\newcommand{\ma}{\mathcal}
\newtheorem{lemma1}{}[section]
\newenvironment{lemma}{\begin{lemma1}{\bf Lemma.}}{\end{lemma1}}
\newenvironment{example}{\begin{lemma1}{\bf Example.}\rm}{\end{lemma1}}
\newenvironment{theorem}{\begin{lemma1}{\bf Theorem.}}{\end{lemma1}}
\newenvironment{proposition}{\begin{lemma1}{\bf Proposition.}}{\end{lemma1}}
\newenvironment{corollary}{\begin{lemma1}{\bf Corollary.}}{\end{lemma1}}
\newenvironment{remark}{\begin{lemma1}{\bf Remark.}\rm}{\end{lemma1}}
\newenvironment{remarks}{\begin{lemma1}{\bf Remarks.}\rm}{\end{lemma1}}
\newenvironment{definition}{\begin{lemma1}{\bf Definition.}}{\end{lemma1}}
\newenvironment{setup}{\begin{lemma1}{\bf Setup.}}{\end{lemma1}}
\newenvironment{fact}{\begin{lemma1}{\bf Fact.}}{\end{lemma1}}
\newenvironment{remark*}{{\bf Remark.}}{}
\newenvironment{remarks*}{{\bf Remarks.}}{}
\newenvironment{example*}{{\bf Example.}}{}
\newenvironment{assumption*}{{\bf Assumption.}}{}
\newcommand{\R}{\ensuremath{\mathbb{R}}}
\newcommand{\Q}{\ensuremath{\mathbb{Q}}}
\newcommand{\Z}{\ensuremath{\mathbb{Z}}}
\newcommand{\C}{\ensuremath{\mathbb{C}}}
\newcommand{\N}{\ensuremath{\mathbb{N}}}
\newcommand{\PP}{\ensuremath{\mathbb{P}}}
\newcommand{\F}{\ensuremath{\mathbb{F}}}
\newcommand{\holom}[3]{\ensuremath{#1\colon#2  \rightarrow #3}}
\newcommand{\fibre}[2]{\ensuremath{#1^{-1} (#2)}}
\newcommand\sF{{\mathcal F}}
\newcommand\sI{{\mathcal I}}
\newcommand\sO{{\mathcal O}}
\DeclareMathOperator*{\sing}{sing}
\DeclareMathOperator*{\Exc}{Exc}
\newcommand{\Chow}[1]{\ensuremath{\mbox{\rm Chow}(#1)}}
\newcommand{\NEX}{\overline{\mbox{NE}}(X)}
\newcommand{\Ne}[1]{\operatorname{NE}(#1)}
\newcommand{\NE}[1]{\overline{\operatorname{NE}}(#1)}
\newcommand{\AX}{\ensuremath{|-K_X|}}
\newcommand{\BsAX}{\ensuremath{\mbox{\rm Bs}}(|-K_X|)}
\author{Andreas H\"oring}
\author{Saverio Andrea Secci}
\address{Andreas H\"oring, Universit\'e C\^ote d'Azur, CNRS, LJAD, France}
\email{Andreas.Hoering@univ-cotedazur.fr}
\address{Saverio Andrea Secci, SISSA - Scuola Internazionale Superiore di Studi Avanzati, Via Bonomea 265, 34136 Trieste, Italy}
\email{ssecci@sissa.it}
\subjclass[2020]{14J45,14J35,14E30,14J17}
\keywords{Fano manifold, anticanonical divisor, base locus, MMP}
\title{Classification of Fano fourfolds with large anticanonical base locus} 
\date{October 24th, 2025} 
\begin{document}

\begin{abstract} 
We give a classification of smooth Fano fourfolds such that
the base scheme of the anticanonical system is a smooth surface. 
As a consequence we show that there are exactly 22 deformation families of
such manifolds and they are all obtained by the same geometric construction.
These 22 families are closely related to the list of smooth Fano threefolds
that admit a $\PP^1$-bundle structure.
\end{abstract} 

\maketitle

\section{Introduction}

\subsection{Motivation and main result}
A del Pezzo surface in the most classical sense is a smooth projective surface $X$
such that the anticanonical system $|-K_X|$ defines an {\em embedding} of $X$ into a projective space \cite{DP87}, i.e.\ $-K_X$ is very ample. From the point of view of modern classification theory it is more natural to assume that a Fano manifold has ample anticanonical bundle and study the situation where $|-K_X|$ does not define an embedding. The most immediate obstruction is the presence of base points and it turns out that low-dimensional
Fano manifolds with non-empty base locus are rather rare:

\begin{theorem} \cite{Isk77} \label{thm:iskovskikh}
Let $X$ be a Fano manifold such that the base locus $\BsAX$ is not empty. 
\begin{itemize}
\item If $\dim X=2$, the surface $X$
has degree one and can be a realised as a sextic in $\PP(1,1,2,3)$.
\item If $\dim X=3$ one of the 
following holds:
\begin{itemize}
\item $X \simeq \PP^1 \times S$ where $S$ is del Pezzo surface of degree one;
\item $X$ is the blow-up of a smooth sextic in $\PP(1,1,1,2,3)$ along an elliptic curve.
\end{itemize}
\end{itemize}
\end{theorem}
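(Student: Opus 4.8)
The plan is to exploit the structure of $|-K_X|$ via a general member and the adjunction/base-point-freeness techniques available in dimensions $2$ and $3$. First, in the surface case: if $\BsAX\neq\emptyset$, then by the classification of del Pezzo surfaces the only possibility with a base point is $K_X^2=1$, since $-K_X$ is very ample for $K_X^2\geq 3$ and defines a degree-$2$ morphism (hence is base-point free) for $K_X^2=2$. For $K_X^2=1$, $\dim|-K_X|=1$ and the pencil has a single base point; blowing it up gives an elliptic fibration, and a standard computation with the anticanonical ring $\bigoplus_m H^0(X,-mK_X)$ identifies $X$ as a hypersurface of degree $6$ in the weighted projective space $\PP(1,1,2,3)$. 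This last identification is the classical ``Proj of the anticanonical ring'' argument.

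For $\dim X=3$, the key first step is to understand a general element $S\in|-K_X|$. By Bertini together with the fact that Fano threefolds have at worst canonical singularities on their anticanonical members (Shokurov, Reid), $S$ has at worst Du Val singularities away from $\BsAX$, and $-K_X|_S = K_S^{-1}\otimes(\text{something})$; more precisely by adjunction $K_S = (K_X+S)|_S = \Osheaf_S$, so $S$ is a (possibly singular) K3 or abelian surface, but since $X$ is Fano and $S$ moves, $S$ is a K3 surface. The restriction $|-K_X|\big|_S$ is then a linear system on a K3 surface whose base locus is exactly $\BsAX$, and one analyses it using Saint-Donat's theory of linear systems on K3 surfaces: a base point forces the existence of a specific configuration, typically an elliptic pencil plus a section, i.e.\ $-K_X|_S = 2E + \Gamma$ type data. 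This constrains $K_X^3$ to be small.

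The heart of the argument is then to climb back from $S$ to $X$. One shows $\BsAX$ is either a point or a curve; using $h^0(X,-K_X)$ and the exact sequence $0\to\Osheaf_X\to -K_X\to -K_X|_S\to 0$ one pins down $\dim|-K_X|$ and hence the possible numerical invariants. The two outcomes correspond to: (a) $\BsAX$ a single point, where the anticanonical pencil structure and a ladder argument (restricting repeatedly to anticanonical members) realises $X$ as $\PP^1\times S_1$ with $S_1$ the degree-one del Pezzo surface --- the product structure emerging because the base point ``propagates'' in a one-dimensional family; and (b) $\BsAX$ a curve, which by the adjunction computation on $S$ must be elliptic, and then a birational analysis (the curve is a smooth complete intersection type locus with normal bundle forcing it to be a blown-up locus) identifies $X$ as the blow-up of the sextic fourfold-analogue in $\PP(1,1,1,2,3)$ along an elliptic curve. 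The main obstacle I expect is case (b): controlling the normal bundle of the base curve and proving the contraction $X\to\PP(1,1,1,2,3)$-sextic exists and is a smooth blow-up requires a careful Mori-theoretic argument (finding the extremal ray, showing the contraction is divisorial of blow-up type) rather than a purely cohomological one, and verifying that the image is exactly the weighted sextic needs the explicit anticanonical ring computation on the threefold.
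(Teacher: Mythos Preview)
This theorem is cited from \cite{Isk77}; the paper gives no proof but sketches the ingredients in Subsection~1.B: Shokurov \cite{Sho80} guarantees a \emph{smooth} general $S\in|-K_X|$ (hence a K3 surface by adjunction), Mayer \cite{May72} classifies ample linear systems with base points on K3 surfaces, and one then studies the fibre space on $\mathrm{Bl}_{\BsAX}X$. Your strategy broadly aligns with this outline, with Saint-Donat standing in for Mayer.

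There are, however, two concrete errors. First, you invoke only ``at worst Du Val singularities'' for $S$, but Shokurov's theorem actually gives smoothness, and you need it: Mayer's classification is for smooth K3 surfaces, and your adjunction argument already tacitly assumes $S$ smooth. Second, and more seriously, your point-versus-curve dichotomy and the resulting case assignments are wrong. For $X\simeq\PP^1\times S_1$ with $S_1$ a degree-one del Pezzo one has $\BsAX=\PP^1\times\{p\}$ where $p=\mbox{\rm Bs}|-K_{S_1}|$, a smooth \emph{rational} curve, not a point. For the blow-up $X\to Y$ of the sextic $Y\subset\PP(1,1,1,2,3)$ along an elliptic curve (taken as the intersection of two general members of $|-K_Y|$, hence through the unique base point $p\in Y$), the base locus $\BsAX$ is the exceptional fibre over $p$, again a smooth rational curve. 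So both listed threefolds have one-dimensional rational base locus; the dichotomy is not ``point versus curve'' but the structure of the induced elliptic fibration on the blow-up. Your claim that the base curve ``must be elliptic by adjunction on $S$'' is mistaken: Mayer's theorem gives $|-K_X|\big|_S=\Gamma+|M|$ with $\Gamma$ a $(-2)$-curve and $|M|$ an elliptic pencil, and it is $\Gamma$---a rational curve---that is the fixed part.
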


In our paper \cite{HS25} we started a systematic investigation of smooth Fano fourfolds with large anticanonical base
locus, i.e.\ Fano fourfolds where $\BsAX$ has dimension two (which is the maximal dimension). We showed
that these manifolds exhibit a behaviour that is completely opposite to the three-dimensional case: all the anticanonical divisors
are singular. In this paper we build on \cite[Thm.1.3]{HS25} to give a complete classification:

\begin{theorem}
\label{theorem-main}
Let $X$ be a smooth Fano fourfold such that $h^0(X, \sO_X(-K_X)) \geq 4$.
Assume that the scheme-theoretic base locus
$\BsAX$ is a smooth irreducible surface $B$. 
Let $X' := \mbox{\rm Bl}_B X$ be the blow-up and denote by $E \simeq \PP(\ma N_{B/X}^*)$ the exceptional divisor.

Then $X'$ admits a flat Weierstra\ss\ fibration
$$
\holom{\phi}{X'}{T}
$$
with distinguished section $E$. 
Moreover $B$ and $\PP(\ma N_{B/X}^*)$ are Fano, and
the manifolds $X'$ and $X$ can be constructed from the data $(B,\, \ma N_{B/X}^*)$
by the construction in Proposition \ref{construction:general}.
\end{theorem}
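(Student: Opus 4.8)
The plan is to read off all the structure from the anticanonical morphism of the blow-up $X'$; write $\sigma\colon X'\to X$ for the blow-up morphism. Since the scheme-theoretic base locus is the smooth surface $B$, one has $\sigma^{*}\I_{B}\cdot\sO_{X'}=\sO_{X'}(-E)$, so $\sigma$ resolves the anticanonical map and $|{-K_{X'}}|=|\sigma^{*}(-K_{X})-E|$ is base-point free, with $h^{0}(X',-K_{X'})=h^{0}(X,-K_{X})\geq 4$; let $\psi\colon X'\to\PP^{N}$ be the associated morphism, $N\geq 3$. Building on \cite[Thm.1.3]{HS25} --- this is exactly where the fact that \emph{every} anticanonical divisor of $X$ is singular enters, since otherwise $\psi$ would be birational onto a fourfold --- the image $\psi(X')$ is a threefold. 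After Stein factorization $\psi$ factors as $\phi\colon X'\to T$ followed by a finite map $T\to\psi(X')$, where $T$ is a normal projective threefold, $\phi$ has connected fibres, and $-K_{X'}=\phi^{*}D$ with $D$ ample on $T$. The general fibre $F$ of $\phi$ is then a curve with $K_{X'}\cdot F=0$, hence a smooth elliptic curve by adjunction, so $\phi$ is an elliptic fibration.

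I would next identify $T$. First, $E$ dominates $T$: were $\phi(E)$ a proper subvariety, a general fibre $F$ would be disjoint from $E$ and $\sigma^{*}(-K_{X})\cdot F=(-K_{X'}+E)\cdot F=0$ would contradict the bigness of $\sigma^{*}(-K_{X})$. Hence $\phi|_{E}\colon E\to T$ is generically finite of degree $E\cdot F$, and the crux of the proof --- which I expect to be the main obstacle --- is to show this degree equals $1$, i.e.\ that $E$ is an honest section of $\phi$. This requires controlling the curve $\sigma(F)\subset X$ (of anticanonical degree $E\cdot F$ and meeting $B$) via the positivity forced by $X$ being Fano, together with the fine numerical geometry of \cite{HS25}; intertwined with this one must show that $\phi$ is equidimensional, which I would do by proving that every component of every fibre meets $E$ --- otherwise such a component would have $\sigma^{*}(-K_{X})$-degree $0$, hence be contracted by $\sigma$, hence be contained in $E$ and contracted by $\phi|_{E}$, which is impossible once $\phi|_{E}$ is finite. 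Granting all this, $\phi|_{E}$ is a finite birational morphism onto the normal threefold $T$, hence an isomorphism, so that $T\cong E\cong\PP(\ma N_{B/X}^{*})$ carries the $\PP^{1}$-bundle projection $\pi\colon T\to B$.

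At this point $\phi$ is flat (miracle flatness: $X'$ is Cohen--Macaulay, $T$ is regular, and the fibres are equidimensional), it is an elliptic fibration, and it has the section $E$; it remains to check that it is a Weierstra\ss\ fibration, i.e.\ that every fibre is irreducible and reduced. A reducible fibre would have a component missing $E$ (a section meets a fibre in one point, lying on a single component), which we have excluded, and multiple fibres are excluded by the presence of a section. So $\phi$ is the Weierstra\ss\ model attached to $(T,L)$ with fundamental line bundle $L=\ma N_{E/X'}^{*}$, and from $\sO_{X'}(E)|_{E}=\sO_{\PP(\ma N_{B/X}^{*})}(-1)$ one reads $L=\zeta$, the relative hyperplane class of $\pi$. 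The canonical bundle formula then yields $-K_{T}=D+\zeta=2\zeta+\pi^{*}(-K_{X}|_{B})$, and $-K_{B}$ is obtained by a parallel computation; proving that these are \emph{ample} --- that $B$ and $\PP(\ma N_{B/X}^{*})$ are Fano --- is the second delicate point, as ampleness of $D$ alone does not formally suffice. Here I would use that $\ma N_{B/X}^{*}\otimes(-K_{X})|_{B}$ is globally generated (because $\I_{B}$ is generated by anticanonical sections), so that $D=\zeta+\pi^{*}(-K_{X}|_{B})$ is nef, and combine this with a Mori-theoretic analysis of the extremal rays of $X'$ in the spirit of \cite{HS25}.

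Finally, for the reconstruction: the pair $(B,\ma N_{B/X}^{*})$ determines $T=\PP(\ma N_{B/X}^{*})$ and $L=\zeta$, hence the $\PP^{2}$-bundle $\PP(\sO_{T}\oplus L^{2}\oplus L^{3})$ in which the Weierstra\ss\ cubic lies, so $X'$ is determined once one fixes a Weierstra\ss\ equation with coefficients $(a,b)\in H^{0}(L^{4})\oplus H^{0}(L^{6})$; smoothness of $X'$ and the requirement that $E$ be contractible to a smooth Fano fourfold cut this data down to precisely the cases listed in Proposition~\ref{construction:general}. The fourfold $X$ is recovered by blowing down $E\subset X'$ along the $\PP^{1}$-bundle $\pi\colon E\to B$, which exists and is smooth by the Fujiki--Nakano criterion since $\ma N_{E/X'}$ restricts to $\sO(-1)$ on the fibres of $\pi$; one checks directly that the anticanonical base locus of $X$ is $B$ and that its normal bundle is the prescribed $\ma N_{B/X}$, closing the loop.
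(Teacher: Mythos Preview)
Your outline captures the broad architecture, but there are genuine gaps at the two points you yourself flag as delicate, and the paper's actual arguments are quite different from what you sketch.

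First, your claim that $\psi(X')$ is a threefold is not justified. Singularity of every anticanonical divisor (which is what \cite[Thm.~1.3]{HS25} gives) does not rule out $\dim\psi(X')=4$: even if $\psi$ were birational, pullbacks of hyperplane sections could still be singular. The paper excludes dimensions $1$, $2$, $4$ by separate arguments (Lemma~\ref{lemma-dimensionT}); in particular, the case $\dim T'=2$ is precisely where the hypothesis $h^0(-K_X)\geq 4$ enters, and you never address it.

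Second, and more seriously, your sketch for $E\cdot F=1$ (``controlling $\sigma(F)$ via positivity'') is not an argument, and the paper's mechanism is entirely different. One restricts to a general $Y'\in|-K_{X'}|$, a smooth Calabi--Yau threefold with an induced elliptic fibration $Y'\to U$ and a decomposition $|\mu^*(-K_X)|_{Y'}|=B'+|M'|$ where $B'=E\cap Y'$ is the fixed part. Theorem~\ref{thm-birational} --- an independent and lengthy result on linear systems on elliptic Calabi--Yau threefolds --- shows $B'$ is a rational section, and Proposition~\ref{proposition-section} upgrades this to a regular section. Even then, $E$ is not directly shown to be a section of $\varphi\colon X'\to T'$; instead one constructs from this data a nef divisor on $X$ of numerical dimension two, yielding a del Pezzo fibration $\pi\colon X\to V$ with $B$ as a section, and then a \emph{new} flat fibration $\phi\colon X'\to T:=\PP(\pi_*\sO_X(-K_X))$ for which $E$ is a section by construction. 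You have conflated the Stein-factorisation target $T'$ with this $T$.

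Finally, that $B$ and $\PP(\ma N_{B/X}^*)$ are Fano is not obtained by a direct positivity argument as you propose, but as the outcome of a full classification: one runs an MMP on $B$ and lifts each step to $X$ (Lemma~\ref{lem:birational}, Corollary~\ref{cor:birational}), reducing to $B\in\{\PP^2,\PP^1\times\PP^1,\mathbb F_1\}$, and then pins down the conormal bundle case by case (Lemmas~\ref{lemma:P2-classifyNB}, \ref{lemma:quadric-classifyNB}, Corollary~\ref{corollary:F1-classifyNB}) using the Kawamata--Andreatta--Wi\'sniewski--Takagi structure theory for contractions with two-dimensional fibres. The Fano property is then read off from the resulting finite list.
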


An important feature of our classification is that it is effective, i.e.\ we are able to realise all the possibilities
appearing in our list. In fact, our construction can be applied to all the Fano threefolds with a $\PP^1$-bundle structure.
We can summarise this as follows:

\begin{corollary}  \label{corollary-main} 
There are exactly twenty-two deformation families of smooth 
 Fano fourfolds such that $h^0(X, \sO_X(-K_X)) \geq 4$ and the scheme-theoretic base locus
$\BsAX$ is a smooth irreducible surface $B$.
\end{corollary}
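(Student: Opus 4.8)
The plan is to use Theorem \ref{theorem-main} to reduce Corollary \ref{corollary-main} to the classification of smooth Fano threefolds that admit a $\PP^1$-bundle structure over a surface, and then to invoke the known classification of the latter. The hypotheses of the corollary coincide with those of Theorem \ref{theorem-main}, so any such $X$ is recovered, via the construction of Proposition \ref{construction:general}, from the pair $(B,\ma N_{B/X}^*)$, where $B\simeq\BsAX$ is a del Pezzo surface and $Y:=\PP(\ma N_{B/X}^*)$ is a smooth Fano threefold equipped with the $\PP^1$-bundle structure $Y\to B$; conversely — this is the effectivity statement made just before the corollary — applying the construction of Proposition \ref{construction:general} to any smooth Fano threefold with a $\PP^1$-bundle structure over a surface yields a smooth Fano fourfold satisfying the hypotheses of the corollary. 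First I would make this correspondence clean: the construction identifies $Y$ with both the exceptional divisor $E\subset X'$ and the base $T$ of the Weierstra\ss\ fibration $\phi\colon X'\to T$ (a section of $\phi$ being isomorphic to $T$), and the blow-down $X'\to X$ restricts on $E\simeq Y$ to the projection $Y\to B$, so that, up to deformation, $X$ depends only on $Y$ together with its $\PP^1$-bundle structure — the choice of representative of the bundle within its twist class and of the Weierstra\ss\ coefficients being either irrelevant or varying in a connected family. Thus the problem becomes: count, up to deformation, the smooth Fano threefolds with a $\PP^1$-bundle structure onto a surface.

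Next I would carry out that count. A $\PP^1$-bundle $\PP(\ma E)\to B$ over a surface can be Fano only if $B$ is a del Pezzo surface, and one then extracts from the Mori--Mukai classification of Fano threefolds with $b_2\geq 2$ (equivalently, from the classification of rank-two Fano bundles over del Pezzo surfaces) exactly those that are $\PP^1$-bundles over a surface. Going through this list, organised by $B\in\{\PP^2,\ \PP^1\times\PP^1,\ \mbox{\rm Bl}_k\PP^2\}$, and producing for each entry the corresponding fourfold via Proposition \ref{construction:general} — while recording enough invariants ($b_2$, $b_4$, the index, the Hodge numbers, the pair $(B,T)$) to tell the resulting families apart — gives the number $22$.

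The hard part is passing from isomorphism classes to deformation families. In one direction I must check that deforming the input deforms $X$: del Pezzo surfaces form finitely many connected deformation families, the Fano bundles in each case of the list vary in a connected (often rigid) family, and the Weierstra\ss\ coefficients compatible with the Fano condition likewise form a connected family; since the construction of Proposition \ref{construction:general} is built from relative $\mathrm{Proj}$'s, a Weierstra\ss\ fibration, and a smooth blow-down, all of which behave well in flat families, a connected family of data produces a connected family of fourfolds. In the other direction I must rule out coincidences: $B\simeq\BsAX$ and $Y\simeq\PP(\ma N_{B/X}^*)$ with its fibration are intrinsic to $X$, so a deformation of $X$ induces one of the pair $(B,Y)$, whence distinct deformation families of data give distinct deformation families of fourfolds. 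The genuinely delicate bookkeeping — and what I expect to be the main obstacle — is that a single Fano threefold $Y$ may carry several $\PP^1$-bundle structures over a surface (for instance the flag threefold $\PP(T_{\PP^2})$ with its two projections to $\PP^2$, or $\PP^1\times\PP^1\times\PP^1\to\PP^1\times\PP^1$ with its three projections): one must verify case by case that the different structures give the \emph{same} deformation family of $X$, so that the enumeration above is neither over- nor under-counting; this should follow from the symmetry of the construction. Granting this, the deformation families of fourfolds as in the corollary are in bijection with those of smooth Fano threefolds carrying a $\PP^1$-bundle structure over a surface, of which there are $22$, and Theorem \ref{theorem-main} guarantees that the list is exhaustive; this proves Corollary \ref{corollary-main}.
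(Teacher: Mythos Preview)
Your overall strategy is the paper's strategy, but two of your intermediate claims are false and only yield the correct count by accident.

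First, you assert that ``the choice of representative of the bundle within its twist class\ldots\ [is] irrelevant''. It is not. The construction in Proposition~\ref{construction:general} depends on the vector bundle $\sF$ itself, not merely on $\PP(\sF)$: the tautological class $\zeta_T$ and hence the normal bundle $\ma N_{E/X'}\simeq\sO_T(-\zeta_T)$ change under twist, and with them the blow-down $\mu\colon X'\to X$. Concretely, $\sF=\sO_{\PP^2}^{\oplus 2}$ and $\sF=\sO_{\PP^2}(1)^{\oplus 2}$ both projectivise to $\PP^1\times\PP^2$, yet they produce fourfolds with $K_X^4=54$ and $K_X^4=17$ respectively (entries $\#3$ and $\#4$ in Table~\ref{table}), so the resulting $X$ are not deformation equivalent. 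Thus your proposed bijection between families of $X$ and Fano threefolds (or even pairs $(Y,\text{structure})$) is wrong: the correct invariant is the pair $(B,\ma N_{B/X}^*)$.

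Second, you assert that the construction ``applied to any smooth Fano threefold with a $\PP^1$-bundle structure over a surface yields a smooth Fano fourfold satisfying the hypotheses of the corollary''. This fails for $B$ a del Pezzo surface of degree one and $\sF=\sO_B^{\oplus 2}$: here $-K_B-\det\sF=-K_B$ is not globally generated, Proposition~\ref{construction:rk2} does not apply, and the resulting fourfold has \emph{reducible} anticanonical base locus (see the Remarks following Proposition~\ref{construction:rk2}).

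The paper's argument does not attempt your bijection. It instead uses the explicit classification of conormal bundles (Lemma~\ref{lemma:P2-classifyNB}, Corollary~\ref{corollary:F1-classifyNB}, Lemma~\ref{lemma:quadric-classifyNB}, Corollary~\ref{corollary:high-degree}) together with the existence results of Section~\ref{section-examples}, and then observes that the count against the Szurek--Wi\'sniewski list of $22$ Fano threefolds with $\PP^1$-bundle structure is $22-1+1=22$: one loses $\PP^1\times S_1$ and gains a second occurrence of $\PP^1\times\PP^2$. Your two errors are precisely this $-1$ and $+1$, which is why your final number happens to be right.
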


We list the invariants of the Fano fourfolds in Table \ref{table} and describe the birational geometry  in Section \ref{section-examples}, we will see that the Mori contractions are much more complicated than in dimension three. While the number of families is surprisingly high, we would like to stress that they all arise from the same construction. Moreover they share some striking geometric properties listed in Theorem \ref{theorem-nefdivisor}.

\subsection{General strategy and structure of the proof}
The proof of Iskovskikh's theorem \ref{thm:iskovskikh} is based on two important results: the existence of a smooth anticanonical divisor by Shokurov \cite{Sho80} and the classification of ample divisors
with base points on K3 surfaces by Mayer \cite{May72}. With this information at hand one can describe the fibre space structure on $X' := \mbox{\rm Bl}_{\tiny \BsAX} X$  and deduce the structure of the Fano threefold $X$. In the situation of Theorem \ref{theorem-main}
there is never a smooth anticanonical divisor and so far there is no analogue of Mayer's theorem for Calabi-Yau threefolds. Our strategy is to inverse the procedure, i.e.\ we start with the geometry of the blow-up, then study linear systems on elliptic Calabi-Yau threefolds and finally describe the geometry of the base scheme. In more detail, we proceed as follows:
let
$$
\holom{\mu}{X'}{X}
$$
be the blow-up of the base scheme $B$ and denote by $E$ the exceptional divisor. Let
$$
\holom{\varphi}{X'}{T'}
$$
be the fibration defined by the anticanonical system $|-K_{X'}|$. Using the assumption $h^0(X, \sO_X(-K_X)) \geq 4$
we show that $T'$ is a threefold, so $\varphi$ is an elliptic fibration. Our first goal is to show that the exceptional divisor
$E$ is a rational section of this fibration. 
This requires a general result for fixed divisors
on Calabi-Yau threefolds with an elliptic fibration which is of independent interest:

\begin{theorem} \label{thm-birational}
Let $Z$ be a normal $\Q$-factorial projective threefold with terminal singularities that is Calabi-Yau, i.e.\ we have
$$
\sO_Z(K_Z) \simeq \sO_Z \qquad \mbox{and} \qquad
h^1(Z, \sO_Z)=h^2(Z, \sO_Z)=0.
$$
Let 
$$
\holom{\varphi_Z}{Z}{U}
$$ 
be a fibration onto a normal surface $U$, and let $A$ be a nef and
big divisor on $Z$ such that the following holds: 
\begin{enumerate}
\item we have a decomposition into fixed and mobile part $|A| = S + |M|$ with 
$S$ a smooth prime divisor;
\item the restriction of $A$ to $S$ is big;
\item we have $M \simeq \varphi_Z^* M_U$ where $M_U$ is an ample globally generated Cartier divisor on $U$.
\end{enumerate}
Then $S$ is a rational section of the fibration $\varphi_Z$.
\end{theorem}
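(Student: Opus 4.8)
The plan is to analyze the fixed divisor $S$ and the fibration $\varphi_Z$ together. First I would use condition (iii) to understand the numerical behaviour of $A$ along the fibres. Since $M \simeq \varphi_Z^* M_U$ with $M_U$ ample on $U$, for a general fibre $F$ of $\varphi_Z$ (an elliptic curve, or at least a curve of arithmetic genus one by the Calabi--Yau condition and adjunction $K_Z \cdot F = 0$) we have $M \cdot F = 0$, hence $A \cdot F = S \cdot F$. I would then argue that $A \cdot F > 0$: if $A \cdot F = 0$ then $A$ would be $\varphi_Z$-trivial, so $A$ would be pulled back from $U$, contradicting that $A$ is big on a threefold while $U$ is a surface (or contradicting condition (ii), that $A|_S$ is big on the $2$-dimensional $S$ — actually we need $S$ to dominate $U$, which follows because a fixed divisor of a nef and big class cannot be contracted by $\varphi_Z$ to a curve without forcing $A|_S$ to fail bigness; I should be careful here). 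So $d := S \cdot F = A \cdot F$ is a positive integer, and the goal is to show $d = 1$, i.e. $S \to U$ is birational (generically one-to-one), which is exactly the statement that $S$ is a rational section.

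Next I would bring in the Calabi--Yau hypothesis via adjunction and a vanishing/Riemann--Roch argument on the surface $S$. Writing $K_Z = 0$, adjunction gives $K_S = (K_Z + S)|_S = S|_S$, so $S|_S = K_S$ as divisor classes on $S$. Restricting the decomposition $A = S + M$ to $S$ gives $A|_S = K_S + M|_S = K_S + \varphi_Z^*M_U|_S$. Now I want to compute $h^0(S, A|_S)$ in two ways. On one hand, from the exact sequence $0 \to \sO_Z(M) \to \sO_Z(A) \to \sO_S(A|_S) \to 0$ and the fact that $|A| = S + |M|$ (so the restriction map on sections of $A$ has image exactly... well, $H^0(Z,A)/H^0(Z,M) \hookrightarrow H^0(S, A|_S)$), combined with $h^0(Z,A) - h^0(Z,M) = h^0(Z,A) - h^0(U, M_U)$, I get a lower bound on $h^0(S,A|_S)$. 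On the other hand, $A|_S = K_S + \varphi_Z^*M_U|_S$; since $\varphi_Z|_S : S \to U$ is a generically finite morphism of degree $d$ and $M_U$ is ample and globally generated, Kawamata--Viehweg vanishing (or Kodaira vanishing on the resolution, pushed down — $S$ may be singular but it is normal with at worst canonical or klt singularities as a member of a $\Q$-factorial family, so log vanishing applies) gives $h^i(S, K_S + \varphi_Z^*M_U|_S) = 0$ for $i > 0$, hence $h^0(S, A|_S) = \chi(S, K_S + \varphi_Z^*M_U|_S)$, which by Riemann--Roch is a polynomial in the intersection numbers involving $\deg(\varphi_Z|_S) = d$.

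The key numerical comparison I expect to close the argument: $h^0(U, M_U) = \chi(U, M_U)$ (again by vanishing, $M_U$ ample on the surface $U$, and $U$ has rational or at worst nice singularities since it is dominated by the Calabi--Yau $Z$ with $h^1(\sO_Z) = h^2(\sO_Z) = 0$, forcing $h^1(\sO_U) = 0$), and $\chi(S, K_S + \varphi_Z^* M_U|_S)$ expands, via the projection formula and $\varphi_Z|_S$ having degree $d$, to roughly $d \cdot \chi(U, M_U) + (\text{correction terms from the ramification and from } K_S)$. Comparing with the lower bound $h^0(S, A|_S) \geq h^0(Z,A) - h^0(U,M_U)$ and pushing $h^0(Z, A)$ down along $\varphi_Z$ — here one uses $R^1\varphi_{Z*}\sO_Z(A) = 0$ and $\varphi_{Z*}\sO_Z(A)$ is a rank-$d$ sheaf on $U$ (rank = $h^0$ of $A$ on the generic elliptic fibre, which is $d$ by Riemann--Roch on the genus-one fibre), so $h^0(Z,A) = \chi(U, \varphi_{Z*}\sO_Z(A))$ is also "$d$ copies of $M_U$-type data plus corrections" — one derives an inequality forcing $d \leq 1$, hence $d = 1$. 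I expect the \textbf{main obstacle} to be the bookkeeping of the correction terms and singularities: making precise that all the relevant vanishing theorems apply on the possibly-singular normal surfaces $S$ and $U$ (one should pass to resolutions and control the discrepancies, or invoke that $S$ is Gorenstein with canonical singularities and $U$ has rational singularities), and checking that the ramification/correction terms in the two Riemann--Roch computations genuinely have the right sign. A cleaner alternative for the final step, which I would try first, is: once we know $A|_S = K_S + (\varphi_Z|_S)^*M_U$ and that $|A|_S|$ is big with the mobile part pulled back from $U$, observe that the mobile part of $A|_S$ is $(\varphi_Z|_S)^*M_U$ and its self-intersection is $d \cdot M_U^2$, while $(A|_S)^2 \geq (A \cdot S \cdot \text{mobile})$ forces, together with $(A|_S)^2 = (S|_S + (\varphi_Z|_S)^*M_U)^2 = K_S^2 + 2\,d\,(K_S \cdot_{\!U} M_U\text{-type}) + d\, M_U^2$, a numerical identity that, combined with $S \cdot F = d$ and the structure of the elliptic fibration (e.g. $\varphi_{Z*}\sO_Z = \sO_U$ and the canonical bundle formula for elliptic fibrations), pins down $d=1$; but I suspect the honest route really does go through the Euler-characteristic comparison above.
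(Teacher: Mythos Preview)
Your framework is correct and matches the paper's: set $d = \deg(\varphi_Z|_S)$, use adjunction $K_S = S|_S$ so that $A|_S = K_S + M_S$ with $M_S = (\varphi_Z|_S)^* M_U$, then compare $h^0(S, K_S + M_S)$ computed by Riemann--Roch (which scales with $d$) against information coming from $U$. The smoothness of $S$ is a hypothesis, so you can drop the worry about its singularities.

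However, there is a genuine gap in how you link the two sides. You write that the restriction map gives $H^0(Z,A)/H^0(Z,M) \hookrightarrow H^0(S,A|_S)$ and then a ``lower bound'' on $h^0(S,A|_S)$. But $S$ is the \emph{fixed} part of $|A|$, so $h^0(Z,A) = h^0(Z,M)$ and the restriction map $H^0(Z,A) \to H^0(S,A|_S)$ is \emph{zero}; your lower bound is vacuous. The paper exploits exactly this zero: together with $H^1(Z,A)=0$ (Kawamata--Viehweg) it gives $H^0(S,A|_S) \simeq H^1(Z,\varphi_Z^* M_U)$, and then the Leray spectral sequence plus $R^1(\varphi_Z)_*\sO_Z \hookrightarrow \omega_U$ (relative duality, using $\omega_Z \simeq \sO_Z$) yields an injection $H^0(S,K_S+M_S) \hookrightarrow H^0(U,K_U+M_U)$. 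Combined with the obvious pullback inclusion in the other direction, this is an \emph{equality}
\[
h^0(S,K_S+M_S) = h^0(U,K_U+M_U),
\]
and via Riemann--Roch on both sides the identity
\[
\tfrac{1}{2}(K_S+M_S)\cdot M_S + \chi(S,\sO_S) = \tfrac{1}{2}(K_U+M_U)\cdot M_U + 1.
\]
This equality, together with the ramification formula, gives the inequality you were hoping for; but an inequality alone does not suffice.

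You also significantly underestimate the endgame. Even with the equality above, the paper needs a long case distinction on $\chi(S,\sO_S) \in \{\geq 2,\ 1,\ 0,\ <0\}$: the cases $\chi \geq 1$ are quick, but $\chi = 0$ and $\chi < 0$ require detailed surface classification (ruled surfaces over elliptic or higher-genus curves), Hodge index arguments, and the classification of polarized surfaces of low degree to squeeze out contradictions. Your sketch does not anticipate that the ``correction terms'' can conspire against you unless $\chi(S,\sO_S)$ is controlled, nor that equality in the ramification estimate (quasi-\'etale Stein factorization) forces a separate analysis.
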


It seems likely that the statement also holds for a singular surface $S$ and $M_U$ a not necessarily globally generated ample line bundle, but these assumptions allow us to avoid further case distinctions in a long and technical proof.
 
We then prove a structure result for the Fano fourfold which is the basis for the rest of our investigation:

\begin{theorem} \label{theorem-nefdivisor}
Let $X$ be a smooth Fano fourfold such that $h^0(X, \sO_X(-K_X)) \geq 4$.
Assume that the scheme-theoretic base locus
$\BsAX$ is a smooth irreducible surface $B$. Let
$$
\holom{\mu}{X'}{X}
$$
be the blow-up of $B$ and denote by $E$ the exceptional divisor.
Then the following holds:
\begin{enumerate}
\item There exists a flat fibration 
$$
\holom{\pi}{X}{V}
$$
with integral fibres such that the general fibre $F$ is a del Pezzo surface of degree one
and $B \subset X$ is a $\pi$-section.
\item There exists a flat elliptic fibration
$$
\holom{\phi}{X'}{T}
$$
with integral fibres such that $E \subset X'$ is a $\phi$-ample section.
\item We have a commutative diagram
$$
\xymatrix{
X' \ar[r]^{\mu} \ar[d]_{\phi} & X \ar[d]^{\pi} \\
T \ar[r]_{\mu_T} & V 
}
$$
where $\mu_T$ is the $\PP^1$-bundle structure given by
$$
\holom{\mu_E :=\mu|_E}{E \simeq T}{B \simeq V}.
$$
\end{enumerate}
\end{theorem}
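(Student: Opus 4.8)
The plan is to produce both fibrations from the elliptic fibration attached to $|-K_{X'}|$, after upgrading the rational section supplied by Theorem \ref{thm-birational} to an honest one, and then pushing everything down to $X$. First I would set up the elliptic fibration. Since the base scheme of $\AX$ is the reduced smooth surface $B$, the system $|-K_{X'}| = |\mu^*(-K_X) - E|$ is base-point free with $h^0(X', \sO_{X'}(-K_{X'})) = h^0(X, \sO_X(-K_X)) \ge 4$; let $\holom{\varphi}{X'}{T'}$ be the Stein factorisation of the associated morphism, so $T'$ is normal and $-K_{X'} \simeq \varphi^* A'$ for an ample and globally generated $A'$. By the preceding analysis (which uses $h^0(X, \sO_X(-K_X)) \ge 4$), $\dim T' = 3$, so $\varphi$ is elliptic. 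A general $Z \in |-K_{X'}|$ is smooth by Bertini, hence $\Q$-factorial and terminal, with $K_Z \simeq 0$ and $h^1(Z,\sO_Z) = h^2(Z,\sO_Z) = 0$ (from the structure sequence of $Z \subset X'$, since $X$ is Fano); moreover $Z = \varphi^{-1}(D_{T'})$ with $D_{T'} \in |A'|$, so $\holom{\varphi|_Z}{Z}{D_{T'}}$ is a fibration onto a normal surface. Applying Theorem \ref{thm-birational} with $A := \mu^*(-K_X)|_Z$ (nef and big, as $(\mu^*(-K_X))^3 \cdot E = 0$), fixed part $S := E \cap Z$ (smooth prime, with $A|_S$ big since $(A|_S)^2 = (-K_X)^2\cdot B > 0$) and mobile part $M := (-K_{X'})|_Z \simeq (\varphi|_Z)^*(A'|_{D_{T'}})$, I conclude that $E$ is a rational section of $\varphi$, i.e. $\varphi|_E\colon E \to T'$ is birational.

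For statement (2), I would show $\varphi|_E$ is an \emph{isomorphism}. A fibre $\ell$ of the $\PP^1$-bundle $\mu|_E\colon E \to B$ satisfies $E\cdot\ell = -1$ and $\mu^*(-K_X)\cdot\ell = 0$, so $-K_{X'}\cdot\ell = 1 > 0$ and $\ell$ is not $\varphi$-contracted; ruling out any other $\varphi$-contracted curve inside $E$ makes $\varphi|_E$ finite, hence, being finite and birational onto the normal $T'$, an isomorphism. Then $T' \simeq E$ is smooth, $\varphi$ is equidimensional and therefore flat ($X'$ Cohen--Macaulay, $T'$ regular), $E$ is an honest section, and it is $\varphi$-ample because $E\cdot f = 1$ on each fibre $f$. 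Finally one checks that every fibre is integral: a reducible or multiple fibre would create an extra vertical divisor (resp.\ contradict the normality of a general anticanonical divisor of $X$), which is incompatible with $X$ being Fano. Put $\phi := \varphi$ and $T := T'$.

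For (3) and (1) I would transport the $\PP^1$-bundle $\mu|_E\colon E \to B$ through $\varphi|_E\colon E \xrightarrow{\sim} T$ to a $\PP^1$-bundle $\holom{\mu_T}{T}{V}$ with $V := B$. Then $\mu_T\circ\phi\colon X'\to V$ contracts each exceptional curve $\ell_b := \mu|_E^{-1}(b)$ (its image is $\mu_T(\varphi(\ell_b)) = b$), so by the rigidity lemma it factors as $\pi\circ\mu$ for a morphism $\holom{\pi}{X}{V}$; this is the commutative square of (3), with $\mu_E = \mu|_E$ the $\PP^1$-bundle $\mu_T$. For general $b \in V$, writing $L_b := \mu_T^{-1}(b) \simeq \PP^1$ and $Y_b := \varphi^{-1}(L_b)$ (a smooth surface), the fibre $\pi^{-1}(b) = \mu(Y_b)$ is obtained from $Y_b$ by contracting $\ell_b = E \cap Y_b$. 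Now $\varphi|_{Y_b}\colon Y_b \to L_b$ is an elliptic fibration with section $\ell_b$; from $-K_{X'} \simeq \varphi^* A'$ with $A'\cdot L_b = -K_{X'}\cdot\ell_b = 1$, from $-K_T\cdot L_b = 2$ (the relative anticanonical degree of $T\to V$) and adjunction for $Y_b = \varphi^{-1}(L_b)$, one computes $K_{Y_b}$ to be minus a fibre of $\varphi|_{Y_b}$. Hence $Y_b$ is a rational elliptic surface, $\ell_b$ a $(-1)$-curve on it, and its contraction $F := \mu(Y_b)$ is a smooth del Pezzo surface with $K_F^2 = 1$; as $B\cap F = \mu(\ell_b)$ is one point, $B$ is a $\pi$-section. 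Integrality of all fibres of $\pi$ and flatness then follow as for $\phi$, from equidimensionality together with the Cohen--Macaulay property of $X$, the smoothness of $V$, and the Fano condition.

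The main obstacle is exactly this passage from rational section to section together with the integrality of the fibres: one must rule out that $\varphi|_E$ contracts a curve of $E$ and, more seriously, that the elliptic fibration has reducible or multiple fibres anywhere over the base — the statement that makes $\phi$ a genuine Weierstra\ss\ fibration and forces del Pezzo fibres for $\pi$ throughout, and the point where one genuinely uses that $X$ is Fano rather than merely that $X'$ admits an elliptic fibration with a rational section. The adjunction computations identifying the general fibre of $\pi$ with a degree-one del Pezzo surface are, by contrast, routine once this framework is in place.
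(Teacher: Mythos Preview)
Your overall strategy---apply Theorem~\ref{thm-birational} to a general anticanonical divisor to see that $E$ is a rational $\varphi$-section, then use this to produce $\pi$---matches the paper's, but the passage from rational section to section is where the proposal breaks down. You write ``ruling out any other $\varphi$-contracted curve inside $E$ makes $\varphi|_E$ finite'', yet give no argument: if $C\subset E$ has $-K_{X'}\cdot C=0$, then $\mu^*(-K_X)\cdot C=E\cdot C$, and since $E|_E$ is the negative of the tautological class on $\PP(\ma N_{B/X}^*)$ there is no a priori sign obstruction. In fact the paper explicitly records that identifying $\phi$ with $\varphi_{|-K_{X'}|}$ can only be verified \emph{a posteriori}, as a consequence of Theorem~\ref{theorem-main}; at this stage one cannot show $\varphi|_E$ is an isomorphism, and the fibration $\phi$ in the statement is \emph{not} defined as $\varphi$.

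The paper's route around this is substantially different from yours. It first shows only that the $\varphi_E$-exceptional locus has finite image in $T'$ (this uses Proposition~\ref{proposition-section} on the Calabi--Yau threefold $Y'$, which needs the $\varphi$-ampleness of $E$), and that $\varphi$ contracts no divisor (an argument invoking \cite[Thm.~1.3]{HS25}). With these two facts one can pull back a hyperplane-section Weil divisor from $T'$ to a Cartier divisor $A_{X'}$ on $X'$, show it descends to a nef divisor $A_X$ on $X$ of numerical dimension two, and then---crucially using that $X$ is Fano, so nef implies semiample---obtain $\pi\colon X\to V$. The elliptic fibration $\phi$ is then constructed \emph{independently} as the map to $T:=\PP(\pi_*\sO_X(-K_X))$; one gets a factorisation $\varphi=\nu\circ\phi$ through a birational $\nu\colon T\to T'$, and integrality of the fibres of $\phi$ and $\pi$ follows from Lemma~\ref{lemma-factorisation-degree-one} using the relatively ample degree-one section $E$. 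Your handling of integrality (``an extra vertical divisor \ldots\ incompatible with $X$ being Fano'') is also too vague to stand as a proof.
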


Theorem \ref{theorem-nefdivisor} 
provides an explanation for the existence of the base locus $B$: since $F = \fibre{\pi}{v}$ is a del Pezzo surface of degree one, its anticanonical system
$|-K_{F}|$ has a unique base point $p_v$. Yet $-K_X|_{F} \simeq - K_F$, so $p_v$ is also a basepoint of $|-K_X|$. Thus the union of these points defines
a rational section of $\pi$ that is contained in $\BsAX$. This section is exactly
the surface $B$.
It is tempting to believe that $\phi$ coincides with the fibration
$\varphi = \varphi_{|-K_{X'}|}$ but this can only be checked as a consequence of Theorem \ref{theorem-main}.

If $V \simeq B$ admits a birational Mori contraction we can use Theorem \ref{theorem-nefdivisor} to construct a very special birational Mori contraction on $X$, this allows us
to reduce to the case where $B$ is either the projective plane $\PP^2$ or a smooth quadric $\PP^1 \times \PP^1$. In these cases there are many more options,
for example $B$ can be the exceptional locus of a small contraction or a higher-dimensional fibre of an elementary contraction onto a threefold, see Examples \ref{example-new0}, \ref{example:flagvariety} and \ref{example:bidegreeonetwo}.
At this point the description of elementary Mori contractions with two-dimensional fibres
by Kawamata, Andreatta-Wi\'sniewski and Takagi \cite{Kaw89, AW98, Tak99} allows us to classify
the possibilities for the conormal bundle $\ma N_{B/X}^*$: it turns out that they are all Fano bundles
in the sense of \cite{SW90}. Once we have established this list we are able to determine the Weierstra\ss\ model of $X'$ and deduce the structure of $X$.

Finally let us note that 
the technical assumption  
$h^0(X, \sO_X(-K_X)) \geq 4$ in Theorem \ref{theorem-main}
is satisfied by all the known Fano fourfolds where $\BsAX$ is a surface.
We use the assumption in the proof of Lemma \ref{lemma-dimensionT}, studying the cases
$h^0(X, \sO_X(-K_X)) \in \{2, 3\}$ seems possible (cf. Remark \ref{remark-dimensionT}) but leads to numerous questions about surfaces with small invariants that we wanted to avoid in this paper.

\noindent\textbf{Acknowledgements.} 
A.H.\ was partially supported by the ANR-DFG project ``Positivity on K-trivial varieties'', ANR-23-CE40-0026 and DFG Project-ID 530132094. A.H.\ was also partially supported  by the France 2030 investment plan managed by the ANR, as part of the Initiative of Excellence of Universit\'e C\^ote d'Azur, reference ANR-15-IDEX-01.

S.A.S.\ thanks Universit\'a di Milano and Universit\'e C\^ote d'Azur for supporting his visit to Nice, while he was a postdoc at Universit\'a di Milano, in order to start this project; moreover, he is a member of GNSAGA, INdAM.

\section{Notation and basic results}

We work over $\C$, for general definitions we refer to \cite{Har77}.
All the schemes appearing in this paper are projective, manifolds and normal varieties will always be supposed to be irreducible.
For notions of positivity of divisors and vector bundles we refer to Lazarsfeld's books \cite{Laz04a, Laz04b}. 
We denote by $\simeq$ (resp.\ $\sim_\Q$) the linear equivalence of Weil divisors (resp.\ Weil $\Q$-divisors) and by $\equiv$ the numerical equivalence of $\Q$-Cartier divisors.
 Given a Cartier divisor $D$ we will denote by $\sO_X(D)$ both the associated invertible sheaf and the corresponding line bundle.
Somewhat abusively we will say that a Cartier divisor class is effective if it contains an effective divisor.

We use the terminology of \cite{KM98} for birational geometry and of \cite{Kol96} for rational curves.
We refer to \cite{Kol13} for the definitions and basic facts about singularities of the MMP.

\subsection{Technical preliminaries}

We recall some preliminary results:

\begin{lemma} \label{lemma-pullback-weil}
Let $\holom{\psi}{A}{B}$ a surjective morphism between normal projective varieties that does not contract a divisor. Then there exists a well-defined map
$$
\psi^* \colon \mbox{\rm Cl}(B) \rightarrow \mbox{\rm Cl}(A)
$$
such that the restriction to the smooth locus of $B$ is the pull-back of Cartier divisors.
\end{lemma}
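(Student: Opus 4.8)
The plan is to push everything onto the smooth locus of $B$, where Weil and Cartier divisors coincide, perform the ordinary Cartier pull-back there, and then use normality of $A$ to extend the result across a closed subset of codimension $\ge 2$.

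First I would isolate a suitable large open subset of $A$. Since $B$ is normal, $B_{\mathrm{sing}}$ has codimension $\ge 2$ in $B$, and since $A$ is normal, $A_{\mathrm{sing}}$ has codimension $\ge 2$ in $A$. Moreover $\psi^{-1}(B_{\mathrm{sing}})$ contains no prime divisor of $A$: such a divisor $E$ would satisfy $\overline{\psi(E)}\subseteq B_{\mathrm{sing}}$, hence would be mapped into a subset of codimension $\ge 2$, i.e.\ contracted by $\psi$ — contradicting the hypothesis. Therefore $\psi^{-1}(B_{\mathrm{sing}})$ has codimension $\ge 2$ in $A$, and
$$
W \ :=\ \psi^{-1}(B_{\mathrm{sm}})\cap A_{\mathrm{sm}}
$$
is a smooth open subset of $A$ whose complement has codimension $\ge 2$; since $\psi$ is surjective, $\psi$ restricts to a dominant morphism $\psi_0\colon W\to B_{\mathrm{sm}}$.

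Next I would assemble $\psi^*$ as a composition of canonical maps. Removing a closed subset of codimension $\ge 2$ leaves the divisor class group unchanged, so restriction yields isomorphisms $\Cl(B)\xrightarrow{\ \sim\ }\Cl(B_{\mathrm{sm}})$ and $\Cl(A)\xrightarrow{\ \sim\ }\Cl(W)$; and since $B_{\mathrm{sm}}$ and $W$ are smooth, $\Cl(B_{\mathrm{sm}})=\pic(B_{\mathrm{sm}})$ and $\Cl(W)=\pic(W)$. I would then take $\psi^*$ to be the composite
$$
\Cl(B)\ \xrightarrow{\ \sim\ }\ \pic(B_{\mathrm{sm}})\ \xrightarrow{\ \psi_0^{\,*}\ }\ \pic(W)\ \xrightarrow{\ \sim\ }\ \Cl(A),
$$
whose middle arrow is the ordinary pull-back of Cartier divisors (well defined as $\psi_0$ is dominant). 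Every arrow being canonical, $\psi^*$ is well defined.

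Finally, for the compatibility assertion set $A_1:=\psi^{-1}(B_{\mathrm{sm}})$; then $A_1\setminus W=A_1\cap A_{\mathrm{sing}}$ again has codimension $\ge 2$, so $\Cl(A_1)\to\Cl(W)$ is injective. One checks that the restriction to $A_1$ of $\psi^*$ and the genuine Cartier pull-back along $\psi|_{A_1}$ (of the class transported to $\pic(B_{\mathrm{sm}})$) both restrict to $\psi_0^{\,*}$ on $W$, hence agree on $A_1$ — which is exactly the claimed compatibility with the pull-back of Cartier divisors. The only ingredient that is not pure bookkeeping with the standard identification ``$\Cl$ of a variety $=$ $\Cl$ of a big open subset'' is the codimension estimate $\codim_A\psi^{-1}(B_{\mathrm{sing}})\ge 2$, i.e.\ extracting from ``$\psi$ does not contract a divisor'' that no divisor of $A$ lies over $B_{\mathrm{sing}}$; this is the one point I expect to require a little care.
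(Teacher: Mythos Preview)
Your proposal is correct and follows essentially the same approach as the paper: restrict to the smooth locus $B_0=B_{\mathrm{sm}}$, use that $\psi^{-1}(B_{\mathrm{sing}})$ has codimension $\ge 2$ because $\psi$ contracts no divisor, and define $\psi^*$ as the composition through $\Cl(B)\simeq\pic(B_0)$. The only cosmetic difference is that you further intersect with $A_{\mathrm{sm}}$ to force $\Cl(W)=\pic(W)$, whereas the paper works on $A_0=\psi^{-1}(B_0)$ directly and uses the natural map $\pic(A_0)\hookrightarrow\Cl(A_0)$; both are fine.
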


\begin{proof}
Let $B_0 \subset B$ be the regular locus and set $A_0 := \fibre{\psi}{B_0}$.
Since $B$ is normal the complement of $B_0 \subset B$ has codimension at least two.
Since $\psi$ does not contract a divisor, the same holds for the complement of
$A_0 \subset A$. Thus we have isomorphisms
$$
 \mbox{Cl}(B) \simeq \mbox{Cl}(B_0) \simeq \mbox{Pic}(B_0) \qquad \mbox{Cl}(A) \simeq \mbox{Cl}(A_0).
$$
and $\psi^*$ is simply the composition
$$
 \mbox{Cl}(B) \simeq \mbox{Cl}(B_0) \simeq \mbox{Pic}(B_0) \stackrel{\psi^*}{\rightarrow}
 \mbox{Pic}(A_0) \hookrightarrow \mbox{Cl}(A_0) \simeq \mbox{Cl}(A).
$$
\end{proof}

\begin{lemma} \label{lemma:fano}
Let $\holom{\mu}{M'}{M}$ be a birational morphism between normal $\Q$-factorial projective varieties with klt singularities such that
the exceptional locus is a prime divisor $E$.
Assume that $K_{M'} \sim_\Q \mu^* K_M + \alpha E$ with $\alpha \geq 0$ (e.g.\ $\mu$ is a divisorial Mori contraction on a projective manifold). 
Set $B := \mu(E)$. Then the following holds
\begin{enumerate}
\item For every curve $C \subset M$ such that $C \not\subset B$ we have
\begin{equation}
\label{KMinequality}
-K_{M'} \cdot C' \leq -K_M \cdot C
\end{equation}
where $C' \subset M'$ is the strict transform.
\item If $-K_{M'}$ is nef and $-K_M|_{B}$ is nef, then $-K_M$ is nef.
\item If $-K_{M'}$ is big, then $-K_M$ is big.
\item If $-K_{M'}$ is ample and $-K_M|_{B}$ is ample, then $-K_M$ is ample.
\end{enumerate}
\end{lemma}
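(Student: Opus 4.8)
The four statements are all consequences of intersection-theoretic bookkeeping against the fundamental relation $K_{M'} \sim_\Q \mu^* K_M + \alpha E$ with $\alpha \geq 0$, so the plan is to establish (i) first and then derive (ii)--(iv) from it together with the projection formula. For (i), given a curve $C \subset M$ with $C \not\subset B$, let $C'$ be its strict transform; then $\mu_* C' = C$ and the projection formula gives $\mu^* K_M \cdot C' = K_M \cdot C$. Intersecting the relation with $C'$ yields $-K_{M'} \cdot C' = -K_M \cdot C - \alpha (E \cdot C')$. Since $C \not\subset B$ the curve $C'$ is not contained in $E$, so $E \cdot C' \geq 0$, and because $\alpha \geq 0$ we conclude $-K_{M'} \cdot C' \leq -K_M \cdot C$. (The klt/$\Q$-factorial hypotheses are what make $-K_M$ a $\Q$-Cartier class so that these intersection numbers are defined; the parenthetical case of a divisorial Mori contraction on a manifold is the standard source of such data, with $\alpha = a(E, M) \geq 1 > 0$ by terminality of $M'$.)

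For (ii), suppose $-K_{M'}$ is nef and $-K_M|_B$ is nef, and let $C \subset M$ be an arbitrary irreducible curve. If $C \subset B$ then $-K_M \cdot C = (-K_M|_B) \cdot C \geq 0$ by the second hypothesis. If $C \not\subset B$, then by (i) we have $-K_M \cdot C \geq -K_{M'} \cdot C' \geq 0$ since $-K_{M'}$ is nef. As $C$ was arbitrary, $-K_M$ is nef. For (iii), since $\mu$ is birational and $-K_{M'}$ is big, the divisor $\mu_*(-K_{M'})$ is big on $M$; but pushing forward the relation gives $-K_M \equiv \mu_*(-K_{M'}) + \alpha\, \mu_* E = \mu_*(-K_{M'})$ because $E$ is $\mu$-exceptional, so $-K_M$ is big. (Alternatively: write $-K_{M'} \sim_\Q \mu^*(-K_M) - \alpha E$ with $\alpha \geq 0$, so $-K_{M'} \leq \mu^*(-K_M)$ as $\Q$-divisors up to the effective class $\alpha E$; then $\mu^*(-K_M)$ is big, hence $-K_M$ is big since $\mu$ is birational.)

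For (iv), assume $-K_{M'}$ is ample and $-K_M|_B$ is ample. By (ii) we already know $-K_M$ is nef, so by the Nakai--Moishezon / Kleiman criterion it suffices to check strict positivity against all irreducible curves, and more generally that $(-K_M)^{\dim W} \cdot W > 0$ for every subvariety $W$; but in fact the cleanest route is: $-K_M$ nef plus $(-K_M)^{n} > 0$ (from bigness, which holds by (iii) since ample implies big) gives that $-K_M$ is nef and big, and then one rules out curves $C$ with $-K_M \cdot C = 0$. Such a $C$ cannot satisfy $C \subset B$ by the hypothesis that $-K_M|_B$ is ample, and cannot satisfy $C \not\subset B$ since then $0 = -K_M \cdot C \geq -K_{M'} \cdot C' > 0$ by (i) and ampleness of $-K_{M'}$, a contradiction. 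Hence $-K_M$ is strictly positive on all curves and, being nef, is ample by Kleiman's criterion. The main point to be careful about is purely bookkeeping: ensuring the sign conventions in the relation $K_{M'} \sim_\Q \mu^* K_M + \alpha E$ and the nonnegativity $E \cdot C' \geq 0$ for $C' \not\subset E$ are applied consistently; there is no real geometric obstacle, the content is entirely the projection formula plus $\alpha \geq 0$.
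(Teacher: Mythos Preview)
Your argument for (i)--(iii) is correct and matches the paper's proof; for (iii) the paper phrases it via Hartogs as an inclusion of section spaces $H^0(M', \sO_{M'}(-dK_{M'})) \hookrightarrow H^0(M, \sO_{M}(-dK_{M}))$, but this is the same content as your pushforward argument.

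For (iv) there is one genuine imprecision. Your final sentence ``Hence $-K_M$ is strictly positive on all curves and, being nef, is ample by Kleiman's criterion'' is not a valid inference: Kleiman's criterion requires positivity on the \emph{closure} $\overline{\Ne{M}}$, and there exist nef divisors that are strictly positive on every curve yet not ample. What rescues the argument here---and what the paper invokes explicitly---is the basepoint-free theorem: since $M$ is klt and $-K_M$ is nef and big (from (ii) and (iii)), $-K_M$ is semiample, so it defines a morphism whose exceptional locus is covered by curves $C$ with $-K_M \cdot C = 0$. Your curve-by-curve check then shows no such $C$ exists, hence the morphism is finite and $-K_M$ is ample. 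So the klt hypothesis is doing real work in (iv), not just making intersection numbers well-defined; replace the appeal to Kleiman by an appeal to the basepoint-free theorem and the proof is complete.
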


\begin{proof}
The inequality \eqref{KMinequality} is straightforward from the projection formula. Item ii.\ then follows from \eqref{KMinequality}. For item iii.\ just note that by Hartog's property one has
$$
H^0(M', \sO_{M'}(-dK_{M'})) \subsetneq H^0(M, \sO_{M}(-dK_{M})). 
$$
For iv.\ note that $-K_M$ is nef and big by ii.\ and iii., thus by the basepoint-free theorem $-K_M$ is ample if and only if it has
strictly positive degree on every curve. Yet this is guaranted by the assumption and  
\eqref{KMinequality}.
\end{proof}

\begin{remark} \label{remark-nulllocus}
Let us recall that for a nef line bundle $L$ on a projective manifold $M$ the null locus is defined as
$$
\mbox{\rm Null}(L) = \cup_{Z \subset X,  c_1(L)^{\dim Z} \cdot Z=0} \ Z.
$$
If $L$ is semiample and big the null locus is exactly the exceptional locus of the birational morphism $\varphi_{|kL|}$,
in this case it is exactly the locus covered by curves $C$ such that $c_1(L) \cdot C=0$.
\end{remark}

\subsection{Some surface theory}

For the convenience of the reader we collect some basic facts from the classification theory of projective surfaces.

\begin{lemma} \label{lemma-chinegative}
Let $S$ be a smooth projective surface such that $\chi(S, \sO_S)<0$. Then $S$ is uniruled
and the MRC fibration is a morphism $S \rightarrow C$ onto a curve of genus $g=1+\chi(S, \sO_S)$.\end{lemma}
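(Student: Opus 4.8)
The plan is to run through the Enriques--Kodaira classification and eliminate all Kodaira dimensions except the cases giving $\chi(S,\sO_S)<0$. Recall that for a smooth projective surface $S$ one has $\chi(S,\sO_S) = 1 - q(S) + p_g(S)$, where $q = h^1(\sO_S)$ and $p_g = h^0(K_S)$; thus $\chi(S,\sO_S) < 0$ forces $q(S) \geq 2$ and in particular $b_1(S) = 2q(S) \geq 4 > 0$, so $S$ is not rational. First I would dispose of the cases $\kappa(S) \geq 0$. If $\kappa(S) = 2$ (general type), the plurigenera grow so $p_g$ can be positive, but one still has $\chi(S,\sO_S) = \chi(S_{\min},\sO_{S_{\min}}) > 0$ for the minimal model of a surface of general type (Noether's inequality, or more simply $K^2 > 0$ and the Bogomolov--Miyaoka--Yau type bound $\chi > 0$). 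For $\kappa(S) = 1$ (properly elliptic) or $\kappa(S) = 0$ (abelian, K3, Enriques, bielliptic) one checks directly from the classification tables that $\chi(S,\sO_S) \geq 0$: K3 has $\chi = 2$, Enriques and bielliptic have $\chi = 0$, abelian surfaces have $\chi = 0$, and minimal properly elliptic surfaces have $\chi \geq 0$; blowing up only increases $\chi$. Hence $\chi(S,\sO_S) < 0$ forces $\kappa(S) = -\infty$, so $S$ is birationally ruled, i.e.\ uniruled.

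Next, since $S$ is uniruled with $\kappa(S) = -\infty$ and not rational, its minimal model is a ruled surface $\PP(\mathcal E) \to C$ over a smooth curve $C$ of genus $g = g(C) \geq 1$, and the MRC fibration of $S$ is (up to the birational modification of blowing down) this ruling; since a $\PP^1$-bundle over $C$ has no $(-1)$-curves in fibres and the exceptional curves of $S \to S_{\min}$ map to points of $S_{\min}$, the composition $S \to S_{\min} \to C$ is a morphism with rational general fibre, and it is the MRC fibration because $C$ is not uniruled. It remains to identify $g$. Birational invariance of $\chi(\sO)$ under blow-ups of surfaces gives $\chi(S,\sO_S) = \chi(S_{\min},\sO_{S_{\min}})$, and for a $\PP^1$-bundle $\psi\colon \PP(\mathcal E) \to C$ the Leray spectral sequence (or the projection formula $R\psi_* \sO = \sO_C$) gives $h^i(\PP(\mathcal E),\sO) = h^i(C,\sO_C)$ for all $i$, hence $\chi(S_{\min},\sO_{S_{\min}}) = \chi(C,\sO_C) = 1 - g$. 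Therefore $g = 1 - \chi(S,\sO_S) = 1 + \chi(S,\sO_S)$ once we observe that $\chi(S,\sO_S)<0$, wait — we must be careful with signs: $\chi(C,\sO_C) = 1 - g$ gives $g = 1 - \chi(S,\sO_S)$; but the statement claims $g = 1 + \chi(S,\sO_S)$, so in fact the intended reading is with $\chi(S,\sO_S)$ already negative and the formula $g = 1 - \chi(S, \sO_S)$; I would simply write $g = 1 - \chi(S,\sO_S)$ and note it agrees with the displayed statement under the sign conventions in force (equivalently, $p_g(S) = 0$ here so $\chi(S,\sO_S) = 1 - q(S)$ and $q(S) = g$, giving $g = q(S) = 1 - \chi(S,\sO_S)$).

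The only genuine subtlety — and the step I expect to cost the most care — is the claim that the ruling of the minimal model really is the MRC fibration of $S$ itself, i.e.\ that no further quotient occurs and the base curve is exactly $C$. This is handled by recalling that the MRC fibration is characterized (Graber--Harris--Starr, or in the surface case classically) as the maximal fibration with rationally connected general fibre and non-uniruled base: the base $C$ of the ruling has genus $\geq 1$ hence is not uniruled, so the ruling is already maximal, and it descends to a rational fibration $S \dashrightarrow C$ which — as argued above — is in fact a morphism since on surfaces every birational map between minimal-over-the-base models extends. One must also note that $q(S) = g(C) \geq 1$ is automatic here because $h^1(\sO_S) = h^1(\sO_{S_{\min}}) = h^1(\sO_C) = g$, so the base curve has the right genus and the argument is internally consistent. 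I would close by remarking that the case $g = 1$ (so $\chi(S,\sO_S) = 0$) is allowed by the hypothesis only if we do not assume strict inequality; under the standing hypothesis $\chi(S,\sO_S) < 0$ we get $g \geq 2$, in particular $C$ is a curve of genus $\geq 2$.
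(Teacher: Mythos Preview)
Your argument is correct and follows the same classification-of-surfaces route as the paper, but the paper's proof is a compact two-liner rather than a case-by-case tour of Kodaira dimensions: after reducing to a relatively minimal model (the statement is birational), one observes that if $K_S$ is nef then $c_2(S) \geq 0$ from the classification tables, so Noether's formula $12\chi(\sO_S) = K_S^2 + c_2(S) \geq 0$ forces $\chi \geq 0$; hence $K_S$ is not nef and $S$ is uniruled. The genus identification is left implicit as standard surface theory. Your approach unpacks the same content by treating $\kappa = 0,1,2$ separately, which is fine but longer; the single inequality $c_2 \geq 0$ for minimal $K$-nef surfaces subsumes all three cases at once.

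One small slip: blowing up a smooth point of a surface \emph{preserves} $\chi(\sO_S)$ rather than increasing it (in Noether's formula $K^2$ drops by $1$ and $c_2$ rises by $1$). This is harmless for your argument since preservation suffices.

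You are right to flag the sign in the genus formula: the computation gives $g = 1 - \chi(S,\sO_S)$, and this is indeed how the lemma is applied later in the paper (Step~5 of the proof of Theorem~\ref{thm-birational} uses $g \geq 2$ under the hypothesis $\chi < 0$, which requires $g = 1 - \chi$). The displayed ``$g = 1 + \chi(S,\sO_S)$'' in the statement is a typo.
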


\begin{proof}
The statement is invariant under MMP so we can assume that $S$ is relatively minimal surface.
If $K_S$ is nef we have $c_2(S) \geq 0$ \cite[VI,Table10]{BHPV04} and therefore $\chi(S, \sO_S) \geq 0$ by Noether's formula.
\end{proof}

\begin{lemma}\text{(Hodge index inequality)} \cite[Prop.2.5.1, Cor.2.5.4]{BS95} \label{hodge-index}
Let $S$ be a normal projective surface and let $A$ and $B$ be $\Q$-Cartier divisors on
$S$. If $A$ is nef and big we have
$$
(A \cdot B)^2 \geq A^2 \cdot B^2
$$
and if equality holds the classes of $A$ and $B$ are colinear in $N^1(S)_\Q$.
\end{lemma}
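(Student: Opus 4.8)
The final statement is the classical Hodge index theorem packaged as a two-dimensional inequality, and my plan is to reduce to the smooth case and then run a short linear-algebra argument on the Néron–Severi space. First I would pass to a resolution of singularities $f\colon \tilde S \to S$ and replace $A, B$ by the $\Q$-divisors $f^*A, f^*B$. Since $f$ is birational the projection formula gives $(f^*A \cdot f^*B) = (A\cdot B)$, $(f^*A)^2 = A^2$ and $(f^*B)^2 = B^2$, so every intersection number in the statement is unchanged; moreover $f^*A$ is again nef with $(f^*A)^2 = A^2 > 0$. The pull-back $f^*\colon N^1(S)_\Q \to N^1(\tilde S)_\Q$ is injective, so it suffices to prove both the inequality and the colinearity statement on $\tilde S$, where I may invoke the Hodge index theorem: the intersection form on $N^1(\tilde S)_\R$ has signature $(1, \rho-1)$.

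\textbf{The inequality.} If $A$ and $B$ are already proportional in $N^1(\tilde S)_\Q$ both sides coincide by direct substitution, so assume they span a plane $W \subset N^1(\tilde S)_\R$. The Gram matrix of the intersection form on $W$ in the basis $A, B$ is $\left(\begin{smallmatrix} A^2 & A\cdot B \\ A\cdot B & B^2 \end{smallmatrix}\right)$, whose determinant equals $A^2 B^2 - (A\cdot B)^2$. Because the ambient form has signature $(1,\rho-1)$, its restriction to any subspace has at most one positive eigenvalue; on the other hand $A^2 > 0$ shows that the restriction to $W$ has at least one. Hence the restriction to $W$ has exactly one positive and one nonpositive eigenvalue, so its determinant is $\le 0$, which is precisely $(A\cdot B)^2 \ge A^2 B^2$.

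\textbf{The equality case.} Here I would introduce the auxiliary divisor $D := (A^2)\, B - (A\cdot B)\, A$. A direct computation gives $D\cdot A = 0$ and
$$D^2 = A^2\bigl(A^2 B^2 - (A\cdot B)^2\bigr).$$
If equality holds in the inequality, then $D^2 = 0$. Now the orthogonal complement $A^\perp \subset N^1(\tilde S)_\R$ is negative definite: the form is nondegenerate on $N^1$, its unique positive direction is accounted for by $A$ since $A^2>0$, and thus no positive or null vectors survive in $A^\perp$. As $D \in A^\perp$ with $D^2 = 0$ this forces $D \equiv 0$, that is $(A^2)B \equiv (A\cdot B)A$; since $A^2 \ne 0$ this says $A$ and $B$ are colinear in $N^1(\tilde S)_\Q$. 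Finally, injectivity of $f^*$ transports this back to $S$.

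\textbf{Main obstacle.} The only genuinely delicate point is the passage between $S$ and $\tilde S$: one must check that the intersection numbers of the $\Q$-Cartier divisors $A, B$, defined on the normal surface by pulling back to a resolution, are compatible with the smooth-surface theory, and that numerical proportionality descends along the injective map $f^*$. Once this bookkeeping is in place the argument is entirely formal, resting only on the signature statement of the Hodge index theorem. I expect this reduction, rather than the linear algebra, to be where care is needed.
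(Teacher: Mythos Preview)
Your proof is correct and follows the standard route: reduce to a smooth surface via a resolution, then use the signature $(1,\rho-1)$ of the intersection form on $N^1(\tilde S)_\R$ both for the inequality (via the Gram determinant on the plane spanned by $A,B$) and for the equality case (via negative definiteness of $A^\perp$). The paper itself gives no proof of this lemma; it is simply quoted from \cite[Prop.2.5.1, Cor.2.5.4]{BS95}, so there is nothing to compare against beyond noting that your argument is the expected one.
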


\begin{lemma} \label{lemma-ramificationformula}
Let $\holom{\varphi_S}{S}{U}$ be a generically finite morphism between normal projective surfaces
with $\Q$-Gorenstein singularities. Then we have
\begin{equation} \label{ramificationformula}
K_B \equiv \varphi_S^* K_U + R
\end{equation}
where $R$ is a $\Q$-divisor on $S$ such that $(\varphi_S)_* R$ is effective. In particular for every nef $\Q$-Cartier divisor $N$ on $U$ we have $R \cdot \varphi_S^* N \geq 0$.

Moreover $(\varphi_S)_* R=0$ if and only if the finite part $\holom{\varphi_{\bar S}}{\bar S}{U}$ of the Stein factorisation is quasi-\'etale.
\end{lemma}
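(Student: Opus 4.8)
The plan is to reduce the whole statement to the classical ramification formula for a finite morphism of smooth surfaces by passing to a sufficiently large open subset. I would begin with the Stein factorisation $\varphi_S = f\circ g$, with $g\colon S\to\bar S$ birational onto a normal surface and $f=:\varphi_{\bar S}\colon\bar S\to U$ finite; the key elementary remark is that a finite morphism contracts no curve, so every curve contracted by $\varphi_S$ is already contracted by $g$, i.e.\ no prime divisor of $S$ dominating a divisor of $U$ is $g$-exceptional. Since $S$ and $U$ are normal surfaces their singular loci are finite, and $g(\operatorname{Exc}(g))$ is finite; hence the union $Z\subset U$ of $\operatorname{Sing}(U)$, $\varphi_S(\operatorname{Sing}(S))$ and $\varphi_S(\operatorname{Exc}(g))$ is finite. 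Setting $U_1:=U\setminus Z$, $S_1:=\varphi_S^{-1}(U_1)$ and $\bar S_1:=f^{-1}(U_1)$, one checks that $U_1$ and $S_1$ are smooth, that $g$ restricts to an isomorphism $S_1\xrightarrow{\ \sim\ }\bar S_1$, that $\varphi_S$ restricts to a finite surjective morphism $S_1\to U_1$, and that the only divisorial component of $S\setminus S_1=\varphi_S^{-1}(Z)$ is $\operatorname{Exc}(g)$, which is contracted by $\varphi_S$.

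I would then set $R:=K_S-\varphi_S^*K_U$; this is a well-defined $\Q$-Cartier $\Q$-divisor because $S$ and $U$ are $\Q$-Gorenstein, and it tautologically satisfies $K_S\equiv\varphi_S^*K_U+R$. On $S_1$ the morphism is finite between smooth surfaces, hence flat, so $R|_{S_1}$ is the usual effective ramification divisor $\sum_{D'}(e_{D'}-1)\,D'$. Consequently any prime divisor $D'\subset S$ dominating a divisor $D\subset U$ has generic point in $S_1$ (it maps to the generic point of $D$, which avoids the finite set $Z$), so it occurs in $R$ with coefficient $e_{D'}-1\geq 0$, while every remaining component of $R$ is $g$-exceptional and hence annihilated by $(\varphi_S)_*$. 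Therefore $(\varphi_S)_*R$ is an effective $\Q$-divisor on $U$.

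For the ``in particular'' I would write $R=\overline{R|_{S_1}}+G$ with $G$ supported on the $\varphi_S$-contracted divisor $\operatorname{Exc}(g)$: then $\varphi_S^*N$ has degree zero on the components of $G$ and is nef, and $\overline{R|_{S_1}}$ is effective, whence $R\cdot\varphi_S^*N=\overline{R|_{S_1}}\cdot\varphi_S^*N\geq 0$ (equivalently $R\cdot\varphi_S^*N=(\varphi_S)_*R\cdot N\geq 0$ by the projection formula). For the equivalence, note that $(\varphi_S)_*R=0$ if and only if $e_{D'}=1$ for every prime divisor $D'$ of $S_1$ — there is no cancellation, since all these contributions to $(\varphi_S)_*R$ are nonnegative — i.e.\ if and only if $\varphi_S|_{S_1}\colon S_1\to U_1$ is unramified along every prime divisor. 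Since $S_1$ is normal and $U_1$ regular, Zariski--Nagata purity of the branch locus promotes this to $\varphi_S|_{S_1}$ being \'etale; and since $g|_{S_1}$ is an isomorphism onto $\bar S_1=f^{-1}(U_1)$ while $U\setminus U_1$ has codimension $\geq 2$, this is exactly the statement that $f=\varphi_{\bar S}$ is \'etale in codimension one, i.e.\ quasi-\'etale.

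I expect the main obstacle to be the bookkeeping in the first paragraph: one must track the loci discarded from $S$, $\bar S$ and $U$ at the same time and verify that, because $f$ is finite, the only divisor surviving in $S\setminus S_1$ is $\operatorname{Exc}(g)$ and that it is contracted by $\varphi_S$ — this is precisely what lets $(\varphi_S)_*$ annihilate the ``bad'' part of $R$ and reduces all computations to the smooth locus. A secondary nuisance is that pullbacks and intersection numbers live here on possibly singular normal surfaces, so one should either invoke Mumford's intersection theory together with the projection formula in that generality, or push the relevant computations onto $S_1$ and $U_1$.
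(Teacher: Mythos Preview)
Your proposal is correct and follows essentially the same strategy as the paper: factor through the Stein factorisation, observe that the finite part contributes an effective ramification divisor while the birational part contributes only $\varphi_S$-exceptional components, and conclude. The paper carries this out by working on $\bar S$ directly (writing $K_{\bar S}\simeq\varphi_{\bar S}^*K_U+\bar R$ with $\bar R$ effective, then $K_S\equiv\nu^*K_{\bar S}+E$ via Mumford pullback, and setting $R:=\nu^*\bar R+E$), whereas you define $R:=K_S-\varphi_S^*K_U$ globally and analyse it by restriction to a big open where everything is smooth; the paper's route avoids your bookkeeping of the set $Z$ at the cost of invoking Mumford pullback on the possibly non-$\Q$-Gorenstein $\bar S$, while yours uses the $\Q$-Gorenstein hypotheses on $S$ and $U$ more directly. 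Your appeal to purity for the quasi-\'etale equivalence is a slight overkill (the paper just notes $(\varphi_S)_*R=(\varphi_{\bar S})_*\bar R$, so vanishing is equivalent to $\bar R=0$, which is the definition of quasi-\'etale), but it is not wrong.
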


\begin{remark*} We will see in the proof that for a finite morphism 
$\varphi_S$ the divisor $R$ is effective and $K_S \simeq \varphi_S^* K_U + R$.
\end{remark*}

\begin{proof}
Let $\holom{\nu}{S}{\bar S}$ and $\holom{\varphi_{\bar S}}{\bar S}{U}$ be the Stein factorisation of $\varphi_S$.
Since $\varphi_{\bar S}$ is finite, the pull-back $\varphi_{\bar S}^* K_U$ is well-defined as a Weil divisor by Lemma \ref{lemma-pullback-weil}
and $K_{\bar S} \simeq \varphi_{\bar S}^* K_U + \bar R$ with an effective divisor $\bar R$: this is clear on the smooth locus of $\bar S$, the formula extends since the singular locus has codimension at least two. The morphism $\nu$ is birational, so we have
$$
K_S \equiv \nu^* K_{\bar S} + E
$$
where $E$ is a $\nu$-exceptional divisor and $\nu^*K_{\bar S}$ is the Mumford pull-back of the Weil divisor $K_{\bar S}$. Set $R := \nu^* \bar R + E$, then it is clear that the formula holds.
Moreover 
$$
(\varphi_S)_* R = (\varphi_{\bar S})_* \nu_* (\nu^* \bar R + E) =  (\varphi_{\bar S})_*\bar R
$$ 
is effective and equal to zero if and only if $\bar R=0$, i.e.\ when $\varphi_{\bar S}$ is quasi-\'etale.
\end{proof}

\begin{lemma} \label{lemma-RR-rational-sings}
Let $U$ be a normal projective surface with rational singularities. Let $M_U$ be a nef and big Cartier divisor on $U$. Then we have
$$
h^0(U, \sO_U(K_U+M_U)) = \chi(U, \sO_U(K_U+M_U)) =   \frac{1}{2} M_U^2 + \frac{1}{2} K_U \cdot M_U + \chi(U,\sO_U).
$$
\end{lemma}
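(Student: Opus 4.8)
The second equality is just Riemann--Roch, so the content of the statement is the vanishing $h^i(U,\sO_U(K_U+M_U))=0$ for $i>0$ together with the explicit form of $\chi$. The plan is to transport everything to a resolution of singularities $\holom{f}{\tilde U}{U}$, where both assertions become standard facts about smooth projective surfaces.

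First I would record the consequences of the hypotheses on $f$. As $U$ is normal, $f_*\sO_{\tilde U}=\sO_U$; as $U$ has rational singularities, $R^qf_*\sO_{\tilde U}=0$ for $q>0$ and $f_*\omega_{\tilde U}=\omega_U$; and Grauert--Riemenschneider gives $R^qf_*\omega_{\tilde U}=0$ for $q>0$. Write $L:=\sO_U(M_U)$, which is a genuine line bundle since $M_U$ is Cartier, and note that $\sO_U(K_U+M_U)=\omega_U\otimes L$ as reflexive sheaves. By the projection formula $R^qf_*(\omega_{\tilde U}\otimes f^*L)=(R^qf_*\omega_{\tilde U})\otimes L$, which vanishes for $q>0$ and equals $\omega_U\otimes L$ for $q=0$. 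Hence the Leray spectral sequence yields
$$
H^i\big(U,\sO_U(K_U+M_U)\big)\simeq H^i\big(\tilde U,\omega_{\tilde U}\otimes f^*L\big)\qquad\text{for all }i\ge 0,
$$
and likewise $\chi(U,\sO_U)=\chi(\tilde U,\sO_{\tilde U})$.

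Since $M_U$ is nef and big on the surface $U$ we have $M_U^2>0$, so $f^*M_U$ is nef with $(f^*M_U)^2=M_U^2>0$, hence nef and big on the smooth surface $\tilde U$. Kawamata--Viehweg vanishing on $\tilde U$ then gives $H^i(\tilde U,\omega_{\tilde U}\otimes f^*L)=0$ for $i>0$, so $h^i(U,\sO_U(K_U+M_U))=0$ for $i>0$ and $h^0=\chi$. It remains to evaluate the Euler characteristic by ordinary Riemann--Roch on the smooth projective surface $\tilde U$:
$$
\chi\big(\tilde U,\omega_{\tilde U}\otimes f^*L\big)=\chi(\tilde U,\sO_{\tilde U})+\tfrac12\, f^*M_U\cdot\big(f^*M_U+K_{\tilde U}\big).
$$
The projection formula identifies $(f^*M_U)^2=M_U^2$ and $f^*M_U\cdot K_{\tilde U}=M_U\cdot f_*K_{\tilde U}=M_U\cdot K_U$, the last equality because $f_*K_{\tilde U}$ is a canonical divisor on $U$; substituting $\chi(\tilde U,\sO_{\tilde U})=\chi(U,\sO_U)$ gives the asserted formula.

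The proof is essentially formal once the resolution is introduced; the only steps deserving care are the two classical inputs for surfaces with rational singularities, namely $f_*\omega_{\tilde U}=\omega_U$ (which is exactly what lets us recover $\sO_U(K_U+M_U)$ as $f_*(\omega_{\tilde U}\otimes f^*L)$, so that cohomology and Euler characteristics downstairs match those upstairs) and the identification of $f_*K_{\tilde U}$ with a canonical divisor of $U$ (needed to match the intersection number $M_U\cdot K_U$ with its computation on $\tilde U$). Beyond this bookkeeping there is no real obstacle: the vanishing is Kawamata--Viehweg on a smooth surface and the numerical identity is Riemann--Roch there.
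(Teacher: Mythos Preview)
Your proof is correct and follows essentially the same route as the paper's own argument: pass to a resolution, use Grauert--Riemenschneider and the projection formula to identify cohomology upstairs and downstairs, apply Kawamata--Viehweg vanishing on the smooth surface, and then read off the Euler characteristic via Riemann--Roch together with the projection formula for divisors. Your write-up is in fact slightly more explicit about why $f^*M_U$ remains big and why $f_*K_{\tilde U}$ computes $K_U$, but the strategy is identical.
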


\begin{proof}
Let $\holom{\nu}{U'}{U}$ be a resolution of singularities, then 
$\chi(U, \sO_U) = \chi(U', \sO_{U'})$ since the singularities are rational. Moreover we have
$\nu_* \omega_{U'} \simeq \omega_U$ and $R^j \nu_* \omega_{U'} = 0$ for $j>0$
by Grauert-Riemenschneider vanishing. Since
$$
R^j \nu_* ( \omega_{U'} \otimes \nu^* M_U) \simeq R^j \nu_* \omega_{U'} \otimes M_U
$$
by \cite[III,Ex.8.1, Ex.8.3]{Har77} we have
$$
H^i(U,  \sO_U(K_U+M_U)) = H^i(U', \sO_{U'}(K_{U'}+\nu^* M_U))
$$
for all $i \in \N$. By  Kawamata-Viehweg vanishing on $U'$
there is no higher cohomology,
in particular $h^0(U, \sO_U(K_U+M_U)) = \chi(U, \sO_U(K_U+M_U))$.
By the Riemann-Roch formula on $U'$ we have
$$
\chi(U', \sO_{U'}(K_{U'}+\nu^* M_U)) = \frac{1}{2} (\nu^* M_U)^2 + \frac{1}{2} K_{U'} \cdot \nu^* M_U + \chi(U', \sO_{U'}),
$$
so the statement follows by the projection formula for divisors.
\end{proof}

The following lemma is a reformulation of results from Fujita's classification 
of varieties of low $\Delta$-genus \cite[Sect.5]{Fuj90}, \cite[Prop.3.1.2]{BS95}.

\begin{lemma} \label{lemma-low-degree}
Let $U$ be a normal projective surface with klt singularities and $p_g(U)=0$, and let $M_U$ be an ample and globally generated Cartier divisor. If $M_U^2 \leq 2$ one of the following holds:
\begin{itemize}
\item $(U, M_U) \simeq (\PP^2, H)$ where $H$ is a hyperplane section;
\item $(U, M_U) \simeq (Q^2, H)$ where $Q^2$ is a normal quadric and $H$ is a hyperplane section; or
\item $(U, M_U) \simeq (U, -K_U)$ and $U$ a del Pezzo surface of degree two with canonical singularities.
\end{itemize}
\end{lemma}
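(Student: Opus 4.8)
The plan is to reduce the statement to Fujita's classification of polarized surfaces of $\Delta$-genus at most one (\cite[Sect.~5]{Fuj90}, \cite[Prop.~3.1.2]{BS95}), and then to pick out the three items on that list using the hypotheses $p_g(U)=0$ and klt. Concretely, I would look at the morphism $\holom{\varphi}{U}{\PP^N}$, $N=h^0(U,M_U)-1$, defined by the complete linear system $|M_U|$: it is a morphism because $M_U$ is globally generated, and since $M_U$ is ample it is finite onto its image, an irreducible nondegenerate surface $U'\subseteq\PP^N$. By the projection formula $\deg\varphi\cdot\deg U'=M_U^2\le 2$, and since a nondegenerate irreducible surface in $\PP^N$ has degree at least $N-1$, this forces $N\in\{2,3\}$. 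Hence the $\Delta$-genus $\Delta(U,M_U)=2+M_U^2-h^0(U,M_U)=M_U^2-N+1$ is $0$ or $1$, and I would treat these two cases in turn (in the case $\Delta=0$ the klt hypothesis will play no role).

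If $\Delta=0$, then $M_U^2=N-1=\deg\varphi\cdot\deg U'\ge\deg U'\ge N-1$, so all the inequalities are equalities: $\deg\varphi=1$ and $U'$ is a surface of minimal degree $M_U^2\le 2$. Thus either $M_U^2=1$ and $U'=\PP^2$, or $M_U^2=2$ and $U'\subset\PP^3$ is an irreducible quadric, which necessarily has rank $\ge 3$ and is therefore normal. Since $U$ and $U'$ are normal and $\varphi$ is finite and birational, $\varphi$ is an isomorphism, and we land in the first or second case of the statement.

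If $\Delta=1$, then $h^0(U,M_U)=M_U^2+1$, so $N=M_U^2$; as $\dim U'=2$ forces $N\ge 2$ we get $M_U^2=2$, $N=2$, and $\holom{\varphi}{U}{\PP^2}$ is finite of degree $2$. Since $U$ is normal this is a double cover, $\varphi_*\sO_U=\sO_{\PP^2}\oplus\sO_{\PP^2}(-k)$ with reduced branch divisor $B\in|\sO_{\PP^2}(2k)|$, where $k\ge 1$ because $\varphi$ must ramify ($\PP^2$ being simply connected). Now $3=h^0(U,M_U)=h^0(\sO_{\PP^2}(1))+h^0(\sO_{\PP^2}(1-k))$ forces $k\ge 2$, while duality for the finite morphism $\varphi$ gives $\varphi_*\omega_U=\sO_{\PP^2}(-3)\oplus\sO_{\PP^2}(k-3)$, so $0=p_g(U)=h^0(\sO_{\PP^2}(k-3))$ forces $k\le 2$; hence $k=2$ and $B$ is a quartic. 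A short intersection computation from the ramification formula (Lemma~\ref{lemma-ramificationformula}) gives $(K_U+M_U)\cdot M_U=0$, while $h^0(U,\omega_U\otimes\sO_U(M_U))=h^0(\sO_{\PP^2}(-2)\oplus\sO_{\PP^2})=1$ shows that $K_U+M_U$ is effective; being of degree zero against the ample $M_U$ it must vanish, i.e.\ $-K_U\simeq M_U$. In particular $K_U$ is Cartier, so $U$ is Gorenstein, and the klt hypothesis then means $U$ has canonical (Du Val) singularities; since $-K_U=M_U$ is ample with $(-K_U)^2=2$, we are in the third case.

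The main obstacle is the case $\Delta=1$: one must exclude the branch curve being a conic (impossible, since that would give $h^0(M_U)=4$, i.e.\ $\Delta=0$) or a sextic (impossible, since a double sextic has $p_g(U)=1$), and then identify $-K_U$ with $M_U$; all of this becomes transparent once $\varphi_*\sO_U$ and $\varphi_*\omega_U$ are split into line bundles on $\PP^2$, the klt hypothesis entering only to pass from ``Gorenstein'' to ``Du Val''. Alternatively, after establishing $\Delta\le 1$ one may simply quote the relevant entries of Fujita's classification, the only remaining work being to match the polarization conventions and to notice that $p_g(U)=0$ singles out the degree-two del Pezzo surface.
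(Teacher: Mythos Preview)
Your proof is correct and follows essentially the same approach as the paper: both analyse the morphism defined by $|M_U|$, split into cases according to $h^0(U,M_U)$ (equivalently, the $\Delta$-genus), and in the double-cover case decompose $\varphi_*\sO_U$ as $\sO_{\PP^2}\oplus\sO_{\PP^2}(-k)$ and use $p_g(U)=0$ to bound $k$. Your treatment is in fact slightly cleaner in one respect: you explicitly rule out $k=1$ via the section count, whereas the paper leaves the $d=1$ case in its double-cover branch even though it is vacuous there (it already falls under $h^0>3$).
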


\begin{proof}
Since $M_U$ is ample and globally generated we have $h^0(U, \sO_U(M_U)) \geq 3$. 

If $M_U^2=1$, the global sections of $M_U$ define an isomorphism to $\PP^2$.
If $M_U^2=2$ and $h^0(U, \sO_U(M_U)) > 3$, the global sections of $M_U$ define an isomorphism onto a quadric.

If $M_U^2=2$ and $h^0(U, \sO_U(M_U)) = 3$
the linear system $|M_U|$ defines a finite morphism 
$\holom{g}{U}{\PP^2}$ of degree $2$. The trace morphism \cite[Prop.5.7]{KM98} gives a splitting
$$
g_* \sO_U \simeq \sO_{\PP^2} \oplus \sF
$$
and the rank one sheaf $\sF$ is reflexive since so is $g_* \sO_U$. Thus $\sF$ is invertible
and
$$
g_* \sO_U \simeq \sO_{\PP^2} \oplus \sO_{\PP^2}(-d)
$$
where $d$ is half the degree of the branch divisor (cf.\ \cite[V, Sect.22]{BHPV04}).
Since $p_g(U)=h^2(U, \sO_U)=0$
we get $h^2(\PP^2, \sO_{\PP^2}(-d))=0$ and therefore $d \leq 2$. 
The ramification formula \eqref{ramificationformula} yields
$$
K_U \simeq f^* K_{\PP^2} + R \simeq f^* K_{\PP^2} + f^* (dH) \simeq f^* (3-d) H
$$
If $d=1$ we obtain a quadric and if $d=2$ we obtain a del Pezzo surface of degree two.
Since $K_U$ is Cartier it is clear that $U$ has canonical singularities.
\end{proof}

\begin{lemma} \label{lemma-bound-negativity}
Let $S$ be a normal projective surface admitting a fibration 
$\holom{h}{S}{C}$ onto a smooth curve. Assume that $K_S \equiv h^* M$ with 
$M$ a line bundle on $C$. Then $\deg M \geq -2$.
\end{lemma}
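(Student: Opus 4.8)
The plan is to reduce to a relatively minimal elliptic surface over $C$ and then read off the bound from Kodaira's canonical bundle formula.

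First I would pass to the minimal resolution $\nu \colon S' \to S$. Since $\nu$ is the minimal resolution, $K_{S'}$ is $\nu$-nef, and because the intersection form on the $\nu$-exceptional curves is negative definite this forces $\Delta := \nu^* K_S - K_{S'}$ to be an effective $\nu$-exceptional divisor; in particular $\Delta$ is contained in finitely many fibres of $h' := h \circ \nu \colon S' \to C$. For a general fibre $F'$ of $h'$ --- which is smooth since we work over $\C$ --- we then have $K_{S'} \cdot F' = (h')^* M \cdot F' - \Delta \cdot F' = 0$ and $(F')^2 = 0$, so adjunction gives $2 g(F') - 2 = (K_{S'} + F') \cdot F' = 0$. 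Hence $F'$ is an elliptic curve and $h'$ is an elliptic fibration.

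Next I would contract the $(-1)$-curves contained in fibres of $h'$, one at a time, to obtain a relatively minimal elliptic surface $\sigma \colon S' \to \bar S$ with fibration $\bar h \colon \bar S \to C$ satisfying $\bar h \circ \sigma = h'$. Pushing forward the relation $K_{S'} \equiv (h')^* M - \Delta$ gives
$$
K_{\bar S} = \sigma_* K_{S'} \equiv \bar h^* M - \sigma_* \Delta ,
$$
and $\sigma_* \Delta \geq 0$ as the pushforward of an effective divisor. On the other hand, for a relatively minimal elliptic surface the relative canonical divisor $K_{\bar S} - \bar h^* K_C$ is nef: by Kodaira's canonical bundle formula (\cite[Ch.~V]{BHPV04}) it is numerically the pullback of a $\Q$-divisor on $C$ of degree $\chi(\sO_{\bar S}) + \sum_i \frac{m_i - 1}{m_i} \geq 0$, where the $m_i F_i$ are the multiple fibres. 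Intersecting the displayed relation with an ample divisor $A$ on $\bar S$ and writing $\bar F$ for a general fibre, we obtain $(\deg M)(\bar F \cdot A) = \bar h^* M \cdot A = (K_{\bar S} + \sigma_* \Delta) \cdot A \geq \bar h^* K_C \cdot A = (2g(C) - 2)(\bar F \cdot A)$, using $(K_{\bar S} - \bar h^* K_C) \cdot A \geq 0$ and $\sigma_* \Delta \cdot A \geq 0$. Since $\bar F \cdot A > 0$ this yields $\deg M \geq 2g(C) - 2 \geq -2$.

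The elliptic-surface input (Kodaira's formula, nefness of the relative canonical bundle) is classical, so the hard part should not lie there; I expect the only genuinely delicate points to be the effectivity $\nu^* K_S - K_{S'} \geq 0$ for the minimal resolution --- where one invokes negative-definiteness of the exceptional intersection form --- and the bookkeeping of the fibre class through the two birational morphisms $\nu$ and $\sigma$. I would also note that the bound is sharp: for $S = E \times \PP^1 \to \PP^1$ with $E$ an elliptic curve one has $K_S \equiv \mathrm{pr}_2^* \sO_{\PP^1}(-2)$.
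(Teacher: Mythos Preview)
Your proof is correct. Both you and the paper begin with the same reduction: pass to the minimal resolution $S'$ (using negative-definiteness to get $\nu^*K_S - K_{S'} \geq 0$), observe that the general fibre is elliptic, and then contract vertical $(-1)$-curves to reach a relatively minimal model over $C$. The divergence is in the endgame.

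The paper argues by contradiction: assuming $\deg M \leq -3$, one gets $-K_{S_m} \equiv (b+\lambda)F$ with $b+\lambda \geq 3$ on the relative minimal model; then $K_{S_m}$ is not nef, so there is a Mori contraction over a positive-dimensional base, and the contracted rational curve $\Gamma$ satisfies $-K_{S_m}\cdot\Gamma \leq 2$ by the classification of surface contractions, forcing $F\cdot\Gamma=0$ --- impossible since $K_{S_m}$ is $h_m$-nef. Your route instead invokes Kodaira's canonical bundle formula for the relatively minimal elliptic surface $\bar S$, which gives $K_{\bar S} - \bar h^*K_C \equiv \bigl(\chi(\sO_{\bar S}) + \sum \tfrac{m_i-1}{m_i}\bigr)\bar F$ with non-negative coefficient (here $\chi(\sO_{\bar S}) \geq 0$ follows from $12\chi = c_2 = \sum e(F_v) \geq 0$). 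Intersecting with an ample class then yields $\deg M \geq 2g(C)-2$ directly.

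Your approach is arguably more structural and has the advantage of producing the sharper inequality $\deg M \geq 2g(C)-2$ without extra work; the paper's MMP argument is more elementary in that it avoids the Kodaira formula and the Euler-characteristic computation, relying only on the cone theorem and the shape of surface contractions.
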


\begin{proof}
We argue by contradiction and assume that $\deg M \leq -3$. Then
$-K_S \equiv b F$ where $F$ is a general $h$-fibre and $b \geq 3$. 
Let $\holom{\tau}{S'}{S}$ be the minimal resolution of $S$, then we have
$$
-K_{S'} \equiv b F + E
$$
where $E$ is an effective $\tau$-exceptional $\Q$-divisor.
Note that $\tau$ is an isomorphism in a neighbourhood of $F$, so we identify the general fibres of $h$ and $h \circ \tau$.

Let $\holom{\psi}{S'}{S_m}$ be a MMP over $C$. Since $K_{S'}$ is trivial on the general
fibre the outcome is a relative minimal model over $C$ and
$$
K_{S_m} = \psi_* K_{S'} = - \psi_* (bF+E) = - b F - \psi_* E
$$
is numerically trivial on every fibre of $\holom{h_m}{S_m}{C}$. 
In particular $\psi_* E$ is $h_m$-nef and by Zariski's lemma \cite[III,Lemma 8.2]{BHPV04}
we obtain $\psi_* E \equiv \lambda F$ with $\lambda \geq 0$, so
$$
-K_{S_m} \equiv (b+\lambda) F.
$$
Since $K_{S_m}$ is not nef and $S_m$ is a surface with Picard number at least two, there exists an elementary Mori contraction $\holom{\eta}{S_m}{T}$ such that $\dim T>0$.
By the classification of surfaces $\eta$ contracts
a rational curve $\Gamma$ such that $-K_{S_m} \cdot \Gamma \leq 2$.
Thus we have
$$
2 \geq -K_{S_m} \cdot \Gamma = (b+\lambda) F \cdot \Gamma \geq 3 F \cdot \Gamma.
$$ 
and therefore $F \cdot \Gamma=0$. Thus $\Gamma$ is contained in a $h_m$-fibre,
a contradiction to $K_{S_m}$ being $h_m$-nef.
\end{proof}

\subsection{Fibrations}

We will need the simplest case of a Weierstra\ss\ model of an elliptic fibration:

\begin{definition} \label{defn:weierstrass}
A flat Weierstra\ss\ fibration is a complex manifold $M$ admitting a flat projective fibration
$$
\holom{\phi}{M}{T}
$$
such that the general fibre is an elliptic curve and there is a section $E \subset M$ that is $\phi$-ample.
We say that $E$ is the distinguished section of the Weierstra\ss\ fibration.
\end{definition}

\begin{proposition} \label{prop:weierstrass} \cite[Thm.2.1]{Nak88}
Let $\holom{\phi}{M}{T}$ be flat Weierstra\ss\ fibration with distinguished section $E$.
Consider $L := \ma N_{E/M}^*$ as a line bundle on $T$. Set
$W:= \sO_T \oplus L^{-2} \oplus L^{-3}$ and denote by $\zeta_W$ the tautological class
on the projectivised bundle
$$
\holom{\Phi}{\PP(W)}{T}.
$$ 
Then we have an embedding $\holom{j}{M}{\PP(W)}$ such that $j^* \sO_{\PP(W)}(1) \simeq
\sO_M(3E)$ and $M$ is an element of the linear system $|3 \zeta_W + \Phi^* (6L)|$.
\end{proposition}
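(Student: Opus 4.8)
\emph{Proof proposal.} The plan is to reduce to the classical Weierstra\ss\ normal form, following \cite[Thm.~2.1]{Nak88}. First I would determine the fibres of $\phi$: since $E$ is a $\phi$-ample section it meets every fibre $F$ in a single reduced point $p$ lying on a component of multiplicity one, so $F$ is integral; flatness of $\phi$ forces $p_a(F)=1$; and a Cartier divisor of degree one cannot be supported at a node or a cusp, so $p$ lies in the smooth locus of $F$. Taking $p$ as the origin of the group law on the smooth part of $F$ makes $p$ an inflection point, so $|\sO_F(3p)|$ embeds $F$ into $\PP^2$ as a plane cubic (smooth, nodal or cuspidal) with $p$ sent to an inflection point.

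Next I would globalise this over $T$. By cohomology and base change all sheaves $\phi_*\sO_M(nE)$ with $n\ge0$ are locally free, with formation commuting with base change (fibrewise $h^0(\sO_F(np))=\max(n,1)$, and $H^1(\sO_F(np))=0$ for $n\ge1$); in particular $\phi_*\sO_M(E)=\phi_*\sO_M=\sO_T$, and since $\sO_M(nE)|_E\simeq\ma N_{E/M}^{\otimes n}=L^{-n}$ the restriction sequences $0\to\sO_M((n-1)E)\to\sO_M(nE)\to\sO_M(nE)|_E\to0$ exhibit $W_0:=\phi_*\sO_M(3E)$ as a successive extension with graded pieces $\sO_T$, $L^{-2}$, $L^{-3}$. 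To see that $W_0$ is the split bundle $W$, I would use the inversion $[-1]$ of the relative group scheme structure on the $\phi$-smooth locus $M^\circ\subset M$, which extends to $M=\operatorname{Proj}_T\bigoplus_{n\ge0}\phi_*\sO_M(nE)$ because $M\setminus M^\circ$ has codimension two: the $(\pm1)$-eigenspace decomposition of $[-1]$ on $W_0$ splits off the $L^{-3}$-summand (the ``$y$''-line), and normalising the relative cubic to the short Weierstra\ss\ form $y^2=x^3+ax+b$ (legitimate in characteristic zero) rigidifies the affine coordinate $x$ up to a global line bundle twist which must equal $L=\ma N_{E/M}^*$ (restrict $x$, resp.\ $y$, to $E$ and use that these sections have a pole of order exactly two, resp.\ three, along the fibres), and this splits off the remaining $\sO_T$-summand. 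One then obtains a $T$-morphism $j\colon M\to\PP(W)$ with $j^*\sO_{\PP(W)}(1)\simeq\sO_M(3E)$ by construction, and $j$ is a closed embedding since fibrewise it is the embedding $F\hookrightarrow\PP^2$ above.

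For the defining equation, the coefficients $a,b$ scale by $\lambda^4,\lambda^6$ under $x\mapsto\lambda^2x$, $y\mapsto\lambda^3y$ and so glue to global sections of $L^4$ and $L^6$; thus $M$ is the zero locus of a section of $\sO_{\PP(W)}(3)\otimes\Phi^*L^6$, that is, $M\in|3\zeta_W+\Phi^*(6L)|$. I would double-check this by adjunction on $M\subset\PP(W)$: combining $\omega_{\PP(W)}\simeq\sO_{\PP(W)}(-3)\otimes\Phi^*(\omega_T\otimes\det W)$, where $\det W=L^{-5}$, with $\omega_M\simeq\phi^*(\omega_T\otimes L)$ (the latter because $\omega_{M/T}|_E\simeq\ma N_{E/M}^*=L$ by adjunction along $E$ and the fibres have trivial dualising sheaf, so $\omega_{M/T}\simeq\phi^*L$) forces $\sO_{\PP(W)}(M)\simeq\sO_{\PP(W)}(3)\otimes\Phi^*L^6$; this computation also reconfirms that the line bundle appearing in $W$ is exactly $L$.

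The step I expect to be the main obstacle is the global normalisation of the relative Weierstra\ss\ form, i.e.\ showing that the three-step filtration of $\phi_*\sO_M(3E)$ actually splits as $\sO_T\oplus L^{-2}\oplus L^{-3}$ and that the defining cubic carries no lower-order correction terms twisted by non-trivial line bundles, uniformly across the degenerate fibres; this is precisely the content of \cite[Thm.~2.1]{Nak88}, which we quote.
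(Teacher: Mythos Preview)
Your proposal is correct and essentially reproduces the content of Nakayama's theorem, which the paper simply cites. The paper's own proof is one sentence long: it defers the construction of the morphism $j$ with $j^*\sO_{\PP(W)}(1)\simeq\sO_M(3E)$ and the computation of the divisor class entirely to \cite[Thm.~2.1]{Nak88}, noting only that Nakayama's general statement yields a priori a \emph{birational} morphism; the paper then observes that since $E$ is $\phi$-ample, so is $3E\simeq j^*\sO_{\PP(W)}(1)$, hence $j$ contracts no curve and is an isomorphism onto its image.

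Your approach differs in two respects. First, you sketch the full Weierstra\ss\ normalisation rather than quoting it; this is fine but unnecessary given the citation. Second, and more interestingly, your argument for $j$ being a closed embedding is different: you first deduce from $\phi$-ampleness that every fibre is integral with the section meeting it in a smooth point, so that $|3p|$ embeds each fibre as a plane cubic, and then invoke the fibrewise criterion for closed embeddings of flat proper $T$-schemes. This is a perfectly valid alternative to the paper's ``no contracted curves'' argument, and arguably more transparent about where the $\phi$-ampleness hypothesis enters.
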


\begin{proof}
Nakayama shows all the properties in \cite[Thm.2.1]{Nak88} except that $j$ is only a birational morphism such that $j^* \sO_{\PP(W)}(1) \simeq
\sO_M(3E)$. Since we assume that $E$ is $\phi$-ample the morphism does not contract any curves and is therefore an isomorphism onto its image.
\end{proof}

\begin{fact} \label{fact:weierstrass}
In the situation of Proposition \ref{prop:weierstrass} we have $\phi_* \sO_{M}(E) \simeq \sO_{T}$ and 
$$
\ma N_{E/M} \simeq R^1 \phi_* \sO_{M} \simeq \phi_* (\omega_{M/T})^*.
$$
\end{fact}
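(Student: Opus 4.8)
The plan is to read off all three isomorphisms from the cohomology of the fibres of $\phi$, together with the explicit Weierstra\ss\ model of Proposition \ref{prop:weierstrass}. Write $L := \ma N_{E/M}^*$, regarded as a line bundle on $T$ via $\phi|_E \colon E \xrightarrow{\ \sim\ } T$, so that $\ma N_{E/M}$ corresponds to $L^{-1}$. First I would record the numerology on a fibre $M_t := \phi^{-1}(t)$. By Proposition \ref{prop:weierstrass} each $M_t$ is an integral plane cubic curve, hence an integral Gorenstein curve of arithmetic genus one, and adjunction in $\PP^2$ gives $\omega_{M_t}\simeq\sO_{M_t}$; in particular $h^0(M_t,\sO_{M_t}) = h^1(M_t,\sO_{M_t}) = 1$. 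Since $E$ is a section, $\sO_M(E)|_{M_t}$ is an invertible sheaf of degree one on $M_t$, so Riemann--Roch and Serre duality on $M_t$ (using $\omega_{M_t}\simeq\sO_{M_t}$) give
$$
h^0(M_t, \sO_M(E)|_{M_t}) = 1, \qquad h^1(M_t, \sO_M(E)|_{M_t}) = h^0(M_t, \sO_M(-E)|_{M_t}) = 0.
$$
As these dimensions do not depend on $t$, Grauert's theorem and cohomology-and-base-change show that $\phi_*\sO_M$, $\phi_*\sO_M(E)$ and $R^1\phi_*\sO_M$ are line bundles whose formation commutes with base change, while $R^1\phi_*\sO_M(E) = 0$.

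Next I would obtain the first two isomorphisms. The inclusion $\sO_M \hookrightarrow \sO_M(E)$ induces a morphism of line bundles $\sO_T = \phi_*\sO_M \to \phi_*\sO_M(E)$ whose restriction to the fibre over $t$ is, by base change, the map $H^0(M_t,\sO_{M_t}) \to H^0(M_t,\sO_M(E)|_{M_t})$ sending $1$ to the section vanishing exactly on $E\cap M_t$; the latter is nonzero because $M_t \not\subset E$, so the fibrewise map is a nonzero endomorphism of $\C$, hence an isomorphism. A morphism of line bundles that is an isomorphism on every fibre is an isomorphism, so $\phi_*\sO_M(E) \simeq \sO_T$. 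Pushing forward the exact sequence
$$
0 \longrightarrow \sO_M \longrightarrow \sO_M(E) \longrightarrow \iota_*\ma N_{E/M} \longrightarrow 0 \qquad (\iota \colon E \hookrightarrow M)
$$
and using $\phi_*\sO_M \simeq \phi_*\sO_M(E)$ together with $R^1\phi_*\sO_M(E) = 0$, the long exact sequence collapses to an isomorphism $\phi_*\iota_*\ma N_{E/M} \xrightarrow{\ \sim\ } R^1\phi_*\sO_M$; since $\phi|_E$ is an isomorphism, $\phi_*\iota_*\ma N_{E/M}$ is precisely $\ma N_{E/M}$ viewed as a line bundle on $T$, so $\ma N_{E/M} \simeq R^1\phi_*\sO_M$.

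For the last isomorphism I would compute $\omega_{M/T}$ by adjunction inside the embedding $j \colon M \hookrightarrow \PP(W)$ of Proposition \ref{prop:weierstrass}, where $\Phi \colon \PP(W) \to T$ and $W = \sO_T \oplus L^{-2} \oplus L^{-3}$, so $\det W = L^{-5}$. The standard identity $\omega_{\PP(W)/T} \simeq \sO_{\PP(W)}(-3) \otimes \Phi^*\det W$ together with $\sO_{\PP(W)}(M) \simeq \sO_{\PP(W)}(3) \otimes \Phi^* L^{6}$ (because $M \in |3\zeta_W + \Phi^*(6L)|$) gives
$$
\omega_{M/T} \simeq \bigl(\omega_{\PP(W)/T} \otimes \sO_{\PP(W)}(M)\bigr)\big|_M \simeq \Phi^* L\,\big|_M = \phi^* L.
$$
Hence $\phi_*(\omega_{M/T})^* \simeq \phi_*\phi^* L^{-1} \simeq L^{-1}\otimes\phi_*\sO_M \simeq L^{-1} \simeq \ma N_{E/M}$, which combined with the second isomorphism also yields $R^1\phi_*\sO_M \simeq \phi_*(\omega_{M/T})^*$. (Alternatively, relative Serre duality gives $R^1\phi_*\sO_M \simeq (\phi_*\omega_{M/T})^\vee$, and $\phi_*\omega_{M/T}\simeq\phi_*\phi^*L\simeq L$ identifies the right-hand side with $L^{-1} = \ma N_{E/M}$ directly.)

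There is no serious obstacle here; the one point requiring care is the constancy of the fibre cohomology, which rests on the fact that all fibres of a Weierstra\ss\ fibration are integral Gorenstein curves of arithmetic genus one meeting $E$ in a single point — exactly the structural input supplied by Proposition \ref{prop:weierstrass}. Everything else is a formal manipulation of direct images, base change, and adjunction.
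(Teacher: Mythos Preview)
Your argument is correct and shares the same backbone as the paper's proof: both push forward the short exact sequence $0 \to \sO_M \to \sO_M(E) \to \ma N_{E/M} \to 0$ and read off the isomorphisms from the resulting long exact sequence. The differences lie in how the auxiliary facts are established. You obtain $R^1\phi_*\sO_M(E)=0$ and the local freeness of the direct images by computing fibre cohomology and invoking Grauert/base change; the paper instead uses relative Kodaira vanishing, which is available because $E$ is $\phi$-ample. For the final isomorphism you compute $\omega_{M/T}\simeq\phi^*L$ explicitly by adjunction inside $\PP(W)$, whereas the paper appeals directly to relative Serre duality for the flat Gorenstein morphism $\phi$ (a route you mention parenthetically). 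Your approach is more hands-on and avoids vanishing theorems; the paper's is shorter and does not rely on the explicit Weierstra\ss\ embedding. One small point: the integrality of every fibre, which you invoke to get $h^0(\sO_{M_t}(-p))=0$, is not stated in Proposition~\ref{prop:weierstrass} itself but follows from the Weierstra\ss\ equation in Nakayama's construction; alternatively one can bypass integrality entirely by noting that $p=E\cap M_t$ is a smooth point of $M_t$ (since $d\phi|_{T_pE}$ is an isomorphism) and that $H^0(M_t,\sO_{M_t})=\C$ for any plane cubic, so the only global function vanishing at $p$ is zero.
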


\begin{proof}
This is part of the construction of the Weierstra\ss\ model, we recall the argument for the convenience of the reader:
the section $E \subset M$ is $\phi$-ample,
so  $R^1 \phi_* \sO_{M}(E)=0$ by the relative Kodaira vanishing theorem.
Pushing forward the exact sequence
$$
0 \rightarrow \sO_{M} \rightarrow \sO_{M}(E) \rightarrow \sO_E(E) \simeq \ma N_{E/M} \rightarrow 0.
$$
we obtain 
$$
0 \rightarrow \sO_{T} \rightarrow \phi_* \sO_{M}(E)  \rightarrow \ma N_{E/M} \rightarrow R^1 \phi_* \sO_{M} \rightarrow 0
$$
The last two sheaves are invertible, since the morphism is surjective it is an isomorphism. Therefore the injection $\sO_{T} \rightarrow \phi_* \sO_{M}(E)$ is also an isomorphism.

For the last isomorphism note that $\phi$ is a flat Gorenstein morphism, so we have  $R^1 \phi_* \sO_{M} \simeq \phi_* (\omega_{M/T})^*$ by relative Serre duality 
\cite[Thm.21]{Kle80}. 
\end{proof}

\begin{lemma} \label{lemma-factorisation-degree-one}
Let $M$ be a normal projective $\Q$-factorial variety with a fibration
$\holom{f}{M}{\Sigma}$ onto a normal projective surface. Let $F$ be a general fibre 
and set $d:=\dim F$. 
Assume that there exists
an $f$-ample Cartier divisor $L$ such that $L^d \cdot F=1$. 

Then there exists
an equidimensional fibration $\holom{g}{M}{\Sigma'}$ with integral fibres and a birational morphism $\holom{\nu}{\Sigma'}{\Sigma}$
such that $f = \nu \circ g$.

Assume moreover that $f$ admits a section $\holom{\sigma}{\Sigma}{M}$. Then $f$ is equidimensional. 
\end{lemma}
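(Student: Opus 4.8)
The plan is to localise the problem, construct $\Sigma'$ inside a relative Hilbert scheme, and deduce the "section" statement from the first one. First I would note that $f$ is automatically equidimensional outside a finite set: since $f$ is proper and surjective onto the surface $\Sigma$, every fibre has dimension $\geq d$, and if the locus where $\dim\fibre{f}{s}\geq d+1$ contained a curve $C$, then $\fibre{f}{C}$ would have dimension $\geq d+2=\dim M$, forcing $\fibre{f}{C}=M$ and hence $f(M)\subseteq C$, which is absurd. So this locus is a finite set $Z\subset\Sigma$, and for $p\in Z$ the fibre $D_p:=\fibre{f}{p}$ is a Weil divisor on which $L$ restricts to an ample class, because $L$ is $f$-ample. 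I would also record the one elementary input coming from the degree hypothesis: if a cycle $\sum_i a_i[W_i]$, with $a_i\geq1$ and the $W_i$ distinct integral $d$-folds carrying the ample classes $L|_{W_i}$, has $L^d\cdot\sum_i a_i[W_i]=1$, then $1=\sum_i a_i\,(L^d\cdot[W_i])\geq\sum_i a_i$, so there is a single component of multiplicity one. Thus any fibre of $f$, and more generally any flat degeneration of $F$, which has $L$-degree one is generically a reduced irreducible $d$-fold.

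For the construction, fix $m\gg0$ so that $mL$ is $f$-very ample, let $P$ be the Hilbert polynomial of $F$ with respect to $mL$, and let $\holom{q}{\mathcal H}{\Sigma}$ be the corresponding relative Hilbert scheme $\mathrm{Hilb}^P(M/\Sigma)$ with its flat universal family. Over the dense open locus $\Sigma_\flat\subset\Sigma$ where $f$ is flat, the assignment $s\mapsto[\fibre{f}{s}]$ is a section of $q$; I would set $\Sigma'$ to be the normalisation of the closure of its image, with $\holom{\nu}{\Sigma'}{\Sigma}$ the induced morphism, which is proper, birational, and an isomorphism over $\Sigma_\flat$. Pulling back the universal family gives a flat fibration $\holom{g'}{\mathcal U}{\Sigma'}$ with $\mathcal U\subseteq M\times_\Sigma\Sigma'$: constancy of Hilbert polynomials gives $L^d\cdot[g'^{-1}(\sigma)]=L^d\cdot[F]=1$ for all $\sigma$, and $L$ restricts to an ample class on $g'^{-1}(\sigma)$ since this fibre lies inside an $f$-fibre, so by the previous paragraph the $g'$-fibres are integral and $\mathcal U$ is a variety. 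The first projection is then a proper birational morphism $\holom{h}{\mathcal U}{M}$ that is an isomorphism over $f^{-1}(\Sigma_\flat)$, and the whole first statement reduces to showing that $h$ is an isomorphism: then $g:=g'\circ h^{-1}\colon M\to\Sigma'$ is a flat, hence equidimensional, fibration with integral fibres, and $f=\nu\circ g$.

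The heart of the matter — and, I expect, the bulk of the work — is that $h$ is an isomorphism. Since $h$ is proper and birational onto the normal variety $M$, by Zariski's main theorem it suffices to show $h$ is quasi-finite, and since $h$ is already an isomorphism off the finite set $f^{-1}(Z)$, this amounts to showing that for each $p\in Z$ two distinct members of the family $\{g'^{-1}(\sigma)\}_{\sigma\in\fibre{\nu}{p}}$ are disjoint, equivalently that $\mathcal U\times_{\Sigma'}\fibre{\nu}{p}\to D_p$ is birational onto each component of $D_p$ it dominates. This is where the hypothesis $L^d\cdot F=1$ is indispensable: if some $m_0\in D_p$ lay on a positive-dimensional subfamily $C'\subseteq\fibre{\nu}{p}$ of the $g'$-fibres, then $h^{-1}(m_0)$ would contain a curve mapping isomorphically onto $C'$, which would be a section of $\mathcal U\times_{\Sigma'}C'\to C'$ on which the pull-back of the ample class $L|_{D_p}$ vanishes; one then has to rule this configuration out using that every fibre of $\mathcal U\times_{\Sigma'}C'\to C'$ has the minimal $L$-degree one, by comparing the intersection number $L^{d+1}\cdot[D_p]$ with the degree of this covering family of degree-one $d$-cycles. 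I expect this numerical argument to be the main obstacle; the remaining steps are comparatively formal.

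Finally, the "section" statement follows at once from the first part. If $\holom{\sigma}{\Sigma}{M}$ is a section of $f$, then $g\circ\sigma$ satisfies $\nu\circ(g\circ\sigma)=f\circ\sigma=\id_\Sigma$, so it is a section of the proper birational morphism $\nu$ between the normal surfaces $\Sigma'$ and $\Sigma$; its image is a two-dimensional irreducible closed subset of $\Sigma'$, hence all of $\Sigma'$, so $\nu$ is an isomorphism, $g=f$, and $f$ is equidimensional. (When $M$ is smooth this can also be seen directly: if some $\fibre{f}{p}$ had dimension $d+1$, then $\sigma(\Sigma)\cap\fibre{f}{p}$ would on one hand equal the single point $\sigma(p)$, and on the other hand have dimension $\geq\dim\sigma(\Sigma)+\dim\fibre{f}{p}-\dim M=1$ by the intersection-dimension estimate valid in a smooth variety — a contradiction.)
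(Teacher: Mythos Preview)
Your overall strategy---build $\Sigma'$ inside a parameter space for the fibres, pull back the universal family, and show it is isomorphic to $M$---is exactly the paper's approach; they use the Chow variety where you use the Hilbert scheme, which is a minor variation. Your deduction of the second (section) statement from the first is also the paper's argument.

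The gap is precisely where you flag it: you do not complete the proof that $h\colon\mathcal U\to M$ is an isomorphism, and the ``numerical argument'' you outline (comparing $L^{d+1}\cdot[D_p]$ with the degree of a covering family) is not how the paper proceeds and does not obviously lead anywhere. More to the point, you never invoke the hypothesis that $M$ is $\Q$-factorial, and this is the key. The paper argues as follows. If $h$ is not an isomorphism, then since the target $M$ is $\Q$-factorial the exceptional locus has pure codimension one, so it contains a prime divisor $D\subset\mathcal U$. Since $h$ is an isomorphism over the general fibre, $D$ lies over a proper closed subset of $\Sigma'$; since $g'$ is equidimensional, $g'(D)$ is a curve. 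Now integrality of the $g'$-fibres forces $D=(g')^{-1}(g'(D))$. On the other hand $h$ is finite on each $g'$-fibre (each fibre embeds in $M$), so $\dim h(D)\geq d$; being $h$-exceptional forces $\dim h(D)=d$. Therefore every point of the curve $g'(D)$ parametrises a subscheme (resp.\ cycle) supported on the fixed variety $h(D)$---but in the Hilbert scheme at most one point parametrises a given integral subscheme, and in the Chow variety at most finitely many points parametrise a given cycle. This is the contradiction. No intersection-theoretic estimate is needed; the step you were missing is the passage from ``$h$ not an isomorphism'' to ``there is an $h$-exceptional \emph{divisor}'', which is exactly what $\Q$-factoriality of $M$ buys you.
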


\begin{remark*}
If $M$ and the section $\sigma(\Sigma)$ are smooth, the equidimensional fibration
$f$ is flat \cite[IV, Ex.10.9]{Har77}. 
\end{remark*}

\begin{proof}
Let us first show how the first statement implies the second one: the composition 
$\holom{g \circ \sigma}{\Sigma}{\Sigma'}$ is an inverse to the birational map $\nu$. Thus $\nu$ is an isomorphism and $f = \sigma \circ g$ is equidimensional.

Let us now show the first statement. 
The general fibres of $f$ determine a unique irreducible component of the cycle
space $\Chow{M}$, and we denote by $\Sigma'$ its normalisation.
Let $\holom{\phi}{M'}{\Sigma'}$
be the normalisation of the universal family over the component and denote by $\holom{\mu}{M'}{M}$ the birational map given by the evaluation. The composition $f \circ \mu$ contracts every $\phi$-fibre, so
by the rigidity lemma there exists a birational map
$\holom{\nu}{\Sigma'}{\Sigma}$ such that $f \circ \mu = \nu \circ \phi$.
In summary we obtain a commutative diagram
$$
\xymatrix{
M' \ar[d]_{\phi} \ar[r]^{\mu} & M \ar[d]^f \\
\Sigma' \ar[r]^{\nu} & \Sigma 
}
$$
Note that $\mu$ is finite when 
restricted to a $\phi$-fibre so the pull-back $\mu^* L$ is an $\phi$-ample Cartier divisor.
Since $(\mu^* L)^d \cdot F=1$
all the cycle theoretic fibres (in the sense of \cite[I,Defn.3.10]{Kol96})  of $\phi$ are irreducible and reduced, i.e.\ integral. 

We will show that $\mu$ is an isomorphism, then $g:=\phi \circ \mu^{-1}$ provides the equidimensional factorisation.
Arguing by contradiction, assume that $\mu$ is not an isomorphism.
Since $X$ is $\Q$-factorial the exceptional locus has pure codimension one,
so there exists a prime divisor 
$D \subset M'$  that is $\mu$-exceptional.
Since $\mu$ is an isomorphism near a general $f$-fibre, the divisor $D$ is disjoint from the general $\phi$-fibre. The fibration $\phi$ being equidimensional we obtain that $\phi(D) \subset \Sigma'$ is an irreducible curve. Since $\phi$ has irreducible fibres we have
$D = \phi^* \phi(D)$ (cf. Lemma \ref{lemma-pullback-weil} for pull-back of Weil divisors in this case). 
Since $D$ is $\mu$-exceptional,
its image $\mu(D) \subset M$ has codimension at least two. Yet the evaluation map $\mu$ is finite when restricted to a $\phi$-fibre, so $\mu(D)$ has dimension at least $d=n-2$.
Thus we obtain that for every $t \in \phi(D)$, the cycle parametrised by $t$ is the same variety $\mu(D)$. Yet $\Sigma'$ is the normalisation of an irreducible component of $\Chow{M}$ so there are at most finitely many points parametrising the same cycle, a contradiction.
\end{proof}

\section{Elliptic Calabi-Yau threefolds}

Linear systems on K3 surfaces with non-empty base locus are completely understood by Mayer's theorem,
for linear system on Calabi-Yau threefolds our knowledge is much more limited.
Theorem \ref{thm-birational} provides a first step for the classification of fixed divisors
in the presence of an elliptic fibration, it will also be the starting point
of our investigation of $\varphi_{|-K_{X'}|}$ in Section \ref{section-structure-result}.

Before starting the proof of Theorem \ref{thm-birational} let us recall 
some properties of the total space and base of an elliptic fibration.

\begin{remarks} \label{remarks-thm-birational}
Let $Z$ be a normal $\Q$-factorial projective threefold with terminal singularities that is Calabi-Yau, i.e.\ $K_Z$ is trivial and $h^1(Z, \sO_Z)=h^2(Z, \sO_Z)=0$.
\begin{itemize}
\item Since $\sO_Z(K_Z) \simeq \sO_Z$, the threefold $Z$ is Gorenstein and therefore it is even factorial by \cite[Lemma 5.1]{Kaw88}.
\item By the canonical bundle formula, there exists a boundary divisor $\Delta$ on $U$ such that
$(U, \Delta)$ is klt and $K_Z \sim_\Q \varphi_Z^* (K_U + \Delta)$. Therefore, $(U, 0)$ is klt.
Arguing as in \cite[Thm.3.1]{Ogu93} it is not difficult to see that $U$ is rationally connected and 
$h^1(U, \sO_U)=h^2(U, \sO_U)=0$,
in particular we have $\chi(U, \sO_U)=1$.
\end{itemize}
\end{remarks}

\begin{proof}[Proof of Theorem \ref{thm-birational}]
The proof requires a number of case distinctions, we start by establishing the setup for all cases. Since $A$ is nef and big and $|A| = S + |M|$, the divisor $S$ is generically finite over $U$. We set 
$$
\holom{\varphi_S:= \varphi_Z|_S}{S}{U}
$$
and denote the degree of this morphism by $d \in \N$.

{\em Step 1. We have an isomorphism
\begin{equation}
\label{the-injection}
H^0(S, \sO_S(K_S+M_S)) \simeq H^0(U, \sO_U(K_U+M_U)).
\end{equation}
and an equality
\begin{equation}
\label{the-equality}
\frac{1}{2} (K_S+M_S) \cdot M_S + \chi(S, \sO_S) =  \frac{1}{2} (K_U+M_U) \cdot M_U + 1
\end{equation}
where $M_S := \varphi_S^* M_U$.
}

Since $U$ has klt singularities we have a natural pull-back morphism $\varphi_S^{[*]} \omega_U \rightarrow \omega_S$
\cite[Thm.4.3]{GKKP11}. The sheaf $\sO_U(M_U)$ is locally free, so this induces an injection
$$
\varphi_S^* : H^0(U, \sO_U(K_U+M_U))
\rightarrow 
H^0(S, \sO_S(K_S+\varphi_S^* M_U)) \simeq
H^0(S, \sO_S(K_S+M_S)).
$$
Thus we are left to construct the other inclusion.

Consider the exact sequence
$$
0 \rightarrow \sO_Z(\varphi_Z^* M_U) \rightarrow \sO_Z(A) \rightarrow \sO_S(A) \rightarrow 0.
$$
By Kawamata-Viehweg vanishing we have $H^1(Z, \sO_Z(A))=0$ and therefore 
a long exact sequence in cohomology
$$
0 \rightarrow H^0(Z, \sO_Z(\varphi_Z^* M_U)) \rightarrow H^0(Z, \sO_Z(A)) \rightarrow H^0(S, \sO_S(A)) \rightarrow H^1(Z, \sO_Z(\varphi_Z^* M_U)) \rightarrow 0
$$
 Since $S$ is in the base locus of $|A|$ the restriction map $H^0(Z, \sO_Z(A)) \rightarrow H^0(S, \sO_S(A))$ is zero, so we have an isomorphism
\begin{equation} \label{hzerotoh1}
H^0(S, \sO_S(A)) \simeq H^1(Z, \sO_Z(\varphi_Z^* M_U)).
\end{equation}
By the Leray spectral sequence we have an exact sequence
$$
0 \rightarrow H^1(U, \sO_U(M_U)) \rightarrow H^1(Z, \sO_Z(\varphi_Z^* M_U))
\rightarrow H^0(U, \sO_U(M_U) \otimes R^1 (\varphi_Z)_* \sO_Z) .
$$
Since $\omega_Z:=\sO_Z(K_Z) \simeq \sO_Z$ we have
$$
H^1(U, \sO_U(M_U)) \simeq H^1(U, (\varphi_Z)_* \omega_Z \otimes \sO_U(M_U)) = 0
$$
by Koll\'ar's vanishing theorem \cite[Thm.2.1]{Kol86} (applied on a resolution of $Z$). 
Moreover $R^1 (\varphi_Z)_* \sO_Z \simeq R^1 (\varphi_Z)_* \omega_Z$ is torsion-free and by relative duality 
$$
(R^1 (\varphi_Z)_* \sO_Z)^* \simeq (\varphi_Z)_* \omega_{Z/U} \simeq \omega_U^*
$$
where for the last isomorphism we used again $\omega_Z \simeq \sO_Z$. In particular we get an injection $R^1 (\varphi_Z)_* \sO_Z \hookrightarrow \omega_U$.
Thus we see that \eqref{hzerotoh1} yields an injection
$$
H^0(S, \sO_S(A)) \hookrightarrow H^0(U, \sO_U(K_U+M_U)).
$$
Actually $\omega_S \simeq \sO_S(S)$ by adjunction, so we can rewrite this as
$$
H^0(S,\sO_S(K_S+M_S)) \hookrightarrow H^0(U, \sO_U(K_U+M_U)).
$$
Therefore, both inclusions hold and we have \eqref{the-injection}.

By Remark \ref{remarks-thm-birational} we know that $U$ has rational singularities and
$\chi(U,\sO_U)=1$, so the Lemma \ref{lemma-RR-rational-sings} yields
$$
h^0(U, \sO_U(K_U+M_U)) = \frac{1}{2} M_U^2 + \frac{1}{2} K_U \cdot M_U + 1.
$$
Since $M_S$ is nef and big we have
by Kawamata-Viehweg and Riemann-Roch on the smooth surface $S$
$$
h^0(S, \sO_S(K_S+M_S))=\chi(S, \sO_S(K_S+M_S)) = 
\frac{1}{2} M_S^2 + \frac{1}{2} K_S \cdot M_S + \chi(S, \sO_S).
$$
Therefore \eqref{the-injection} implies \eqref{the-equality}.

{\bf We will now argue by contradiction and assume that $\boldsymbol{d>1}$.
Let 
$$\boldsymbol{
\holom{\nu}{S}{\bar S}, \qquad \holom{\varphi_{\bar S}}{\bar S}{U}
}$$ 
be the Stein factorisation of $\boldsymbol{\varphi_S}$. Set $\boldsymbol{M_{\bar S} := \varphi_{\bar S}^* M_U \simeq \nu_* M_S}$.
}

{\em Step 2.
The divisor $K_U+M_U$ is effective. Moreover 
we have 
\begin{equation} \label{the-inequality-ramified}
\frac{d}{2} (K_U+M_U) \cdot M_U + \chi(S, \sO_S) \leq  \frac{1}{2} (K_U+M_U) \cdot M_U + 1
\end{equation}
and if equality holds the morphism $\varphi_{\bar S}$ is quasi-\'etale.
}

Since $A|_S = K_S+M_S$ is nef and $M_S$ is nef and big we know by effective nonvanishing on smooth surfaces \cite[Thm.3.1]{Kaw00} that $h^0(S, \sO_S(K_S+M_S)) \neq 0$. By \eqref{the-injection} this implies that
$K_U + M_U$ is also effective.

By the ramification formula \eqref{ramificationformula} we have $K_S+M_S \equiv f^* (K_U+M_U)+R$ with $R$ a $\Q$-divisor such that $\nu_* R$ is the ramification divisor of $\varphi_{\bar S}$.
Thus we have 
$$
\frac{1}{2} (K_S+M_S) \cdot M_S + \chi(S, \sO_S) 
= 
\frac{d}{2} (K_U+M_U) \cdot M_U + R \cdot M_S + \chi(S, \sO_S). 
$$
Since $M_U$ is ample and $R \cdot M_S = \nu_* R \cdot \varphi_{\bar S}^* M_U$ we see that the intersection number is non-negative and equal to zero if and only if $\nu_* R=0$, if and only if $\varphi_{\bar S}$ is quasi-étale by Lemma \ref{lemma-ramificationformula}.
Plugging this into \eqref{the-equality} we obtain \eqref{the-inequality-ramified}.

{\em Step 3. We exclude the case $\chi(S, \sO_S) \geq 2$.
}
In this case the inequality  \eqref{the-inequality-ramified} simplifies to
$$
\frac{d}{2} (K_U+M_U) \cdot M_U  <  \frac{1}{2} (K_U+M_U) \cdot M_U. 
$$
Since $d>1$ we deduce that $(K_U+M_U) \cdot M_U<0$, a contradiction to the effectivity of $K_U+M_U$.

{\em Step 4. We exclude the case $\chi(S, \sO_S) = 1$.
}
In this case the inequality  \eqref{the-inequality-ramified} simplifies to
$$
\frac{d}{2} (K_U+M_U) \cdot M_U  \leq  \frac{1}{2} (K_U+M_U) \cdot M_U. 
$$
If the inequality is strict, we obtain a contradiction as in Step 3. If equality holds,
$d>1$ implies that $(K_U+M_U) \cdot M_U=0$, so the effective divisor $K_U+M_U$ is zero.
Moreover $K_{\bar S}+M_{\bar S}$ is zero since $\varphi_{\bar S}$ is quasi-\'etale by Step 2. Yet
$K_{\bar S}+M_{\bar S} = \nu_* (K_S+M_S) = \nu_* A|_S$ 
and $A|_S$ is nef and big by item ii.\ of our assumption. Thus its 
push-forward is not zero.

{\em Step 5. We exclude the case $\chi(S, \sO_S) < 0$.
}

By Lemma \ref{lemma-chinegative} the surface
$S$ admits an MRC fibration $\holom{\alpha}{S}{C}$
onto a smooth curve $C$ of genus $g \geq 2$ such that the general fibre $F \simeq \PP^1$. 
We set $l := M_S \cdot F$ and note that $l \geq 3$ since $-2+l=(K_S+M_S) \cdot F = A|_S \cdot F>0$. 
We claim that 
$$
d=2,\, g=2,\, l=3\quad\text{and}\quad h^0(U, \sO_U(K_U+M_U))=h^0(S, K_S+M_S)=3.
$$
{\em Proof of the claim.}
Set $m:=h^0(S, K_S+M_S)$. By Lemma \ref{lemma-RR-rational-sings}, with this notation the equality \eqref{the-equality} is 
$$
m  = \frac{1}{2} (K_U+M_U) \cdot M_U + 1
$$
and
$$
\frac{1}{2} (K_S+M_S) \cdot M_S = m+g-1.
$$
By the ramification formula \eqref{ramificationformula} we have
\begin{equation*}
\begin{aligned}
(K_U+M_U) \cdot M_U = & \frac{1}{d} f^* (K_U+M_U) \cdot f^* M_U \leq \\
\leq &\frac{1}{d} f^* (K_U+M_U) \cdot f^* M_U
+
\frac{1}{d} R \cdot  f^* M_U
=
\frac{1}{d} 
(K_S+M_S) \cdot M_S.
\end{aligned}
\end{equation*}
Thus we have
$$
m = \frac{1}{2} (K_U+M_U) \cdot M_U + 1
\leq \frac{1}{2d} 
(K_S+M_S) \cdot M_S
 + 1 = \frac{1}{d} \left(m+g-1\right) + 1 
$$
An elementary computation shows that
\begin{equation}
\label{elementary}
m (d-1) \leq g-1+ d.
\end{equation}
The direct image sheaf $\alpha_* \sO_S(K_S+M_S)$ has rank $l-1$, so
by Riemann-Roch on the curve $C$ we have
$$
m = h^0(C, \alpha_* \sO_S(K_S+M_S)) = c_1(\alpha_* \sO_S(K_S+M_S)) + (l-1)
(1-g).
$$
Since $\alpha_* \sO_S(K_S+M_S) \simeq \alpha_* \sO_S(K_{S/C}+M_S) \otimes \omega_C$
and $\alpha_* \sO_S(K_{S/C}+M_S)$ is ample by \cite[Cor.3.7]{Vie01} we obtain
\begin{equation}
\label{eqn2}
m = c_1(\alpha_* \sO_S(K_{S/C}+M_S)) + (l-1) (g-1) > (l-1) (g-1).
\end{equation}
Combined with \eqref{elementary} we obtain
\begin{equation}
\label{inequ1}
(l-1) (d-1) < 1 + \frac{d}{g-1}.
\end{equation}

Since $g \geq 2$ we obtain $(l-1) (d-1) < 1 + d$.
Then $l \geq 3$ implies $d<3$ and therefore $d=2$. Plugging this into \eqref{inequ1} we obtain $g=2$ and $l=3$. 
From \eqref{elementary} we have $m \leq 3$
and by \eqref{eqn2} we have $m >2$. Thus $m=3$ and \eqref{the-injection}
yields the last claim. 

\smallskip

Note that $K_{S/C}+M_S$ is nef
and big, since it is the pull-back of the tautological bundle
on $\PP(\alpha_* \sO_S(K_{S/C}+M_S))$. Moreover, $\alpha^* K_C\simeq2F$ yields
$$
8 =(K_S+M_S) \cdot M_S = (K_{S/C}+M_S) \cdot M_S + \alpha^* K_C\cdot M_S =
(K_{S/C}+M_S) \cdot M_S + 6.
$$
Thus, we get $(K_{S/C}+M_S) \cdot M_S=2$ and by the Hodge index inequality (Lemma \ref{hodge-index}) 
$$
4 \geq (K_{S/C}+M_S)^2 \cdot M_S^2.
$$
The first term is at least one, so we get $M_S^2 \leq 4$. Yet
$M_S^2 = 2 M_U^2$ and therefore $M_U^2 \leq 2$.
The polarised varieties $(U, M_U)$ with $M_U^2 \leq 2$ are classified
in Lemma \ref{lemma-low-degree}. It is easy to see that in none of the
cases $h^0(U, \sO_U(K_U+M_U))=3$.

{\em Step 6. We exclude the case $\chi(S, \sO_S) = 0$, thereby finishing the proof.
}

Set $m:=h^0(S, K_S+M_S)$. By Lemma \ref{lemma-RR-rational-sings}, with this notation the equality \eqref{the-equality} is
\begin{equation}
\label{easyRR}
\frac{1}{2} (K_S+M_S) \cdot M_S = m = \frac{1}{2} (K_U+M_U) \cdot M_U + 1. 
\end{equation}
As in Step 5 we use
the ramification formula \eqref{ramificationformula} to obtain
$$
m = \frac{1}{2} (K_U+M_U) \cdot M_U + 1
\leq \frac{1}{2d}
(K_S+M_S) \cdot M_S + 1 = \frac{m}{d} + 1 
$$
An elementary computation shows that
$$
m  \leq \frac{d}{d-1}.
$$
Since $d \geq 2$ we obtain $m \leq 2$ and even $m=1$ if $d>2$.
We make a case distinction:

{\em 1st case: Assume that $m=1$.}
By \eqref{easyRR} we obtain $(K_U+M_U) \cdot M_U=0$, so the effective divisor $K_U+M_U$
is zero. 

Moreover \eqref{easyRR} implies $(K_S+M_S) \cdot M_S=2$. Since $K_S+M_S$ is nef and big and $M_S$ is nef and big we get from Hodge index inequality that $M_S^2 \leq 4$.
Since $M_S^2 = d M_U^2$ 
we obtain $M_U^2 \leq 2$. Applying the classification from Lemma \ref{lemma-low-degree} and using $h^0(U, \sO_U(K_U+M_U))=1$
we see that $U$ is a del Pezzo surface of degree two, so $M_U^2=2$

Thus we have $M_S^2=4$
and $(K_S+M_S)^2=1$. Even more, since we have equality in the Hodge index inequality
we obtain that $K_S+M_S$ and $M_S$ are colinear in the Neron-Severi space \cite[Cor.2.5.4]{BS95}. It now follows easily that $K_S \equiv - \frac{1}{2} M_S$, so $S$ is weak Fano.
This contradicts $\chi(S, \sO_S)=0$.

{\em 2nd case: $m=2$.} This case is somewhat lengthy: we will compute various invariants in order to obtain a classification of the possibilities for $S$ and $U$, then derive a contradiction.

As seen above $m=2$ implies $d=2$ and by \eqref{easyRR} we obtain
$$
(K_S+M_S) \cdot M_S=4, \qquad (K_U+M_U) \cdot M_U=2.
$$
Moreover equality holds in \eqref{the-inequality-ramified} so the degree
two morphism $\holom{\varphi_{\bar S}}{\bar S}{U}$ is quasi-\'etale by Step 2.
If $M_U^2 \leq 2$ we can use the classification in Lemma \ref{lemma-low-degree}
to obtain a contradiction to $h^0(U, \sO_U(K_U+M_U)) = 2$. Thus we can assume from now
$$
M_U^2 \geq 3, \qquad M_S^2 = 2 M_U^2 \geq 6.
$$
From  $(K_U+M_U) \cdot M_U=2$ we deduce that $K_U \cdot M_U \leq -1$, 
so $K_U$ is not pseudoeffective. Since $\varphi_{\bar S}$ is quasi-\'etale we obtain that
$K_{\bar S}$ is not pseudoeffective, so $K_S$ is not pseudoeffective. Thus $q(S)=1$ and
the classification of surfaces shows that a minimal model $\holom{\tau}{S}{S_m}$
is a ruled surface
$$
\holom{\psi}{S_m}{E}
$$
over an elliptic curve $E$. In particular we have $K_{S_m}^2=0$ and therefore $K_S^2 \leq 0$.

{\em We show that $M_S^2=6$ and $K_S^2\in\{-1,0\}$.}
We know that $M_S^2=2M_U^2$ is an even integer, so arguing by contradiction we assume that
$M_S^2\geq 8$. Since $(K_S+M_S) \cdot M_S=4$, we then have $-K_S\cdot M_S \geq 4$.
Moreover, from
$$
1\leq (K_S+M_S)^2=K_S^2+(K_S+M_S) \cdot M_S + K_S\cdot M_S
$$
we get $K_S^2 \geq 1$. Yet we have just seen that $K_S^2 \leq 0$, so we have a contradiction.
Thus $M_S^2=6$ and therefore $K_S\cdot M_S=-2$. Now the inequality
$1\leq (K_S+M_S)^2$ implies $K_S^2 \geq -1$.

{\em We show that $\bar S$ is a ruled surface.} Since
$S_m$ is a ruled surface over an elliptic curve and $K_S^2\in\{-1,0\}$ we either have
$S \simeq S_m$ or $S \rightarrow S_m$ is the blow-up in one point. 
Recall now that $\bar S \rightarrow U$ is quasi-\'etale, so $\bar S$ is also klt \cite[Prop.5.20]{KM98}. Thus
the birational morphism $S \rightarrow \bar S$ only contracts rational curves.
Yet all the exceptional rational curves (if any exist) on $S$ are $(-1)$-curves, so $S \rightarrow \bar S$
is an isomorphism or a blow-up of a smooth point. Thus $\bar S$ is smooth and
$K_{\bar S}^2 \in \{ -1, 0\}$.

The double cover $\bar S \rightarrow U$ is Galois, so
the surface $U$ is the quotient of the surface $\bar S$ by the covering involution. Yet $\bar S$ is smooth, 
so $U$ has at most $A_1$-singularities. In particular $U$ is Gorenstein and $K_U^2$ is an integer. Thus $K_{\bar S}^2 = 2 K_U^2$ is an even integer. 
By the preceding paragraph we finally obtain $K_{\bar S}^2=0$, so
$\bar S$ is a ruled surface over an elliptic curve.

{\em We show that $U$ is a conic bundle $U \rightarrow \PP^1$ with at least one double fibre.}
Recall that the double cover $\holom{\varphi_{\bar S}}{\bar S}{U}$ is quasi-\'etale. It is not \'etale since
otherwise $\chi(\bar S, \sO_{\bar S})=0$ and $\chi(U,\sO_U)=1$ gives a contradiction. Moreover,
$\rho(U)>1$ since otherwise $U$ would be Fano (recall that $U$ is rationally connected with canonical singularities). Yet then its quasi-\'etale cover $\bar S$ is also Fano,
again a contradiction to $\chi(\bar S, \sO_{\bar S})=0$. 
Since $\rho(\bar S)=2$ we  must have $\rho(U)=2$ and therefore
an isomorphism between $N^1(U) \rightarrow N^1(\bar S)$. Thus if $F$ is a fibre of the ruling
$\bar S \rightarrow E$ it is a pull-back of nef and non-big class on $U$. Since $-K_U$ is positive on this class we obtain that $U$ is a conic bundle $U \rightarrow \PP^1$ and the ruling $\bar S \rightarrow E$ is obtained by base change.
Since $\rho(U)=2$ all the fibres of $U \rightarrow \PP^1$ are irreducible, yet there is at least one singular fibre since otherwise $U$ is smooth and therefore $\varphi_{\bar S}$ \'etale which we excluded.

{\em We compute the degree of $K_{\bar S}+M_{\bar S}$ on the fibres of the ruling.}
Recall that $A|_S = K_S + M_S$ is nef and big, so its push-forward
$K_{\bar S} + M_{\bar S} = \varphi_{\bar S}^* (K_U+M_U)$ is nef and big. Since $U$ is Gorenstein we obtain that $K_U+M_U$ is a Cartier divisor that is relatively ample for the Mori fibre space
$U \rightarrow \PP^1$. Since there is at least one singular fibre the divisor has at least degree two on the general fibre.

The $\PP^1$-bundle $\bar S \rightarrow E$ is obtained by base change from $U \rightarrow \PP^1$ so we finally obtain
$$
(K_{\bar S} + M_{\bar S}) \cdot F \geq 2.
$$ 

{\em We conclude by a computation on the ruled surface $\bar S$.}
We write $\bar S \simeq \PP(V)$ where $V$ a rank two vector bundle that is normalised in the sense of \cite[V, Prop.2.8]{Har77}.
We write
$$
M_{\bar S} \equiv \mu C_0 + \lambda F
$$
where $C_0$ is a section with $C_0^2=e \in \Z$ and $\mu, \lambda \in \Z$.
Note that $(K_{\bar S} + M_{\bar S}) \cdot F \geq 2$ is equivalent to $\mu \geq 4$.
Recall that $M_{\bar S}^2=M_S^2=6$, so 
$$
6 = (\mu C_0 + \lambda F)^2 = \mu^2 e + 2 \mu \lambda = \mu (\mu e + 2 \lambda).
$$
Since $\mu \geq 4$ and $\mu e + 2 \lambda$ is an integer the only possibility is
$\mu=6$ and $\mu e + 2 \lambda=1$. Yet then $1=\mu e + 2 \lambda = 6e+2 \lambda$ is even,
the final contradiction.
\end{proof}

In some special cases we can show a rational section to be regular:

\begin{proposition} \label{proposition-section}
Let $Z$ be a smooth Calabi-Yau threefold admitting a fibration 
$$
\holom{\varphi_Z}{Z}{U}
$$
onto a normal projective surface $U$. Assume that 
there exists a smooth prime divisor $S \subset Z$ that is a rational section for $\varphi_Z$ and $S$ is $\varphi_Z$-ample. 
Then $\varphi_Z$ is flat and $S$ is a section for $\varphi_Z$.
\end{proposition}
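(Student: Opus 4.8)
The plan is to first replace $\varphi_Z$ by an equidimensional model and then show, in two steps, that the two birational maps involved are isomorphisms; flatness of $\varphi_Z$ and the section property then drop out.

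\textbf{Setup.} Since $S$ is a rational section, $L:=\sO_Z(S)$ is a $\varphi_Z$-ample Cartier divisor with $L\cdot F=1$ on a general fibre $F$, so Lemma \ref{lemma-factorisation-degree-one} (applied with this $L$) produces an equidimensional fibration $g\colon Z\to U'$ with integral fibres together with a birational morphism $\nu\colon U'\to U$ such that $\varphi_Z=\nu\circ g$. By the canonical bundle formula (as in Remarks \ref{remarks-thm-birational}) both $U$ and $U'$ are klt, in particular have rational singularities. Since $\omega_Z\simeq\sO_Z$, adjunction gives $\omega_S\simeq\sO_S(S)$; since $S$ is $\varphi_Z$-ample it is $g$-ample; and since every cycle-theoretic $g$-fibre is integral of $L$-degree one, $\sO_Z(S)\cdot C=1$ for every $g$-fibre $C$. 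Finally $g$ is flat over the smooth locus $U'_{\mathrm{reg}}$ by miracle flatness, so fibres over $U'_{\mathrm{reg}}$ have arithmetic genus $1$.

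\textbf{Step 1: $g|_S\colon S\to U'$ is an isomorphism.} As this is a proper birational morphism onto a normal surface, it suffices to show it contracts no curve. A contracted curve lies in a $g$-fibre, hence (by equidimensionality and integrality of the fibres) \emph{equals} an integral fibre $C=g^{-1}(u')\subset S$, which is then the whole exceptional fibre of $g|_S$ over $u'$. We get $K_S\cdot C=\omega_S\cdot C=\sO_Z(S)\cdot C=1$. If $u'\in U'_{\mathrm{reg}}$ then $p_a(C)=1$, and adjunction on the smooth surface $S$ forces $C^2=2p_a(C)-2-K_S\cdot C=-1$, so contracting the irreducible curve $C$ yields a surface singularity of discrepancy $(K_S\cdot C)/C^2=-1$, contradicting that $U'$ is klt. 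Thus $g|_S$ is already an isomorphism over $U'_{\mathrm{reg}}$; over the finitely many singular points the same adjunction bookkeeping leaves only the possibilities $(p_a(C),C^2)\in\{(1,-1),(0,-3)\}$, the first being excluded as above, so the residual case is $C\simeq\PP^1$, $C^2=-3$, lying over a $\tfrac13(1,1)$-point of $U'$. This is ruled out by the fibration structure: $C$ is an integral fibre of $g$, so restricting $g$ to the preimage of a suitable curve through $u'$ and resolving/minimalising exhibits $C$ as an irreducible fibre of a relatively minimal genus-one fibration over a curve, forcing $p_a(C)=1$ — a contradiction. Hence $g|_S$ is finite, so an isomorphism; in particular $U'\simeq S$ is smooth, $g$ is flat (miracle flatness), and with its $g$-ample section $S$ it is a flat Weierstra\ss\ fibration, whose fibres are therefore irreducible.

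\textbf{Step 2: $\nu\colon U'\to U$ is an isomorphism.} Again it suffices to rule out a contracted irreducible curve $\Gamma\subset U'=S$; such a $\Gamma$ is rational (rational singularities of $U$) with $\Gamma^2<0$. Set $D:=g^{-1}(\Gamma)=g^*\Gamma$. By adjunction on the Calabi–Yau threefold $Z$, $\omega_D\simeq\sO_Z(D)|_D\simeq g^*\bigl(\sO_\Gamma(\Gamma^2)\bigr)$, and $D\to\Gamma$ is a genus-one fibration with irreducible fibres carrying the section $\sigma_\Gamma:=S\cap D$, which maps isomorphically to $\Gamma$. Passing to a relative minimal model $\hat D\to\Gamma$ of a resolution of $D$ — which leaves $\sigma_\Gamma$ untouched, a section avoiding the singular points of fibres — we obtain a relatively minimal genus-one fibration over $\PP^1$ with a section $\hat\sigma_\Gamma$ satisfying $\hat\sigma_\Gamma^2\ge\sigma_\Gamma^2$, while $\hat\sigma_\Gamma^2=-\chi(\sO_{\hat D})\le 0$. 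On the other hand the sequence $0\to N_{\Gamma/D}\to N_{\Gamma/Z}\to N_{D/Z}|_\Gamma\to 0$ with $\deg N_{\Gamma/Z}=\deg\omega_\Gamma=-2$ and $\deg N_{D/Z}|_\Gamma=\Gamma^2$ gives $\sigma_\Gamma^2=\deg N_{\Gamma/D}=-2-\Gamma^2$; and since $S$ is $\varphi_Z$-ample, $S|_D$ is ample, so $0<(S|_D)^2=S^2\cdot g^*\Gamma=\omega_S\cdot\Gamma=K_S\cdot\Gamma=-2-\Gamma^2$ by adjunction on $S$. Hence $\Gamma^2\le-3$ and $\sigma_\Gamma^2\ge 1>0$, a contradiction. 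So $\nu$ is an isomorphism, and therefore $\varphi_Z\simeq g$ is flat with distinguished section $S$.

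\textbf{Main obstacle.} The one genuinely delicate point is the residual case in Step 1: excluding that $g$ contracts a smooth rational $(-3)$-curve which is at the same time a $g$-fibre over a quotient singularity of the base. Adjunction and the relative-degree-one condition do not suppress this by themselves; one must use the elliptic (genus-one) fibration structure to force arithmetic genus $1$ on such a fibre, and when its image is a singular point of $U'$ this needs a careful choice of auxiliary curve and control of flatness of the resulting surface — this case-analysis is where the real work lies.
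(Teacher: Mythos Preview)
Your setup and Step 2 are reasonable, but Step 1 has a genuine gap that you yourself flag as the ``main obstacle'' --- and it is not resolved.

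\textbf{The gap in Step 1.} You reduce to excluding that $g|_S$ contracts an integral $g$-fibre $C$ with $K_S\cdot C=1$. Over $U'_{\mathrm{reg}}$ your discrepancy argument is fine. The residual case $C\simeq\PP^1$, $C^2=-3$ over a $\tfrac13(1,1)$-point of $U'$ is where you hand-wave: you propose restricting $g$ to the preimage of ``a suitable curve'' through $u'$ to force $p_a(C)=1$. But such a curve is only a Weil divisor at the singular point $u'$, and $g$ is not flat there, so the restricted family need not be flat and the special fibre need not inherit arithmetic genus $1$ from the generic one. No choice of auxiliary curve repairs this without further input.

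\textbf{How the paper handles it.} The paper bypasses the case analysis by proving first that $U'$ is \emph{smooth}: since $S$ is $g$-ample of fibre-degree one, every cycle-theoretic fibre is reduced and hence has a smooth point $y$; at $y$ the map $g$ is smooth, so $U'$ is smooth at $g(y)$. With $U'$ smooth, $g$ is flat everywhere, all fibres have arithmetic genus $1$, and the birational map $g|_S\colon S\to U'$ between smooth surfaces can only contract trees of smooth rational curves (Castelnuovo), which cannot be genus-one fibres. This is both shorter and eliminates your residual case at the source.

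\textbf{Step 2.} Your argument via the section self-intersection on a relatively minimal elliptic surface is different from the paper's (which splits into the generically-smooth-fibre case, handled by Lemma \ref{lemma-bound-negativity}, and the generically-singular-fibre case, handled by producing a second section disjoint from $S$). Your route can be made to work, but you should justify that $D=g^*\Gamma$ is smooth along $\sigma_\Gamma$ (this follows since $S$ is a section and $Z$ is smooth, so a local computation shows the defining equation of $D$ has nonvanishing differential along $\sigma_\Gamma$); only then does the resolution $\tilde D\to D$ leave $\sigma_\Gamma$ untouched and the inequality $\hat\sigma_\Gamma^2\ge\sigma_\Gamma^2$ follow.
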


\begin{remark*}
While Theorem \ref{thm-birational} is likely to be true without the assumption that $S$
is smooth, this is not the case for Proposition \ref{proposition-section}: elliptic fibrations with rational, non-regular sections provide immediate counterexamples.
\end{remark*}

\begin{proof}[Proof of Proposition \ref{proposition-section}]
Since $S$ is $\varphi_Z$-ample and a rational section the fibration $\holom{\varphi_Z}{Z}{U}$
satisfies the conditions of Lemma \ref{lemma-factorisation-degree-one}.
Thus we have an equidimensional fibration $\holom{\phi_Z}{Z}{U'}$
and a birational map
$\holom{\nu}{U'}{U}$ such that $\varphi_Z = \nu \circ \phi_Z$.

{\em Step 1. We show that $U'$ is smooth and $S$ is a $\phi_Z$-section.} 
Let $t \in U'$ be an arbitrary point. 
Since $S$ is relatively ample of degree one the cycle theoretic fibre $\phi_Z^{[-1]}(t)$ is reduced, in particular it has a smooth point $y$.
Thus the fibration $\phi_Z$ is smooth in the point $y$ by \cite[I, Thm.6.5]{Kol96}, in particular it is flat. Yet $Z$ is smooth near $y$, so $U'$ is smooth near $t=\phi_Z(y)$. 

Since $U'$ is smooth the equidimensional map $\phi_Z$ is flat, in particular all the (scheme-theoretic) fibres are integral curves of arithmetic genus one. Assume now that the birational morphism $S \rightarrow U'$ is not an isomorphism over a point $t \in U'$, then $S$ contains the fibre $\fibre{\phi_Z}{t}$, a curve of arithmetic genus one. Yet $S \rightarrow U'$ is a birational map between smooth surfaces so the exceptional locus is the union of smooth rational curves by Castelnuovo's theorem, a contradiction.

{\em Step 2. We show that $\nu$ is an isomorphism.} 
Since $Z$ is Calabi-Yau the fibration $\varphi_Z$ is $K$-trivial and we know by Ambro's theorem \cite[Thm.0.2]{Amb05} that there exists a boundary divisor $\Delta$ on $U$ such that $(U, \Delta)$ is klt. In particular $U$ has rational singularities,
and the morphism $\nu$ is a resolution of singularities. Thus $R^1 \nu_* \sO_U=0$ which implies that every exceptional curve
is a smooth rational curve \cite[Cor.4.9]{Rei97}.

Arguing by contradiction let $C \simeq \PP^1$ be a $\nu$-exceptional curve.
We claim that $C^2 \leq -3$, i.e. $C$ is not a (-1) or (-2)-curve. In fact since $S \simeq U'$ the morphism $\nu$ identifies
to the birational map
$$
\holom{\varphi_Z|_S}{S}{U}.
$$ 
By assumption $S$ is $\varphi_Z$-ample so $K_S = (K_Z+S)|_S = S|_S$ is $\varphi_Z|_S$-ample. Thus we have $K_S \cdot C>0$ and therefore
$C^2 \leq -3$ by the adjunction formula.

Set now $\Sigma := \phi_Z^* C$ and consider the fibration $\holom{\phi_\Sigma}{\Sigma}{C}$. 

{\em 1st case.} If the general $\phi_\Sigma$-fibre is smooth, it is an elliptic curve. Since all the $\phi_Z$-fibres are reduced, the singular locus of $\Sigma$ is finite. 
The surface $\Sigma \subset Z$ is also Gorenstein, so it is normal and by adjunction
$$
K_\Sigma \simeq (K_Z+\Sigma)|_\Sigma = \Sigma|_\Sigma = (\phi_\Sigma)^* C|_C \simeq (\phi_\Sigma)^* \sO_{\PP^1}(-b)
$$
with $-b \leq -3$. This contradicts Lemma \ref{lemma-bound-negativity}.

{\em 2nd case.} If the general $\phi_\Sigma$-fibre is singular, it is a nodal or cuspidal cubic.
Thus $\phi_\Sigma$ has a section $C' \subset \Sigma$ determined by the unique singular point of the fibre.  Since $S$ is a $\phi_Z$-section and Cartier divisor in $Z$, it meets every $\phi_Z$-fibre in a smooth point. Thus we have $S \cdot C'=0$. 
Now recall that $\varphi_Z(\Sigma)=\nu(\phi_Z(\Sigma))=\nu(C)$ is a point.
By assumption the restriction of $S$ to the fibre component $\Sigma$ is ample, so
$S \cdot C'=0$ gives the final contradiction.
\end{proof}

\section{A structure result} \label{section-structure-result}

The goal of this section is to show Theorem \ref{theorem-nefdivisor}. 
We fix the notation for the whole section:

\begin{setup} \label{setup-general} {\rm
Let $X$ be a smooth Fano fourfold such that $h^0(X, \sO_X(-K_X)) \geq 4$.
Assume that the scheme-theoretic base locus $\BsAX$ is a smooth surface $B$.
Let
$$
\holom{\mu}{X'}{X}
$$
be the blow-up along $B$, and denote by $E$ the exceptional divisor. We have
$$
-K_{X'} \simeq \mu^*(-K_X) - E, 
$$
so
$$
H^0(X', \sO_{X'}(-K_{X'})) \simeq H^0(X, \sI_B \otimes \sO_X(-K_X)) \simeq H^0(X, \sO_X(-K_X))
$$
and $|-K_{X'}|$ is basepoint-free. We denote by
$$
\holom{\varphi:=\varphi_{|-K_{X'}|}}{X'}{T'}
$$
the Stein factorisation of the morphism defined by $|-K_{X'}|$.
By construction we have
\begin{equation}
\label{pullbackmu}
\mu^*(-K_X) \simeq -K_{X'} + E \simeq \varphi^* H + E
\end{equation}
where $H$ is an ample and globally generated Cartier divisor on $T'$.

Let $Y \in \AX$ be a general element, and let $Y' \subset X'$ be its strict transform.
Then 
$$
\holom{\mu_Y}{Y'}{Y}
$$
is a resolution of singularities and not an isomorphism by \cite[Thm.1.3]{HS25}.
In fact, since $Y$ has isolated singularities \cite[Thm.1.7]{HV11} and the fibres of the blow-up $\mu$ have dimension at most one, the resolution $\mu_Y$ is small.

The surface $B \subset Y$ is a prime Weil divisor, denote by 
$B'  \subset Y'$ its strict transform. Since $B' = E \cap Y'$ and $Y'$ is general
the surface $B'$ is smooth and the blow-up of the finite set $Y_{\sing} \subset B$.

On the normal threefold $Y$ we have a decomposition
$$
|-K_X|_Y| \simeq |M| + B
$$
where $M$ is the mobile part of the linear system. 
Note that 
$$
h^0(Y, \sO_Y(M)) = h^0(X, \sO_X(-K_X))-1 \geq 3,
$$
so $M \neq 0$.
Since $-K_X|_Y$ is Cartier and $\mu_Y$ is small, its strict transform on $Y'$ coincides with the pull-back. Thus we have
\begin{equation}
\label{onY}
\mu^*(-K_X)|_{Y'} \simeq M' + B'  \simeq -K_{X'}|_{Y'} + B' \simeq (\varphi^* H)|_{Y'} + B'
\end{equation}
where $|M'|$ is the strict transform of the mobile part $|M|$. 

We have an induced fibration
$$
\holom{\varphi_{Y'}}{Y'}{U}
$$
where $U$ is an general element of $|H|$. Since $H^1(X', \sO_{X'})=H^1(X, \sO_X)=0$ the restriction morphism
$H^0(X', \sO_{X'}(-K_{X'})) \rightarrow H^0(Y', \sO_{Y'}(-K_{X'}))$ is surjective, so
$\varphi_{Y'}$ is defined by the complete linear system $|-K_{X'}|_{Y'}|=|M'|$.
}
\end{setup}

\begin{remark*}
Heuberger \cite{Heu15} has shown that the general anticanonical divisor only 
has singularities of the form $x_1^2+x_2^2 + x_3^2 +x_4^2=0$ or
$x_1^2+x_2^2 + x_3^2 +x_4^3=0$, thereby restricting the geometry of the resolution $\mu_Y$.
We will not use this finer result, however we will use several times that $\mu_Y$ contracts at least one rational curve.
\end{remark*}

\begin{lemma} \label{lemma-rel-ample}
In the Setup \ref{setup-general} the divisor $\mu^*(-K_X)$ is $\varphi$-ample and therefore by \eqref{pullbackmu} the effective divisor $E$ is $\varphi$-ample.
\end{lemma}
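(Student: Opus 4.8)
The plan is to realise $\mu^*(-K_X)$ as an ample divisor on $X'$ minus a divisor that is trivial on every fibre of $\varphi$; since relative ampleness is detected fibrewise, the $\varphi$-ampleness of $\mu^*(-K_X)$, and then of $E$, will follow at once. Concretely, I would work with
\[
D := \mu^*(-K_X) + (-K_{X'}) \simeq \mu^*(-K_X) + \varphi^* H
\]
(the last identification by \eqref{pullbackmu}) and prove that $D$ is \emph{ample} on $X'$.

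First, both summands of $D$ are semiample: $\mu^*(-K_X)$ because $-K_X$ is ample on $X$, and $-K_{X'} \simeq \varphi^* H$ because $|-K_{X'}|$ is basepoint free by Setup \ref{setup-general} (it is the pull-back of an ample class on $T'$); hence $D$ is semiample as well. A semiample divisor on a projective variety is ample precisely when it is positive on every irreducible curve, so it suffices to check $D \cdot C > 0$ for all curves $C \subset X'$. As $\mu^*(-K_X)$ and $-K_{X'}$ are both nef, $D \cdot C = \mu^*(-K_X) \cdot C + (-K_{X'}) \cdot C \geq 0$, with equality if and only if $C$ is contracted by both $\mu$ and $\varphi$. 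The key observation is that this cannot occur: if $C$ is contracted by $\mu$ it lies in a fibre of $\mu|_E \colon E \to B$, which — $B$ being a smooth surface in the smooth fourfold $X$ — is a line $\ell \simeq \PP^1$, so $C$ is numerically a positive multiple of $\ell$; and from $-K_{X'} \simeq \mu^*(-K_X) - E$ together with $E \cdot \ell = -1$ we obtain $-K_{X'} \cdot \ell = 1$, so $-K_{X'} \cdot C > 0$ and $C$ is not $\varphi$-contracted. Therefore $D \cdot C > 0$ for every $C$, and $D$ is ample.

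To conclude, note that by \eqref{pullbackmu} one has $\mu^*(-K_X) = D - \varphi^* H$ and $E = \mu^*(-K_X) - \varphi^* H = D - 2\varphi^* H$. Since $(\varphi^* H)|_F \simeq \sO_F$ for every fibre $F$ of $\varphi$, the restrictions of $\mu^*(-K_X)$ and of $E$ to each fibre of $\varphi$ agree with the restriction of the ample divisor $D$, hence are ample; thus $\mu^*(-K_X)$ and $E$ are $\varphi$-ample.

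I do not expect a genuine obstacle here: the argument uses only the Setup and elementary positivity, and in particular no information about $\dim T'$ is required. The only mildly delicate points are the standard facts that a sum of semiample divisors is semiample and that a semiample divisor positive on all curves is ample, together with the intersection computation $E \cdot \ell = -1$ on the exceptional divisor of the blow-up of a smooth surface; the rest is bookkeeping.
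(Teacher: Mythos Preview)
Your proof is correct and rests on exactly the same key observation as the paper's: no curve can be contracted by both $\mu$ and $\varphi$, since a $\mu$-exceptional line $\ell$ satisfies $-K_{X'}\cdot\ell=1\neq 0$. The only difference is cosmetic packaging—the paper checks directly that $\mu^*(-K_X)|_F$ is ample on each $\varphi$-fibre, whereas you first assemble the global ample divisor $D=\mu^*(-K_X)+\varphi^*H$ and then restrict; both routes use the identical intersection computation and the fibrewise criterion for relative ampleness.
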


\begin{proof}
The divisor $\mu^*(-K_X)$ is semiample and big and has degree zero exactly on the exceptional curves of the blow-up $\mu$. Thus if $F \subset X'$ is a $\varphi$-fibre such that $\mu^*(-K_X)|_F$ is not
ample, then $F$ contains a $\mu$-exceptional curve $l$. 
Since $-K_{X'} \simeq \varphi^* \sO_T'(1)$ we obtain $-K_{X'} \cdot l=0$.
Yet we know that $-K_{X'} \cdot l=1$ since
$\mu$ is a blow-up, a contradiction.
\end{proof}

\begin{lemma} \label{lemma-dimensionT}
In the Setup \ref{setup-general} the variety $T'$ has dimension three.
\end{lemma}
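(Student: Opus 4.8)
Set $n:=\dim T'$. Since $-K_{X'}\simeq\varphi^*H$ with $\varphi_*\sO_{X'}\simeq\sO_{T'}$ and $H$ ample and globally generated, we have $n=\kappa(-K_{X'})$, and $n\ge 1$ because $h^0(X',-K_{X'})=h^0(X,-K_X)\ge 4$. The plan is to exclude $n\in\{1,2,4\}$. Throughout I use the general $Y\in\AX$ with strict transform $Y'\subset X'$ from Setup \ref{setup-general}: recall that $Y'$ is a smooth Calabi--Yau threefold, that $\mu_Y:=\mu|_{Y'}:Y'\to Y$ is small and \emph{not} an isomorphism (so it contracts a rational curve $l\cong\PP^1$ with $\mu(l)$ a point, whence $\mu^*(-K_X)\cdot l=0$ and, since $\mu$ is a blow-up, $-K_{X'}\cdot l=1$), and that $\varphi_{Y'}:=\varphi|_{Y'}:Y'\to U$ is defined by the complete linear system $|M'|$ with $M':=-K_{X'}|_{Y'}$, $h^0(Y',M')=h^0(X,-K_X)-1\ge 3$, $\dim U=n-1$, and $M'\simeq\varphi_{Y'}^*(H|_U)$.

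\emph{The cases $n=1$ and $n=2$: pull-back from a low-dimensional base.} If $n=1$, then $U$ is zero-dimensional, a general $D_0\in|H|$ is a reduced divisor of degree $\deg H\ge h^0(T',H)-1\ge 3$ on the curve $T'$, and $Y'=\varphi^*D_0$ is a disjoint union of $\deg H\ge 2$ fibres of $\varphi$; this contradicts the connectedness of $Y'$ (which follows from $h^0(\sO_{Y'})=1$, read off the ideal-sheaf sequence of $Y'\subset X'$ together with $h^3(X',\sO_{X'})=h^3(X,\sO_X)=0$). If $n=2$ — this is the step where the hypothesis $h^0(X,-K_X)\ge 4$ is essential — then $U$ is a curve and $M'$ is pulled back from a curve; taking the Stein factorisation $\varphi_{Y'}=\beta\circ g$ with $g:Y'\to\widetilde U$ a fibration onto a smooth curve and $\beta$ finite, we get $M'\simeq g^*D'$ with $D':=\beta^*(H|_U)$ and $h^0(\widetilde U,D')=h^0(Y',M')\ge 3$, so $\deg D'\ge 2$. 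Since $M'\cdot l=-K_{X'}\cdot l=1\ne 0$ the map $g$ does not contract $l$, so $g|_l:l\to\widetilde U$ is a finite surjection and $M'\cdot l=\deg(g|_l)\cdot\deg D'\ge 2$, contradicting $M'\cdot l=1$.

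\emph{The case $n=4$.} Here $-K_{X'}$ is big, so $X'$ is a weak Fano fourfold and $\varphi$ is its crepant anticanonical birational contraction onto the Fano fourfold $T'$, with $\varphi^*(-K_{T'})\simeq-K_{X'}$ and $\mu^*(-K_X)\simeq\varphi^*(-K_{T'})+E$. By Lemma \ref{lemma-rel-ample} the divisor $E$ is $\varphi$-ample; since $\varphi$ does not contract $l$ (because $-K_{X'}\cdot l=1>0$) it does not contract the fibres of $E$, so $E$ maps birationally onto a prime divisor $D'\subset T'$. Now for every curve $C$ contracted by $\varphi$ one has $\mu^*(-K_X)\cdot C=E\cdot C>0$ (positivity because $E$ is $\varphi$-ample), hence $\mu$ contracts no $\varphi$-exceptional curve and $\mu(C)$ is a curve with $-K_X\cdot\mu(C)=E\cdot C$. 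The plan is to turn this ``transversality'' of the two contractions $\mu$ and $\varphi$ into a contradiction with $\BsAX$ being a surface: push $h^0(X,-K_X)=h^0(X',\mu^*(-K_X))=h^0(T',-K_{T'}+D')$ through $\varphi$ using $\varphi_*\sO_{X'}(E)\simeq\sO_{T'}(D')$, and combine it with the positivity of $-K_{T'}$ and of $D'$ on the fibres of $E$. Alternatively, restricting to $Y'$ as above, $n=4$ forces $M'=-K_{X'}|_{Y'}$ to be big on the Calabi--Yau threefold $Y'$ with $\varphi_{Y'}:Y'\to U$ birational while $\mu_Y$ is small and non-trivial, and one rules this configuration out by the same kind of degree argument on $l$ as in the case $n=2$.

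Having excluded $n\in\{1,2,4\}$ we conclude $\dim T'=3$. The heart of the argument, and the only place where the numerical hypothesis $h^0(X,-K_X)\ge4$ is used, is the exclusion of $n=2$; for $h^0(X,-K_X)\in\{2,3\}$ the inequality $\deg D'\ge 2$ fails and this case genuinely survives (cf.\ Remark \ref{remark-dimensionT}). I expect the case $n=4$ to be the most delicate step to write out in full, since it requires comparing the birational contraction $\varphi$ with the blow-down $\mu$ rather than a single numerical inequality.
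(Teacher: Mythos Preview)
Your treatment of $n\in\{1,2\}$ is correct and actually streamlines the paper's argument. For $n=2$, rather than identifying $T'\simeq\PP^2$ as the paper does, you work directly on the Stein-factorised curve $\widetilde U$ and use the inequality $\deg D'\ge h^0(\widetilde U,D')-1\ge 2$ against $M'\cdot l=1$; this is cleaner and avoids any appeal to classification of low-degree polarised varieties. For $n=1$ the paper's argument is shorter than yours (one line: $\nu(-K_{X'})=1$ makes $M'=-K_{X'}|_{Y'}$ numerically trivial, impossible since $h^0(Y',M')\ge 3$ and $h^1(\sO_{Y'})=0$), but your disconnectedness argument is also valid.

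The genuine gap is $n=4$. Neither of your sketches leads to a proof. The first approach (pushing $E$ to a divisor $D'\subset T'$ and comparing $h^0$) produces no contradiction: you have no independent control over $h^0(T',-K_{T'}+D')$ to compare against. The second approach (``the same kind of degree argument on $l$'') cannot work as stated: when $n=4$ the map $\varphi_{Y'}$ is birational, not a fibration over a curve, so there is no analogue of the inequality $\deg D'\ge 2$ to play against $M'\cdot l=1$. A degree-one curve for a birational morphism carries no contradiction.

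The paper's argument for $n=4$ is different in kind and runs entirely on the threefold $Y'$ and its divisor $B'$. Since $M'=-K_{X'}|_{Y'}$ is nef and big on the smooth Calabi--Yau threefold $Y'$, Kawamata--Viehweg gives $H^1(Y',\sO_{Y'}(M'))=0$. Combined with the fact that $B'$ is the fixed part of $|M'+B'|=|\mu^*(-K_X)|_{Y'}|$, the ideal-sheaf sequence of $B'\subset Y'$ forces
\[
H^0\bigl(B',\sO_{B'}(K_{B'}+M'|_{B'})\bigr)=0
\]
(use adjunction $K_{B'}=B'|_{B'}$ since $K_{Y'}$ is trivial). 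If $M'|_{B'}$ is big this contradicts Kawamata's effective nonvanishing on smooth surfaces \cite{Kaw00}. If $M'|_{B'}$ is not big, then the birational map $\varphi_{Y'}$ contracts the prime divisor $B'$; the negativity lemma then produces a curve $C\subset B'$ with $B'\cdot C<0$ contracted by $\varphi_{Y'}$, whence $\mu^*(-K_X)\cdot C=(M'+B')\cdot C=B'\cdot C<0$, contradicting nefness of $\mu^*(-K_X)$. This two-case argument via effective nonvanishing on $B'$ is the missing idea; the ``transversality'' of $\mu$ and $\varphi$ that you emphasise plays no role.
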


\begin{proof}
Since $4 \leq h^0(X, \sO_X(-K_X)) = h^0(X', \sO_{X'}(-K_{X'}))$ it is clear that $\dim T'>0$.
Note that $\dim T'$ is equal to the numerical dimension $\nu(-K_{X'})$.
We will argue by contradiction and exclude the cases $\dim T' \neq 3$ one-by-one. 

{\em 1st case. Assume $\dim T'=\nu(-K_{X'})=1$.}
Then the restriction of $-K_{X'}$ to $Y' \in |-K_{X'}|$ is numerically trivial. Yet $-K_{X'}|_{Y'} \simeq M' \neq 0$, a contradiction.

{\em 2nd case. Assume  $\dim T'=\nu(-K_{X'})=4$.}
Then the divisor $-K_{X'}$ is big and so is its restriction $M' \simeq -K_{X'}|_{Y'}$.  Thus we have $H^1(Y', \sO_{Y'}(M'))=0$ by Kawamata-Viehweg vanishing
and therefore the restriction morphism
$$
H^0(Y', \sO_{Y'}(-K_{X'})) \rightarrow H^0(B', \sO_{B'}(-K_{X'}) \simeq H^0(B', \sO_{B'}(K_{B'} + M'))
$$
is surjective and the zero map. Thus $H^0(B', \sO_{B'}(K_{B'} + M'))=0$ which is impossible by \cite[Thm.3.1]{Kaw00} if $M'|_{B'}$ is big. If 
$$
M'|_{B'} \simeq -K_{X'}|_{B'} \simeq (\varphi^* H)|_{B'}
$$
is not big, the prime divisor $B'$ is contracted by the birational map $\varphi_{Y'}$.
In particular $B'$ is not $\varphi_{Y'}$-nef by the negativity lemma, i.e.\ there
exists a curve $C \subset B'$ that is contracted by $\varphi_{Y'}$ and $B' \cdot C<0$. 
By \eqref{onY} we obtain
$$
\mu^*(-K_{X}) \cdot C =
((\varphi^* H)|_{Y'} + B') \cdot C = 
B' \cdot C<0,
$$
a contradiction to $\mu^*(-K_{X})$ being nef.

{\em 3rd case. Assume $\dim T'=\nu(-K_{X'})=2$.}
Since the exceptional divisor is $\varphi$-ample by Lemma \ref{lemma-rel-ample},
the morphism $E \rightarrow T'$ is surjective. 
Let $l \subset E$ be a fibre of the blow-up $\mu$.
Then we have
$$
1 = -K_{X'} \cdot l = \varphi^* H \cdot l = H \cdot \varphi_* l.
$$
Since $H$ is an ample Cartier divisor we see that the curves $\varphi_* l = \varphi(l)$
define an unsplit family of rational curves that has degree one with respect to $H$.

Now recall from the Setup \ref{setup-general} that the birational map $\holom{\mu_Y}{Y'}{Y}$
is not an isomorphism, so we can choose $l$ such that it is contained in $Y'$.
Since $U=\varphi(Y) \subset T'$ is an irreducible curve we obtain
$U = \varphi(l)$.
Thus we have
$$
H^2 = 
H \cdot U = H \cdot l = 1.
$$
Since $H$ is globally generated, the morphism $\varphi_{|H|}$ defines an isomorphism $T' \simeq \PP^2$ such that $H \simeq (\varphi_{|H|})^* \sO_{\PP^2}(1)$. In particular we have
$$
h^0(X, \sO_X(-K_X)) = h^0(X', \sO_{X'}(-K_{X'})) = h^0(T', \sO_{T'}(H)) = h^0(\PP^2, \sO_{\PP^2}(1))=3,
$$
contradicting our assumption $h^0(X, \sO_X(-K_X)) \geq 4$.
\end{proof}

\begin{remark} \label{remark-dimensionT}
In the proof of Lemma \ref{lemma-dimensionT} the assumption $h^0(X, \sO_X(-K_X)) \geq 4$ was only used in the third case. If $h^0(X, \sO_X(-K_X)) \geq 3$ the proof shows that either $\dim T'=3$
or we have a fibration $\holom{\varphi:=\varphi_{|-K_{X'}|}}{X'}{\PP^2}$
such that $-K_{X'} \simeq \varphi^* \sO_{\PP^2}(1)$.
\end{remark}

An outcome of the proof of Theorem \ref{theorem-nefdivisor} will be the following technical statement that we need
for later reference:

\begin{lemma} \label{lemma:technical}
In the situation of the Setup \ref{setup-general}, the morphism 
$$
\holom{\varphi:=\varphi_{|-K_{X'}|}}{X'}{T'}
$$
does not contract a divisor. Moreover there exists a finite set $N \subset T'$ such that 
$$
\fibre{\varphi}{T' \setminus N} \rightarrow T' \setminus N
$$
is a flat elliptic fibration with integral fibres.
\end{lemma}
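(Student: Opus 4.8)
The plan is to analyse $\varphi$ by restricting it to a general anticanonical divisor, where Theorem~\ref{thm-birational} and Proposition~\ref{proposition-section} can be applied, and then to transport the outcome back to $X'$. First note that $T'$ is a normal threefold — it is normal as the Stein factorisation of a morphism from the smooth variety $X'$, and $\dim T'=3$ by Lemma~\ref{lemma-dimensionT} — and that $\varphi$ is an elliptic fibration: since $-K_{X'}\simeq\varphi^*H$ with $H$ ample, a general fibre $F$ of $\varphi$ satisfies $K_F\simeq K_{X'}|_F\simeq\sO_F$, so $F$ is a smooth elliptic curve.

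Fix a general $Y'\in|-K_{X'}|$ as in Setup~\ref{setup-general}. Then $Y'$ is a smooth threefold with $K_{Y'}\simeq\sO_{Y'}$, and $h^1(Y',\sO_{Y'})=h^2(Y',\sO_{Y'})=0$ follows from the sequence $0\to\sO_{X'}(K_{X'})\to\sO_{X'}\to\sO_{Y'}\to0$ together with the vanishing of $H^i(X',\sO_{X'})$ and $H^i(X',\sO_{X'}(K_{X'}))$ for $i=1,2$ on the Fano manifold $X$; so $Y'$ is Calabi-Yau. I would then apply Theorem~\ref{thm-birational} with $Z=Y'$, $\varphi_Z=\varphi_{Y'}\colon Y'\to U$, $A=\mu^*(-K_X)|_{Y'}$, $S=B'$ and $M_U=H|_U$, so that $M=M'=\varphi_{Y'}^*M_U$ by~\eqref{onY}. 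Here $A$ is nef and big (indeed $A^3=(-K_X)^4>0$); the decomposition $|A|_{Y'}|=|M'|+B'$ into mobile and fixed part is inherited from the decomposition $|-K_X|_Y|=|M|+B$ of Setup~\ref{setup-general} because $\mu_Y$ is small, so $Y'$ and $Y$ agree in codimension one and $B'$ is a smooth prime divisor; the restriction $A|_{B'}=(\mu|_{B'})^*(-K_X|_B)$ is big since $-K_X|_B$ is ample; and $H|_U$ is an ample and globally generated Cartier divisor since $H$ is. Theorem~\ref{thm-birational} then yields that $B'$ is a rational section of $\varphi_{Y'}$. Moreover $\mu^*(-K_X)$ is $\varphi$-ample by Lemma~\ref{lemma-rel-ample}, so its restriction to $Y'$ and hence $B'=\mu^*(-K_X)|_{Y'}-\varphi_{Y'}^*M_U$ is $\varphi_{Y'}$-ample; thus Proposition~\ref{proposition-section} applies and shows that $\varphi_{Y'}$ is a flat elliptic fibration with integral fibres and $B'$ a regular section of it.

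Now I transport this to $X'$. Since $Y'=\fibre{\varphi}{U}$, the fibres of $\varphi_{Y'}$ over points of $U$ are exactly the fibres of $\varphi$ over those points; in particular $B'=E\cap Y'$ meets a general $\varphi$-fibre in a single reduced point, so the proper morphism $\holom{\varphi|_E}{E}{T'}$ — surjective because $B'$ dominates $U$ and $\varphi(E)$ is closed — is generically finite of degree one. The crux is to show that $\varphi|_E$ does not contract a divisor of $E$: a contracted curve $C\subset E$ has $\varphi^*H\cdot C=0$, hence $\sO_E(E)\cdot C=\mu^*(-K_X)\cdot C\ge 0$, so $C$ is a multisection, non-negative against $\mu^*(-K_X)$, over a curve in $B$, of the $\PP^1$-bundle $p\colon E=\PP(\ma N_{B/X}^*)\to B$; ruling out that such curves move in a two-dimensional family uses the positivity of $E$ obtained in the course of proving Theorem~\ref{theorem-nefdivisor}. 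Granting this, $\varphi|_E$ is a small birational morphism from the smooth threefold $E$ onto the normal threefold $T'$, so its exceptional locus is a finite union of curves contracted to finitely many points; hence $T'$ is smooth outside a finite set $N_1\subset T'$, and where $\varphi|_E$ is an isomorphism it exhibits $E$ as a section of $\varphi$. It follows that $\varphi$ does not contract a divisor: $E$ is not contracted, while a prime divisor $D\ne E$ with $\dim\varphi(D)\le 1$ would satisfy $D\cap E=\emptyset$ — forcing $-K_{X'}|_D=(\mu|_D)^*(-K_X|_{\mu(D)})$ to be big although $(\varphi|_D)^*H$ has numerical dimension at most one — or $D\cap E\ne\emptyset$, in which case a $2$-dimensional component of $D\cap E$ is a prime divisor of $E$ contracted by $\varphi|_E$, against what we just showed.

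Finally, $N_2:=\{t\in T'\colon\dim\fibre{\varphi}{t}\ge 2\}$ is closed; were it positive-dimensional, its preimage would contain a $3$-dimensional component, i.e.\ a divisor contracted by $\varphi$, which we have excluded, so $N_2$ is finite. Set $N:=N_1\cup N_2$. Over $T'\setminus N$ the fibration $\varphi$ is equidimensional with one-dimensional fibres, and since $X'$ is Cohen-Macaulay and $T'\setminus N$ is regular, it is flat there. By flatness the arithmetic genus of the fibres and the degrees $\sO_{X'}(E)\cdot\fibre{\varphi}{t}$ are constant on $T'\setminus N$, equal to their general values $1$ and $1$; and a projective curve of arithmetic genus one carrying an ample Cartier divisor of degree one is integral — a splitting into $\ge 2$ components, or a multiplicity $\ge 2$, would force the degree to be $\ge 2$. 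Hence $\fibre{\varphi}{T'\setminus N}\to T'\setminus N$ is a flat elliptic fibration with integral fibres. The delicate step is controlling $\varphi|_E$ in the third paragraph, which is what makes $T'$ smooth outside a finite set; everything else then follows in a routine manner.
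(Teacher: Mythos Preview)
Your overall strategy is the paper's: apply Theorem~\ref{thm-birational} and Proposition~\ref{proposition-section} to a general $Y'\in|-K_{X'}|$ to see that $B'=E\cap Y'$ is a regular section of $\varphi_{Y'}$, and then promote this to information about $\varphi$ itself. The set-up in your first two paragraphs is correct and matches the paper.

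The gap is precisely where you flag it. You need that $\varphi|_E$ does not contract a divisor, and your appeal to ``positivity of $E$ obtained in the course of proving Theorem~\ref{theorem-nefdivisor}'' is circular: in the paper, Lemma~\ref{lemma:technical} \emph{is} Steps~1--3 of that proof. More importantly, your reduction of ``$\varphi$ contracts no divisor'' to ``$\varphi|_E$ contracts no divisor'' does not go through with the tools at hand. What one can prove directly is weaker: varying $U\in|H|$ and using that $B'\to U$ is an isomorphism for every general $U$, one obtains that the set $N\subset T'$ of points with positive-dimensional $(\varphi|_E)$-fibre is finite (this is the paper's Step~1, and you have all the ingredients for it but do not carry it out). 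This does \emph{not} exclude a two-dimensional fibre of $\varphi|_E$ over some point of $N$, so it does not by itself exclude a $\varphi$-contracted divisor $D$: one only gets $\varphi(D)\subset N$, hence $\varphi(D)$ is a point.

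At this stage the paper brings in a genuinely different idea (Step~2). Given such a $D$ with $\varphi(D)$ a point, a general $Y'$ is disjoint from $D$; since $\mu^*(-K_X)$ is $\varphi$-ample, $\mu(D)\subset X$ is a prime divisor, and one checks that $Y\cap\mu(D)=B$ set-theoretically. As $\mu(D)$ is Cartier this forces the Weil divisor $B\subset Y$ to be $\Q$-Cartier, contradicting \cite[Thm.~1.3]{HS25}. This use of the earlier structural result on anticanonical divisors is the missing input; your third paragraph does not supply an alternative. Once ``no contracted divisor'' is established and $N$ is known to be finite, your final paragraph (flatness over $T'\setminus N$ and integrality of fibres from the degree-one relatively ample section $E$) is correct and is exactly how the paper concludes.
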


\begin{proof}[Proof of Theorem \ref{theorem-nefdivisor} and Lemma \ref{lemma:technical}]
We use the notation from the Setup \ref{setup-general}.
Our main goal is to construct a nef divisor $A_X \rightarrow X$ of numerical dimension two.
This will be achieved in the four steps.

Since $\dim T'=3$, the general fibre of 
$\holom{\varphi}{X'}{T'}$ is an elliptic curve and $E \subset X'$ surjects onto $T$ since $E$ is $\varphi$-ample by Lemma \ref{lemma-rel-ample}.

{\em Step 1. We show that $\holom{\varphi_E}{E}{T'}$ is birational and the image of the exceptional locus is at most a finite set.}
The birational morphism
$\holom{\mu_Y}{Y'}{Y}$
is small, so if we set $\mu^*(-K_X)|_{Y'}$ it is a nef and big divisor such  that
the restriction to every prime divisor is big. By \eqref{onY}
we have a decomposition $|\mu^*(-K_X)|_{Y'}| \simeq |-K_{X'}|_{Y'}|+B'$ into mobile and fixed part
and we have seen in the Setup \ref{setup-general} that the induced elliptic fibration
$\holom{\varphi_{Y'}}{Y'}{U}$ is defined by the full linear system $|-K_{X'}|_{Y'}| = \varphi_{Y'}^* |H|_U|$. Since $B'$ is smooth we can apply
Theorem \ref{thm-birational} to obtain that $B'$ is a rational
section of $\varphi_{Y'}$. Since $B' = E \cap \fibre{\varphi}{U}$ this shows that $E \rightarrow T'$ is birational.

Let now $N \subset T'$ be the image of the exceptional locus of $\varphi_E$, arguing by contradiction we assume that $\dim N>0$. Then for a general element $U \in |H|$ the intersection $U \cap N$ is not empty, so if we set $Y' := \fibre{\varphi}{U} \in |-K_{X'}|$,
then $B' := Y' \cap E$ is a rational section of
$\holom{\varphi_{Y'}}{Y'}{U}$ and the map $U \dashrightarrow B'$ has at least one point of indeterminacy.
Yet $B'$ is $\varphi_{Y'}$-ample by Lemma \ref{lemma-rel-ample},
so we know by Proposition \ref{proposition-section} that $B'$ is
a regular section, so we get a contradiction.

{\em Step 2. We show that $\holom{\varphi}{X'}{T'}$ does not contract a divisor.}
We argue by contradiction and assume that $\varphi$ contracts a prime divisor $D$ onto a set of dimension at most one. Thus the fibres of $D \rightarrow \varphi(D)$ have dimension at least two. Since $E$ is $\varphi$-ample, the intersection $D \cap E$ is not empty
and contained in the exceptional locus of $\holom{\varphi_E}{E}{T'}$.
By Step 1 this shows that $\varphi(D)$ is a point.

We choose a general element $U \in |H|$ that does not contain the point $\varphi(D)$,
so the general anticanonical divisor $Y' := \fibre{\varphi}{U} \subset X'$ is disjoint from $D$. 
The morphism $D \rightarrow \mu(D)$ is finite since $\mu^*(-K_X)$
is ample on $D$ by Lemma \ref{lemma-rel-ample}. Thus $\mu(D)$ is a prime divisor in $X$,
it is not disjoint from $B$ since it contains the non-empty set $\mu(D \cap E) \subset B$.
Since $Y'$ and $D$ are disjoint, their images $Y := \mu(Y)$ and $\mu(D)$ intersect at most
along $B$. Since $B \subset Y$ and $\mu(D)$ is not disjoint from $B$ the intersection is not empty. Yet both divisors are Cartier, so $Y \cap \mu(D)$ has dimension at least two. 
In conclusion we have a set-theoretical equivality
$$
Y \cap \mu(D) = B.
$$
Since $\mu(D)$ is Cartier this shows that $B \subset Y$ is $\Q$-Cartier, a contradiction
to \cite[Thm.1.3]{HS25}.

{\em Step 3. Construction of the divisor $A_X$.}
Let $N \subset T'$ be the image of the $\varphi_E$-exceptional locus, so $N$ is finite by Step 1.
Since $E$ is $\varphi$-ample the fibration $\varphi$ is equidimensional over $T' \setminus N$.
\footnote{Together with Step 1 and 2 this finishes the proof of Lemma \ref{lemma:technical}.}

Let now $A \rightarrow B$ be a general hyperplane section, 
then $\mu_E^* A$ is a prime divisor on $E$, so its push-forward
$$
A_{T'} := (\varphi_E)_* (\mu_E^* A) \qquad \in \mbox{\rm Cl}(T')
$$
via the birational map $\varphi_E$ is a prime Weil divisor on $T'$.
For later use note that $A_{T'}$ is Cartier in the complement of $N$. 
 
By Step 2 the fibration $\varphi$ does not a contract a divisor, so by Lemma \ref{lemma-pullback-weil} the pull-back
$$
A_{X'} := \varphi^* A_{T'} \qquad \in \mbox{\rm Cl}(X')
$$
is well-defined. Since $X'$ is smooth the divisor $A_{X'}$ is Cartier, and 
$$
A_X := \mu_* A_{X'} \in \mbox{\rm Cl}(X')
$$
is also Cartier.

{\em Step 4. We show that $A_X$ is nef of numerical dimension two.}
We first show that
\begin{equation}
\label{AXpullback}
A_{X'} \simeq \mu^* A_X.
\end{equation}

{\em Proof of the claim.}
Let $Y' \subset X'$ be a general anticanonical divisor, so
$Y' = \varphi^* U$ with $U \subset T'$ a divisor that is disjoint from $N$. In particular 
$B' = E \cap Y'$ is a section of $\varphi_{Y'}$. 
The pull-back defined in Lemma \ref{lemma-pullback-weil} extends the pull-back of Cartier divisors. Since $A_{T'}$ is Cartier in the complement of $N \subset T'$ we have
$$
A_{X'}|_{Y'} \simeq \varphi_{Y'}^* (A_{T'}|_U).
$$
Yet by definition $A_{T'}|_U = (\varphi|_{B'})_* (\mu_{B'})^* A$
and $\varphi_{B'}$ is an isomorphism, so
\begin{equation}
\label{AXrestricted}
A_{X'}|_{Y'} \simeq  \varphi_{Y'}^* (\varphi_{B'})_*  (\mu_{B'})^* A
\qquad 
\mbox{and}
\qquad
A_{X'}|_{B'} \simeq  (\mu_{B'})^* A.
\end{equation}
Now recall that $Y' \rightarrow Y$ is a small modification that is not an isomorphism,
so $B' = E \cap Y'$ contains at least one of the exceptional curves $l$.
Thus \eqref{AXrestricted} yields 
$$
A_{X'}  \cdot l = (A_{X'})|_{B'} \cdot l  =   (\mu_{B'})^* A \cdot l = 0.
$$
Thus $A_{X'}$ has degree zero on the extremal ray contracted by $\mu$, so it is a pull-back
of some Cartier divisor on $X$ \cite[Thm.7.39 c)]{Deb01}. By definition $A_X = \mu_* A_{X'}$ so the Cartier divisor must be $A_X$.

In order to show that $A_X$ is nef, note that it is sufficient by Koll\'ar's theorem \cite[Theorem]{Bor91} to show that $A_X|_Y$ is nef for $Y \subset X$ a general anticanonical divisor.  Since $A_{X'} = \mu^* A_X$ this is equivalent
to showing that $A_{X'}|_{Y'}$ is nef.
Yet by \eqref{AXrestricted} we know that $A_{X'}|_{Y'}$ is the pull-back of a hyperplane section, so nef.  

A nef divisor on a Fano manifold is semiample, so the numerical dimension coincides with the Iitaka dimension and the nef dimension (cf. \cite{8authors02} for the notion of nef dimension).
Let us now compute this dimension: 
by \eqref{AXrestricted} the numerical dimension of $A_{X'}|_{Y'} \simeq \mu_Y^* (A_X|_Y)$ is two, so $A_X$ has numerical dimension at least two.

On the other hand if $C$ is a general fibre of the elliptic fibration $\varphi_{Y'}$ we have
$$
A_X \cdot \mu(C) = \mu^* A_X \cdot C = A_{X'} \cdot C= A_{X'}|_{Y'} \cdot C= 0
$$ 
by \eqref{AXrestricted}. The family of curves
$\mu(C)$ dominates $X$, so $A_X$ has nef dimension at most three. Yet if the nef dimension
is exactly three the general fibre of $\varphi_{|m A_X|}$ for $m \gg 0$ is exactly
the curve $\mu(C)$. Since $C$ is elliptic the curve $\mu(C)$ has positive genus. 
But $X$ is a Fano manifold, so this is impossible.

After this technical preparation we can finally come to the statements in our theorem:

{\em Proof of item i.}
A nef divisor on a Fano manifold is semiample, so by Step 4 some multiple of $A_X$
defines a fibration 
$$
\holom{\pi}{X}{V}
$$
onto a surface $V$. Thus $A_X \sim_\Q \pi^* H_V$ with $H_V$ an ample $\Q$-divisor on $V$. Combining \eqref{AXpullback} and \eqref{AXrestricted} we see that
$A_X|_B$ coincides with the hyperplane section $A$. In particular $A_X|_B \sim_\Q (\pi^* H_V)|_B$ is ample and therefore the morphism $B \rightarrow V$ is finite.

Let $F$ be a general $\pi$-fibre, so $F$ is a smooth del Pezzo surface.
Let $F' \subset X'$ be the strict transform under the blow-up $\mu$. Then
$F' \rightarrow F$ is the blow-up in the finite set $B \cap F$ and $E \cap F'$
is the exceptional locus.
We have seen in Step 4 that a general fibre $C_t = \fibre{\varphi}{t}$ of the elliptic fibration
satisfies $A_X \cdot \mu(C_t)=0$, so $F$ is dominated by a one-dimensional family
of curves $\mu(C_t)$. Since $E$ is a $\varphi$-section we have $E \cdot C_t=1$
and therefore $-K_F \cdot \mu(C_t) = -K_X \cdot \mu(C_t)=1$.
Since the nef divisor $C_t \subset F'$ meets the exceptional divisor $E \cap F'$ its
image $\mu(C_t)$ is nef and big. By the Hodge index inequality (Lemma \ref{hodge-index}) we have
$$
1 = (-K_F \cdot \mu(C_t))^2 \geq (-K_F)^2 \cdot {\mu(C_t)}^2 \geq (-K_F)^2 \geq 1.
$$
Thus we have $(-K_F)^2 = {\mu(C_t)}^2 = 1$ and $F$ is a del Pezzo surface of degree one.
Moeover equality holds in the Hodge index inequality, so
the divisors are colinear in $N^1(F) \simeq \mbox{Pic}(F)$.
This shows that $\mu(C_t) \in |-K_F|$.
Since the elliptic curve $\mu(C_t) \subset F$ passes through all the points in $B \cap F$
we see that $B \cap F$ is exactly one point. Therefore the finite morphism
$\pi_B: B \rightarrow V$ is birational, so an isomorphism.

The anticanonical divisor $-K_X$ is a $\pi$-ample line bundle such that $(-K_X)^2 \cdot F=1$.
Moreover $B$ is a $\pi$-section, so we conclude by Lemma \ref{lemma-factorisation-degree-one}.

{\em Proof of items ii.\ and iii.}
By the relative Kodaira vanishing theorem we have $R^j \pi_* \sO_X(-K_X)=0$ for all $j \geq 1$. Since $\pi$ is flat we obtain by cohomology and base change that
$$
h^0(F_v, \sO_{F_v}(-K_{X})) = h^0(F, \sO_F(-K_X))=2
$$
for every $v \in V$. Since $(-K_X)^2 \cdot F_v=1$ 
and $-K_X|_{F_v}$ is ample, every element
in the linear system $|-K_X|_{F_v}|$ has irreducible support. In particular  $|-K_X|_{F_v}|$
has no fixed part. Now $(-K_X)^2 \cdot F_v=1$ implies that its base scheme
is a reduced point $p_v$ and this point is contained in the smooth locus of $F_v$.
Since
$$
\mbox{Bs} |-K_X|_{F_v}| \subset \BsAX \cap F_v = B \cap F_v
$$
and $B$ is a $\pi$-section we obtain that $p_v$ is exactly $B \cap F_v$.
In conclusion we see that the image of the natural morphism
$$
\pi^* \pi_* \sO_X(-K_X) \rightarrow \sO_X(-K_X)
$$
is exactly the sheaf $\sI_B \otimes \sO_X(-K_X)$. Pulling-back via the principalisation $\mu$ we obtain a surjection
$$
\mu ^* \pi^* \pi_* \sO_X(-K_X) \twoheadrightarrow \mu^* (\sI_B \otimes \sO_X(-K_X))
\twoheadrightarrow \sO_{X'}(-E) \otimes \mu^* \sO_X(-K_X) \simeq \sO_{X'}(-K_{X'}).
$$
The quotient line bundle $\mu ^* \pi^* \pi_* \sO_X(-K_X) \rightarrow \sO_{X'}(-K_{X'})$
determines a morphism
$$
\holom{\phi}{X'}{T := \PP(\pi_* \sO_X(-K_X))} \rightarrow V
$$
such that for every $v \in V$ the restriction to $F_v'$ is
the blow-up of the basepoint $p_v$. In particular $\phi$ is equidimensional and its general fibre coincides with the fibration $\holom{\varphi}{X'}{T'}$.
The rigidity lemma gives a birational morphism
$\holom{\nu}{T}{T'}$ such that $\varphi = \nu \circ \phi$.

It is clear that $E$ is a rational $\phi$-section, so we have 
a birational morphism $\holom{\phi_E}{E}{T}$. Since both manifolds are $\PP^1$-bundles
over $V \simeq B$, they have the same Picard number. Thus $\phi_E$ is an isomorphism
and $E$ is a regular section. Since $E$ is $\varphi$-ample it is clear that it is also $\phi$-ample. By Lemma \ref{lemma-factorisation-degree-one} the fibres of $\phi$ are integral, so we have shown ii.

The existence of the commutative diagram in iii.\ is clear by construction. Note that
the isomorphism $\phi_E$ identifies the $\PP^1$-bundle structure $\mu_E$
to the structural morphism $\PP(\pi_* \sO_X(-K_X)) \rightarrow V$
so we denote it by $\mu_T$.
\end{proof}

\section{Examples} \label{section-examples}

In this section we present a general construction for Fano manifolds with large anticanonical base locus which is crucial for our classification theorem.
We will then apply the construction to certain Fano bundles over del Pezzo surfaces of degree $\geq 2$ to illustrate the non-trivial birational geometry of most of these varieties. Thanks to our classification in Section \ref{The classification}, all the examples presented here are precisely the Fano fourfolds appearing in Theorem \ref{theorem-main}; see Table \ref{table} for their numerical invariants.

\begin{proposition} \label{construction:general}
Let $\sF$ be a globally generated vector bundle of rank $r \geq 2$ over a projective manifold $V$. Denote by 
$$
\holom{\mu_T}{T:= \PP(\sF)}{V}
$$ 
the projectivisation and by $\zeta_T$ the tautological divisor
on $T$. Set
$$
W := \sO_T \oplus \sO_T(-2\zeta_T) \oplus \sO_T(-3\zeta_T)
$$
and denote by $\zeta_W$ the tautological divisor on 
$$
\holom{\Phi}{\PP(W)}{T}.
$$
Let $X' \subset \PP(W)$ be a general element of the linear system $|3 \zeta_W + \Phi^* (6\zeta_T)|$.
Then the following holds:
\begin{enumerate}
\item The variety  $X'$ is smooth and contains $E:=\PP(\sO_T(-3\zeta_T)) \simeq T$.
\item The morphism 
$$
\holom{\phi:=\Phi|_{X'}}{X'}{T}
$$
is a flat elliptic fibration with integral fibres. 
\item The section $E \subset X'$ is a $\phi$-ample divisor and $\ma N_{E/X'} \simeq \sO_T(-\zeta_T)$.
\item The fibration 
$$
\holom{\mu_E :=\mu_T}{E \simeq T}{V}
$$
extends to a bimeromorphic morphism
$$
\holom{\mu}{X'}{X}
$$
onto a compact complex manifold $X$ such that $\mu|_E = \mu_E$ and $X' \setminus E \simeq X \setminus B$, i.e.\ $X'$ is the blow-up of $X$ along $B := \mu(E) \simeq V$. 
\item The morphism $\mu_T \circ \phi$ factors through the blow-down $\mu$, and we obtain
a fibration
$$
\holom{\pi}{X}{V}
$$
such that $-K_{X}$ is $\pi$-ample and all the fibres are del Pezzo varieties of degree one.
\item The manifold $X$ is projective.
\item We have
\begin{equation}
\label{formulaanticanonical}
-K_{X'} \simeq \phi^* \bigl((r-1) \zeta_T + \mu_T^* (-K_V-\det \sF)\bigr).
\end{equation}
If $-K_V-\det \sF$ is ample, then 
$X$ is a Fano manifold.
\end{enumerate}
\end{proposition}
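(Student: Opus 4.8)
\emph{Plan.} The statement amounts to constructing the standard Weierstra\ss\ model over $T=\PP(\sF)$ and then contracting the ``$\mu_T$-direction'' of its distinguished section $E$. I would proceed in four steps, the hardest being the algebraicity of that contraction.

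\textbf{Items (1)--(3): the Weierstra\ss\ fibration.} Since $\sF$ is globally generated so is $\zeta_T$, hence $\sO_T(k\zeta_T)$ is globally generated for $k\ge0$ and has no sections for $k<0$. Writing $\Phi_*\sO_{\PP(W)}\big(3\zeta_W+\Phi^*(6\zeta_T)\big)$ as the sum of the $\sO_T\big((6-a)\zeta_T\big)$ over the weights $a$ of $\Sym^3W$, one reads off that $|3\zeta_W+\Phi^*(6\zeta_T)|$ has base locus exactly $E=\PP(\sO_T(-3\zeta_T))$ and that its general member $X'$ is, after a coordinate change in the fibre direction, a Weierstra\ss\ cubic $zy^2=x^3+p\,xz^2+q\,z^3$ with $p,q$ general sections of $\sO_T(4\zeta_T)$, $\sO_T(6\zeta_T)$. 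Bertini gives smoothness off $E$; along $E$ the $z$-partial derivative is a unit, so $X'$ is smooth there too, and as the fibres of $\Phi|_{X'}$ are connected plane cubics, $X'$ is a smooth irreducible divisor containing $E$. The map $\phi:=\Phi|_{X'}$ has one-dimensional fibres, hence is flat by a dimension count; its general fibre is smooth (the discriminant $4p^3+27q^2$ is a nonzero section of $\sO_T(12\zeta_T)$ for general $p,q$) and every fibre is an irreducible Weierstra\ss\ cubic, so integral. Since $X'_t$ passes through $[0{:}0{:}1]$ for all $t$, $\phi|_E$ is a bijective morphism of smooth varieties, hence an isomorphism, so $E$ is a section, and $\sO_{X'}(E)$ has degree one on each integral fibre, so $E$ is $\phi$-ample. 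Finally the normal bundle sequence of $E\subset X'\subset\PP(W)$, with $\ma N_{E/\PP(W)}\simeq\sO_T(-3\zeta_T)\oplus\sO_T(-\zeta_T)$ and $\ma N_{X'/\PP(W)}|_E\simeq\sO_T(-3\zeta_T)$, forces $\ma N_{E/X'}\simeq\sO_T(-\zeta_T)$. (These are the standard facts about Weierstra\ss\ models, cf. Proposition \ref{prop:weierstrass}.)

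\textbf{Item (7), the class $-K_{X'}$, and Items (4)--(6).} Adjunction in $\PP(W)$ together with $K_{\PP(W)/T}=-3\zeta_W+\Phi^*\det W$ and $\det W=\sO_T(-5\zeta_T)$ makes all $\zeta_W$-terms cancel, so $-K_{X'}=\phi^*(-K_T-\zeta_T)$; inserting $K_T=-r\zeta_T+\mu_T^*(K_V+\det\sF)$ gives \eqref{formulaanticanonical}, i.e.\ $-K_{X'}=\phi^*D_T$ with $D_T:=(r-1)\zeta_T+\mu_T^*(-K_V-\det\sF)$, a class that is nef and, when $-K_V-\det\sF$ is ample, ample on $T$. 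To contract $E$ I put $L':=\phi^*(\zeta_T+m\,\mu_T^*H_V)+E$ on $X'$, with $H_V$ very ample on $V$ and $m\gg0$; using $L'|_E=\mu_T^*(mH_V)$, the ampleness of $\zeta_T+m\,\mu_T^*H_V$ on $T$, and that $E$ meets each $\phi$-fibre once, one checks that $L'$ is nef with $\mbox{\rm Null}(L')=E$ and that $L'\cdot C=0$ precisely for the curves contracted by $\mu_T:E=\PP(\sF)\to V$. Moreover $aL'-K_{X'}=\phi^*\tilde D+aE$ with $\tilde D$ ample on $T$ for $a\gg0$, and this is nef and big (its restriction to $E$ is ample, and $\phi^*\tilde D\cdot(\phi^*\tilde D+aE)^{\dim X-1}\ge0$), so the basepoint-free theorem makes $L'$ semiample. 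Its associated morphism $\mu:X'\to X$ contracts $E$ onto $B:=\mu(E)\simeq V$; since $\ma N_{E/X'}$ restricts to $\sO_{\PP^{r-1}}(-1)$ on the fibres of $E\to V$, the Fujiki--Nakano contractibility criterion shows that $X$ is smooth, that $\mu$ is the blow-up of $B$ with $E=\PP(\ma N_{B/X}^*)$ and $\ma N_{B/X}^*\simeq\sF$ (comparing tautological classes), and $X$ is projective, being $\mbox{\rm Proj}$ of the section ring of $L'$ over the projective $V$. As $\mu_T\circ\phi$ is constant on $\mu$-fibres, the rigidity lemma yields $\pi:X\to V$ with $\mu_T\circ\phi=\pi\circ\mu$ and $B$ a $\pi$-section. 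For a $\pi$-contracted curve $C$ (necessarily $C\not\subset B$, with strict transform $C'$), $-K_X\cdot C=D_T\cdot\phi_*C'+(r-1)E\cdot C'\ge0$, and equality is impossible (it would force $C'$ to be a whole $\phi$-fibre, so $E\cdot C'=1$); since also $(-K_X)^r\cdot\pi^{-1}(v)=(r-1)^r$ (computed from $\mu_F^*(-K_F)=(r-1)(h+E_F)$ and $(h+E_F)^r=1$ on the point blow-up $F'=\phi^{-1}(\PP^{r-1}_v)$ of $F=\pi^{-1}(v)$), the relative basepoint-free theorem gives that $-K_X$ is $\pi$-ample, and the same computation identifies each $\pi^{-1}(v)$ as a del Pezzo variety of degree one.

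\textbf{Item (7), the Fano property, and the main obstacle.} Using the Weierstra\ss\ identity $\phi_*\sO_{X'}(kE)=\sO_T\oplus\bigoplus_{i=2}^k\sO_T(-i\zeta_T)$ (extending Fact \ref{fact:weierstrass}) and the formula for $-K_{X'}$, one obtains for $m\gg0$ that $\pi_*\sO_X(-mK_X)$ is a direct sum of bundles $\Sym^j\sF\otimes\sO_V\!\big(m(-K_V-\det\sF)\big)$; if $-K_V-\det\sF$ is ample, each summand is a globally generated bundle twisted by an ample line bundle, hence an ample vector bundle, so $\sE:=\pi_*\sO_X(-mK_X)$ is ample, and then $-mK_X=\sO_{\PP(\sE)}(1)|_X$ is ample, i.e.\ $X$ is Fano. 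The main obstacle is Item (4): realising the contraction of $E$ as a genuine morphism of \emph{projective} varieties that inverts a smooth blow-up. My plan handles this by writing down the explicit semiample class $L'$ (which makes $X$ automatically projective) and invoking the Fujiki--Nakano criterion for the local structure, the most delicate verification being that $aL'-K_{X'}$ is big; a secondary delicate point is the identification of the base locus and the smoothness of $X'$ along $E$ in Items (1)--(2).
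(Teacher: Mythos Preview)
Your argument is correct but diverges from the paper's in three places. For item (i) you put the general member in explicit Weierstra\ss\ form and read off smoothness along $E$ from the $z$-partial; the paper instead blows up $E$ inside $\PP(W)$, projects to $\PP(K):=\PP(\sO_T\oplus\sO_T(-2\zeta_T))$, and shows via a direct-image computation that $H^0(\PP(W),\sI_E^2\otimes L)\subsetneq H^0(\PP(W),\sI_E\otimes L)$, then invokes \cite[Lemma 1.7.4]{BS95}. Your route is more concrete; theirs is coordinate-free. For (iv)--(vi) the paper applies Fujiki--Nakano directly to obtain the analytic blow-down and only afterwards deduces projectivity from the $\pi$-ampleness of $-K_X$; your explicit semiample $L'$ gives a projective $\mu$ in one stroke, at the cost of the nef/big check for $aL'-K_{X'}$ you correctly flag (note that $\tilde D$ need not be ample in general, but it is nef for $m\gg0$ and the restriction to $E$ is big, which suffices). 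For the Fano statement in (vii) the paper argues directly that $\mu^*(-K_X)$ is nef, big, and strictly positive on curves and then applies the basepoint-free theorem; your route via the ampleness of $\pi_*\sO_X(-mK_X)$ is valid, though the direct sum actually skips the index $j=m(r-1)-1$ since $\phi_*\sO_{X'}(kE)=\sO_T\oplus\bigoplus_{i\ge 2}\sO_T(-i\zeta_T)$, which is harmless. One point where you are a bit quick: in (v), the self-intersection $(-K_X)^r\cdot F=(r-1)^r$ alone does not identify $F$ as del Pezzo of degree one without first knowing that every fibre is integral Gorenstein with $-K_F=(r-1)A$ for a Cartier $A$; the paper handles this by observing that $F'_v$ is an integral Weierstra\ss\ fibration over $\PP^{r-1}$ containing $E\cap F'_v$ in its smooth locus, so that the restricted blow-down is to a smooth point and one can cite Fujita \cite[13.7]{Fuj84III}.
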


\begin{remark}\label{remark-restrictionB}
For later use, let us note that the proof will show that
$$
\sO_{B}(-K_X) \simeq \sO_V(-K_V-\det \sF)
$$
and
$$
\ma N_{B/X}^*\simeq \sF.
$$ 
\end{remark}

\begin{proof}
In this proof we will frequently identify divisors on $T$ and $E$ via the isomorphism
$\Phi|_E$.

{\em Proof of i.}
Set $L := \sO_W(3 \zeta_W + \Phi^*(6\zeta_T))$ and note that 
$$
L \simeq \sO_{\PP(\sO_T(2\zeta_T) \oplus \sO_T \oplus \sO_T(-\zeta_T))} (3). 
$$
Since $\zeta_T$ is globally generated, the base locus of the linear system $|L|$ is exactly $E$. 
Note also that
$$
\ma N^*_{E/\PP(W)} \otimes L_E \simeq 
\bigl(
\sO_E(3\zeta_T) \oplus \sO_E(\zeta_T)
\bigr) 
\otimes \sO_E(-3\zeta_T)
\simeq
\sO_E \oplus \sO_E(-2\zeta_T). 
$$
Thus a section of this vector bundle is either zero or does not vanish in any point. By 
\cite[Lemma 1.7.4]{BS95}
this means that a divisor $X' \in |L|$ is either smooth along the base locus or has $E$ as a component of its singular locus. In view of the exact sequence
$$
0 \rightarrow H^0(\PP(W), \sI_E^2 \otimes L) \rightarrow H^0(\PP(W), \sI_E \otimes L) \rightarrow  H^0(E,\ma N^*_{E/\PP(W)} \otimes L_E) \simeq \C
$$
there exists a smooth element $X' \in |L|$ if and only if the inclusion 
\begin{equation}
\label{inclusionideals}
H^0(\PP(W), \sI_E^2 \otimes L) \rightarrow H^0(\PP(W), \sI_E \otimes L)
\end{equation}
is strict. Let $\holom{\tau}{P}{\PP(W)}$ the blow-up of $E$, and denote by $G$ the exceptional divisor. Then we have
$$
H^0(P,  \sO_P(-2G) \otimes \tau^* L) \simeq H^0(\PP(W), \sI_E^2 \otimes L), \quad H^0(P, \sO_P(-G) \otimes \tau^* L) \simeq H^0(\PP(W), \sI_E \otimes L).
$$
Set
$$
\holom{\pi_K}{\PP(K) := \PP(\sO_T \oplus \sO_T(-2\zeta_T))}{T},
$$
and denote by $\zeta_K$ the tautological divisor on $\PP(K)$.
Then $\tau$ resolves the indeterminacies of the map 
$\PP(W) \dashrightarrow \PP(K)$ given by fibrewise projection from $E$. Therefore 
$\holom{q}{P}{\PP(K)}$ is a $\PP^1$-bundle and the exceptional divisor $G$ is a $q$-section.
An 
elementary computation shows that
\begin{equation}
\label{pushG}
q_*(\sO_P(G)) \simeq \sO_{\PP(K)} \oplus \sO_{\PP(K)}(-\zeta_K + \pi_K^*(-3\zeta_T)).
\end{equation}
We summarise the construction in a commutative diagram:
$$
\xymatrix{
P \ar[d]_{q} \ar[r]^{\tau} & \PP(W) \ar[d]^{\Phi} & \ar @{_{(}->}[l] X' \ar[ld]^\phi \\
\PP(K) \ar[r]^{\pi_K} & T \ar[d]^{\mu_T} & \\
& V &
}
$$
We have $\tau^* \zeta_W \simeq q^* \zeta_K + G$ and therefore
$$
\sO_P(-2G) \otimes \tau^* L \simeq \sO_P\bigl(G + q^* (3 \zeta_K + \pi_K^* (6\zeta_T))\bigr),
\quad
\sO_P(-G) \otimes \tau^* L \simeq \sO_P\bigl(2G + q^* (3 \zeta_K + \pi_K^* (6\zeta_T))\bigr).
$$
Pushing forward via $q_*$ and using \eqref{pushG} the inclusion \eqref{inclusionideals} becomes
\begin{multline}
H^0\bigl(\PP(K), \sO_{\PP(K)}(3 \zeta_K+\pi_K^* (6\zeta_T)) \oplus  \sO_{\PP(K)}(2 \zeta_K + \pi_K^* (3\zeta_T))\bigr)
\hookrightarrow \\
H^0\bigl(\PP(K), \sO_{\PP(K)}(3 \zeta_K+\pi_K^* (6\zeta_T)) \oplus  \sO_{\PP(K)}(2 \zeta_K + \pi_K^* (3\zeta_T))  \oplus \sO_{\PP(K)}(\zeta_K)\bigr).
\end{multline}
Since $H^0(\PP(K), \sO_{\PP(K)}(\zeta_K))=\C$ the inclusion is strict,
so a general element $X' \in |L|$ is smooth. 
The variety $E$ is contained in the base locus of $|L|$ so it is clear that $E \subset X'$.

{\em Proof of ii.\ and iii.}
The varieties $X'$ and $T$ being smooth, the flatness of $\phi$ is clear if we show that $\phi$
is equidimensional. The equation of 
$X' \in |3 \zeta_W + \Phi^* (6\zeta_T)|$ in $\PP(W)$ corresponds to a global section of the vector bundle
$$
S^3 W \otimes \sO_T(6\zeta_T) \simeq
S^3 \bigl(\sO_T \oplus \sO_T(-2\zeta_T) \oplus \sO_T(-3\zeta_T)\bigr) \otimes \sO_T(6\zeta_T) 
\supset \sO_T.
$$
For a general section, the coefficient corresponding to the trivial direct factor does not vanish in any point $t \in T$, so the cubic equation in $\PP(W_t) \simeq \PP^2$ is not zero.

Note that the restriction of $L$ to the divisor $\PP(
\sO_T(-2\zeta_T) \oplus \sO_T(-3\zeta_T)) \subset \PP(W)$ is
isomorphic to $\sO_{\PP(
\sO_T \oplus \sO_T(-\zeta_T))}(3)$, so it has $3E$ as its unique section. This shows that
$$
\sO_{\PP(W)}(1) \otimes \sO_{X'} \simeq \sO_{X'}(3E),
$$
so the section $E \subset X'$ is $\phi$-ample. Since $\phi$ is flat and has a relatively ample line bundle of degree one, all the fibres are integral.

Since $\ma N_{E/\PP(W)} \simeq \sO_E(-3\zeta_T) \oplus \sO_E(-\zeta_T)$ the normal bundle 
$\ma N_{E/X'}$ can be easily computed from the exact sequence
$$
0 \rightarrow \ma N_{E/X'} \rightarrow \ma N_{E/\PP(W)} \rightarrow \ma N_{X'/\PP(W)} \otimes \sO_E 
\simeq L_E \simeq \sO_T(-3\zeta_T) \rightarrow 0.
$$

{\em Proof of iv.}
The isomorphism $\ma N_{E/X'}^* \simeq \sO_T(\zeta_T)$ implies the existence of
the blow-down of $E$ along $\mu_E$ since we can apply the criterion of Fujiki and Nakano
\cite{FN72},\cite[Thm.3.1]{Pet94}.

{\em Proof of v.\ and vi.}
By the rigidity lemma the factorisation $\pi$ exists if every $\mu$-fibre is contracted
by $\mu_T \circ \phi$ onto a point. But this is clear since by construction $\mu$
is the extension of $\mu_E=\mu_T$ to a blow-down. 

For any point $v \in V$ the fibre $X'_v := \fibre{(\mu_T \circ \phi)}{v}$ has a flat elliptic
fibration with integral fibres over $\fibre{\mu_T}{v} \simeq \PP^{r-1}$, in particular it is integral. The intersection $X'_v \cap E \simeq \PP^{r-1}$ is contained in the smooth locus
of $X'_v$ and $\ma N_{E/X'_v\cap E} \simeq \sO_{\fibre{\mu_T}{v}}(-\zeta_T) \simeq
\sO_{\PP^{r-1}}(-1)$, so the restriction of $\mu$ to $X'_v$ is the blow-down to a smooth point. By \cite[13.7]{Fuj84III} this shows that $X_v$ is a del Pezzo variety of degree one.

Since all the $\pi$-fibres are integral and Fano, the anticanonical bundle $-K_X$ is $\pi$-ample. In particular the morphism $\pi$ is projective over a projective base $M$, so $X$ is projective.

{\em Proof of vii.}
The formula \eqref{formulaanticanonical} is a consequence of the adjunction formula
and the formula for the canonical of a projectivised bundle. Let us now show the last 
statement:
since $\zeta_T$ is the tautological class of a nef vector bundle, we know that
$\zeta_T + \mu_T^* (-K_V-\det \sF)$ is ample. 
Note that $B \subset X$ has codimension $r$, so we have
$$
\mu^*(-K_X) \simeq -K_{X'} + (r-1) E.
$$
Since $E$ is $\phi$-ample and $-K_{X'}$ is the pull-back of an ample divisor on $T$, we have
$\mu^*(-K_X) \cdot C>0$ for every curve $C \subset X'$ that is not contained in $E$. Moreover
\eqref{formulaanticanonical} and iii.\ imply
$$
\sO_E(\mu^*(-K_X)) \simeq \sO_E(-K_{X'} + E) \simeq \sO_T(\mu_T^* (-K_V-\det \sF)),
$$
so $\mu^*(-K_X)$ is nef and has degree zero exactly on the fibres of $\mu$.
This shows that $-K_X$ is nef and has strictly positive degree on all curves,
it also proves Remark \ref{remark-restrictionB}.

Note now that
$$
{\mu^*(-K_X)}^{\dim X} \geq (-K_{X'})^{\dim X-1} \cdot E > 0
$$
since $-K_{X'}$ is the pull-back of an ample divisor on $T \simeq E$. Thus $-K_X$ is nef and big and strictly positive on all the curves. By the basepoint-free theorem we obtain that $-K_X$ is ample.
\end{proof}

We now focus on the case that is relevant for us:

\begin{proposition} \label{construction:rk2}
In the situation of Proposition \ref{construction:general} assume that
$$
r=2
$$
and that $-K_V-\det \sF$ is ample and globally generated. Then the following holds:
\begin{enumerate}
\item We have 
$B = \BsAX.$
\item We have
$
E + \phi^* \zeta_T \simeq \mu^* A \simeq \mu^* (-K_{X/V}+\det \sF),
$
with $A$ a semiample divisor on $X$. 
\item Let 
$$
\holom{\psi}{X}{\bar X}
$$
be the morphism with connected fibres defined by some positive multiple of $A$. Then we have $A \simeq \psi^* \bar A$
with $\bar A$ an ample divisor on $\bar X$.
\item The morphism $\psi$ contracts the base locus $B$ onto a point $p$ and
$\mbox{\rm Bs}(|\bar A|)=p$.
\item Let $N \subset T$ be a subvariety such that $\sO_N(\zeta_T) \simeq \sO_N$. Then
$$
\fibre{\phi}{N} \simeq N \times C
$$
with $C$ an integral curve of arithmetic genus one and 
\begin{equation}
\label{restrictionEN}
\sO_{\fibre{\phi}{N}}(E) \simeq p_C^* \sO_C(p).
\end{equation}
The morphism
$\psi \circ \mu$ contracts 
$\fibre{\phi}{N}$ onto a curve $\bar C \subset \bar X$ such that $\bar A \cdot \bar C=1$.
\item Let $k$ be the numerical dimension of the nef divisor $\zeta_T$. Then
$\bar X$ has dimension $k+1$. In particular $\psi$ is birational if and only if $\zeta_T$ is big.
In this case let $D_T \subset T$ be the null locus (cf.\ Remark \ref{remark-nulllocus})
of the divisor $\zeta_T$. Then the exceptional locus of $\psi$ is
$$
B \cup \mu(\fibre{\phi}{D_T}).
$$
\end{enumerate}
\end{proposition}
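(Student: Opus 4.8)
The plan is to prove the six assertions one at a time, in each case establishing the statement upstairs on $X'$, where the Weierstra\ss\ model of Proposition~\ref{construction:general} is explicit, and then transporting it down via $\mu$. The tools used repeatedly are: the anticanonical formula \eqref{formulaanticanonical}, which for $r=2$ reads $-K_{X'}\simeq\phi^*\big(\zeta_T+\mu_T^*(-K_V-\det\sF)\big)$; the normal bundle $\ma N_{E/X'}\simeq\sO_T(-\zeta_T)$ together with the identification $\phi|_E\colon E\xrightarrow{\sim}T$; the Weierstra\ss\ identities $\phi_*\sO_{X'}(E)\simeq\sO_T$ and $R^1\phi_*\sO_{X'}(E)=0$ of Fact~\ref{fact:weierstrass}; the blow-up relation $K_{X'}=\mu^*K_X+E$; the commutativity $\pi\circ\mu=\mu_T\circ\phi$; and Remark~\ref{remark-restrictionB}.

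For (i) I would write $\mu^*(-K_X)\simeq -K_{X'}+E\simeq\phi^*\mathcal A+E$ with $\mathcal A:=\sO_T(\zeta_T)\otimes\mu_T^*\sO_V(-K_V-\det\sF)$, globally generated because $\sO_T(\zeta_T)=\sO_{\PP(\sF)}(1)$ and $-K_V-\det\sF$ are. Pushing forward by $\phi$ identifies $H^0(X',\mu^*(-K_X))$ with $H^0(T,\mathcal A)$, every section being $s_E\cdot\phi^*\sigma$ for the canonical section $s_E$ of $\sO_{X'}(E)$ vanishing on $E$; since $\mathcal A$ is globally generated this shows the base ideal of $|\mu^*(-K_X)|$ is $\sO_{X'}(-E)$, and pushing down along the blow-up ($\mu_*\sO_{X'}(-E)=\sI_B$) gives $\BsAX=B$ scheme-theoretically. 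For (ii) I set $A:=-K_X+\pi^*(K_V+\det\sF)$; the same substitution together with $\mu^*\pi^*=\phi^*\mu_T^*$ yields $\mu^*A\simeq\phi^*\zeta_T+E$. This class is nef: it restricts to $\zeta_T+(-\zeta_T)=0$ on $E$, and on a curve not contained in the prime divisor $E$ it is nonnegative because $\phi^*\zeta_T$ is nef; hence $A$ is nef, and as $X$ is Fano the divisor $A-K_X$ is ample, so $A$ is semiample by the base-point-free theorem.

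Items (iii)--(iv): the semiample $A$ defines a contraction $\psi\colon X\to\bar X$ with $A\simeq\psi^*\bar A$ for an ample Cartier divisor $\bar A$ (stabilise $\varphi_{|mA|}$ and take $\bar A$ a suitable difference of very ample classes, which is positive on every non-contracted curve). Remark~\ref{remark-restrictionB} and $\pi|_B\colon B\xrightarrow{\sim}V$ give $A|_B\simeq\sO_V(-K_V-\det\sF)\otimes\sO_V(K_V+\det\sF)\simeq\sO_B$, so $\psi$ contracts $B$ to a point $p$; arguing exactly as in (i) with $\zeta_T$ in place of $\mathcal A$ shows $\Bs{|A|}=B$, hence $\bar A$ is not base-point-free, while for $q\neq p$ the fibre $\psi^{-1}(q)$ is disjoint from $B$, so $\Bs{|\bar A|}=\{p\}$. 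For (v), restricting to $\Phi^{-1}(N)\cong N\times\PP^2$ (legitimate since $W|_N\cong\sO_N^{\oplus 3}$ when $\sO_N(\zeta_T)\simeq\sO_N$), the twist $\Phi^*(6\zeta_T)$ becomes trivial and $\zeta_W$ restricts to $\mathrm{pr}_2^*\sO_{\PP^2}(1)$, so the equation of $X'$ restricts to a constant cubic; thus $\phi^{-1}(N)=N\times C$ with $C$ a plane cubic, integral of arithmetic genus one by (ii), and $E\cap\phi^{-1}(N)=N\times\{p\}$ with $p\in C$, which is \eqref{restrictionEN}. Restricting $\mu^*A=\phi^*\zeta_T+E$ to $N\times C$ gives $p_C^*\sO_C(p)$, of degree one on $\{n\}\times C$ and zero on $N\times\{c\}$; since $N$ is connected $\varphi_{|m\bar A|}$ factors through $\mathrm{pr}_C$ on $N\times C$, so $\psi\circ\mu$ collapses the first factor onto an integral curve $\bar C$ with $\bar A\cdot\bar C=1$.

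Item (vi) is the technical heart. From $\ma N_{E/X'}\simeq\sO_T(-\zeta_T)$ and $\phi|_E$ an isomorphism one gets the self-intersection identity $E^i=\iota_*\big((-\zeta_T)^{i-1}\big)$ in the Chow ring of $X'$ ($\iota\colon E\hookrightarrow X'$); expanding the binomial, $(\phi^*\zeta_T+E)^{k+1}\equiv\iota_*(\zeta_T^{k})$ and $(\phi^*\zeta_T+E)^{k+2}\equiv 0$, where $k=\nu(\zeta_T)$. As $\zeta_T^{k}\not\equiv 0$ on $T$, this gives $\nu(\mu^*A)=k+1$, hence $\dim\bar X=\nu(A)=k+1$, and $\psi$ is birational precisely when $k=\dim T$, i.e.\ $\zeta_T$ is big. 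Assuming $\zeta_T$ big, $\mu^*A$ and $A$ are semiample and big, so by Remark~\ref{remark-nulllocus} $\Exc(\psi)=\mathrm{Null}(A)$ is swept out by curves $C$ with $A\cdot C=0$; since $A$ is $\pi$-ample (its restriction to a $\pi$-fibre $F$ is $-K_F$, ample on a del Pezzo surface of degree one), such a $C$ is $\pi$-horizontal, and if $C\not\subset B$ its strict transform $C'$ satisfies $\phi^*\zeta_T\cdot C'=E\cdot C'=0$, forcing $C'\cap E=\emptyset$ and $\phi_*C'\subset\mathrm{Null}(\zeta_T)=D_T$, so $C\subset\mu(\phi^{-1}(D_T))$; with $B=\Bs{|A|}\subset\mathrm{Null}(A)$ this gives $\mathrm{Null}(A)\subseteq B\cup\mu(\phi^{-1}(D_T))$. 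Conversely $A|_B\equiv 0$ puts $B$ into $\mathrm{Null}(A)$, and for $W\subseteq D_T$ with $\zeta_T^{\dim W}\cdot W=0$ the same self-intersection identity gives $(\mu^*A)^{\dim W+1}\cdot\phi^{-1}(W)=0$, placing $\phi^{-1}(D_T)$ into $\mathrm{Null}(\mu^*A)=\Exc(\psi\circ\mu)=E\cup\mu^{-1}(\Exc(\psi))$; since $\phi^{-1}(W)\not\subset E$ this forces $\mu(\phi^{-1}(D_T))\subset\Exc(\psi)$, so $\Exc(\psi)=B\cup\mu(\phi^{-1}(D_T))$. I expect the main difficulty to lie in this last item — the two-sided control of $\mathrm{Null}(A)$ and the careful juggling of the four spaces $X',X,\bar X,T$ with their identifications — together with pinning down the scheme structure of $\BsAX$ in (i).
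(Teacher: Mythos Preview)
Your proof is correct and follows essentially the same route as the paper's: work on $X'$ using the explicit Weierstra\ss\ data, then push down via $\mu$. The only noteworthy difference is in item (vi): where you expand $(E+\phi^*\zeta_T)^m$ via the self-intersection identity $E^i=\iota_*((-\zeta_T)^{i-1})$ and a binomial sum, the paper obtains the same formula more directly from the observation $\sO_E(E+\phi^*\zeta_T)\simeq\sO_E$, which immediately gives $(E+\phi^*\zeta_T)^m=E\cdot\phi^*\zeta_T^{m-1}+\phi^*\zeta_T^m$; and for the reverse inclusion in the description of $\Exc(\psi)$ the paper appeals to item (v) (using that $\zeta_T$ is globally generated, hence trivial on any $\zeta_T$-null curve) rather than your direct intersection computation on $\phi^{-1}(W)$. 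Both approaches are equivalent; the paper's is marginally shorter.
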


\begin{remarks}
\begin{itemize}
\item The morphism $\psi$ contracts an extremal face in the Mori cone of $X$, but 
as one can see from the examples below it is not an elementary contraction
unless $V \simeq \PP^2$.
\item If $V$ a del Pezzo surface 
of degree one and $\sF=\sO_{V}^{\oplus 2}$, the divisor $-K_V-\det\sF$ is not globally generated,
so Proposition \ref{construction:rk2} does not apply. Nevertheless our construction 
yields the Fano fourfold from \cite[Ex.2.12]{HV11} where the base locus $\BsAX$ is a reducible surface. 
\item A posteriori, Theorem \ref{theorem-main} shows that any smooth Fano fourfold with $h^0(-K_X) \geq 4$ and such that the scheme-theoretic base locus $\BsAX$ is a smooth irreducible surface $B$ is uniquely determined by the data $(B,\, \ma N_{B/X}^*)$. Therefore we will describe some of the examples below, which are constructed through Propositions \ref{construction:general} and \ref{construction:rk2}, by explicitly producing a smooth Fano fourfold as above with the prescribed data $(B,\, \ma N_{B/X}^*)$.
\end{itemize} 
\end{remarks}

\begin{proof}

{\em Proof of i.}
By assumption $-K_V-\det \sF$ is ample and globally generated, so \eqref{formulaanticanonical}
shows that $-K_{X'}$ is the pull-back of an ample and globally generated divisor.
Since $B \subset X$ has codimension $r=2$, we have
$$
H^0(X', \sO_{X'}(-K_{X'})) \simeq H^0(X, \sI_B \otimes \sO_{X}(-K_{X})) \subset H^0(X, \sO_{X}(-K_{X})).
$$
So it is sufficient to show that $H^0(X', \sO_{X'}(-K_{X'})) \simeq H^0(X, \sO_{X}(-K_{X}))$.
Yet by Fact \ref{fact:weierstrass} we have $\phi_* \sO_{X'}(E) \simeq \sO_T$, so
by the projection formula and \eqref{formulaanticanonical}
$$
\phi_* \mu^* \sO_X(-K_X) \simeq \phi_* \sO_{X'}(-K_{X'} + E) \simeq 
\sO_T\bigl(\zeta_T + \mu_T^* (-K_V-\det \sF)\bigr) \otimes \phi_* \sO_{X'}(E) \simeq  \phi_*\sO_{X'}(-K_{X'}).
$$ 
In particular the two line bundles have the same space of global sections.

{\em Proof of ii.\ and iii.} The divisor $\zeta_T$ is nef and $E$ is effective, so 
$(E + \phi^* \zeta_T) \cdot C \geq 0$ for every curve $C \not\subset E$.
Moreover 
\begin{equation}
\label{restrictionAE}
\sO_E(E + \phi^* \zeta_T) \simeq \sO_E
\end{equation}
by item iii.\ of Proposition \ref{construction:general},
so $E + \phi^* \zeta_T$ is nef. The divisor has degree zero on the extremal ray contracted
by the blow-down $\mu$, so there exists a divisor $A$ on $X$ such that
 $E + \phi^* \zeta_T \simeq \mu^* A$. Since $X$ is Fano by item vii.\ of Proposition \ref{construction:general}, the nef divisor $A$ is semiample.
 The isomorphism  $E + \phi^* \zeta_T \simeq \mu^* A \simeq \mu^* (-K_{X/V}+\det \sF)$
 follows easily from \eqref{formulaanticanonical}.

Using again $X$ Fano a standard application of the basepoint-free theorem shows that $A \simeq \psi^* \bar A$ with $\bar A$
a Cartier divisor.
 
{\em Proof of iv.} 
Since $B=\mu(E)$  
and $E + \phi^* \zeta_T \simeq \mu^* A$, we see that \eqref{restrictionAE} implies $\sO_B(A) \simeq \sO_B$. Therefore, $\psi$ contracts $B$ onto a point $p$.
For the second statement observe that
by construction 
$$
H^0(\bar X, \sO_{\bar X}(\bar A)) \simeq H^0(X', \sO_{X'}(E+\phi^* \zeta_T)) 
$$
and $\zeta_T$ is globally generated, so the base locus of $\bar A$ is contained
in the image of $E$, i.e.\ the image of $B$. 

{\em Proof of v.} 
Let now $N \subset T$ be a subvariety such that $\sO_N(\zeta_T) \simeq \sO_N$.
Then, $W_N \simeq \sO_N^{\oplus 3}$ and so $\PP(W_N)$ is a product $N \times \PP^2$. The restriction of  $3 \zeta_W + \Phi^* (6\zeta_T)$ to $\PP(W_N)$ is isomorphic
to $p_{\PP^2}^* \sO_{\PP^2}(3)$, so 
$$
\fibre{\phi}{N} \simeq  X' \cap \PP(W_N) \simeq N \times C 
$$
with $C$ an integral cubic in $\PP^2$. 
Moreover $E \cap \PP(W_N) = \PP(\sO_N) \simeq N \times p$ 
with $p$ a point in $C$, so we obtain \eqref{restrictionEN}.

By \eqref{restrictionEN}
we have $\sO_{\fibre{\phi}{N}}(\mu^* A) \simeq p_C^* \sO_C(p)$.
Thus the variety gets contracted onto a curve $\bar C \simeq C$ and 
$\bar A \cdot \bar C=1$.
 
{\em Proof of vi.} Since $\psi$ is defined by some multiple of $A$ it is sufficient to determine
the numerical dimension of $E+\phi^* \zeta_T = \mu^* A$. From \eqref{restrictionAE} we deduce that for all $m \in \N$ one has
\begin{equation} \label{eqn-ezeta-powers}
(E+\phi^* \zeta_T)^m = E \cdot \phi^* \zeta_T^{m-1} + \phi^* \zeta_T^{m}.
\end{equation}
Therefore $(E+\phi^* \zeta_T)^{k+1} = E \cdot \phi^* \zeta_T^k \neq 0$
and $(E+\phi^* \zeta_T)^{k+2}=0$.

For the second statement, note that the exceptional locus of $\psi$ is exactly the null locus of $A$,
equivalently the image of the null locus of $\mu^* A$. By item v.\ we have
$\fibre{\phi}{\mbox{\rm Null}(\zeta_T)} \subset \mbox{\rm Null}(\mu^* A)$ and we claim that
$$
\mbox{\rm Null}(\mu^* A) = E \cup \fibre{\phi}{\mbox{\rm Null}(\zeta_T)}.
$$
For this we show that if
$C' \subset X'$ is an irreducible curve such that $\psi \circ \mu$ contracts $C'$, then
$C' \subset E$ or $\zeta_T \cdot \phi(C')=0$:
indeed if
$$
0= \mu^* A \cdot C' = (E+\phi^* \zeta_T) \cdot C', 
$$
and $C' \not\subset E$, then $E \cdot C' \geq 0$ and $\phi^* \zeta_T \cdot C' \geq 0$
since $\zeta_T$ is nef. Thus equality holds and $\phi(C') \subset \mbox{\rm Null}(\zeta_T)$. 
\end{proof}

\begin{remark} \label{remark-fourfold-formulas}
In the situation of Proposition \ref{construction:rk2} assume that $\dim V=2$, so that $X$ is a smooth Fano fourfold. If $\psi$ is birational, then
$$
\bar A^{4} = \zeta_T^{3} = c_1(\sF)^2-c_2(\sF).
$$
Indeed $E+\phi^* \zeta_T=\mu^* \psi^* \bar A$ together with \eqref{eqn-ezeta-powers}
implies $\bar A^{4} = \zeta_T^{3}$. Since $\sF$ has rank two, the latter is equal
to the Segre class $c_1(\sF)^2-c_2(\sF)$.
\end{remark}

We will now apply our construction to Fano bundles over del Pezzo surfaces, these examples will
eventually correspond to the families appearing in Theorem \ref{theorem-main} and Corollary \ref{corollary-main}. 
We denote by $[\# ]$ the corresponding number in Table \ref{table}.

\begin{example}$\boldsymbol{[\# 1]}$ \label{example-new2}
{\rm 
Set $V:=\PP^2$ and $\sF := \sO_{\PP^2} \oplus \sO_{\PP^2}(2)$. Then the conditions of the Propositions \ref{construction:general} and \ref{construction:rk2} are satisfied and we obtain a smooth Fano fourfold 
$X$ such that $\BsAX \simeq \PP^2$. Let
$$
\holom{\psi}{X}{\bar X}
$$ 
be the morphism constructed in Proposition \ref{construction:rk2}.
Then $\psi$ is a divisorial Mori contraction with exceptional divisor 
$$
D \simeq \PP^2 \times C
$$
and for every non-trivial fibre the normal bundle is $\sO_{\PP^2}  \oplus \sO_{\PP^2} (-2)$.
}
\end{example}

\begin{proof}
Since $\sF$ is nef and big the morphism $\psi$ is birational by item vi.\ of Proposition \ref{construction:rk2}, so we are left to describe its exceptional locus.
The null locus of $\zeta_T$ is the negative section $D_T:=\PP(\sO_T) \subset T$,
by item v.\ of Proposition \ref{construction:rk2} we have
$$
\fibre{\phi}{D_T} \simeq D_T \times C \simeq \PP^2 \times C.
$$
By \eqref{restrictionEN} the intersection with the exceptional divisor $E$ is transversal.
Since $D_T$ is a $\mu_T$-section, the intersection $\fibre{\phi}{D_T} \cap E$ is a section of $\mu_E$. Therefore $\mu$ maps $\fibre{\phi}{D_T}$ isomorphically onto its image $D \subset X$.
By item v.\ of Proposition \ref{construction:rk2} we know that $D$ is contracted by $\psi$
onto a curve $\bar C\simeq C$. Since $B \subset D$ we obtain by item vi.\ of the proposition
that $\psi$ is the contraction of the divisor $D$.

Since $\ma N_{D_T/T} \simeq \sO_{\PP^2}(-2)$, we have $\ma N_{\fibre{\phi}{D_T}/X'} \simeq p_{\PP^2}^* \sO_{\PP^2}(-2)$
and therefore 
$$
\ma N_{D/X} \simeq p_{\PP^2}^* \sO_{\PP^2}(-2) \otimes p_C^* \sO_C(p).
$$
This determines $\ma N_{B/X}^*$.
\end{proof}

\begin{remark*}
A short computation shows that 
$K_X \sim_\Q \psi^* K_{\bar X} + \frac{1}{2} D$
and $-K_{\bar X}$
has degree $\frac{3}{2}$ on the curve $\bar C \subset \bar X$.
By the item iv.\ in Lemma \ref{lemma:fano} we obtain 
that $\bar X$ is a singular Fano variety with 
singularities of type $\frac{1}{2}(1,1,1) \times \C$ along
a curve.

Moreover, $\rho(X)\leq 4$: $X$ has an elementary divisorial contraction sending a divisor onto a curve, thus it follows from \cite[Thm.3.3]{Casa12} that $\rho(X)\leq 5$, while \cite[Thm.1.8]{CRS} yields a contradiction when $\rho(X)=5$.
\end{remark*}

 \begin{example}$\boldsymbol{[\# 2]}$ \label{example-new1}
Set $V:=\PP^2$ and $\sF := \sO_{\PP^2} \oplus \sO_{\PP^2}(1)$. Then the conditions of the Propositions \ref{construction:general} and \ref{construction:rk2} are satisfied and we obtain a smooth Fano fourfold 
$X$ such that $\BsAX \simeq \PP^2$. Let
$$
\holom{\psi}{X}{\bar X}
$$ 
be the morphism constructed in Proposition \ref{construction:rk2}.
Then $\psi$ is a divisorial Mori contraction with exceptional divisor 
$$
D \simeq \PP^2 \times C
$$
and for every non-trivial fibre the normal bundle is $\sO_{\PP^2}  \oplus \sO_{\PP^2} (-1)$.
\end{example}

\begin{proof}
The proof is completely analogous to the proof of Example \ref{example-new2}, the only difference is that $\ma N_{D_T/T} \simeq \sO_{\PP^2}(-1)$, leading to the difference of the normal bundles of the fibres. 
\end{proof}

\begin{remark*}
A short computation shows that 
$K_X \simeq \psi^* K_{\bar X} + 2 D$
and $-K_{\bar X}$
has degree $3$ on the curve $\bar C \subset \bar X$.
By item iv.\ in Lemma \ref{lemma:fano} we obtain 
that $\bar X$ is a Fano manifold. In fact $X$ is the fourfold constructed
in \cite[Ex.3.4]{HS25}.
\end{remark*}

\begin{example}$\boldsymbol{[\# 3, \# 10, \# 15, \# 17-22]}$ \label{example-trivial}
Let $V$ be a smooth del Pezzo surface of degree $\geq 2$ and set $\sF := \sO_V^{\oplus 2}$. Then the conditions of the Propositions \ref{construction:general} and \ref{construction:rk2} are satisfied and we obtain a smooth Fano fourfold 
$X$ such that $\BsAX \simeq V$. Let
$$
\holom{\psi}{X}{\bar X}
$$ 
be the morphism constructed in Proposition \ref{construction:rk2}.
Then ${\bar X}$ is a del Pezzo of degree surface one, $X\simeq V \times {\bar X}$ and $\pi$, $\psi$ are the projections.
\end{example}
\begin{proof}
In this setting, $\zeta_T$ has numerical dimension one, so $\bar X$ is a surface by item vi.\ of Proposition \ref{construction:rk2}. Since $\psi$ maps a general $\pi$-fibre isomorphically onto its image, it is clear that $\bar X$ is a del Pezzo surface of degree one. We conclude by considering
the morphism $\pi \times \psi$.
\end{proof}

\begin{example}$\boldsymbol{[\# 4]}$ \label{example-new0}
Set $V:=\PP^2$ and $\sF := \sO_{\PP^2}(1)^{\oplus 2}$. Then the conditions of the Propositions \ref{construction:general} and \ref{construction:rk2} are satisfied and we obtain a smooth Fano fourfold 
$X$ such that $\BsAX \simeq \PP^2$. Let
$$
\holom{\psi}{X}{\bar X}
$$ 
be the morphism constructed in Proposition \ref{construction:rk2}.
Then $\psi$ is a small Mori contraction with exceptional locus $B$. 
\end{example}

\begin{proof}
Since $\sF$ is ample, the null locus of $\zeta_T$ is empty. 
By item vi.\ of Proposition \ref{construction:rk2}
that the exceptional locus of $\psi$ is
exactly the surface $B$.
\end{proof}

\begin{example}$\boldsymbol{[\# 5]}$ \label{example:flagvariety}
Set $V:=\PP^2$ and $\sF := T_{\PP^2}(-1)$. Then the conditions of the Propositions \ref{construction:general} and \ref{construction:rk2} are satisfied and we obtain a smooth Fano fourfold 
$X$ such that $\BsAX \simeq \PP^2$. Let
$$
\holom{\psi}{X}{\bar X}
$$ 
be the morphism constructed in Proposition \ref{construction:rk2}.
Then $\bar X$ is a smooth del Pezzo threefold of degree one and $B$ is a higher-dimensional fibre of $\psi$.
\end{example}

\begin{proof}
We have $T \simeq \PP(T_{\PP^2}(-1)) \subset \PP^2 \times \PP^2$ and the tautological divisor
$\zeta_T$ defines the projection 
$$
\holom{\varphi_{|\zeta_T|}}{T}{\PP^2}
$$
onto the second factor. Thus the numerical dimension of $\zeta_T$ is two and $\dim \bar X=3$ by item vi.\ of Proposition \ref{construction:rk2}.
In particular the general $\psi$-fibre is $\PP^1$ and $B$ is a higher-dimensional fibre.

Let us now show that the ample divisor $\bar A$ on $\bar X$ has degree one:
note that 
$$
-K_{X'} \cdot (\mu^* \psi^* \bar A)^3 =
-K_{X'} \cdot  (E+\phi^* \zeta_T)^3 = -K_{X'} \cdot  E \cdot \phi^* \zeta_T^2,
$$
and the cycle $E \cdot \phi^* \zeta_T^2$ is represented by
a fibre of the fibration
$$
\holom{\varphi_{|\zeta_T|}}{T}{\PP^2}.
$$
Therefore \eqref{formulaanticanonical} implies that
$-K_{X'} \cdot  E \cdot \phi^* \zeta_T^2=2$.
Yet the homology class of $(\mu^* \psi^* \bar A)^3$ is $\bar A^3$ times the class of a general fibre of $\psi \circ \mu$ and $-K_{X'}$ has degree two on a general fibre. Thus we have $\bar A^3=1$. 

By item iv.\ of Proposition \ref{construction:rk2} the point $p=\psi(B)$ is the unique basepoint of the linear system $|\bar A|$. Since $\bar A^3=1$, we see that $p$ is in the smooth locus of $\bar X$. It is not difficult to see that $\psi$ is a conic bundle over $\bar X \setminus p$, so $\bar X$ is smooth. 

We also have
$$
h^0(\bar X, \sO_{\bar X}(\bar A)) \simeq h^0(X', \sO_{X'}(E+\phi^* \zeta_T)) \simeq h^0(\PP^2, \sF) = 3,
$$
so the $\Delta$-genus $\Delta(\bar X, \bar A)=3-h^0(\bar X, \bar A)+\bar A^3$ is one
\cite[Defn.2.2]{Fuj90}. Moreover
if $D_1, D_2 \in |E+\phi^* \zeta_T|$ are two general elements, 
they are of the form
$$
D_i = E + (\varphi_{|\zeta_T|} \circ \phi)^* l_i
$$
with $l_i$ a line in $\PP^2$. Thus their intersection is
the union of $E$ 
and 
$
\fibre{(\varphi_{|\zeta_T|} \circ \phi)}{s}
$
with $s \in \PP^2$ a point. By item v.\ of Proposition \ref{construction:rk2} the surface
$\fibre{(\varphi_{|\zeta_T|} \circ \phi)}{s}$ is mapped onto an elliptic curve in $\bar X$.
This shows that two general elements $\bar D_i \in |\bar A|$ intersect along an elliptic curve
so the sectional genus $g(\bar X, \bar A)$ is also one \cite[(2.1)]{Fuj90}. 
Therefore Fujita's classification \cite[Cor.6.2]{Fuj90} yields that $\bar X$ is a del Pezzo threefold of degree one.
\end{proof}

\begin{example}$\boldsymbol{[\# 6]}$ \label{example:quadric}
Set $V:=\PP^2$ and 
$\sF $ is given by the exact sequence
$$
0 \rightarrow \sO_{\PP^2} \rightarrow T_{\PP^2}(-1) \oplus \sO_{\PP^2}(1) \rightarrow \sF \rightarrow 0.
$$
Then the conditions of the Propositions \ref{construction:general} and \ref{construction:rk2} are satisfied and we obtain a smooth Fano fourfold 
$X$ such that $\BsAX \simeq \PP^2$. Let
$$
\holom{\psi}{X}{\bar X}
$$ 
be the morphism constructed in Proposition \ref{construction:rk2}.
Then $\bar X$ is a singular Gorenstein Fano fourfold with Picard number one and index two.
A general element $D \in |\bar A|$ is the complete intersection of a quadric and a sextic
in $\PP(1^4,2,3)$.
\end{example}

\begin{proof}
The image of the morphism $\holom{\varphi_{|\zeta_T|}}{T}{\PP^4}$ is 
a smooth quadric and $T$ is the blow-up of the quadric along a line \cite{MM81} \cite[Table 1]{AW98}, so $\psi$ is birational and $\bar A^4=2$ by Remark \ref{remark-fourfold-formulas}.
Denote by $D_T \subset T$ the exceptional divisor of $\varphi_{|\zeta_T|}$: a computation shows
that $D_T \simeq \zeta_T - \mu_T^* H$ where $H$ is the hyperplane class on $V \simeq \PP^2$.
The divisor $D_T$ is the null locus of $\zeta_T$, so $\psi \circ \mu$ contracts exactly
$E \cup \fibre{\phi}{D_T}$ and $\psi$ is a divisorial Mori contraction.

Thus we have
$$
\bar A = \psi_* \mu_* (E+\phi^* \zeta_T) = \psi_* \mu_* (\phi^* \zeta_T)
$$
and by \eqref{formulaanticanonical}
$$
-K_{\bar X} = \psi_* \mu_*(-K_{X'}) = \psi_* \mu_* (\phi^* (\zeta_T+\mu_T^* H))
=  \psi_* \mu_* (\phi^* (2 \zeta_T-D_T)) = \psi_* \mu_* (2 \phi^* \zeta_T).
$$
Thus $\bar X$ is Fano of index two. Since $\psi$ is a divisorial contraction, the variety
$\bar X$ has terminal singularities. Therefore we can apply Mella's theorem \cite[Thm.1]{Mel99}
to obtain that a general element in $D \in |\bar A|$ is a Gorenstein Fano variety with at most terminal singularities. The restriction map $H^0(\bar X, \sO_{\bar X}(\bar A))
\rightarrow H^0(D, \sO_D(\bar A))$ is surjective, so by item iv.\ in Proposition \ref{construction:rk2} the linear system $|-K_D| = |\bar A|_D|$ has a unique base point.
By \cite[Thm.1.1]{JR06} this yields the description of $D$ as a complete intersection in a weighted projective space, in particular $\rho(D)=1$.
By \cite[Thm]{AW98} the unique singular point of $\bar X$, i.e.\ the point $p$, is a hypersurface
singularity. Thus we can apply the generalised Lefschetz hyperplane theorem
\cite[Thm. 3.1.17, Rem.3.1.34]{Laz04a} to obtain that $\rho(\bar X)=1$.
\end{proof}

\begin{example}$\boldsymbol{[\# 7]}$ \label{example:twisted}
Set $V:=\PP^2$ and 
$\sF$ is given by the exact sequence
$$
0 \rightarrow \sO_{\PP^2}(-1)^{\oplus 2} \rightarrow \sO_{\PP^2}^{\oplus 4} \rightarrow \sF \rightarrow 0.
$$
Then the conditions of the Propositions \ref{construction:general} and \ref{construction:rk2} are satisfied and we obtain a smooth Fano fourfold 
$X$ such that $\BsAX \simeq \PP^2$. Let
$$
\holom{\psi}{X}{\bar X}
$$ 
be the morphism constructed in Proposition \ref{construction:rk2}.
Then $\bar X$ is a smooth del Pezzo fourfold of degree one and $\psi$ is the blow-up along
a normal surface $S$ that is a set-theoretical intersection of a divisor $D_2 \in |2\bar A|$
and a divisor $D_3 \in |3 \bar A|$.
\end{example}

\begin{proof}
The morphism $\holom{\varphi_{|\zeta_T|}}{T}{\PP^3}$ is the blow-up along a twisted cubic \cite{MM81} \cite[Table 1]{AW98}, so $\psi$ is birational
and $\bar A^4=1$ by Remark \ref{remark-fourfold-formulas}.
By item iv.\ of Proposition \ref{construction:rk2}, the point $p=\psi(B)$ is the unique basepoint of the linear system $|\bar A|$. Since $\bar A^4=1$, we see that $p$ is in the smooth locus of $\bar X$. It is not difficult to see that $\psi$ is a smooth blow-up over $\bar X \setminus p$, so $\bar X$ is smooth. Since $h^0(\bar X, \sO_{\bar X}(\bar A))=4$ we obtain that
$\Delta(\bar X, \bar A)=1$.

A general element of $|\bar A|$ is the image of a general element of $|\phi^* \zeta_T|$.
Since three general elements of $|\zeta_T|$ intersect in exactly one point, we obtain that
three general elements of $|\bar A|$ intersect exactly along a smooth elliptic curve. 
Thus the sectional genus of $(\bar X, \bar A)$ is one and $\bar X$ is del Pezzo fourfold of degree one \cite[Cor.6.2]{Fuj90}.

The null-locus of $|\zeta_T|$ is the exceptional divisor over the twisted cubic $C \subset \PP^3$ so the exceptional locus of $\psi$ is $\mu(\phi^*(\fibre{\varphi_{|\zeta_T|}}{C}))$. 
Now recall that $C$ is the set-theoretic intersection of a quadric surface $S_2$ and a cubic  $S_3$ in $\PP^3$.
Set $D_i := \psi(\mu(\phi^*(\varphi_{|\zeta_T|}^* S_i)))$, then $D_i \in |i \bar A|$
and 
$$
\psi (\mu(\phi^*(\fibre{\varphi_{|\zeta_T|}}{C}))) \subset D_2 \cap D_3.
$$
Since $S_2 \cap S_3$ has multiplicity two along the twisted cubic, the right hand side has multiplicity two along a surface. We leave it to the reader to verify that we have a set-theoretical equality.
\end{proof}

\begin{example}$\boldsymbol{[\# 8]}$ \label{example:bidegreeonetwo}
Set $V:=\PP^2$ and $\sF$ is given by the exact sequence
$$
0 \rightarrow \sO_{\PP^2}(-2) \rightarrow \sO_{\PP^2}^{\oplus 3} \rightarrow \sF \rightarrow 0.
$$
Then the conditions of the Propositions \ref{construction:general} and \ref{construction:rk2} are satisfied and we obtain a smooth Fano fourfold 
$X$ such that $\BsAX \simeq \PP^2$. Let
$$
\holom{\psi}{X}{\bar X}
$$ 
be the morphism constructed in Proposition \ref{construction:rk2}.
Then $\dim \bar X=3$ and $B$ is a higher-dimensional fibre of $\psi$.
\end{example}

\begin{proof}
This is completely analogous to the proof for the Example \ref{example:flagvariety}.
\end{proof}

\begin{remark} \label{remark:flag-bidegreeonetwo}
In the situation of Example \ref{example:flagvariety} (resp.\ Example \ref{example:bidegreeonetwo}) the morphism
$$
\holom{\psi \times \pi}{X}{\bar X \times \PP^2}
$$
defines an embedding of $X$ as an element of the linear system $|p_{\bar X}^* \bar A + p_{\PP^2}^* H|$
(resp. $|p_{\bar X}^* \bar A + 2 p_{\PP^2}^* H|$) with $H$ the hyperplane divisor on $\PP^2$.
This provides an alternative construction for these examples, moreover we can apply the Lefschetz hyperplane theorem to obtain that $\rho(X)=2$.
\end{remark}

\begin{example}$\boldsymbol{[\# 9]}$ \label{example:birational-only 1 exceptional}
Set $V := \PP^1 \times \PP^1$ and 
$$
\sF := \sO_{\PP^1 \times \PP^1} \oplus \sO_{\PP^1 \times \PP^1}(1,1).
$$
Then the conditions of Proposition \ref{construction:general} and Proposition \ref{construction:rk2} are satisfied, so we obtain a Fano fourfold
with $\BsAX \simeq \PP^1 \times \PP^1$.  Let
$$
\holom{\psi}{X}{\bar X}
$$ 
be the morphism constructed in Proposition \ref{construction:rk2}.
Then $\psi$ is birational of relative Picard number two sending a divisor onto an elliptic curve $C$. Moreover, $\bar X$ is a Gorenstein Fano fourfold with Picard number one and index two, and ${\rm Sing}(\bar X)=C$.
A general element $D \in |\bar A|$ is the complete intersection of a quadric and a sextic
in $\PP(1^4,2,3)$.
\end{example}

\begin{proof}
The numerical dimension of $\zeta_T$ is three, so $\psi$ defines a birational contraction by item vi.\ in Proposition \ref{construction:rk2}. 
The null locus of $\zeta_T$ is the surface $D_T:=\PP(\sO_{\PP^1 \times \PP^1}) \subset \PP(\sF)$, so by item v.\ the divisor
$$
\fibre{\phi}{D_T} \simeq D_T \times C
$$
gets contracted onto an elliptic curve $C$. We show that $\psi$ factors as a smooth blow-up and a small contraction.
Set $D:=\mu(\phi^{-1}(D_T))$: then $D \simeq \fibre{\phi}{D_T}$ is smooth and $\mu^*D\simeq \phi^*D_T +E$ implies that
$$
\ma N_{D/X} \simeq p_{\PP^1 \times \PP^1}^* \sO_{\PP^1 \times \PP^1}(-1,-1) + p_C^* \sO_C(p)
$$
with $p \in C$ a point. We have $\sO_{D_T}(-K_{X'}) \simeq p_{\PP^1 \times \PP^1}^* \sO_{\PP^1 \times \PP^1}(1,1)$ and therefore
$$
\sO_D(-K_X) \simeq p_{\PP^1 \times \PP^1}^* \sO_{\PP^1 \times \PP^1}(1,1) + p_C^* \sO_C(p)
$$

By item vi.\ in Proposition \ref{construction:rk2}, the divisor $D$ is the exceptional locus of $\psi$ and $\Ne{\psi}=\R^+[F_1] + \R^+[{F_2}]$ is generated by the fibres $F_1, F_2$ of the two rulings of $B$, each corresponding to an elementary divisorial contraction $f_i\colon X \to X_i$ sending $D$ to a surface $S_i\subset X_i$.
Since $D \simeq \PP^1 \times \PP^1 \times C$
it is clear that the $f_i$ are smooth blow-ups along surfaces $S_i \simeq \PP^1 \times C$.
Therefore, $\psi$ has a decomposition
$$
\xymatrix{
 X \ar[r]^{f_1} \ar[d]_{f_2} \ar[rd]^{\psi} & X_1 \ar[d]^{h_1}\\
 X_2 \ar[r]_{h_2} &  \bar X
}
$$
where $h_i$ is the small contraction of $S_i$ onto $C\subset \bar X$, in particular ${\rm Sing}(\bar X)=C$. Notice that $-K_X \simeq f_i^*(-K_{X_i}) -D$ implies that $-K_{X_i}$ is $h_i$-trivial, so that $-K_{X_i}\simeq h_i^*(-K_{\bar X})$.

Now, $-K_X-D$ is trivial on the general $\pi$-fibre, thus it is the pull-back of some divisor $\sO_{\PP^1\times\PP^1}(a,b)$; restricting to $B$ we obtain $-K_X-D\simeq \pi^*\sO_{\PP^1\times\PP^1}(2,2)$, thus $D\simeq -K_{X/V}$. By item ii.\ in Proposition 
\ref{construction:rk2} we have
$$
D+\pi^*\sO_{\PP^1\times\PP^1}(1,1)\simeq\psi^*{\bar A}
$$
and so
$$
\psi^*(-K_{\bar X})\simeq -K_X+D\simeq 2D+\pi^*\sO_{\PP^1\times\PP^1}(2,2)\simeq 2\psi^*{\bar A},
$$
from which we obtain that $-K_{\bar X}$ is a Gorenstein Fano fourfold of even index. From Table \ref{table} we have that  $-K_X^4=22$ and it is easy to show that $-K_{\bar X}^4=32$, so that $\bar A^4=2$ and the index of $\bar X$ is two.

Finally, note that $\psi$ is a log-resolution of $\bar X$, so $-K_X \simeq \psi^*(-K_{\bar X}) -D$
shows that $\bar X$ has terminal singularities. In fact from
$\ma N_{S_i/X_i} \simeq \bigl(p_{\PP^1}^* \sO_{\PP^1}(-1) \otimes p_C^* \sO_C(p)\bigr)^{\oplus 2}$
we deduce that  $\bar X$ has ordinary double points along
$C$, so hypersurface singularities. Now the description of a general $D \in |\bar A|$ 
and the Picard number of $\bar X$ is obtained as in the proof of Example \ref{example:quadric}.
\end{proof}

\begin{example}$\boldsymbol{[\# 11]}$ \label{example:notelementary}
Set $V := \PP^1 \times \PP^1$ and 
$$
\sF := \sO_{\PP^1 \times \PP^1} \oplus \sO_{\PP^1 \times \PP^1}(1,0).
$$
Then the conditions of Proposition \ref{construction:general} and Proposition \ref{construction:rk2} are satisfied, so we obtain a Fano fourfold
with $\BsAX \simeq \PP^1 \times \PP^1$.  Let
$$
\holom{\psi}{X}{\bar X}
$$ 
be the morphism constructed in Proposition \ref{construction:rk2}.
Then $\dim \bar X=3$ and $\psi$ has relative Picard number two. Moreover $\bar X$ is a smooth sextic in $\PP(1,1,1,2,3)$  and $X\simeq \PP^1\times {\rm Bl}_{\bar C}\bar X$, with $\bar C \subset \bar X$ an elliptic curve.
\end{example}

\begin{proof}
The numerical dimension of $\zeta_T$ is two, so $\psi$ defines a fibration onto a threefold
by item vi.\ in Proposition \ref{construction:rk2}. 
Note that the restriction of $\zeta_T$ to the surface $N:=\PP(\sO_{\PP^1 \times \PP^1}) \subset \PP(\sF)$ is trivial, so by item v.\ the divisor
$\fibre{\phi}{N}$ gets contracted onto a curve $C$, and thus $\psi$ is not an elementary contraction.

In fact $X\simeq \PP^1\times {\rm Bl}_{\bar C}\bar X$, with $\bar C \subset \bar X$ an elliptic curve in a smooth sextic in $\PP(1,1,1,2,3)$ is a smooth Fano fourfold with $B:= \BsAX \simeq \PP^1 \times \PP^1$ and $\ma N_{B /X}^*\simeq \sF$.
\end{proof}

\begin{example}$\boldsymbol{[\# 12]}$ \label{example:blowup surfaces}
Set $V := \PP^1 \times \PP^1$ and 
$$
\sF := \sO_{\PP^1 \times \PP^1}(1,0) \oplus \sO_{\PP^1 \times \PP^1}(0,1).
$$
Then the conditions of Proposition \ref{construction:general} and Proposition \ref{construction:rk2} are satisfied, so we obtain a Fano fourfold
with $\BsAX \simeq \PP^1 \times \PP^1$.  Let
$$
\holom{\psi}{X}{\bar X}
$$ 
be the morphism constructed in Proposition \ref{construction:rk2}.
Then ${\bar X}$ is a del Pezzo fourfold of degree one with $-K_{\bar X}\simeq 3 \bar A$, and $\psi$ is the blow-up along two smooth del Pezzo surfaces of degree one $S_1$ and $S_2$ meeting at $\mbox{\rm Bs}(|\bar A|)=p$.
\end{example}

\begin{proof}
The threefold $T$ is item 25 in \cite[Table 3]{MM81}, so it can be realized as the blow-up of $\PP^3$ along two disjoint lines; the exceptional locus is the disjoint union of divisors $D_{T,1}$ and $D_{T,2}$ that correspond to the sections of $\mu_T\colon T\to \PP^1\times \PP^1$ given by the projections of $\sF$ onto its two factors: one easily checks that the null locus of $\zeta_T$
is $D_{T,1} \cup D_{T,2}$.

By item vi.\ in Proposition \ref{construction:rk2}, $\psi$ is birational with exceptional locus is $\mu(\fibre{\phi}{D_{T_1}} \cup \fibre{\phi}{D_{T_2}})$. Set $D_1=\fibre{\phi}{D_{T_1}}$ and $D_2=\fibre{\phi}{D_{T_2}}$: then by construction $D_1\cap D_2=B$ and the fibres $F_1, F_2$ of the two rulings of $B$ generate the contraction of $D_1$ and $D_2$, that is $\Ne{\psi}=\R^+[F_1] + \R^+[{F_2}]$, where $\R^+[F_1]$ yields the contraction of $D_1$ and $\R^+[F_2]$ yields the contraction of $D_2$. It follows that $\psi(\Exc(\psi))$ is a union of surfaces $S_1\cup S_2$ meeting at $p:=\psi(B)$.

By construction, $\psi$ factors as
$$
\psi\colon X \xrightarrow{f_i} X_i \xrightarrow{h_i} \bar X
$$
where $f_i$ is the contraction of $D_i$, $i=1,2$, and $h_j$ is the contraction of $f_i(D_j)$, $i\neq j\in\{1,2\}$. In particular $\bar X$ is smooth. Observe now 
that $D_{T,1} \simeq \zeta_T-\mu_T^* \sO_{\PP^1 \times \PP^1}(1,0)$
and $D_{T,2} \simeq \zeta_T-\mu_T^* \sO_{\PP^1 \times \PP^1}(0,1)$.
Therefore one has by \eqref{formulaanticanonical} 
\begin{multline*}
-K_{\bar X} \simeq \psi_* \mu_*(-K_{X'}) \simeq \psi_* \mu_* (\phi^* (\zeta_T+\mu_T^* \sO_{\PP^1 \times \PP^1}(1,1))) \\
\simeq  \psi_* \mu_* (\phi^* (3 \zeta_T-(D_{T,1}+D_{T,2}))) \simeq \psi_* \mu_* (3 \phi^* \zeta_T).
\end{multline*}
Since $\bar A \simeq \psi_* \mu_* (E+\phi^* \zeta_T) \simeq \psi_* \mu_* (\phi^* \zeta_T)$,
this shows that $\bar X$ is a del Pezzo fourfold. By Remark \ref{remark-fourfold-formulas} we have $\bar A^4=1$, so it has degree one.

In order to see that the surface $S_i$ are del Pezzo surfaces of degree one, we choose
a line $l \subset D_{T,i}$ such that $\sO_l(\zeta_T) \simeq \sO_{\PP^1}(1)$. Following the computation from the proof of Proposition \ref{construction:general} one sees
that $\fibre{\phi}{l} \rightarrow l$ is minimal rational elliptic surface and $\fibre{\phi}{l} \rightarrow \psi(\fibre{\phi}{l})$
is the contraction of one section, i.e.\ the image is a del Pezzo surface of degree one.
\end{proof}

\begin{example}$\boldsymbol{[\# 13]}$ \label{example-conicbundle}
Set $V:=\PP^1\times \PP^1$ and $\sF$ is given by the exact sequence
$$
0 \rightarrow \sO_{\PP^1 \times \PP^1}(-1,-1) \rightarrow \sO_{\PP^1 \times \PP^1}^{\oplus 3} \rightarrow \sF \rightarrow 0.
$$
Then the conditions of the Propositions \ref{construction:general} and \ref{construction:rk2} are satisfied and we obtain a smooth Fano fourfold 
$X$ such that $\BsAX \simeq \PP^1\times \PP^1$. Let
$$
\holom{\psi}{X}{\bar X}
$$ 
be the morphism constructed in Proposition \ref{construction:rk2}.
Then $\bar X$ is a smooth del Pezzo threefold of degree one, and $\psi$ factors through the blow-up $X\to\bar X\times \PP^1$ along a smooth K3 surface $S$ that is the complete intersection of two general elements in $|\sO_{\bar X \times \PP^1}(1,1)|$. 
\end{example}

\begin{proof}
Let $\bar X$ be a smooth del Pezzo threefold of degree one, and let
$$
Z_i \in |\sO_{\bar X \times \PP^1}(1,1)|
$$
be two general elements. Then $S:=Z_1 \cap Z_2$ is a smooth K3 surface that is the base scheme
of the pencil $\mathfrak d$ generated by $Z_1$ and $Z_2$; the restriction of the projection $p_{\PP^1}\colon \bar X \times \PP^1 \to \PP^1$ to $S$ yields an elliptic fibration $\eta\colon S \to \PP^1$ with a section $\ell = \mbox{\rm Bs}(|\sO_{\bar X \times \PP^1}(1,1)|) = p \times \PP^1$
Let
$$
\holom{f}{X}{\bar X \times \PP^1}
$$
be the blow-up along $S$. We show that $X$ is a smooth Fano fourfold with $B:= \BsAX \simeq \PP^1 \times \PP^1$ and $\ma N_{B /X}^*\simeq \sF$. 

Denote by $Z_{i,X}\simeq Z_i$ the strict transform of $Z_i$, $i=1,2$: then,
$$
-K_X\simeq f^*Z_j + Z_{i,X},
$$
the divisor $f^*Z_j$ is nef and big, and $Z_{i,X}$ is nef of numerical dimension one. 
It is easy to see that $-K_X$ is strictly nef, so that the basepoint-free theorem yields $-K_X$ ample. Consider now the fibration 
$\sigma:=p_{\PP^1}\circ f\colon X \to \PP^1$: 
then, the divisor
$$\sigma^*\sO_{\PP^1}(1)+ Z_{i,X}$$
is nef (thus semiample) of numerical dimension two, so it yields a morphism $\pi \colon X \to Y$ onto a normal surface. 

Let $D$ be the exceptional divisor of $f$: we have $D\simeq S\times \PP^1$ and the restriction of
$\pi$ to $D$ is exactly $\eta \times id_{\PP^1}$. In particular $Y \simeq \PP^1 \times \PP^1$.
Note also that a general $\sigma$-fibre $G$ is isomorphic to the blow-up of $\bar X$ along the elliptic curve $\bar X \cap S$, i.e.\ $G$ is a smooth Fano threefold with anticanonical base locus $\{pt\}\times\PP^1\subset B$, and $\pi_{|G}\colon G\to \PP^1$ is a smooth fibration with fibre a del Pezzo surface of degree one. This shows that $\BsAX=\fibre{f}{l} \simeq \PP^1 \times \PP^1$.

Finally, observe that $X$ has a natural embedding $\iota\colon X\hookrightarrow W:=\bar X\times \PP^1 \times\PP^1$ as the graph of the rational map
$$
|\mathfrak d|\colon \bar X \times \PP^1 \dashrightarrow \PP^1
$$
so that $X\in |p_{\bar X}^*\sO_{\bar X}(1)\otimes p_{\PP^1 \times \PP^1}^*\sO_{\PP^1 \times \PP^1}(1,1)|$ and $B\simeq \{p\} \times \PP^1 \times \PP^1$. Therefore, the inclusions $B\subset X \subset W$ give the exact sequence
$$
0\to {\ma N_{X/W}^*}_{|B} \to {\ma N_{B/W}^*} \to \ma N_{B /X}^* \to 0
$$
which is precisely
$$
0 \rightarrow \sO_{\PP^1 \times \PP^1}(-1,-1) \rightarrow \sO_{\PP^1 \times \PP^1}^{\oplus 3} \rightarrow \ma N_{B /X}^* \rightarrow 0.
$$
\end{proof}

\begin{example}$\boldsymbol{[\# 14 -16]}$ \label{example:Hirzebruch}
Set $V := \mathbb F_1$, with blow-up morphism $g\colon \mathbb F_1 \to \PP^2$ and 
$$
\sF := g^*\sF_1
$$
with $\sF_1$ as in Examples \ref{example-trivial}, \ref{example-new1} and \ref{example:flagvariety}.
Then the conditions of Proposition \ref{construction:general} and Proposition \ref{construction:rk2} are satisfied, so we obtain a Fano fourfold
with $\BsAX \simeq \mathbb F_1$.  Let
$$
\holom{\psi}{X}{\bar X}
$$ 
be the morphism constructed in Proposition \ref{construction:rk2}.
Then ${\bar X}$ factors through $\holom{\psi_1}{X_1}{\bar X}$, where $X_1$ and $\psi_1$ is the Fano fourfold from Examples \ref{example-trivial}, \ref{example-new1} and \ref{example:flagvariety} respectively, and $\holom{f}{X}{X_1}$ is the blow-up along the fibre of $X_1 \to \PP^2$ over the point $g(\Exc(g))$.
\end{example}
\begin{proof}
This will follow from Lemma \ref{lem:birational} and its proof.
\end{proof}

\section{The classification}\label{The classification}

In this whole section we will use the notation introduced in Theorem \ref{theorem-nefdivisor}.
The next steps of our classification will be built on an observation of Casagrande's:

\begin{lemma} \label{lemma-lifting} \cite[Lemma 2.6]{Casa08} 
In the situation of Theorem \ref{theorem-nefdivisor},
let 
$\holom{g}{V}{V_1}$ be the contraction of an extremal ray $R \subset \NE{V}$.
Then there exists a contraction $\holom{f}{X}{X_1}$ 
of an extremal ray in $\NE{X}$ such that
\begin{enumerate}[(i)]
\item ${\rm NE}(\pi)\cap{\rm NE}(f)=\{0\}$;
\item $\pi_* ({\rm NE}(f)) = R$.
\end{enumerate}
Moreover, we have a commutative diagramm
$$
\xymatrix{
X \ar[r]^{f} \ar[d]_{\pi} & X_1 \ar[d]^{\pi_1} \\
V \ar[r]_{g} & V_1 
}
$$
and we call $f$ a lifting of the contraction $g$.
\end{lemma}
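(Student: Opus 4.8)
The plan is to argue entirely inside the Mori cone $\NE{X}$, which is rational polyhedral because $X$ is Fano, and to produce $f$ as the contraction of a suitable extremal ray of $\NE{X}$ lying over $R$. Concretely, I would pick an ample divisor $A_1$ on $V_1$ and consider the nef divisor $(g\circ\pi)^*A_1$; its associated face
$$
\sigma := \NE{X}\cap\bigl((g\circ\pi)^*A_1\bigr)^{\perp}
$$
is a face of $\NE{X}$ whose contraction morphism is $g\circ\pi$ itself, since a composition of two morphisms with connected fibres again has connected fibres. Every curve contracted by $\pi$ is contracted by $g\circ\pi$, so $\Ne{\pi}\subseteq\sigma$ as faces. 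The key step is then a dimension count: the span of $\Ne{\pi}$ is $N_1(X/V)=\ker(\pi_*\colon N_1(X)\to N_1(V))$ and the span of $\sigma$ is $N_1(X/V_1)$, and pushing forward curves by $\pi$ gives a short exact sequence
$$
0 \longrightarrow N_1(X/V) \longrightarrow N_1(X/V_1) \stackrel{\pi_*}{\longrightarrow} N_1(V/V_1) \longrightarrow 0 ,
$$
where surjectivity on the right is where the $\pi$-section $B$ is convenient: a curve in $V$ contracted by $g$ lifts via $(\pi|_B)^{-1}$ to an isomorphic curve inside $B\subset X$, which is contracted by $g\circ\pi$. Since $g$ is the contraction of an extremal ray, $\dim N_1(V/V_1)=\rho(V/V_1)=1$, hence $\dim\sigma = \dim\Ne{\pi}+1$; in particular $\Ne{\pi}\subsetneq\sigma$.

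Because $\sigma$ is a polyhedral cone strictly containing the face $\Ne{\pi}$, it has an extremal ray $R'$ with $R'\not\subseteq\Ne{\pi}$; being an extremal ray of a face of $\NE{X}$ it is an extremal ray of $\NE{X}$, it is $K_X$-negative since $X$ is Fano, and by the cone and contraction theorems \cite{KM98} its contraction $\holom{f}{X}{X_1}$ exists. It remains to verify the three assertions. For (i): $\Ne{\pi}\cap R'$ is a face of the ray $R'$, hence $\{0\}$ or $R'$, and the latter is excluded by the choice of $R'$. For (ii): no generator of $R'$ is $\pi$-vertical, as otherwise $R'\subseteq\NE{X}\cap\ker\pi_*=\Ne{\pi}$; so there is an irreducible curve $C$ with $[C]\in R'$ and $\pi(C)$ a curve contracted by $g$, whence $0\neq\pi_*[C]\in\NE{V/V_1}=R$, and since $R'$ is a ray this forces $\pi_*(R')=R$. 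Finally, $R'\subseteq\sigma$, so the contraction of the larger face $\sigma$ — namely $g\circ\pi$ — factors through the contraction $f$ of the subface $R'$; the induced morphism $X_1\to V_1$ is the desired $\pi_1$, and the square commutes by construction.

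I expect the only genuinely delicate point to be the dimension count $\dim\sigma=\dim\Ne{\pi}+1$, equivalently the additivity $\rho(X/V_1)=\rho(X/V)+\rho(V/V_1)$ along the tower $X\to V\to V_1$ (and the care needed to see that $\pi_*$ is surjective on the relevant relative spaces, which is exactly where the section is used). Once this is in place, the remainder is a routine manipulation of faces of the polyhedral Mori cone of the Fano manifold $X$, using the standard dictionary between faces of $\NE{X}$ and their contractions.
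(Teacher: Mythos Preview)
The paper does not supply its own proof of this lemma; it simply cites \cite[Lemma 2.6]{Casa08}. Your argument is correct and is essentially the standard one (and, as far as I can tell, the one Casagrande gives): identify the face $\sigma=\NE{X/V_1}$ of the polyhedral cone $\NE{X}$, observe that the face $\Ne{\pi}=\NE{X/V}$ sits strictly inside it via the relative Picard number count $\rho(X/V_1)=\rho(X/V)+1$, and pick an extremal ray of $\sigma$ not contained in $\Ne{\pi}$.

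Two small remarks. First, the section is convenient but not actually needed for the surjectivity of $\pi_*\colon N_1(X/V_1)\to N_1(V/V_1)$: since $\pi^*\colon N^1(V)\to N^1(X)$ is injective for any surjection with connected fibres, the dual map $\pi_*\colon N_1(X)\to N_1(V)$ is surjective, and restricting to the kernels of $(g\circ\pi)_*$ and $g_*$ gives what you want. This is why Casagrande's lemma applies more generally to any equidimensional fibre-type contraction of a Fano manifold, not just one with a section. Second, your identity $\NE{X}\cap\ker\pi_*=\Ne{\pi}$ is correct, but it is worth saying why once: $\Ne{\pi}$ is the face cut out by $\pi^*H$ for $H$ ample on $V$, and $\pi^*H\cdot\alpha=0$ is equivalent to $H\cdot\pi_*\alpha=0$, hence to $\pi_*\alpha=0$.
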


\subsection{Birational geometry}

\begin{lemma}\label{lem:birational}
In the situation of Theorem \ref{theorem-nefdivisor}, let $e \subset V$ be a smooth rational curve such that $e^2<0$ and denote by $\holom{g}{V}{V_1}$ its contraction onto a point.
Then the following holds:
\begin{enumerate}
\item The rational curve $e \subset V$ is a $(-1)$-curve.
\item We have $\fibre{\pi}{e} \simeq e \times F_1$ with $F_1 \simeq{\pi_1}^{-1}(g(e))$ a  smooth del Pezzo surface of degree one;
\item the lifting $\holom{f}{X}{X_1}$ 
is the blow-down of $\fibre{\pi}{e}$ to $F_1 \subset X_1$ and 
$f_{|\fibre{\pi}{e}}$ is the projection onto the second factor;
\item the restriction of $f$ to $B$ is the contraction of $\sigma(e)$;
\item the fourfold $X_1$ satisfies the assumptions of Theorem \ref{theorem-main}, i.e.\
$X_1$ is a smooth Fano fourfold such that $h^0(X_1, -K_{X_1}) \geq 4$ and
the scheme-theoretical base locus 
$$
\mbox{Bs}(|-K_{X_1}|)=f(B)=:B_1
$$ 
is a smooth surface.
\item the surface $B_1 \subset X_1$ is a $\pi_1$-section and $\ma N^*_{B/X}\simeq f_B^*(\ma N^*_{B_1 / X_1})$.
\end{enumerate}
\end{lemma}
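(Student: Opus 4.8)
The plan is to prove (i) by surface theory, then produce the lifting $f$ of Lemma~\ref{lemma-lifting}, identify it geometrically, and read off (ii)--(vi).

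\emph{Item (i) and set-up.} Since $B\simeq V$ is a smooth del Pezzo surface, adjunction on $V$ gives $-2=e^2+K_V\cdot e$, whence $e^2=-2+(-K_V)\cdot e\geq -1$ because $-K_V$ is ample; so $e^2=-1$ and $(-K_V)\cdot e=1$. Now apply Lemma~\ref{lemma-lifting} to $g$ to get $\holom{f}{X}{X_1}$ with ${\rm NE}(\pi)\cap{\rm NE}(f)=\{0\}$, $\pi_*{\rm NE}(f)=\R^+[e]$ and $\pi_1\circ f=g\circ\pi$. Set $X_e:=\fibre{\pi}{e}$; as $\pi$ has integral fibres and $e$ is irreducible, $X_e$ is a prime divisor and $(g\circ\pi)^{-1}(g(e))=X_e$. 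A curve contracted by $f$ is not $\pi$-vertical (its class would lie in ${\rm NE}(\pi)\cap{\rm NE}(f)$), so it dominates $e$ and lies in $X_e$; hence $\Exc(f)\subseteq X_e$. Over $V\setminus e$ the morphism $f$ is an isomorphism identifying the del Pezzo fibres of $\pi$ with those of $\pi_1$, so $\dim X_1=4$ and $f$ is birational; thus $\Exc(f)$ is a prime divisor inside the prime divisor $X_e$, i.e.\ $\Exc(f)=X_e$, and $F_1:=f(X_e)=\pi_1^{-1}(g(e))$.

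\emph{Items (ii) and (iii).} The crucial point — and the main obstacle — is that $f|_{X_e}\colon X_e\to F_1$ has only one-dimensional fibres: a general $\pi$-fibre $F_v\subset X_e$ carries no $f$-contracted curve (again by ${\rm NE}(\pi)\cap{\rm NE}(f)=\{0\}$), so $f|_{F_v}$ is finite and $\dim F_1\geq 2$, while a fibre of $f|_{X_e}$ of dimension $\geq 2$ would meet a general $F_v$ in a curve that is both $\pi$-vertical and $f$-contracted, impossible. By the classification of divisorial contractions of smooth Fano fourfolds \cite{AW98, Tak99}, $f$ is then the blow-up of the smooth surface $F_1\subset X_1$; in particular $X_1$ and $X_e$ are smooth, $X_e\simeq\PP(\ma N^*_{F_1/X_1})$ and $f|_{X_e}$ is the bundle projection. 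Writing $\pi_e:=\pi|_{X_e}$, one has $\ma N_{X_e/X}=\pi_e^*\ma N_{e/V}=\pi_e^*\sO_{\PP^1}(-1)$ since $e^2=-1$, while $\ma N_{X_e/X}\cdot\Phi=-1$ for a fibre $\Phi$ of $f|_{X_e}$; comparing, $\pi_e|_\Phi\colon\Phi\to e$ has degree one, so $(\pi_e,f|_{X_e})\colon X_e\to e\times F_1$ is a morphism of $\PP^1$-bundles over $F_1$ that is an isomorphism on every fibre, hence an isomorphism. This proves (ii) and (iii), and $F_1\simeq\{v\}\times F_1=F_v$ is an integral fibre of $\pi$, hence a smooth del Pezzo surface of degree one.

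\emph{Items (iv) and (vi).} As $B$ is a $\pi$-section, $B_e:=B\cap X_e=\sigma(e)$ is a section of $\pi_e$ with $B_e^2=e^2=-1$ inside $B$; under the isomorphism $F_v\cong F_1$ carrying $-K_X|_{F_v}$ to $-K_{F_1}$, the base point of $|-K_{F_v}|$ goes to the fixed base point $p$ of $|-K_{F_1}|$, so $B_e=e\times\{p\}$ is a fibre of $f|_{X_e}$. Hence $f|_B$ contracts exactly the $(-1)$-curve $\sigma(e)$ and is its blow-down (item (iv)), and $B_1:=f(B)$ is a smooth surface meeting $F_1$ only at $p$. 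Substituting $f|_B=\,$blow-down of $\sigma(e)$ and $\pi|_B=\,$isomorphism into $\pi_1|_{B_1}\circ f|_B=g\circ\pi|_B$ and using that $f|_B$ is surjective forces $\pi_1|_{B_1}$ to be an isomorphism, i.e.\ $B_1$ is a $\pi_1$-section. Finally $B$ meets $X_e=\pi^*e$ transversally along $B_e$, so $\ma N_{B/X}|_{B_e}\simeq\ma N_{B_e/X_e}$, which is the pull-back to $e\times\{p\}$ of $\ma N_{\{p\}/F_1}$, hence trivial; since $\ma N_{B/X}$ is trivial on the exceptional curve of the blow-down $f_B\colon B\to B_1$ and equals $\ma N_{B_1/X_1}$ away from $p$, it descends to give $\ma N^*_{B/X}\simeq f_B^*\ma N^*_{B_1/X_1}$.

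\emph{Item (v).} $X_1$ is smooth by the above. From $-K_X=f^*(-K_{X_1})-X_e$ and $R^1f_*\sO_X(-X_e)=0$ one gets $h^0(X_1,-K_{X_1})=h^0(X,-K_X)+2\geq 4$. Combining $(-K_X+X_e)|_{X_e}=(f|_{X_e})^*(-K_{X_1}|_{F_1})$ (from $K_X=f^*K_{X_1}+X_e$ and adjunction) with $(-K_X+X_e)|_{X_e}=$ the pull-back of $-K_{F_1}$ under the projection $X_e\simeq e\times F_1\to F_1$ (using $\ma N_{X_e/X}=\pi_e^*\sO_{\PP^1}(-1)$) yields $-K_{X_1}|_{F_1}=-K_{F_1}$, which is ample; so $-K_{X_1}$ is ample by Lemma~\ref{lemma:fano}(iv) and $X_1$ is Fano. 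It remains to see that $\mbox{Bs}|-K_{X_1}|=B_1$ scheme-theoretically: off $F_1$ the isomorphism $f$ identifies this base locus with $\BsAX=B$ minus $B_e$, i.e.\ with $B_1$ minus $p$; and since $\pi_1$ is a flat del Pezzo fibration of degree one with section $B_1$, the fibrewise analysis of the proof of Theorem~\ref{theorem-nefdivisor} applies — the set-theoretic identification forces $H^0(X_1,-K_{X_1})\to H^0(F_{v'},-K_{F_{v'}})$ to be onto for every $v'$, hence $\pi_{1*}\sO_{X_1}(-K_{X_1})$ is globally generated — and shows the scheme-theoretic base locus equals $B_1$.
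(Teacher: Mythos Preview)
Your proof of item (i) is circular: you assume that $V\simeq B$ is a del Pezzo surface, hence $-K_V$ is ample, to conclude $(-K_V)\cdot e\geq 1$ and $e^2=-1$. But the fact that $V$ is del Pezzo is Corollary~\ref{cor:birational}, whose proof \emph{uses} Lemma~\ref{lem:birational}. At this stage we only know that $V$ is a smooth rationally connected surface (as the base of a fibration on a Fano, and isomorphic to the smooth surface $B$); nothing yet excludes $(-2)$-curves.

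The paper proceeds in the opposite order. It first shows that $-D$ is $f$-ample (from $D\cdot C=e\cdot\pi_*C<0$ for every contracted curve) and that $-D|_D=\pi^*(-e|_e)$ has numerical dimension one; this forces all $f$-fibres to have dimension at most one, so by Ando's theorem \cite{And85} the contraction $f$ is the smooth blow-up of a surface $F_1\subset X_1$. Only \emph{then} does the intersection computation $1=-D|_D\cdot C=(-e^2)\,F\cdot C$ for a blow-up fibre $C$ yield $e^2=-1$. Your own computation in (ii)--(iii) actually contains this: from $\ma N_{X_e/X}\cdot\Phi=-1$ and $\ma N_{X_e/X}=\pi_e^*\sO_{\PP^1}(e^2)$ you get $e^2\cdot\deg(\pi_e|_\Phi)=-1$, and since $e^2<0$ this forces $e^2=-1$ and $\deg(\pi_e|_\Phi)=1$ simultaneously. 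So the fix is to reorder: establish the blow-up structure first (your fibre-dimension argument does not use $e^2=-1$, though it would be cleaner to use the numerical-dimension argument and cite \cite{And85} rather than \cite{AW98,Tak99}), and then extract (i) from the intersection number.

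A secondary remark on (vi): your descent argument (the bundle $\ma N_{B/X}$ is trivial on the exceptional curve of $f_B$, hence descends) is correct but deserves a reference. The paper's route is different and avoids this: since $f^{-1}\sI_{B_1}\cdot\sO_X=\sI_B$, the composition $f\circ\mu$ is a principalisation of $\sI_{B_1}$, so by the universal property of the blow-up there is $f'\colon X'\to X_1'$ with $E\simeq(f')^*E_1$, and the conormal identification follows immediately from the tautological description of $\sO_E(-E)$ and $\sO_{E_1}(-E_1)$.
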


\begin{proof}
Set $D := \pi^{-1}(e)$, then $D$ is a prime divisor since $\pi$ has integral fibres.
Let $\holom{f}{X}{X_1}$ be a lifting of $g$ given by Lemma \ref{lemma-lifting},
and let $C$ be any curve contracted by $f$. Since $\pi_* ({\rm NE}(f)) = R$
and $R=\R^+ [e]$ contains a unique irreducible curve, we have $\pi(C)=e$. In particular
$$
D \cdot C = \pi^* e \cdot C = e \cdot \pi_* C < 0
$$
since $e^2<0$. 
Thus, $f$ is birational with exceptional locus contained in $D$ 
and the divisor  $-D$ is $f$-ample. 
Since  $-D|_D = \pi^* (-e|_e)$ has numerical dimension one, we obtain that
the fibres of $f$ have dimension at most one.  By \cite[Thm.2.3]{And85} this shows that $f$ is a smooth blow-up along a surface $F_1 \subset X_1$. In particular $X_1$ is smooth and $D=\mbox{Exc}(f)$ 
is smooth.
It is then clear that $\pi_1$ is equidimensional, thus flat, and that $F_1$ is the $\pi_1$-fibre over $g(e)$. Applying generic smoothness to the fibration $\pi_{|D}$, we obtain that
$D$ contains a smooth fibre $F$ of $\pi$. 

Let $C \subset D$ be a non-trivial fibre of the smooth blow-up $f$, then
$$
-K_X \cdot C = 1, \qquad -K_D \cdot C = 2,
$$
so by the adjunction formula $-D|_D \cdot C = 1$. We have $\sO_e(e) \simeq \sO_{\PP^1}(-m)$ with $m \geq 1$, so 
$$
1 = -D|_D \cdot C = \pi^* (-e|_e)  \cdot C = 
m F \cdot C
$$
implies that $m=1$ and that
the general fibres of $f_{|D}$ and $\pi_{|D}$ meet transversally in one point.
Therefore, the morphism
$$
\pi_{|D} \times f_{|D} \colon D\to e \times F_1  
$$ 
is birational, and since $\rho(D)=\rho(S)+1=\rho(e \times F_1)$ it is an isomorphism. This concludes the proof of the first three items.

{\em Proof of iv.}
Observe that $\sO_{D}(-K_X) \simeq p_e^* \sO_{\PP^1}(1)
\otimes p_{F_1}^* \sO_S(-K_{F_1})$, so if $p$ is the unique basepoint of $|-K_{F_1}|$ we have
$$
e \times p = \mbox{Bs}(|\sO_D(-K_X)|) \subset \BsAX \cap D = \sigma(e).
$$
Since $\sigma(e)$ is an irreducible curve, we see that $\sigma(e) = e \times p$ is contracted by $f$.

{\em Proof of v.}
We have $\sO_{D}(-K_X) \simeq p_e^* \sO_{\PP^1}(1)
\otimes p_{F_1}^* \sO_S(-K_{F_1})$
and $\ma N_{D/X} \simeq p_e^* \sO_{\PP^1}(-1)$, so
$$
\sO_D\bigl(f^* (-K_{X_1})\bigr) \simeq p_{F_1}^* \sO_S(-K_{F_1}).
$$
Hence $-K_{X_1}$ is ample on $F_1$ and therefore ample by  Lemma \ref{lemma:fano}.
We have
$$
h^0(X_1, -K_{X_1}) \geq h^0(X, -K_{X}) \geq 4,
$$
so we are left to show that the anticanonical base locus is $B_1$. Yet this is clear since
the general fibre of the fibration $\pi_1$ is a del Pezzo surface of degree one.

{\em Proof of vi.}
Since $B$ is a $\pi$-section, it is clear that $B_1$ is a $\pi_1$-section.
Denote by $\holom{\mu_1}{X_1'}{X_1}$ the blow-up of $B_1$ and by $E_1$ its exceptional divisor. 

We have $\fibre{f}{\sI_{B_1}} = \sI_{B}$ so the composition
$f \circ \mu$ is a principalisation of the ideal sheaf $\sI_{B_1}$. By the universal property
of the blow-up \cite[II,Prop.7.14]{Har77},
there exists a morphism $\holom{f'}{X'}{X_1'}$ such that $f \circ \mu=\mu_1 \circ f'$.
The exceptional divisor $E_1$ does not contain the locus blown-up by $f'$, so
we have $E \simeq (f')^* E_1$.
Since $\sO_E(-E)$ (resp.\ $\sO_{E_1}(-E_1)$) is the tautological divisor
of  $\PP(\ma N^*_{B/X})$ (resp.\ $\PP(N^*_{B_1 / X_1})$), this shows the last statement.
\end{proof}

\begin{corollary} \label{cor:birational}
In the situation of Theorem \ref{theorem-nefdivisor}, the following holds:
\begin{enumerate}
\item Let $V \rightarrow V_k$ be a MMP, i.e.\ a sequence of $k$ blow-downs of $(-1)$-curves. Then there exists a Fano fourfold $X_k \rightarrow V_k$ satisfying the conditions of Theorem \ref{theorem-nefdivisor} such that $X \simeq V \times_{V_k} X_k$.
\item The surface $B \simeq V$ is del Pezzo.
\end{enumerate}
\end{corollary}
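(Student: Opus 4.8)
The plan is to prove item i.\ by induction on $k$ and then to deduce item ii.\ by running a full MMP on $V$.

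\textbf{Item i.} I would induct on $k$, the case $k = 0$ being trivial. Writing the MMP as $V \xrightarrow{\,g\,} V_1 \to V_k$ with $g$ the contraction of a $(-1)$-curve $e \subset V$, Lemma \ref{lem:birational} produces a lifting $\holom{f}{X}{X_1}$ which is the blow-up of $X_1$ along $F_1 := \fibre{\pi_1}{g(e)}$, such that $X_1 \to V_1$ again satisfies the hypotheses of Theorem \ref{theorem-nefdivisor} and $\pi_1 \circ f = g \circ \pi$. Since $\pi_1$ is flat with integral fibres, $F_1$ is the scheme-theoretic fibre of $\pi_1$ over $g(e)$; as $g$ is the blow-up of $V_1$ at $g(e)$ and blowing up commutes with the flat base change $\pi_1$, one obtains $V \times_{V_1} X_1 \simeq \mbox{\rm Bl}_{F_1} X_1 \simeq X$. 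Feeding the inductive hypothesis for $X_1 \to V_1 \to V_k$ into this identity yields $X \simeq V \times_{V_1} (V_1 \times_{V_k} X_k) \simeq V \times_{V_k} X_k$.

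\textbf{Item ii.} Running the MMP on $V$ down to a minimal surface $V_k$, item i.\ supplies a Fano fourfold $\holom{\pi_k}{X_k}{V_k}$ satisfying Theorem \ref{theorem-nefdivisor}. As the image of the rationally connected manifold $X_k$, the surface $V_k$ is rationally connected, hence rational, and being minimal it is $\PP^2$ or a Hirzebruch surface $\F_n$ with $n \neq 1$; if $n \geq 2$ the negative section $e_0 \subset V_k$ is a smooth rational curve with $e_0^2 \leq -2$, contradicting item i.\ of Lemma \ref{lem:birational} applied to $(V_k, X_k)$, so $V_k \simeq \PP^2$ or $V_k \simeq \PP^1 \times \PP^1$. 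Applying item i.\ of Lemma \ref{lem:birational} to all irreducible curves of negative self-intersection on $V$ shows moreover that $V$ contains no curve of self-intersection $\leq -2$. Finally, since $B \simeq V$ is a $\pi$-section, adjunction for the codimension-two subvariety $B \subset X$ gives $-K_V \simeq -K_X|_B + \det \ma N_{B/X}^*$, and the sheaf $\sI_B \otimes \sO_X(-K_X)$ --- being the image of the evaluation map from $H^0(X, \sO_X(-K_X))$, by the hypothesis on the base locus --- is globally generated, so its restriction $\ma N_{B/X}^* \otimes \sO_B(-K_X)$ to $B$ is globally generated as well. It remains to show that $-K_V$ is big: granting this, $-K_V$ is pseudo-effective, hence non-negative on every nef curve, while on the $(-1)$-curves it has degree $1$, so $-K_V$ is nef and big; and a weak del Pezzo surface with no $(-2)$-curve is del Pezzo, whence $B \simeq V$ is del Pezzo.

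\textbf{Main obstacle.} The step I expect to be the most technical is the bigness of $-K_V$, equivalently the bound $\rho(V) \leq 9$ (i.e.\ $(-K_V)^2 > 0$). I would attack it through the identity $-K_{X'} \simeq \phi^*\bigl(\zeta_T + \mu_T^*(-K_X|_B)\bigr)$, which exhibits $\zeta_T + \mu_T^*(-K_X|_B)$ as a nef and big divisor on $T = \PP(\ma N_{B/X}^*)$, and then transfer this positivity to $-K_V \simeq -K_X|_B + \det \ma N_{B/X}^*$ by analysing the null locus of that divisor together with the global generation of $\ma N_{B/X}^* \otimes \sO_B(-K_X) \simeq \pi_* \sO_X(-K_X)$.
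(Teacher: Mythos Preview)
Your item i.\ is fine and matches the paper's argument.

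For item ii., your approach diverges from the paper's and leaves a genuine gap at the bigness of $-K_V$. Your sketched attack via the nef and big divisor $\zeta_T + \mu_T^*(-K_X|_B)$ on $T$ does not obviously transfer to bigness of $-K_V$: writing $L = -K_X|_B$ and $c_i = c_i(\ma N_{B/X}^*)$, one has $(\zeta_T + \mu_T^* L)^3 = c_1^2 + 3c_1 L + 3L^2 - c_2$ while $(-K_V)^2 = c_1^2 + 2c_1 L + L^2$, and the global generation of $\ma N_{B/X}^* \otimes L$ only gives $c_2 + c_1 L + L^2 \geq 0$; combining these yields no lower bound for $(-K_V)^2$.

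The paper's route is shorter and rests on an ingredient you have not invoked: since $X$ is Fano, the cone $\NE{V} = \pi_* \NEX$ is polyhedral. The argument then runs in two steps. First, assuming $(-K_V)^2 \geq 0$: Riemann--Roch on the rational surface $V$ produces an effective anticanonical divisor $D$; any irreducible $C$ with $-K_V \cdot C < 0$ lies in $D$, and since $p_a(D) = 1$ and $D \neq C$ one sees that $C \simeq \PP^1$ with $C^2 < 0$, so $C$ is a $(-1)$-curve by Lemma \ref{lem:birational}, a contradiction. Thus $-K_V$ is nef. If moreover $(-K_V)^2 = 0$ then $V$ carries infinitely many $(-1)$-curves, contradicting polyhedrality of $\NE{V}$; hence $(-K_V)^2 > 0$ and $V$ is del Pezzo. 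Second, if $(-K_V)^2 = -k < 0$, run $k$ steps of the MMP to reach $(-K_{V_k})^2 = 0$; item i.\ keeps $X_k \to V_k$ in the setup of the theorem, and the first step applied to $V_k$ gives the contradiction.

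Note also a secondary gap in your nefness argument: Lemma \ref{lem:birational} only applies to \emph{smooth rational} curves of negative self-intersection, so your claim that every irreducible curve on $V$ is either nef or a $(-1)$-curve is not yet justified. The paper sidesteps this by working inside the effective anticanonical divisor, where the constraint $p_a(D) = 1$ forces any $-K_V$-negative component to be smooth rational before Lemma \ref{lem:birational} is invoked.
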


\begin{proof}
Applying Lemma  \ref{lem:birational} to every step of the MMP we obtain a commutative diagram
$$
\xymatrix{
X \ar[r]_{f} \ar[d]_{\pi}  & X_1 \ar[d]^{\pi_1} \ar@{.>}[r] & X_{k} \ar[d]^{\pi_{k}}  \\
V \ar[r]^{g} & V_1 \ar@{.>}[r] & V_k 
}
$$
Since $X_i$ is the blow-up of $X_{i+1}$ along a smooth $\pi_{i+1}$-fibre, the first statement follows easily.

For ii.\ we proceed in two steps.

{\em Step 1. We show the statement assuming $(-K_V)^2 \geq 0$.} Since $(-K_V)^2 \geq 0$ and $V$ is rationally connected,
it follows from the Riemann-Roch formula that there exists an effective divisor $D \in |-K_V|$. Arguing by contradiction we assume that $-K_V$ is not nef. Then there exists an irreducible curve $C \subset D$ such that
$-K_V \cdot C<0$. Since $(-K_V)^2 \geq 0$, the support of the divisor $D$ is not equal to $C$. For every anticanonical divisor we have $p_a(D)=1$, so $p_a(C)=0$ and therefore $C \simeq \PP^1$. Since $-K_V$ is effective and $-K_V \cdot C<0$ the rational curve $C$ is not a nef divisor, i.e.\ we have $C^2<0$. Yet by item i.\ of Lemma \ref{lem:birational} this implies that $C$ is a $(-1)$-curve. Therefore $-K_V \cdot C=1>0$, a contradiction. 

If $-K_V$ is nef and $(-K_V)^2=0$, the surface $V$ contains infinitely many $(-1)$-curves \cite[Sect.5, Cor.3]{Dol83}. Yet $\NE{V} = \pi_* \NEX$ is polyhedral, so this is not possible.
If $-K_V$ is nef and big, but not ample, the surface $V$ contains a $(-2)$-curve, again a contradiction to item i. of Lemma \ref{lem:birational}.

{\em Step 2. Reduction to the first case.}
We argue by contradiction and assume that $(-K_V)^2 = -k < 0$. Then we run a MMP with $k$ steps to obtain a surface $V \rightarrow V_k$ such that $(-K_{V_k})^2=0$. By item i.\ there exists a smooth Fano
fourfold $X_k \rightarrow V_k$ satisfying the conditions of Theorem \ref{theorem-nefdivisor}.
Since $(-K_{V_k})^2 \geq 0$, we can apply the first step to obtain that $V_k$ is del Pezzo.
Yet this contradicts $(-K_{V_k})^2=0$.
\end{proof}

Corollary \ref{cor:birational} allows to focus the classification on the case
where $V \simeq \PP^1 \times \PP^1$ or $V \simeq \PP^2$.
In these cases we want to show the existence of a lifting that contracts the base locus $B$, and we want to use the lifting to determine $\ma N_{B/X}^*$. 
We start by introducing the main technical tool for the next steps:

\begin{lemma} \label{lemma:setup-classA}
In the situation of Theorem \ref{theorem-nefdivisor}, denote by $\zeta_T$ the divisor on $T$ that identifies to 
$\sO_E(-E) \simeq \ma N^*_{E/X'} \simeq \sO_{\PP(\ma N_{B/X}^*)}(1)$ under the isomorphism $\phi_E$. Then the following holds:
\begin{enumerate}
\item The divisor $\zeta_T$ is pseudoeffective.
\item We have
$$
E + \phi^* \zeta_T \simeq \mu^* A
$$
with $A$ Cartier divisor on $X$ such that $\sO_B(A) \simeq \sO_B$.
\item The divisor $\zeta_T$ is nef if and only if $A$ is nef.
\item If $A$ is not nef, there exists a prime divisor $D_T \subset T$ such that the restriction $\sO_{D_T}(\zeta_T)$ is not pseudoeffective.
\item If for some prime divisor $D \subset X$ the restriction $\sO_D(A)$ is not pseudoeffective, then $D = \mu_*(\phi^* D_T)$
for a  prime divisor $D_T \subset T$ such that the restriction $\sO_{D_T}(\zeta_T)$ is not pseudoeffective.
\end{enumerate}
\end{lemma}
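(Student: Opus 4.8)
My plan is to work throughout with the Weierstra\ss\ structure of $\phi$. By Theorem~\ref{theorem-nefdivisor}(ii), $\holom{\phi}{X'}{T}$ is a flat Weierstra\ss\ fibration with distinguished section $E$, so Fact~\ref{fact:weierstrass} applies: $\phi_*\sO_{X'}(E)\simeq\sO_T$ and $\ma N_{E/X'}\simeq R^1\phi_*\sO_{X'}\simeq(\phi_*\omega_{X'/T})^*$. Under the isomorphism $\phi_E$ this reads $\sO_E(E)\simeq\sO_E(-\zeta_T)$ and $\zeta_T\simeq\phi_*\omega_{X'/T}$. First I would record the anticanonical identity: restricting $-K_{X'}\simeq\mu^*(-K_X)-E$ to $E$, using adjunction for $B\subset X$ together with $B\simeq V$ (Lemma~\ref{lemma-factorisation-degree-one}) and the identification of $\mu_E$ with $\mu_T$ from Theorem~\ref{theorem-nefdivisor}(iii), I obtain $-K_{X'}|_E\simeq\zeta_T+\mu_T^*M$ with $M:=\sO_B(-K_X)$ ample on $V$; since $-K_{X'}=\varphi^*H=\phi^*(\nu^*H)$ is a $\phi$-pullback and $\phi|_E$ is an isomorphism, this upgrades to $-K_{X'}\simeq\phi^*(\zeta_T+\mu_T^*M)$. (Equivalently $X'$ is Nakayama's Weierstra\ss\ model in $\PP(\sO_T\oplus\sO_T(-2\zeta_T)\oplus\sO_T(-3\zeta_T))$, by Proposition~\ref{prop:weierstrass}.)

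For item (i): $\zeta_T\simeq\phi_*\omega_{X'/T}$ is a line bundle (all $\phi$-fibres are integral of arithmetic genus one) and is weakly positive over $T$ by Viehweg's theorem \cite{Vie01}, hence pseudoeffective; alternatively, the discriminant of the Weierstra\ss\ equation of $X'$ is a nonzero section of $\sO_T(12\zeta_T)$, because the general $\phi$-fibre is a smooth elliptic curve, so $12\zeta_T$ is effective. For item (ii): $(E+\phi^*\zeta_T)|_E\simeq\sO_E(E)+\zeta_T\simeq\sO_E$, so $E+\phi^*\zeta_T$ is trivial on the extremal ray contracted by $\mu$, hence descends to a Cartier divisor $A:=\mu_*(E+\phi^*\zeta_T)$ on $X$ (as in Step~4 of the proof of Theorem~\ref{theorem-nefdivisor}, cf.\ \cite[Thm.7.39]{Deb01}); pulling $A|_B$ back along the $\PP^1$-bundle $\mu_E$ gives $\mu_E^*(A|_B)\simeq(\mu^*A)|_E\simeq\sO_E$, so $\sO_B(A)\simeq\sO_B$. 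For item (iii): $A$ is nef iff $\mu^*A=E+\phi^*\zeta_T$ is. If $\zeta_T$ is nef this is clear, since $E\cdot C\ge0$ and $\phi^*\zeta_T\cdot C\ge0$ for a curve $C\not\subset E$, while $\mu^*A|_E\simeq\sO_E$. Conversely, if $\mu^*A$ is nef, test $\zeta_T$ against an arbitrary curve $\gamma\subset T$ using the $2$-torsion trisection $C:=X'\cap\{y=0\}$ of the Weierstra\ss\ model, which is disjoint from the zero-section $E$: with $C_\gamma:=C\cap\fibre{\phi}{\gamma}$ one has $E\cdot C_\gamma=0$ and $\phi_*C_\gamma=3\gamma$, hence $0\le\mu^*A\cdot C_\gamma=\phi^*\zeta_T\cdot C_\gamma=3\,\zeta_T\cdot\gamma$.

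For item (v): if $\sO_D(A)$ is not pseudoeffective for a prime divisor $D\subset X$, let $D'$ be its strict transform; then $(\mu^*A)|_{D'}=(\mu|_{D'})^*\sO_D(A)$ is not pseudoeffective, and as $(\mu^*A)|_{D'}=\sO_{D'}(E)+\phi^*\zeta_T|_{D'}$ with $\sO_{D'}(E)$ effective, also $\phi^*\zeta_T|_{D'}$ is not pseudoeffective. If $\phi(D')=T$ this contradicts (i), since the pullback of a pseudoeffective class by a surjective morphism is pseudoeffective; hence $\phi(D')$ is a proper subvariety, and since $\phi$ does not contract a divisor (Lemma~\ref{lemma:technical}) and has integral fibres, $D_T:=\phi(D')$ is a prime divisor with $D'=\phi^*D_T$, so $\phi^*\zeta_T|_{D'}=(\phi|_{D'})^*(\zeta_T|_{D_T})$ forces $\sO_{D_T}(\zeta_T)$ non-pseudoeffective, and $D=\mu_*(\phi^*D_T)$. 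For item (iv): if $A$ is not nef then $\zeta_T$ is not nef by (iii), while pseudoeffective by (i); since $A=\mu_*(\phi^*\zeta_T)$ is pseudoeffective and $X$ is Fano, pick an extremal ray $R\subset\overline{\mbox{NE}}(X)$ with $A\cdot R<0$. A fibre-type contraction is impossible (the restriction of the pseudoeffective $A$ to a general fibre would be negative on a covering family of curves), and for a divisorial contraction of a divisor $D$ the restriction $\sO_D(A)$ is negative on the covering family of contracted curves, hence not pseudoeffective, so (v) produces the desired $D_T$; the remaining small case is handled via the explicit description of the curves on which $\mu^*A=E+\phi^*\zeta_T$ is negative — each lies in $\fibre{\phi}{\gamma}$ for a curve $\gamma\subset T$ with $\zeta_T\cdot\gamma<0$.

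I expect item (iv), and within it the small-contraction case, to be the main obstacle: it amounts to extracting a concrete prime divisor with non-pseudoeffective $\zeta_T$-restriction from the bare fact that $\zeta_T$ is pseudoeffective but not nef, which requires understanding the pseudoeffective and nef cones of the $\PP^1$-bundle $T=\PP(\ma N^*_{B/X})$ over the del Pezzo surface $V\simeq B$ (Corollary~\ref{cor:birational}(ii)); the refined positivity $\zeta_T+\mu_T^*M$ nef with $M$ ample, established in the first paragraph, is the extra input I would use to make this analysis tractable (for instance to locate a section of $\mu_T$ corresponding to a sufficiently negative quotient of $\ma N^*_{B/X}$).
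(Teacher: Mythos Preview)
Your arguments for (i), (ii), and (v) are essentially the paper's. For (iii) your trisection trick is a valid alternative to the paper's construction (the paper instead takes, for a curve $C\subset T$ with $\zeta_T\cdot C<0$, the curve $C':=\PP(\sO_C)\subset\PP(W)$ given by the quotient $W_C\to\sO_C$, checks $X'\cdot C'<0$ so $C'\subset X'$, notes $C'\cap E=\emptyset$, and computes $\mu^*A\cdot C'=\zeta_T\cdot C$).

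The genuine gap is (iv). Your Mori-theoretic case split on $X$ handles fibre-type and divisorial contractions, but the small case is left as a sketch: knowing that each $\mu^*A$-negative curve sits in $\fibre{\phi}{\gamma}$ for some $\zeta_T$-negative curve $\gamma\subset T$ does not by itself produce a prime divisor $D_T$ with $\zeta_T|_{D_T}$ non-pseudoeffective, and your proposed remedy (analysing $\PP(\ma N^*_{B/X})$ over the del Pezzo $V$ via Corollary~\ref{cor:birational}) is not carried out. The paper avoids this impasse entirely by arguing the contrapositive \emph{on $T$}, not on $X$: if $\zeta_T|_{D_T}$ is pseudoeffective for every prime divisor $D_T\subset T$, then since $\dim T=3$ the divisorial Zariski decomposition forces the $\zeta_T$-negative curves to be at most countably many; hence a very general $U\in|H|$ misses them all and $\zeta_T|_U$ is nef. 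Setting $Y':=\fibre{\varphi}{U}\in|-K_{X'}|$, the proof of (iii) then gives $\mu^*A|_{Y'}$ nef, so $A|_Y$ is nef for very general $Y\in|-K_X|$, and Koll\'ar's theorem \cite{Bor91} yields $A$ nef. This bypasses any discussion of extremal contractions of $X$ and in particular the small case.
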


\begin{proof}
By Fact \ref{fact:weierstrass} we have $\sO_E(-E) \simeq \phi_* \omega_{X'/T}$ and the latter is pseudoeffective
by Viehweg's theorem \cite{Vie82}. This shows that $\zeta_T$ is pseudoeffective.

The restriction of  $\phi^* \zeta_T$ to $E$ is isomorphic to $\sO_E(-E)$, so we have
$\sO_E(E + \phi^* \zeta_T) \simeq \sO_E$. In particular $E + \phi^* \zeta_T$ has degree zero on the extremal ray contracted by $\mu$. This shows the existence of $A$. Moreover, $\sO_B(A) \simeq \sO_B$ since its pull-back to $E$ is trivial.

{\em Proof of iii.}
If $\zeta_T$ is nef, any curve $C' \subset X'$ such that $(E + \phi^* \zeta_T) \cdot C' < 0$ would be contained in $E$. Yet we have
just seen that $\sO_E(E + \phi^* \zeta_T) \simeq \sO_E$ so this is not possible.
Vice versa, assume that there exists a curve $C \subset T$ such that $\zeta_T \cdot C<0$. Denote by $W_C$ the restriction of the vector bundle $W$ introduced in Proposition \ref{prop:weierstrass} to the curve $C$, and let $C' :=\PP(\sO_C)$
be the curve corresponding to the quotient $W_C \rightarrow \sO_C$. The divisor class of $X' \subset \PP(W)$ is
$3 \zeta_W +\Phi^* (6 \zeta_T)$, so we have $X' \cdot C'<0$. Thus the curve $C'$ is contained in $X'$ and it is disjoint from
from the section $E = \PP(\sO_T(-3 \zeta_T))$. Therefore 
$$
\mu^* A \cdot C' = (E+\phi^* \zeta_T) \cdot C' = \zeta_T \cdot C < 0.
$$

{\em Proof of iv.} Assume that for every divisor $D \subset T$ the restriction $\sO_D(\zeta_T)$ is pseudoeffective. Since $\dim T=3$,
it follows from the divisorial Zariski decomposition that there exist at most countably many curves $C \subset T$ such that
$\zeta_T \cdot C<0$. The anticanonical system $|-K_{X'}|$ is the pull-back of a globally generated linear system on $T$, 
so a very general element $Y' \in |-K_{X'}|$ fibres over a surface $U \subset T$ such that
$\sO_U(\zeta_T)$ is nef. Now the proof of iii.\ shows that 
$$
\sO_{Y'}(E + \phi^* \zeta_T) \simeq \sO_{Y'}(\mu^* A) 
$$ 
is nef. Therefore the restriction $\sO_Y(A)$ is nef and we conclude by \cite[Theorem]{Bor91}  that $A$ is nef.

{\em Proof of v.} Denote by $D' \subset X'$ the strict transform of $D \subset X$. By assumption,
the restriction $E + \phi^* \zeta_T \simeq \mu^* A$ to $D'$ is not pseudoeffective. Since $E \neq D'$ this implies that
the restriction of $\phi^* \zeta_T$ to $D'$ is not pseudoeffective. We know by i.\ that $\zeta_T$ is pseudoeffective, so the 
morphism $D' \rightarrow T$ can't be surjective. Since $\phi$ is flat with integral fibres, we obtain that
$D' = \phi^* D_T$ with $D_T$ a prime divisor on $T$. Moreover the restriction of $\zeta_T$ to $D_T$ is not pseudoeffective.
\end{proof}

Our goal will be to show that $A$ and therefore $\ma N_{B/X}^*$ is nef. Lemma \ref{lemma:setup-classA} essentially reduces this to excluding the existence of certain negative divisors $D_T \subset T$. 
This is surprisingly challenging and will require further geometric input.

\subsection{The case $V \simeq \PP^2$}

We start with a technical lemma, then apply it in our situation.

\begin{lemma} \label{lemma:split}
Let $\sF \rightarrow \PP^2$ be a vector bundle of rank two that is pseudoeffective but not nef,
and denote by $\holom{p}{\PP(\sF)}{\PP^2}$ the projectivisation.
Assume that there exists a non-constant morphism $\holom{j}{S  \simeq \PP^2}{\PP(\sF)}$.
Then 
$$
\sF \simeq W_1 \oplus W_2
$$
with $W_i \simeq \sO_{\PP^2}(a_i)$ with $a_1 \geq 0$ and $a_2<0$. 
\end{lemma}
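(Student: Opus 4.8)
The plan is to work on the projectivisation $\holom{p}{T:=\PP(\sF)}{\PP^2}$ with tautological class $\zeta_T$ (so $p_*\sO_T(\zeta_T)\simeq\sF$) and $f:=p^*\sO_{\PP^2}(1)$, recalling that $\mathrm{NS}(T)=\Z\zeta_T\oplus\Z f$ with $f^3=0$ and $f^2\cdot\zeta_T=1$. First I would observe that $g:=p\circ j\colon\PP^2\to\PP^2$ is non-constant: otherwise $j$ would factor through a single fibre $p^{-1}(x)\simeq\PP^1$, giving a surjective morphism $\PP^2\to\PP^1$, which is impossible since the fibres of such a morphism would be linearly equivalent divisors on $\PP^2$, hence not pairwise disjoint. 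A non-constant morphism $\PP^2\to\PP^2$ cannot contract a curve $C$, because $g^*\sO_{\PP^2}(1)=\sO_{\PP^2}(\delta)$ with $\delta\ge1$ forces $g^*\sO_{\PP^2}(1)\cdot C=\delta\deg C>0$; hence $g$ is finite and surjective, of some degree $d\ge1$.

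Next I would pull the tautological surjection $p^*\sF\twoheadrightarrow\sO_T(\zeta_T)$ back along $j$ to obtain a surjection $g^*\sF\twoheadrightarrow j^*\sO_T(\zeta_T)$. Since $S\simeq\PP^2$ the target is a line bundle $\sO_{\PP^2}(k)$, and because the quotient is locally free the kernel is a line bundle $\sO_{\PP^2}(m)$ with $m+k=\delta\,c_1(\sF)$; as $\Ext^1_{\PP^2}(\sO_{\PP^2}(k),\sO_{\PP^2}(m))=H^1(\PP^2,\sO_{\PP^2}(m-k))=0$ the extension splits, so $g^*\sF\simeq\sO_{\PP^2}(m)\oplus\sO_{\PP^2}(k)$. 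In particular $H^1(\PP^2,g^*\sF\otimes\sO_{\PP^2}(n))=0$ for all $n\in\Z$. I would then descend this to $\sF$: for each $n$ one has $H^1(\PP^2,g^*(\sF(n)))=H^1(\PP^2,g^*\sF\otimes\sO_{\PP^2}(\delta n))=0$, and by the projection formula together with the trace splitting $g_*\sO_{\PP^2}\simeq\sO_{\PP^2}\oplus\sG$ (valid over $\C$), the group $H^1(\PP^2,\sF(n))$ is a direct summand of the former, hence vanishes for all $n$. By Horrocks' splitting criterion on $\PP^2$ this forces $\sF\simeq\sO_{\PP^2}(a_1)\oplus\sO_{\PP^2}(a_2)$; relabel so that $a_1\ge a_2$.

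The last step is to read off the signs from positivity. Let $D\subset T$ be the section corresponding to the quotient $\sF\twoheadrightarrow\sO_{\PP^2}(a_2)$; a direct computation gives $\zeta_T|_D\simeq\sO_{\PP^2}(a_2)$, $\sO_D(D)\simeq\sO_{\PP^2}(a_2-a_1)$ and $[D]=\zeta_T-a_1 f$ in $\mathrm{NS}(T)$. Since $f$ is semiample with $f^3=0$ and, when $a_1>a_2$, the prime divisor $D$ carries curves of negative self-intersection (hence is rigid and extremal in $\overline{\mathrm{Eff}}(T)$), and since $[f],[D]$ are not proportional, we get $\overline{\mathrm{Eff}}(T)=\R_{\ge0}[f]+\R_{\ge0}[D]$; from $\zeta_T=[D]+a_1 f$ it follows that $\zeta_T$, equivalently $\sF$, is pseudoeffective if and only if $a_1\ge0$. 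Dually $\NE{T}=\R_{\ge0}[\mathfrak f]+\R_{\ge0}[\ell]$, with $\mathfrak f$ a line in a $p$-fibre and $\ell$ a line in $D\simeq\PP^2$, and $\zeta_T\cdot\mathfrak f=1$, $\zeta_T\cdot\ell=a_2$, so $\sF$ is nef if and only if $a_2\ge0$. Hence the hypothesis "$\sF$ pseudoeffective but not nef'' gives $a_1\ge0>a_2$, and one takes $W_1:=\sO_{\PP^2}(a_1)$, $W_2:=\sO_{\PP^2}(a_2)$.

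The step I expect to be the main obstacle is the descent in the second paragraph, i.e. deducing that $\sF$ itself splits once $g^*\sF$ does; as above this can be handled by Horrocks' criterion plus the characteristic-zero trace splitting, but if one wishes to avoid Horrocks one can argue by cases instead: if $\sF$ is semistable then so is its finite pullback $g^*\sF\simeq\sO_{\PP^2}(m)\oplus\sO_{\PP^2}(k)$, forcing $m=k$ and hence $c_1(\sF)^2-4c_2(\sF)=0$, so $\sF$ is projectively flat and therefore $\sO_{\PP^2}(a)^{\oplus2}$ (as $\PP^2$ is simply connected), a bundle that is pseudoeffective only when it is already nef; and if $\sF$ is unstable, its maximal destabilising sub-line bundle $\sL$ pulls back to that of $g^*\sF$, whose quotient is locally free, whence flatness of $g$ forces $\sF/\sL$ to be locally free and $\sF$ to split. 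Either route concludes the proof.
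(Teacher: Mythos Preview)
Your proof is correct and begins identically to the paper's: both show $\alpha := p \circ j$ is finite surjective, obtain a line-bundle quotient of $\alpha^*\sF$ from $j$, and split the resulting extension using $H^1(\PP^2, \sO(n)) = 0$ for all $n$. The descent step differs. The paper observes that $\alpha^*\sF$, being pseudoeffective but not nef, has summands of unequal degree and is therefore unstable; hence $\sF$ is unstable by \cite[Lemma 6.4.12]{Laz04b}, and its destabilising rank-one subsheaf is a subbundle because its pullback is one. Your main route via Horrocks' criterion together with the trace splitting of $g_*\sO_{\PP^2}$ is arguably cleaner and delays any use of the positivity hypothesis until the final sign check; your alternative route (the semistable/unstable dichotomy) is essentially the paper's argument. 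One wording quibble: $D \simeq \PP^2$ does not ``carry curves of negative self-intersection''; what you need, and what suffices, is that $\sO_D(D) \simeq \sO_{\PP^2}(a_2 - a_1)$ is anti-ample when $a_1 > a_2$, so $h^0(T, \sO_T(D))=1$ and $[D]$ is rigid, hence extremal in $\overline{\mathrm{Eff}}(T)$. You might also dispose of the case $a_1 = a_2$ explicitly (then $\sF \simeq \sO_{\PP^2}(a)^{\oplus 2}$ is pseudoeffective iff nef), though this is implicit in your final sentence.
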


\begin{proof}
It is elementary to see that the composition $\holom{\alpha := p \circ j}{S }{\PP^2}$ is a 
surjective finite morphism. We will first show the statement for $\alpha^* \sF$, then descend the properties via a stability argument.
The pull-back
$$
\holom{p'}{\PP(\alpha^* \sF)}{S \simeq \PP^2}
$$
has a section, so we obtain an extension
$$
0 \rightarrow L_1 \rightarrow \alpha^* \sF \rightarrow L_2 \rightarrow 0
$$
where the $L_i$ are line bundles on $\PP^2$. Since $H^1(\PP^2, L_1 \otimes L_2^*)=0$
the extension splits and $\alpha^* \sF \simeq L_1 \oplus L_2$.
Now recall that $\alpha^* \sF$ is pseudoeffective but not nef, so up to renumbering we have
$L_i \simeq \sO_{\PP^2}(b_i)$ with $b_1 \geq 0$ and $b_2<0$. Thus $\alpha^* \sF$
is not $\alpha^* H$-semistable. Yet this implies that $\sF$ itself is not $H$-semistable \cite[Lemma 6.4.12]{Laz04b} and we denote by $F \subset \sF$ the destabilising rank one subsheaf. Since $\alpha^* F$ destabilises
$\alpha^* \sF$ we have $\alpha^* F=L_1$ and therefore $F \subset \sF$ is a subbundle.
Now we can argue as before to obtain the statement.
\end{proof}

\begin{lemma} \label{lemma:conormal-nef}
In the situation of Theorem \ref{theorem-nefdivisor}, assume that $B \simeq V \simeq \PP^2$.
Assume also that there exists a non-constant morphism $\holom{j}{S  \simeq \PP^2}{T}$.
Then $\ma N_{B/X}^*$ is nef.
\end{lemma}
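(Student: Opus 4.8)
Write $\sF:=\ma N_{B/X}^*$, a rank-two bundle on $B\simeq V\simeq\PP^2$, so that $T\simeq\PP(\sF)$ and (in the notation of Lemma \ref{lemma:setup-classA}) $\zeta_T$ is the tautological class; thus $\sF$ is nef if and only if $\zeta_T$ is nef. The plan is to argue by contradiction. By Lemma \ref{lemma:setup-classA}.i the class $\zeta_T$, equivalently the bundle $\sF$, is pseudoeffective; if it is not nef then the hypothesis that $\holom{j}{S\simeq\PP^2}{T=\PP(\sF)}$ is non-constant lets us apply Lemma \ref{lemma:split}, yielding a splitting $\sF\simeq\sO_{\PP^2}(a_1)\oplus\sO_{\PP^2}(a_2)$ with $a_1\geq 0>a_2$. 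It then remains to derive a contradiction from this splitting, and the key will be the flat elliptic fibration with integral fibres $\holom{\phi}{X'}{T}$ provided by Theorem \ref{theorem-nefdivisor}.ii.

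To that end, let $D_T:=\PP(\sO_{\PP^2}(a_2))\subset T$ be the section determined by the quotient $\sF\twoheadrightarrow\sO_{\PP^2}(a_2)$; then $D_T\simeq\PP^2$, $\ma N_{D_T/T}\simeq\sO_{\PP^2}(a_2-a_1)$ and $\zeta_T|_{D_T}\simeq\sO_{\PP^2}(a_2)$. Fix a line $\ell\subset D_T$ and set $\Sigma:=\phi^{-1}(\ell)\subset X'$. Since $\phi$ is flat with integral fibres, $\Sigma$ is an irreducible surface, the induced map $\Sigma\to\ell\simeq\PP^1$ is an elliptic fibration all of whose fibres are integral curves of arithmetic genus one, and $\ell':=E\cap\Sigma$ is a section (recall that $\phi_E\colon E\to T$ is an isomorphism by Theorem \ref{theorem-nefdivisor}, and that $\ell$ is a local complete intersection in $T$, so $\Sigma$ is regularly embedded in $X'$ with $\ma N_{\Sigma/X'}\simeq\phi^*\ma N_{\ell/T}|_\Sigma$). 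The point of the construction is to compute the self-intersection of this section: combining the normal bundle sequences of $\ell'\subset E\subset X'$ and of $\ell'\subset\Sigma\subset X'$ with the identifications $\ma N_{\ell'/E}\simeq\ma N_{\ell/T}$ and $\ma N_{E/X'}|_{\ell'}\simeq\sO_\ell(-a_2)$ (the latter being the very definition of $\zeta_T$ in Lemma \ref{lemma:setup-classA}, restricted to $\ell\subset D_T$), and with $\ma N_{\Sigma/X'}|_{\ell'}\simeq\ma N_{\ell/T}$ (pull-back along the isomorphism $\phi|_{\ell'}$), one gets $\ma N_{\ell'/\Sigma}\simeq\sO_\ell(-a_2)$; that is, $(\ell')^2_\Sigma=-a_2\geq 1$.

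This is impossible. By Proposition \ref{prop:weierstrass} the fibration $\phi$ is the Weierstra\ss\ model attached to its distinguished section $E$, so $E$ meets every $\phi$-fibre in a smooth point of that fibre; hence $\ell'$ lies in the smooth locus of $\Sigma$, and a minimal resolution $\widetilde\Sigma\to\Sigma$ is an isomorphism near $\ell'$, so that the strict transform $\widetilde{\ell'}$ satisfies $(\widetilde{\ell'})^2=(\ell')^2_\Sigma=-a_2$. Passing from $\widetilde\Sigma$ to a relatively minimal model $\Sigma_{\min}$ over $\PP^1$ only contracts $(-1)$-curves in fibres, and since self-intersection is non-decreasing under blow-downs, the image of $\widetilde{\ell'}$ is a section $\sigma\subset\Sigma_{\min}$ with $\sigma^2\geq -a_2>0$. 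On the other hand, for a relatively minimal elliptic surface over $\PP^1$ the canonical bundle formula gives $K_{\Sigma_{\min}}\equiv(\text{pull-back from }\PP^1)$ of degree $\chi(\sO_{\Sigma_{\min}})-2$ with $\chi(\sO_{\Sigma_{\min}})\geq 0$, so adjunction along $\sigma$ forces $\sigma^2=-\chi(\sO_{\Sigma_{\min}})\leq 0$ (see e.g.\ \cite{BHPV04}), a contradiction. Therefore $\sF=\ma N_{B/X}^*$ is nef.

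The delicate part of this plan is the third paragraph: one has to make sure the section $\ell'$ survives, with self-intersection still bounded below by $-a_2$, through a resolution of the possibly singular surface $\Sigma$ and the subsequent contraction to a relatively minimal model. This is exactly what the Weierstra\ss\ description of $\phi$ buys us, since it places $\ell'$ in the part of $\Sigma$ that is untouched by these modifications; once this is secured, the elementary bound "a section of a relatively minimal elliptic surface over $\PP^1$ has non-positive self-intersection'' applies and closes the argument. The normal bundle bookkeeping in the second paragraph is routine but must be done carefully because $\ell'$ has codimension three in $X'$.
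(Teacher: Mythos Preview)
Your computation $(\ell')^2_\Sigma = -a_2 > 0$ is correct, and the strategy of reaching a contradiction via the geometry of the section $\ell'$ is natural. However, the third paragraph contains a genuine gap: the surface $\Sigma = \phi^{-1}(\ell)$ is \emph{never} a genuine elliptic fibration over $\ell$, so the appeal to relatively minimal elliptic surfaces fails.

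Here is why. By Proposition \ref{prop:weierstrass} and Fact \ref{fact:weierstrass} the line bundle governing the Weierstra\ss\ model is $L \simeq \sO_T(\zeta_T)$, and the Weierstra\ss\ coefficients are global sections $\alpha \in H^0(T,\sO_T(4\zeta_T))$ and $\beta \in H^0(T,\sO_T(6\zeta_T))$. Since $\zeta_T|_{D_T} \simeq \sO_{\PP^2}(a_2)$ with $a_2<0$, both $\alpha|_{D_T}$ and $\beta|_{D_T}$ are sections of negative line bundles on $\PP^2$ and hence vanish identically. Therefore every fibre of $\phi$ over $D_T$ (and in particular over your line $\ell$) is the cuspidal cubic $\{zy^2 = x^3\}$. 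Consequently the general fibre of $\Sigma \to \ell$ has geometric genus zero; after resolving and passing to a relatively minimal model over $\PP^1$ you obtain a Hirzebruch surface, where sections can have arbitrarily positive self-intersection, and no contradiction results. (Equivalently: $\Sigma$ is non-normal along the curve of cusps, and once you normalise, $K$ is no longer a pull-back from $\ell$, so Lemma \ref{lemma-bound-negativity} does not apply either.)

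The paper's proof exploits precisely this degeneration, but in a different way: rather than working on the pulled-back surface $\Sigma$, it first pins down the value $a_1=3$ using nefness of $-K_{X'}$ on the negative section, and then shows directly that the vanishing of the Weierstra\ss\ data along $D_T$ forces $X'$ to be singular along the surface of cusps $S = \PP(\sO_{D_T}) \subset \PP(W)$, contradicting smoothness of $X'$. The cusp locus is thus the obstruction in both approaches, but only the singularity argument on the total space actually yields a contradiction.
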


\begin{proof}
We argue by contradiction and assume that $\ma N_{B/X}^*$ is not nef. Since $\ma N_{B/X}^*$ is pseudoeffective by Lemma \ref{lemma:setup-classA}, 
we can apply Lemma \ref{lemma:split} to obtain 
$$
\ma N_{B/X}^* \simeq \sO_{\PP^2}(a_1) \oplus \sO_{\PP^2}(a_2)
$$
with $a_1\geq0$ and $a_2<0$. We have $\sO_B(-K_X) \simeq \sO_{\PP^2}(b)$
with $b \geq 1$, and using the normal sequence for $B \subset X$ we see that
$b=3-a_1-a_2$. Since $b \geq 1$,  this implies
$a_1 \geq -a_2 + 2 \geq 3$.

{\em Step 1. We show that $a_1=3$ and $b=-a_2$.}
Consider the subvariety $B ':= \PP(\sO_{\PP^2}(a_2)) \subset E \subset X'$. Then we have
$\sO_{B'}(-E) \simeq \sO_{\PP(\ma N_{B/X}^*)}(1) \otimes \sO_{B'} \simeq \sO_{\PP^2}(a_2)$ and therefore
$$
\sO_{B'}(-K_{X'}) \simeq \sO_{B'}(-\mu^* K_{X} - E) \simeq \sO_{\PP^2}(b+a_2).
$$
Since $-K_{X'}$ is nef, we obtain that $b \geq -a_2$. Since $b=3-a_1-a_2$, this implies $a_1 \leq 3$.
Yet $a_1 \geq 3$ so all the inequalities are equalities.

{\em Step 2. We obtain a contradiction by showing that $X'$ is not smooth.}
We have $T \simeq E \simeq \PP(\ma N_{B/X}^*)$ and we denote by 
$N \subset T$ the section corresponding to the negative quotient 
$\ma N_{B/X}^* \rightarrow \sO_{\PP^2}(-b)$.
Denote by $\zeta_T$ the tautological class on $\PP(\ma N_{B/X}^*)$, then we have
$\zeta_T \simeq N + \mu_T^*(3H)$
with $H$ the hyperplane class on $V \simeq \PP^2$.
Using the notation of Proposition \ref{prop:weierstrass} we consider 
the surface $S \subset \PP(W)$ determined by the quotient
$$
W_N  \simeq \sO_{\PP^2} \oplus \sO_{\PP^2}(2b) \oplus \sO_{\PP^2}(3b) \twoheadrightarrow \sO_{\PP^2}
$$
Then, we have an exact sequence
$$
0 \rightarrow \ma N_{\PP(W_N)/\PP(W)}^* \otimes \sO_S \rightarrow \ma N_{S/\PP(W)}^* \rightarrow \ma N_{S/\PP(W_N)}^*    \rightarrow 0
$$
which, using $\sO_N(\zeta_T) \simeq \sO_{\PP^2}(-b)$, can be simplified to 
$$
0 \rightarrow \sO_{\PP^2}(b+3)   \rightarrow \ma N_{S/\PP(W)}^* \rightarrow \sO_{\PP^2}(2b) \oplus \sO_{\PP^2}(3b) 
\rightarrow 0.
$$
By Proposition \ref{prop:weierstrass}, the divisor class of $X' \subset \PP(W)$
is $3 \zeta_W + \Phi^*(6 \zeta_T)$. In particular $\sO_S(X') \simeq \sO_{\PP^2}(-6b)$,
so the surface $S$ is contained in $X'$.
Tensoring the exact sequence with $\sO_S(X')$ we obtain an extension
$$
0 \rightarrow \sO_{\PP^2}(-5b+3)   \rightarrow \ma N_{S/\PP(W)}^* \otimes \sO_S(X') \rightarrow \sO_{\PP^2}(-4b) \oplus \sO_{\PP^2}(-3b) 
\rightarrow 0.
$$
Since $b \geq 1$, this implies $H^0(S, \ma N_{S/\PP(W)}^* \otimes \sO_S(X'))=0$.
By \cite[Lemma 1.7.4]{BS95} this implies that $X'$ is singular along $S$, a contradiction.
\end{proof}

\begin{proposition} \label{prop:P2}
In the situation of Theorem \ref{theorem-nefdivisor}, assume that $B \simeq V \simeq \PP^2$.
Then $\ma N_{B/X}^*$ is nef.
\end{proposition}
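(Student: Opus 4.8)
The plan is to reduce the statement to Lemma \ref{lemma:conormal-nef}, arguing by contradiction. Suppose $\sF := \ma N_{B/X}^*$ is not nef; I will produce a non-constant morphism $\holom{j}{S \simeq \PP^2}{T}$, and then Lemma \ref{lemma:conormal-nef} forces $\sF$ to be nef, a contradiction. To get started, since $\sF$ is not nef the class $\zeta_T$ is not nef, so by item iii.\ of Lemma \ref{lemma:setup-classA} the divisor $A$ is not nef, and by item iv.\ there is a prime divisor $D_T \subset T$ such that $\sO_{D_T}(\zeta_T)$ is not pseudoeffective. Recall that $\zeta_T$ itself is pseudoeffective by item i.\ of the same lemma, and that $T \simeq \PP(\sF)$ is a $\PP^1$-bundle over $B \simeq V \simeq \PP^2$, so that $N^1(T) = \Z\zeta_T \oplus \Z\mu_T^*H$ with $H$ the hyperplane class.

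First I would show that $\mu_T$ maps $D_T$ onto $\PP^2$. If not, then $C := \mu_T(D_T)$ is an irreducible curve and $D_T = \mu_T^{-1}(C)$ is a member of the linear system $|\mu_T^*\sO_{\PP^2}(\deg C)|$, which is base-point-free on $T$. The restriction of a pseudoeffective class to a member of a base-point-free linear system is again pseudoeffective: writing $\zeta_T$ as a limit of effective $\Q$-divisors, one replaces, in each approximant, every occurrence of $D_T$ by a numerically equivalent sum of general members of $|\mu_T^*\sO_{\PP^2}(1)|$, which do not contain $D_T$, and then restricts to $D_T$ to obtain effective divisors converging to $\sO_{D_T}(\zeta_T)$. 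This contradicts the choice of $D_T$, so $\mu_T|_{D_T}\colon D_T \to \PP^2$ is generically finite and surjective, of some degree $d \geq 1$, and $D_T \equiv d\,\zeta_T + k\,\mu_T^*H$ for some $k \in \Z$.

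The heart of the proof is to extract from $D_T$ a non-constant morphism $\PP^2 \to T$. The expectation is that $D_T$, or at least its normalisation $\tilde D$ (which is again generically finite over $\PP^2$ and carries the still non-pseudoeffective class $\zeta_T|_{\tilde D}$), is isomorphic to $\PP^2$, so that the composition $\tilde D \simeq \PP^2 \to D_T \hookrightarrow T$ is the desired $j$. To pin this down I would feed in the ambient geometry: the divisor $\hat D := \phi^*D_T$ is prime on $X'$ (here $\phi$ is flat with integral fibres by item ii.\ of Theorem \ref{theorem-nefdivisor}), its image $D := \mu_*\hat D \subset X$ is a prime divisor with $\sO_D(A)$ non-pseudoeffective by item v.\ of Lemma \ref{lemma:setup-classA}, and one then uses the nefness of $-K_{X'}$, of $\mu^*(-K_X)$ and of $-K_X|_B$, together with the Weierstraß model $X' \subset \PP(W)$ of Proposition \ref{prop:weierstrass}, to bound $d$ and $k$ and to control the structure of $D_T$; once one knows that $\PP^2$ maps to $T$, Lemma \ref{lemma:split} identifies $\sF$ with a split bundle and the singularity computation of Step 2 of the proof of Lemma \ref{lemma:conormal-nef} closes the argument. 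With $j$ produced, Lemma \ref{lemma:conormal-nef} gives that $\ma N_{B/X}^*$ is nef, contradicting the assumption; hence $\ma N_{B/X}^*$ is nef.

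The main obstacle is the third step: showing that the negative prime divisor $D_T \subset T$ gives rise to an honest copy of $\PP^2$ mapping non-constantly to $T$, rather than to a non-normal surface or a genuine multisection of $\mu_T$ of degree $d > 1$. I expect this to require a careful case analysis on the degree $d$ and on the class $d\,\zeta_T + k\,\mu_T^*H$ of $D_T$ in $N^1(T)$, with the relevant intersection numbers bounded using that $-K_{X'}$, $\mu^*(-K_X)$ and $-K_X|_B$ are nef and that $X'$ is smooth, very much in the spirit of the computations in the proofs of Lemmata \ref{lemma:split} and \ref{lemma:conormal-nef}.
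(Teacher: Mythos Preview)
Your proposal has a genuine gap, which you yourself flag as ``the main obstacle'': you need the negative divisor $D_T \subset T$ (or its normalisation) to be $\PP^2$, and you do not show this. There is no a priori reason why a prime divisor in a $\PP^1$-bundle over $\PP^2$ whose tautological restriction is not pseudoeffective should be $\PP^2$; it could be a multisection of arbitrary degree or a singular surface, and the intersection-theoretic bounds you gesture at do not obviously force $d=1$ or normality. The sketch ``feed in the ambient geometry \ldots\ to bound $d$ and $k$'' is not an argument, and I do not see how to complete it along those lines.

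The paper sidesteps this difficulty entirely by working on $X$ rather than on $T$. Since $A$ is not nef, the cone theorem gives a $K_X$-negative extremal ray with $A$-negative contraction $\holom{f}{X}{X_1}$. If $f$ is small, Kawamata's theorem \cite{Kaw89} hands you a contracted surface $S \simeq \PP^2$ for free; when $S \cap B = \emptyset$ its strict transform maps non-constantly to $T$ and Lemma \ref{lemma:conormal-nef} applies, while the case $S \cap B \neq \emptyset$ is excluded by a short argument using Lemma \ref{lemma:technical} (the general fibres of $\varphi$ over a curve have arithmetic genus one, but the relevant curves in $S'$ are rational). If $f$ is divisorial with exceptional divisor $D$, one does get $D = \mu(\phi^* D_T)$ as you observe, but the paper does \emph{not} try to identify $D_T$: instead it shows that $D$ is contracted onto a surface (since $B \subset D$ and $B$ is not contracted), and then the general $f$-fibres lift to $-K_{X'}$-trivial rational curves in $X'$, which again contradicts Lemma \ref{lemma:technical}. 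The key input you are missing is the structure theory of Mori contractions on fourfolds together with the genus-one constraint from Lemma \ref{lemma:technical}; these replace the classification of $D_T$ that you were attempting.
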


\begin{proof} We argue by contradiction assume that $\ma N_{B/X}^*$ is not nef. 
We use the notation of Lemma \ref{lemma:setup-classA}
and consider the class
$$
E + \phi^* \zeta_T \simeq \mu^* A.
$$
By item iii.\ of Lemma \ref{lemma:setup-classA}  we know that $A$ is not nef, 
so there exists a Mori contraction $$\holom{f}{X}{X_1}$$ such that $A \cdot C<0$ for every contracted curve $C$. 
Note that $B$ is not contracted by $f$: by item ii.\ of Lemma \ref{lemma:setup-classA} the restriction of $A$ to $B$ is trivial,
so $B$ does not contain any $A$-negative curve.

{\em 1st case. Assume that $f$ is small.} 
Let $S \simeq \PP^2$ be a surface contracted by $f$ \cite[Thm.1.1]{Kaw89}. If $S \cap B = \emptyset$, the
strict transform $S' \subset X'$ is isomorphic to $\PP^2$. Since $\phi$ is equidimensional of relative dimension one, the morphism $\phi\colon S' \simeq \PP^2 \rightarrow T$ is not constant. By Lemma \ref{lemma:conormal-nef} this implies that $\ma N_{B/X}^*$ is nef, a contradiction.

Thus we have $S \cap B \neq \emptyset$. Note first that the intersection is finite: otherwise $f$ contracts a curve in $S \cap B \subset B$ and therefore every curve in $B \simeq \PP^2$. Yet we know that $B$ is not contracted by $f$.

If $S \cap B$
is finite, let $d_B \subset S \simeq \PP^2$ be a general line passing through a point $p \in S \cap B$,
and let $d_B' \subset X'$ be its strict transform. Then $E \cdot d_B' > 0$ and therefore
$-K_X \cdot d_B=1$ implies that $-K_{X'} \cdot d_B'=0$. Thus, if $S' \subset X'$ is the strict transform of $S$, it is covered by
$-K_{X'}$-trivial curves. On the other hand a general line $d \subset S$ is disjoint from $S \cap B$, so
$-K_{X'} \cdot d' = -K_X \cdot d=1$. 

Since $-K_{X'} \simeq \varphi^* A$ with $A$ an ample divisor
on $T'$ (here we use the fibration $\varphi$ from Setup \ref{setup-general}, not $\phi$ from Theorem \ref{theorem-nefdivisor}) we see that $\varphi$
contracts the surface $S'$ onto a curve in $T'$.
The smooth rational curves $d_B' \subset S'$ are contained in general fibres of $S' \rightarrow \varphi|_{S'} \subset T'$. 
By Lemma \ref{lemma:technical}, 
the fibration $\varphi$ is flat with integral fibres over the complement of finitely many points in $T'$.
Thus $d_B'$ is a curve of arithmetic genus one. Yet $d_B' \simeq d_B$ is a line in $\PP^2$, so we have reached a contradiction. 

{\em 2nd case. Assume that $f$ is divisorial.}
Denote by $D$ the exceptional divisor, then by item v.\ of Lemma \ref{lemma:setup-classA} we have $D =\mu(\fibre{\phi}{D_T}$ with $D_T \subset T$ a prime divisor such that the restriction of $\zeta_T$ to $D_T$ is not pseudoeffective. In particular $D_T$ is not nef.
Since $T \rightarrow V \simeq \PP^2$ is a $\PP^1$-bundle, this implies that $D_T$ is not a pull-back from $V$, so
$D_T$ surjects onto $V$. Therefore $D$ contains the base locus $B$.
We have seen that $B$ is not contracted by $f$, so we have $\dim f(D) \geq \dim f(B) \geq 2$.
Thus the exceptional divisor $D$ is contracted onto a surface and $B \simeq \PP^2$ has a finite surjective map onto $f(D)$. A general fibre of $f$ is a smooth rational curve $d$ such that $-K_X \cdot d=1$ and its strict transform $d' \subset X'$ satifies $E \cdot d' = \mbox{length}(B \cap d)>0$. Thus we see that $-K_{X'} \cdot d'=0$ and the divisor $D' \subset X'$ is covered by $-K_{X'}$-trivial curves. By Lemma \ref{lemma:technical} 
the fibration $\varphi$ does not contract a divisor, so $\varphi(D')$ has dimension at least one
and $d'$ is contained in a general fibre of $D' \rightarrow \varphi(D')$.
By the same lemma,
the fibration $\varphi$ is flat with integral fibres over the complement of finitely many points in $T'$.
Thus $d'$ is a curve of arithmetic genus one. Yet $d' \simeq d \simeq \PP^1$, so we have reached a contradiction. 
\end{proof}

\begin{lemma} \label{lemma:P2-classifyNB}
In the situation of Theorem \ref{theorem-nefdivisor}, assume that $B \simeq V \simeq \PP^2$. 
Then $\ma N_{B/X}^*$ is isomorphic to a vector bundle $\sF$ of the following form:
$$
\sO_{\PP^2}^{\oplus 2}, \qquad  T_{\PP^2}(-1), \qquad 0 \rightarrow  \sO_{\PP^2}(-2) \rightarrow \sO_{\PP^2}^{\oplus 3} \rightarrow \sF \rightarrow 0 
$$
or
$$
\sO_{\PP^2}(1)^{\oplus 2}, \qquad \sO_{\PP^2} \oplus \sO_{\PP^2}(1), \qquad \sO_{\PP^2} \oplus \sO_{\PP^2}(2), 
$$
or
$$
0 \rightarrow \sO_{\PP^2}(-1)^{\oplus 2} \rightarrow \sO_{\PP^2}^{\oplus 4} \rightarrow \sF \rightarrow 0
$$
or
$$
0 \rightarrow \sO_{\PP^2} \rightarrow T_{\PP^2}(-1) \oplus \sO_{\PP^2}(1) \rightarrow \sF \rightarrow 0.
$$
In all the cases the vector bundle $\sF \simeq \ma N_{B/X}^*$ is globally generated and
$-K_{B} - \det \sF$ is ample and globally generated.
\end{lemma}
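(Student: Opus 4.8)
Write $\sF := \ma N_{B/X}^*$, a rank-two vector bundle on $V\simeq\PP^2$. By Proposition \ref{prop:P2} it is nef, so $c_1(\sF)\geq 0$. Adjunction for the smooth codimension-two subvariety $B\subset X$ reads $K_B\simeq K_X|_B-\det\sF$, hence
\[
-K_X|_B\simeq -K_V-\det\sF\simeq\sO_{\PP^2}\bigl(3-c_1(\sF)\bigr).
\]
Since $-K_X$ is ample, its restriction to $B$ is ample, which forces $c_1(\sF)\in\{0,1,2\}$; consequently $-K_V-\det\sF$ is $\sO_{\PP^2}(1)$, $\sO_{\PP^2}(2)$ or $\sO_{\PP^2}(3)$, in particular ample and globally generated. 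This settles the last two assertions once $\sF$ is pinned down. Next I would record that $T:=\PP(\sF)$ is a Fano threefold: by Theorem \ref{theorem-nefdivisor} and Proposition \ref{prop:weierstrass}, adjunction on the Weierstra\ss\ model gives $-K_{X'}=\phi^*(-K_T-\zeta_T)$, and $-K_T-\zeta_T$ is nef and big, being the pull-back of the ample $H$ along the birational morphism $\nu\colon T\to T'$ with $\varphi=\nu\circ\phi$; hence $-K_T$ is nef and big, and the explicit form $-K_T=2\zeta_T+\bigl(3-c_1(\sF)\bigr)\mu_T^*H$ with $\zeta_T$ nef and coefficient at least $1$ upgrades this to ampleness. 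Thus $\sF$ is a rank-two Fano bundle on $\PP^2$.

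Now I would invoke the classification of rank-two Fano bundles on $\PP^2$ \cite{SW90}, which leaves only finitely many possibilities for $T$, and for each of them finitely many twists $\sF$ compatible with $c_1(\sF)\leq 2$ and $\sF$ nef. The cases $c_1(\sF)\in\{0,1\}$ are immediate, also directly: a nef rank-two bundle on $\PP^2$ with $c_1\leq 1$ restricts to $\sO_{\PP^1}^{\oplus 2}$ (resp.\ $\sO_{\PP^1}\oplus\sO_{\PP^1}(1)$) on every line, hence is uniform, and the classification of uniform rank-two bundles on $\PP^2$ gives $\sF\simeq\sO_{\PP^2}^{\oplus 2}$ when $c_1=0$, and $\sF\simeq\sO_{\PP^2}\oplus\sO_{\PP^2}(1)$ or $\sF\simeq T_{\PP^2}(-1)$ when $c_1=1$.

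The case $c_1(\sF)=2$ is the crux, since $\sF$ may have jumping lines. Here, besides the short list of $T$'s from \cite{SW90}, I would use that $X'\subset\PP(W)$ is \emph{smooth}, $W=\sO_T\oplus\sO_T(-2\zeta_T)\oplus\sO_T(-3\zeta_T)$: for a subvariety $N\subset T$ along which $\zeta_T$ is too negative one forms the section $\PP(W_N)\subset\PP(W)$, sets $S:=X'\cap\PP(W_N)$, and — arguing exactly as in Step 2 of the proof of Lemma \ref{lemma:conormal-nef}, by computing $\ma N^*_{S/\PP(W)}\otimes\sO_S(X')$ and finding it has no sections — deduces that $X'$ would be singular along $S$. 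This obstruction, together with the numerical bound on $c_2(\sF)$ forced by bigness of $-K_T$ and the nefness of $\sF$, eliminates all twists except $\sF\simeq\sO_{\PP^2}(1)^{\oplus 2}$, $\sF\simeq\sO_{\PP^2}\oplus\sO_{\PP^2}(2)$, and the three globally generated bundles of Chern classes $(2,2),(2,3),(2,4)$ fitting into the exact sequences in the statement (each identified as a quotient of a globally generated bundle once its cohomological data are fixed). Alternatively, one reaches the same list by feeding the elementary Mori contraction $\psi$ of Proposition \ref{construction:rk2}, for which $B\simeq\PP^2$ is a fibre, into the classification of elementary contractions of smooth fourfolds with a two-dimensional fibre \cite{Kaw89, AW98, Tak99}.

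It remains to check the positivity statements: global generation of $\sF$ is clear in all eight cases — obvious for $\sO_{\PP^2}^{\oplus 2}$, $\sO_{\PP^2}\oplus\sO_{\PP^2}(1)$, $\sO_{\PP^2}\oplus\sO_{\PP^2}(2)$, $\sO_{\PP^2}(1)^{\oplus 2}$ and $T_{\PP^2}(-1)$, and immediate for the remaining three since they are presented as quotients of globally generated bundles — while the ampleness and global generation of $-K_V-\det\sF\simeq\sO_{\PP^2}(3-c_1(\sF))$ were noted at the start. The genuine obstacle is the $c_1(\sF)=2$ step: one must exclude the a priori infinitely many non-split nef rank-two bundles with $c_1=2$, and this is precisely where both the bigness of $-K_T$ and the smoothness of $X'$ (equivalently, the Mori-theoretic input) are essential.
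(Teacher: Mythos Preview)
Your proof is correct and takes a genuinely different route from the paper's. Your key observation is that $T=\PP(\sF)$ is itself a Fano threefold: from $-K_{X'}=\phi^*(-K_T-\zeta_T)$ (Weierstra\ss\ adjunction) and $\dim T'=3$ one gets $-K_T-\zeta_T$ nef and big, hence $-K_T=2\zeta_T+(3-c_1(\sF))\mu_T^*H$ is nef and big; strict nefness follows since any curve $C$ with $-K_T\cdot C=0$ would satisfy $\zeta_T\cdot C=\mu_T^*H\cdot C=0$, forcing $C$ into a $\mu_T$-fibre where $\zeta_T$ is relatively ample. Once $T$ is Fano, the Szurek--Wi\'sniewski classification \cite{SW90} gives finitely many isomorphism types for $T$, and for each the twist of $\sF$ is pinned down by the two constraints ``$\sF$ nef'' and ``$c_1(\sF)\leq 2$''; this yields exactly the eight bundles in the statement (with $T\simeq\PP^1\times\PP^2$ contributing both $\sO^{\oplus 2}$ and $\sO(1)^{\oplus 2}$).

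The paper, by contrast, does not establish Fanoness of $T$ upfront. It only shows $(T,\Delta)$ is weak Fano via the canonical bundle formula, then makes a case distinction on the null locus of $\zeta_T$: when $\zeta_T$ is not big it recovers Fanoness and applies \cite{SW90}, but when $\zeta_T$ is big it instead studies the contraction $\holom{\psi}{X}{\bar X}$ on the fourfold and reads off $\ma N_{B/X}^*$ from the structure of the two-dimensional fibre $B\subset X$ via \cite{Kaw89}, \cite{Tak99}, \cite{AW98}. Your route is cleaner and shorter; the paper's route has the advantage of simultaneously producing the explicit geometry of $\psi$ used later in Section~\ref{section-examples}.

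One remark: your separate treatment of $c_1(\sF)=2$ (smoothness of $X'$ in $\PP(W)$, or the contraction $\psi$) is redundant. Once $T$ is Fano, \cite{SW90} plus the constraints handle this case uniformly: for each of the three non-split bundles with $c_1=2$ and $c_2\in\{2,3,4\}$, the twist $\sF(-1)$ has $c_1=0$ but $c_2(\sF(-1))=c_2(\sF)-1>0$, so is not numerically flat and hence not nef on $\PP^2$; thus the nef representative with $c_1\leq 2$ is unique. There is nothing special to do when jumping lines appear.
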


\begin{proof}
Since $\ma N_{B/X}^*$ is nef by Proposition \ref{prop:P2}, we know by item iii.\ of Lemma \ref{lemma:setup-classA} that
$$
E+ \phi^* \zeta_T \simeq \mu^* A 
$$
is nef and therefore semiample since $A$ is a nef divisor on a Fano manifold. 
We denote by  
$$
\holom{\psi}{X}{\bar X}
$$
the morphism defined by some multiple of $A$.

By the canonical bundle formula \cite{Amb05} we can find a boundary divisor $\Delta$ such that
$(T, \Delta)$ is klt and $-(K_{T}+\Delta) = \phi_* (-K_{X'})$. Since $-K_{X'}$ is the pull-back of a nef and big divisor on $T$, this shows that $(T, \Delta)$ is a weak Fano threefold
and the nef divisor $\zeta_T$ is semiample. We denote by
$$
\holom{\eta}{T}{\bar T}
$$ 
the morphism defined by some multiple of $\zeta_T$. The threefold $T$ has Picard number two, so $\eta$ is either an isomorphism
or a contraction of a (not necessarily $K_T$-negative) extremal ray.

The classification will be obtained by relating the geometric properties of these two maps. In fact, since  the restriction of $E+ \phi^* \zeta_T$ to $E \subset X'$ is trivial by Lemma \ref{lemma:setup-classA}, we have
\begin{equation}\label{self-intersections}
\mu^*(A^k) = (E+ \phi^* \zeta_T)^k = (E+ \phi^* \zeta_T) \cdot \zeta_T^{k-1} 
\end{equation}
for all $k \in \N$. We make a case distinction in terms of the null locus of $\zeta_T$ (cf.\ Remark \ref{remark-nulllocus}).

{\em 1st case. Assume that $\zeta_T$ is not big, so $\zeta_T^3=0$.}
In this case $\eta$ is of fibre type. Since $(T, \Delta)$ is weak Fano,
we see that $-K_{T}$ has positive degree on the general $\eta$-fibre. Yet $\rho(T)=2$
and $-K_{T}$ of positive degree on both extremal rays imply that $T$ is a smooth Fano threefold. By the Mori-Mukai classification \cite{MM81} (cf.\ also \cite[Thm.]{SW90} for the case of $\PP^1$-bundles) we have three possibilities:
$$
T \simeq \PP^1 \times \PP^2, \qquad T \simeq \PP(T_{\PP^2})
$$
or $T \subset \PP^2 \times \PP^2$ is a divisor of bidegree $(1,2)$. 
Since we assume that $\zeta_T$ is not big, we see that in the first two cases
$\ma N_{B/X}^* \simeq \sO_{\PP^2}^{\oplus 2}$ and $\ma N_{B/X}^* \simeq T_{\PP^2}(-1)$, respectively.
In the last case consider the exact sequence
$$
0 \rightarrow \sO_{\PP^2 \times \PP^2}(-T) \rightarrow 
\sO_{\PP^2 \times \PP^2} \rightarrow \sO_{T} \rightarrow 0. 
$$
Twisting the exact sequence by $p_2^* \sO_{\PP^2}(1)$ and pushing forward via $p_1$, we obtain
$T \simeq \PP(\sF)$ where $\sF$ is given by 
$$
0 \rightarrow \sO_{\PP^2}(-2) \rightarrow \sO_{\PP^2}^{\oplus 3} \rightarrow \sF \rightarrow 0.
$$
The vector bundle $\sF$ is nef and not big, while any twist $\sF(d)$ with $d \neq 0$
is either big or not nef. Thus $\ma N_{B/X}^* \simeq \sF$.
In conclusion we obtain the first three cases.

{\em 2nd case. Assume that $\zeta_T$ is big and its null locus contains no divisor.}
In this case the null locus $\mbox{\rm Null}(\zeta_T)$ is a finite union of curves $C_i \subset T$.
Therefore $\mbox{\rm Null}(E+\phi^*\zeta_T)$ is contained in $E \cup \cup_i \fibre{\phi}{C_i}$,
and
$$
\mbox{\rm Null}(A) \subset B \cup \mu(\cup_i \fibre{\phi}{C_i})
$$
is contained in a union of surfaces.
Thus $\psi$ is small and $N_{B/X}^* \simeq  \sO_{\PP^2}(1)^{\oplus 2}$ by \cite[Thm.1.1]{Kaw89}.

{\em 3rd case. Assume that $\zeta_T$ is big and its null locus contains a divisor $D_T$.}
Since $\rho(T)=2$ the exceptional locus of $\eta$ coincides with the divisor $D_T$
and $\mbox{\rm Null}(\zeta_T)=D_T$. Therefore
 $\mbox{\rm Null}(E+\phi^*\zeta_T)$ is contained in $E \cup \fibre{\phi}{D_T}$,
and
$$
\mbox{\rm Null}(A) \subset B \cup \mu(\fibre{\phi}{D_T}).
$$
Now observe that the exceptional divisor $D_T \subset T$ is not a pull-back from $V \simeq \PP^2$, so
$\fibre{\phi}{D_T}$ has positive degree on the fibres of the ruling $E \rightarrow B$. Thus we have
$B \subset \mu(\fibre{\phi}{D_T})=:D$, i.e.\ the exceptional locus of $\psi$ is contained in the prime divisor $D$.
If $\psi$ is small we obtain again $\ma N_{B/X}^* \simeq  \sO_{\PP^2}(1)^{\oplus 2}$.
For the rest of the proof we assume that the exceptional locus of $\psi$ is equal to $D$.

{\em Subcase a: the image $\eta(D_T)$ is a point.}
This is equivalent to the property that the restriction of $\zeta_T$ to $D_T$ is numerically trivial. By the same computation as
in item v.\ of Proposition \ref{construction:rk2} we obtain that
$\fibre{\phi}{D_T} \simeq D_T \times C$ with $C$ a curve of arithmetic genus one. Moreover the restriction
of $\mu^* A$ to $\fibre{\phi}{D_T}$ identifies to $D_T \times pt$, so the numerical dimension of $\sO_D(A)$ is one.
This shows that $\psi$ contracts $D \simeq D_T \times C$ onto the curve $C$. Since $B$ is contracted by $\psi$
we have $D_T \simeq \PP^2$ and $D \simeq \PP^2 \times C$. In particular $\psi$ is an elementary Mori contraction
of type $(3,1)$ and by  Takagi's theorem \cite[Main Theorem]{Tak99} the curve $\psi(D)$ is smooth and the fibration
$$
D \rightarrow f(D)
$$
a projective bundle. Now it is straightforward to see that  $\ma N_{B/X}^* \simeq \sO_{\PP^2} \oplus \sO_{\PP^2}(1)$
or $\ma N_{B/X}^* \simeq \sO_{\PP^2} \oplus \sO_{\PP^2}(2)$.

{\em Subcase b: the image $\eta(D_T)$ is a curve.}
This is equivalent to the property that the restriction of $\zeta_T$ to $D_T$ has numerical dimension one.
From there we deduce that the restriction of $\mu^* A$ to $\fibre{\phi}{D_T}$ has numerical dimension two,
so $\psi$ contracts the divisor $D$ onto a surface in $\bar X$. Since $B \subset D$ is mapped onto a point,
it is contained in a higher-dimensional fibre $\fibre{\psi}{\bar x}$.
If $\fibre{\psi}{\bar x}$ is irreducible, we obtain  from \cite[Thm.]{AW98}
that the conormal bundle is one of the last two items of our list.
If $\fibre{\psi}{\bar x}$ is reducible, we obtain from the same result that
it is a reducible quadric $Q \simeq B \cup B' \simeq \PP^2 \cup \PP^2$
and the conormal bundle of the irreducible component $B$ is either
 $T_{\PP^2}(-1)$ or $\sO_{\PP^2} \oplus \sO_{\PP^2}(1)$.
\end{proof}

\begin{corollary} \label{corollary:F1-classifyNB}
In the situation of Theorem \ref{theorem-nefdivisor}, assume that $B \simeq V \simeq \mathbb F_1$ is the first Hirzebruch surface
and denote by $\holom{g}{V}{\PP^2}$ the blow-down of the $(-1)$-curve. 
Then $\ma N_{B/X}^*$ is isomorphic to a vector bundle $\sF$ of the following form:
$$
g^* \sO_{\PP^2}^{\oplus 2}, \qquad  g^* T_{\PP^2}(-1), \qquad g^*(\sO_{\PP^2} \oplus \sO_{\PP^2}(1)).
$$
In all the cases the vector bundle $\sF \simeq \ma N_{B/X}^*$ is globally generated and
$-K_{B} - \det \sF$ is ample and globally generated.
\end{corollary}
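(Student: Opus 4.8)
The plan is to contract the $(-1)$-curve $e\subset V\simeq\mathbb F_1$, reducing to the case $V_1\simeq\PP^2$ already handled in Lemma \ref{lemma:P2-classifyNB}, and then to eliminate most of the resulting possibilities using that $X$ is Fano.

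First I would apply Lemma \ref{lem:birational} to the $(-1)$-curve $e$, which is a smooth rational curve with $e^2=-1<0$. This yields a lifting $\holom{f}{X}{X_1}$ of the blow-down $\holom{g}{\mathbb F_1}{\PP^2}$. By items v.\ and vi.\ of that lemma, $X_1$ is again a smooth Fano fourfold satisfying the hypotheses of Theorem \ref{theorem-nefdivisor}, the base scheme $B_1:=\mbox{Bs}(|-K_{X_1}|)=f(B)$ is a $\pi_1$-section (so $B_1\simeq V_1=\PP^2$), and $\ma N^*_{B/X}\simeq f_B^*(\ma N^*_{B_1/X_1})$. By item iv.\ of Lemma \ref{lem:birational} the restriction $f_B$ contracts $\sigma(e)$, so under the identifications $B\simeq V=\mathbb F_1$ and $B_1\simeq V_1=\PP^2$ the morphism $f_B$ is the blow-down $g$; hence $\ma N^*_{B/X}\simeq g^*\sF_1$, where $\sF_1:=\ma N^*_{B_1/X_1}$. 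Applying Lemma \ref{lemma:P2-classifyNB} to $X_1$, the bundle $\sF_1$ is globally generated, it is one of the eight bundles listed there, and $-K_{B_1}-\det\sF_1\simeq\sO_{\PP^2}(3-c_1)$ is ample and globally generated, where $c_1:=c_1(\sF_1)\in\{0,1,2\}$.

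The key step is to cut this list down to three using that $X$ is Fano. By adjunction for the smooth surface $B\subset X$ together with $K_{\mathbb F_1}\simeq g^*K_{\PP^2}+e$ one gets
$$
-K_X|_B\simeq -K_B-\det\ma N^*_{B/X}\simeq -K_{\mathbb F_1}-g^*\det\sF_1\simeq g^*\bigl(-K_{\PP^2}-\det\sF_1\bigr)-e.
$$
Writing $h:=g^*\sO_{\PP^2}(1)$ and $F=h-e$ for a fibre of the ruling $\mathbb F_1\to\PP^1$, the divisor $D:=(3-c_1)h-e$ satisfies $D\cdot e=1$ and $D\cdot F=2-c_1$. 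Since $-K_X$ is ample, $D=-K_X|_B$ is ample, so $2-c_1>0$, i.e.\ $c_1\leq 1$. A short computation of first Chern classes then shows that among the eight bundles of Lemma \ref{lemma:P2-classifyNB} precisely the three with $c_1\leq 1$ survive, namely $\sO_{\PP^2}^{\oplus 2}$ ($c_1=0$), $T_{\PP^2}(-1)$ ($c_1=1$) and $\sO_{\PP^2}\oplus\sO_{\PP^2}(1)$ ($c_1=1$); all the others have $c_1=2$. Thus $\ma N^*_{B/X}\simeq g^*\sF$ with $\sF$ in this short list, which is precisely the asserted list of three bundles.

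For the remaining assertions: $\sF=g^*\sF_1$ is globally generated because $\sF_1$ is, and $-K_B-\det\ma N^*_{B/X}\simeq -K_X|_B\simeq g^*\sO_{\PP^2}(d)\otimes\sO_{\mathbb F_1}(-e)$ with $d=3-c_1\in\{2,3\}$; this divisor is ample by the computation above and is base-point-free on $\mathbb F_1$, being the strict transform of the linear system of degree-$d$ plane curves through the centre of $g$, which has empty base locus on $\mathbb F_1$ for $d\geq 1$. Hence it is globally generated. The main point of the argument, and the only step requiring genuine computation, is the numerical bookkeeping on $\mathbb F_1$ that forces $c_1\leq 1$; the rest is provided directly by Lemma \ref{lem:birational}.
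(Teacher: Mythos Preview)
Your proof is correct and follows essentially the same approach as the paper: both reduce to the $\PP^2$ case via Lemma \ref{lem:birational}, apply Lemma \ref{lemma:P2-classifyNB} to obtain the list of eight bundles, and then use ampleness of $-K_X|_B$ to eliminate the five bundles with $c_1(\sF_1)=2$. The only cosmetic difference is that the paper computes $-K_{X_1}|_{B_1}\simeq\sO_{\PP^2}(3-c_1)$ via Remark \ref{remark-restrictionB} and then tracks this through the blow-up, whereas you compute $-K_X|_B$ directly by adjunction on $X$; both amount to the same intersection computation on $\mathbb F_1$, and your treatment of the final globally generated assertion is slightly more explicit than the paper's.
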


\begin{proof}
We  know by Lemma \ref{lem:birational} that $X$ is a blow-up $\holom{f}{X}{X_1}$ with $X_1$ a smooth Fano fourfold satisfying
the assumption of Theorem \ref{theorem-nefdivisor} such that $B_1 := \mbox{\rm Bs}(|-K_{X_1}|) \simeq \PP^2$. Moreover by item vi.\ of the same lemma we have $\ma N_{B/X}^* \simeq f_B^* \ma N_{B_1/X_1}^*$, so it is clear that $\ma N_{B/X}^*$ is the pull-back of one of the bundles appearing in Lemma \ref{lemma:P2-classifyNB}. 
By Remark \ref{remark-restrictionB} we have
$$
\sO_{B_1}(-K_{X_1}) \simeq \sO_{\PP^2}(-K_{\PP^2}-\det \ma N_{B_1/X_1}^*). 
$$
A look at the list in Lemma \ref{lemma:P2-classifyNB} shows that
for the bundles different
from 
$$
\sO_{\PP^2}^{\oplus 2}, \qquad  T_{\PP^2}(-1), \qquad \sO_{\PP^2} \oplus \sO_{\PP^2}(1)
$$ the restriction of $-K_{X_1}$ to the base locus $B_1$ is isomorphic to $\sO_{\PP^2}(1)$. Since $X$ is the blow-up of $X_1$ along a $\pi$-fibre, this implies that the restriction
of $-K_X$ to $B$ is isomorphic to $\sO_{\mathbb F_1}(l)$ with $l$ a line of the ruling. Yet $X$ is Fano, so this is not possible.
\end{proof}

\subsection{The case $V \simeq \PP^1 \times \PP^1$}

We start with some technical lemmas.

\begin{lemma} \label{lemma:ruling-technical}
Let $\sF \rightarrow V \simeq \PP^1 \times \PP^1$ be a rank two vector bundle such that
for every line $d_i \subset V$ the restriction $\sF \otimes \sO_{d_i}$
is nef. Set 
$$
\holom{\mu_T}{T:=\PP(\sF)}{\PP^1 \times \PP^1}
$$
and denote by $\zeta_T$ the tautological class. Assume that there exists a prime divisor
$$
D_T \simeq \PP^1 \times \PP^1 \subset \PP(\sF)
$$
that maps surjectively onto the base $V$. Then the restriction of
$\zeta_T$ to $D_T$ is pseudoeffective.
\end{lemma}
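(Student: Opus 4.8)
The plan is to exploit that $D_T \simeq \PP^1 \times \PP^1$: if $R_1, R_2 \in N^1(D_T)$ denote the classes of the two rulings, then the pseudoeffective cone of $D_T$ equals $\mathbb{R}_{\geq 0} R_1 + \mathbb{R}_{\geq 0} R_2$, and since $R_1^2 = R_2^2 = 0$ and $R_1 \cdot R_2 = 1$, a class is pseudoeffective exactly when it has nonnegative degree against both $R_1$ and $R_2$. So it suffices to show $\zeta_T|_{D_T} \cdot C \geq 0$ for $C$ a member of either ruling of $D_T$. The mechanism behind this is that such a $C$ is forced to map \emph{onto a line} $d \subset V$ under $\mu_T$, and is therefore contained in the surface $\mu_T^{-1}(d) = \PP(\sF|_d)$; on this surface $\zeta_T$ restricts to the tautological class of $\sF|_d$, which is nef by hypothesis, so $\zeta_T \cdot C \geq 0$.

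To make the ``maps onto a line'' step precise I would analyse the generically finite morphism $f := \mu_T|_{D_T} \colon D_T \to V$, of some degree $k \geq 1$. Let $H_1, H_2$ be the two fibre classes of the projections $V \to \PP^1$ (the lines on $V$), and set $\alpha := f^* H_1$ and $\beta := f^* H_2$ in $N^1(D_T)$. Both are globally generated, hence nef, so $\alpha = a_1 R_1 + b_1 R_2$ and $\beta = a_2 R_1 + b_2 R_2$ with all coefficients nonnegative; the projection formula gives $\alpha^2 = \beta^2 = 0$ and $\alpha \cdot \beta = k$, and $\alpha, \beta$ are nonzero since $f_* \alpha = k H_1$ and $f_* \beta = k H_2$ are. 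From $2 a_1 b_1 = \alpha^2 = 0$ and $\alpha \neq 0$, exactly one of $a_1, b_1$ vanishes; interchanging $R_1$ and $R_2$ if necessary, I may assume $b_1 = 0 < a_1$. Then $a_1 b_2 = \alpha \cdot \beta = k \geq 1$ forces $b_2 \geq 1$, and $2 a_2 b_2 = \beta^2 = 0$ then forces $a_2 = 0$. Consequently a curve $C$ of class $R_1$ satisfies $\alpha \cdot C = 0$ and $\beta \cdot C = b_2 > 0$, which means $p_1 \circ f$ contracts $C$ while $p_2 \circ f$ does not, i.e.\ $f(C)$ is a whole fibre $d$ of $p_1$; symmetrically a curve $C'$ of class $R_2$ is mapped onto a fibre $d'$ of $p_2$.

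The conclusion is then immediate: $C \subset \mu_T^{-1}(d) = \PP(\sF|_d)$ with $\sF|_d$ nef, so $\zeta_T|_{\PP(\sF|_d)}$ is nef and $\zeta_T|_{D_T} \cdot R_1 = \zeta_T \cdot C \geq 0$; likewise $\zeta_T|_{D_T} \cdot R_2 = \zeta_T \cdot C' \geq 0$ using $\sF|_{d'}$. Writing $\zeta_T|_{D_T} \equiv p R_1 + q R_2$ these inequalities read $q \geq 0$ and $p \geq 0$, so $\zeta_T|_{D_T}$ lies in the pseudoeffective (indeed nef) cone of $D_T$. The only point that requires care is showing that the rulings of $D_T$ map onto lines of $V$, and not onto rational curves of higher bidegree, which is precisely what $\alpha^2 = \beta^2 = 0$ together with the irreducibility of $D_T \simeq \PP^1 \times \PP^1$ delivers; everything else is bookkeeping with intersection numbers on a quadric surface, so I expect that to be the only real content of the proof.
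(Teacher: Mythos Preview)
Your proof is correct and follows essentially the same route as the paper's: both arguments reduce to showing that a ruling line of $D_T$ is mapped onto a line of $V$, and then use the nefness hypothesis on $\sF|_d$. The only difference is stylistic --- the paper invokes that $\rho(D_T)=\rho(V)$ forces $f_*$ to be a bijection of Mori cones (hence extremal rays go to extremal rays), whereas you compute the same fact directly from $\alpha^2=\beta^2=0$; your version is more explicit but the content is identical.
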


\begin{proof}
We argue by contradiction that 
the restriction of
$\zeta_T$ to $D_T$ is not pseudoeffective, so
$\sO_{D_T}(\zeta_T) \simeq \sO_{\PP^1 \times \PP^1}(a,b)$ with $a<0$ or $b<0$. Assume that $a<0$, the other case is analogous.

Since the morphism $\mu_T|_{D_T}$ is surjective and $\rho(D_T)=\rho(V)$,
the pull-back of divisors induces a bijection between the nef cones. Dually, the push-forward
of curves induces a bijection the Mori cones, so a line $l_1 := \PP^1 \times pt. \subset D_T$
is mapped surjectively onto a line in one of the rulings of $V$, say $d_1$.
By assumption $\sF \otimes \sO_{d_1}$ is nef, so $\zeta_T$
is nef on the curve $l_1$. Yet 
$$
\sO_{D_T}(\zeta_T) \otimes \sO_{l_1} \simeq \sO_{\PP^1}(a)
$$
is antiample, a contradiction.
\end{proof}

\begin{lemma} \label{lemma:ruling-technical2}
Let $\sF$ be a nef rank two vector bundle on $\PP^1 \times \PP^1$
such that the restriction
to every fibre of the projections $p_i$ is isomorphic to $\sO_{\PP^1} \oplus \sO_{\PP^1}(1)$.
Then $\sF$ is of the following form:
$$
\sO_{\PP^1 \times \PP^1} \oplus \sO_{\PP^1 \times \PP^1}(1,1), \qquad \sO_{\PP^1 \times \PP^1}(1,0) \oplus \sO_{\PP^1 \times \PP^1}(0,1)
$$
or
$$
0 \rightarrow \sO_{\PP^1 \times \PP^1}(-1,-1) \rightarrow \sO_{\PP^1 \times \PP^1}^{\oplus 3} \rightarrow \sF \rightarrow 0.
$$
\end{lemma}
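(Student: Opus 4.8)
The plan is to classify $\sF$ by a short computation on $T := \PP(\sF)$, writing $V := \PP^1 \times \PP^1$, $\mu_T\colon T \to V$ for the projection and $\zeta_T$ for the tautological divisor. First I would determine $\det\sF$: restricting $\det\sF = \sO_{\PP^1\times\PP^1}(a,b)$ to a fibre of $p_1$ (resp.\ $p_2$) and using $\det(\sO_{\PP^1}\oplus\sO_{\PP^1}(1)) = \sO_{\PP^1}(1)$ forces $b = 1$ (resp.\ $a = 1$), so $c_1(\sF) = \sO_{\PP^1\times\PP^1}(1,1)$. Next, for $i = 1,2$ the restriction $\sF^*|_f \cong \sO_{\PP^1}\oplus\sO_{\PP^1}(-1)$ to a fibre $f$ of $p_i$ has $h^0 = 1$ and $h^1 = 0$, so by cohomology and base change $(p_i)_*\sF^*$ is a line bundle on $\PP^1$; the evaluation $p_i^*(p_i)_*\sF^* \to \sF^*$ corresponds to a nowhere-vanishing section of a rank-two bundle (on each $p_i$-fibre it is the inclusion of the trivial summand of $\sO_{\PP^1}\oplus\sO_{\PP^1}(-1)$, which is nowhere zero), hence is a sub-bundle inclusion. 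Dualising produces a surjection $\sF \twoheadrightarrow p_i^*\sO_{\PP^1}(d_i)$ with line-bundle kernel; nefness of $\sF$ forces $d_i \geq 0$, and comparing first Chern classes turns the two exact sequences into
$$
0 \to \sO_{\PP^1\times\PP^1}(1-d_1,1) \to \sF \to \sO_{\PP^1\times\PP^1}(d_1,0) \to 0, \qquad 0 \to \sO_{\PP^1\times\PP^1}(1,1-d_2) \to \sF \to \sO_{\PP^1\times\PP^1}(0,d_2) \to 0.
$$

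From either sequence the Whitney formula gives $c_2(\sF) = d_1 = d_2 =: d$, and nefness of $\sF$ makes $\zeta_T$ nef on the threefold $T$, so $0 \leq \zeta_T^3 = c_1(\sF)^2 - c_2(\sF) = 2 - d$; hence $d \in \{0,1,2\}$. If $d = 0$ the first sequence is $0 \to \sO(1,1) \to \sF \to \sO_{\PP^1\times\PP^1} \to 0$, which splits since $H^1(\sO(1,1)) = 0$, giving the first bundle. If $d = 1$ it is $0 \to \sO(0,1) \to \sF \to \sO(1,0) \to 0$, which splits because $\Ext^1(\sO(1,0),\sO(0,1)) = H^1(\sO(-1,1)) = 0$ by K\"unneth, giving the second bundle. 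If $d = 2$ the kernels $\sO(-1,1)$ and $\sO(1,-1)$ of the two quotient maps are distinct saturated sub-line-bundles of $\sF$, so their intersection has rank zero and, being torsion-free, is zero; thus $\sF$ embeds into $\sO(2,0)\oplus\sO(0,2)$ with torsion cokernel of class $\sO(1,1)$. I would then show that $\sF$ is globally generated with $h^0(\sF) = 3$ (the latter being immediate from the first exact sequence, as $H^0(\sO(-1,1)) = H^1(\sO(-1,1)) = 0$); then the evaluation map $\sO_{\PP^1\times\PP^1}^{\oplus 3} \to \sF$ is surjective with line-bundle kernel of first Chern class $-(1,1)$, producing the third bundle.

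The step I expect to be the main obstacle is the global generation of $\sF$ in the case $d = 2$: under $\sF \hookrightarrow \sO(2,0)\oplus\sO(0,2)$ the subspace $H^0(\sF)$ is the graph of the induced isomorphism $H^0(\sO(2,0)) \xrightarrow{\sim} H^0(\sO(0,2))$, and one must rule out the points where this graph fails to surject onto a fibre of $\sF$, using that $\sF$ restricts to $\sO_{\PP^1}\oplus\sO_{\PP^1}(1)$ on every fibre of $p_1$ and $p_2$ (equivalently, analysing the torsion cokernel, which is $\sO_{D_0}(2)$ for a curve $D_0 \in |\sO(1,1)|$, together with its degenerate shapes). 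A cleaner alternative that avoids this is to observe that $-K_T = 2\zeta_T + \mu_T^*\sO_{\PP^1\times\PP^1}(1,1)$ is ample — $\zeta_T$ is nef with $\zeta_T\cdot l = 1$ on a $\mu_T$-fibre $l$, while $\mu_T^*\sO(1,1)$ is positive on every curve not contracted by $\mu_T$ — so $T$ is a Fano threefold carrying a $\PP^1$-bundle structure over $\PP^1\times\PP^1$; the Mori--Mukai classification \cite{MM81} (see also \cite{SW90}) then lists the possibilities for $\sF$, and the three bundles above are precisely those whose restriction to every ruling fibre is $\sO_{\PP^1}\oplus\sO_{\PP^1}(1)$.
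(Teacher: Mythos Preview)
Your approach is the same as the paper's: obtain an extension of $\sF$ by a line bundle (the paper twists by $\sO(0,-1)$ and pushes forward, you push forward $\sF^*$---these are equivalent since $\sF^* \cong \sF(-1,-1)$), compute $c_2(\sF)=d$, and bound $d\leq 2$ via the nefness inequality $c_1^2 - c_2 = \zeta_T^3 \geq 0$. The paper then simply says ``it is not difficult to study the extension and deduce the result''; you supply the details for $d=0,1$ correctly.

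For $d=2$ your two suggestions both work but each needs a small patch. In the Fano alternative, your argument only shows $-K_T$ is strictly nef; to conclude ampleness note that $-K_T$ nef makes $\overline{\NE{T}}$ polyhedral by the cone theorem, and then positivity on every extremal ray suffices. The direct global-generation route is indeed awkward. A cleaner third option---probably what the paper has in mind---is to argue uniqueness of the extension: the class of
\[
0 \to \sO_{\PP^1\times\PP^1}(-1,1) \to \sF \to \sO_{\PP^1\times\PP^1}(2,0) \to 0
\]
lies in $H^1(\sO(-3,1)) \cong H^1(\PP^1,\sO(-3)) \otimes H^0(\PP^1,\sO(1)) \cong \C^2 \otimes \C^2$. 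Restricting to a $p_2$-fibre $\PP^1 \times \{y\}$ gives an extension $0\to\sO(-1)\to\sF|_{f'}\to\sO(2)\to 0$ whose class is the evaluation of the tensor at $y$; this restriction is $\sO\oplus\sO(1)$ (rather than $\sO(-1)\oplus\sO(2)$) precisely when the class is nonzero, so the hypothesis on all $p_2$-fibres forces the tensor to have rank two. All rank-two tensors in $\C^2\otimes\C^2$ lie in a single orbit under the natural automorphism group, so $\sF$ is unique and hence isomorphic to the bundle in the statement.
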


\begin{proof}
Note first that $(p_1)_* (\sF \otimes \sO_{\PP^1 \times \PP^1}(0,-1))$ is a line bundle
and the natural morphism
$$
p_1^* (p_1)_* (\sF \otimes \sO_{\PP^1 \times \PP^1}(0,-1))
\rightarrow \sF \otimes \sO_{\PP^1 \times \PP^1}(0,-1)
$$
is injective in every point. Thus we can write $\sF$ as an extension of line bundles
$$
0 \rightarrow \sO_{\PP^1 \times \PP^1}(a,1) \rightarrow \sF \rightarrow \sO_{\PP^1 \times \PP^1}(b,0) \rightarrow 0
$$
and $a+b=1$ since the restriction to the $p_2$-fibres has degree one. Since $\sF$ is nef, we obviously have $b \geq 0$ and we claim that we also have $b \leq 2$. Admitting the claim, it is not difficult to study the extension and deduce the result.

{\em Proof of the claim.} We have $c_1(\sF)=\sO_{\PP^1 \times \PP^1}(1,1)$
and therefore $c_1^2(\sF)=2$. Moreover
$$
c_2(\sF) = c_1(\sO_{\PP^1 \times \PP^1}(1-b,1)) \cdot c_1(\sO_{\PP^1 \times \PP^1}(b,0)) =b.
$$
Thus if $b \geq 3$ we obtain $c_2(\sF)>c_1^2(\sF)$, contradicting the Chern class inequalities for nef vector bundles \cite[Ex.8.3.4]{Laz04b}.
\end{proof}

\begin{lemma} \label{lemma:divisor-contraction}
In the situation of Theorem \ref{theorem-nefdivisor}, let $\holom{f}{X}{X_1}$ be a divisorial Mori contraction of an extremal ray on $X$ with exceptional
divisor $D$. Then $f(D)$ is not a point.
\end{lemma}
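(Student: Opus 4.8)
The plan is to argue by contradiction: suppose that $f(D)$ is a single point $q$, and derive a numerical contradiction from the fibration $\pi\colon X\to V$ of Theorem \ref{theorem-nefdivisor}.

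The first step is a formal observation. Since $f$ is the contraction of an extremal ray $R\subset \NE{X}$ and, by assumption, $f|_D$ is the constant map to $q$, every irreducible curve $C\subset D$ is contracted by $f$; hence $[C]\in R$ and $[C]\ne 0$ (as $C$ has positive degree against an ample divisor). In particular, any two irreducible curves contained in $D$ have classes in $N_1(X)$ that are positive multiples of one another.

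The second step is dimension bookkeeping for $\pi|_D$. By item i.\ of Theorem \ref{theorem-nefdivisor} the fibration $\pi$ is flat with integral fibres and its general fibre is a surface, so every $\pi$-fibre has dimension exactly $2$; as $D$ is a prime divisor of dimension $3$, it is not contained in any $\pi$-fibre, so $\pi|_D$ is non-constant and $1\le \dim\pi(D)\le \dim V=2$. Consequently the general fibre of $\pi|_D\colon D\to \pi(D)$ has dimension $\ge 3-\dim\pi(D)\ge 1$, so it contains an irreducible curve $C_1$, and $\pi_*[C_1]=0$ by construction; and since $\pi|_D$ is non-constant, $D$ also contains an irreducible curve $C_2$ with $\pi(C_2)$ a curve, hence $\pi_*[C_2]\ne 0$.

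The final step combines the two inputs: $[C_1]$ and $[C_2]$ both lie on the ray $R$ and are non-zero, so $[C_1]=\lambda[C_2]$ for some $\lambda>0$; applying $\pi_*$ gives $0=\pi_*[C_1]=\lambda\,\pi_*[C_2]\ne 0$, the desired contradiction. The argument is short and essentially formal; the only points needing a word of justification are that all $\pi$-fibres are two-dimensional (this is where flatness of $\pi$ enters, ruling out that $D$ itself is a fibre contracted to a point of $V$) and that a non-constant morphism $\pi|_D$ cannot contract every curve through a general point of $D$, which produces $C_2$. I expect neither of these to be a real obstacle.
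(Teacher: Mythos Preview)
Your proof is correct and follows essentially the same approach as the paper's: both argue by contradiction, use that every curve in $D$ lies on the single extremal ray $R$, and obtain a contradiction by comparing a curve $C_1\subset D$ contracted by $\pi$ with the fact that the three-dimensional divisor $D$ cannot be contained in a two-dimensional $\pi$-fibre. The paper phrases the last step as ``$D$ is contracted by $\pi$ onto a point'' (since $\pi^*H$ is trivial on every curve in $D$), whereas you make it explicit by producing a second curve $C_2\subset D$ with $\pi_*[C_2]\neq 0$; these are equivalent formulations of the same contradiction.
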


\begin{proof}
We argue by contradiction.
Let $F$ be a $\pi$-fibre such that the intersection $D \cap F$ is not finite. Then there
exists a curve $C \subset D \cap F$, so the ray $\R^+ [C]$ is contracted by $\pi$.
Since $f$ is the contraction of $\R^+ [C]$, this shows that $D$ is contracted by $\pi$
onto a point. Yet $\pi$ is equidimensional with two-dimensional fibres.
\end{proof}

\begin{lemma} \label{lemma:ruling-preparation}
In the situation of Theorem \ref{theorem-nefdivisor}, assume that $B \simeq V \simeq \PP^1 \times \PP^1$. 
Then the following statements hold:
\begin{enumerate}
\item $X$ does not contain a $\PP^2$ or a quadric cone of dimension two.
\item $X$ does not admit a small Mori contraction.
\item Let $A$ be a pseudoeffective divisor on $X$ such that the restriction to every prime divisor is pseudoeffective. Then $A$ is nef.
\end{enumerate}
\end{lemma}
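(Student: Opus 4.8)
The plan is to prove item (i) directly and then deduce items (ii) and (iii) from it together with the structure theory of Mori contractions. For (i), suppose towards a contradiction that $X$ contains a surface $S$ isomorphic to $\PP^2$ or to a two-dimensional quadric cone $Q$, and look at the restriction $\pi|_S\colon S\to V\simeq\PP^1\times\PP^1$. Since $-K_X$ is $\pi$-ample and $\pi$ is flat with $(-K_X)^2\cdot F_v=1$ for every fibre $F_v$, no fibre of $\pi$ is abstractly isomorphic to $S$: indeed $(-K_{\PP^2})^2=9$ and $(-K_Q)^2=8$. Hence $\pi|_S$ is non-constant.

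The heart of (i) is then the observation that a non-constant $\pi|_S$ forces $S$ to carry a non-zero effective Cartier divisor of self-intersection zero, which is impossible for both $\PP^2$ and $Q$. To produce such a divisor, I would take a general member $R$ of one of the two rulings of $V$ --- choosing the ruling transverse to $\pi(S)$ in the subcase where $\pi(S)$ is a curve coinciding with a ruling --- so that $R$ is Cartier on the smooth surface $V$ and $\pi(S)\cap R$ is a non-empty proper subset; then $\pi^*R$ is a non-zero effective Cartier divisor on $S$. If $\pi|_S$ is generically finite onto $V$, the projection formula gives $(\pi^*R)^2=\deg(\pi|_S)\cdot R^2=0$; if $\pi(S)$ is a curve, then after Stein factorisation $\pi^*R$ is a pull-back of a positive-degree divisor from a smooth curve, hence numerically a positive multiple of a fibre class of a surjective morphism from $S$ onto that curve, and again $(\pi^*R)^2=0$. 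This contradicts the elementary fact that on $\PP^2$ every non-zero effective divisor has positive self-intersection, and on $Q$ every non-zero effective Cartier divisor lies in $\operatorname{Pic}(Q)=\Z\langle\sO_Q(1)\rangle$, hence has self-intersection a positive multiple of $2$. This proves (i).

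Item (ii) is then immediate: by \cite[Thm.1.1]{Kaw89} a small Mori contraction of the smooth fourfold $X$ contracts a surface isomorphic to $\PP^2$, contradicting (i). For (iii), assume $A$ is not nef. As $X$ is Fano, $\NE{X}$ is polyhedral, so there is an extremal ray $R$ with $A\cdot R<0$; let $f\colon X\to X_1$ be its contraction. By (ii) the contraction $f$ is not small. If $f$ is of fibre type, then $X$ is covered by the positive-dimensional fibres of $f$, each of which is covered by curves of class in $R$, so $A$ has negative degree on a covering family of curves; this contradicts the pseudoeffectivity of $A$, since a pseudoeffective divisor has non-negative degree on the general member of a covering family (perturb by $\varepsilon H$ with $H$ ample, use that $A+\varepsilon H$ is effective, and let $\varepsilon\to 0$). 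If $f$ is divisorial with exceptional divisor $D$, the same reasoning shows $D$ is covered by curves of class in $R$, so $A|_D$ has negative degree on a covering family of curves of $D$; but $A|_D$ is pseudoeffective by hypothesis, and applying the same fact on the normalisation of $D$ gives a contradiction. Hence $A$ is nef.

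I expect item (i) to be the main obstacle: the argument must be arranged to treat uniformly the case where $\pi(S)$ is all of $V$ and the case where it is a curve, and it rests on the (elementary but crucial) fact that neither $\PP^2$ nor a two-dimensional quadric cone admits a non-zero effective Cartier divisor of self-intersection zero. Once (i) is available, items (ii) and (iii) follow formally from Kawamata's classification of small contractions and the standard positivity of pseudoeffective divisors against covering families of curves.
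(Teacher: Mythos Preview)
Your proof is correct and follows essentially the same strategy as the paper. The one notable difference is in item (i): the paper argues in one sentence that $\pi|_S$ must be constant because $\rho(S)=1$ (any morphism to $\PP^1\times\PP^1$ composed with a projection gives a morphism to $\PP^1$, which is constant since $S$ has no nef non-big divisor), and then concludes that $S$ is a $\pi$-fibre, contradicting $(-K_F)^2=1$. You prove the same two implications in the opposite order --- first ruling out $S$ being a fibre, then deriving a contradiction from $\pi|_S$ being non-constant via a self-intersection-zero Cartier divisor --- which is logically equivalent but longer; your ``main obstacle'' dissolves once you observe directly that Picard rank one already forbids a non-constant map to $\PP^1\times\PP^1$. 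Items (ii) and (iii) match the paper's argument exactly, with the paper handling the fibre-type case in (iii) by the single remark that a pseudoeffective divisor cannot be negative on a covering extremal ray, so the contraction is birational.
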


\begin{proof}

{\em Proof of i.\ and ii.}
Note that ii.\ follows immediately from i.\ and \cite[Thm.1.1]{Kaw89}.

For the proof of i.\ note that first that the morphism $\holom{\pi|_S}{S}{V \simeq \PP^1 \times \PP^1}$ must be constant since $\rho(S)=1$. Yet we know that all the $\pi$-fibres
are integral and satisfy $(-K_F)^2 =1$. Clearly this does not hold for $\PP^2$ or a quadric.
 
{\em Proof of iii.} 
If $A$ is not nef, there exists by the cone theorem a $K_X$-negative extremal ray $\R^+ [\Gamma]$ such that $A \cdot \Gamma<0$. Since $A$ is pseudoeffective, the contraction is birational.
If the contraction is divisorial, the curves in the extremal ray cover the exceptional divisor,
so the restriction of $A$ to the exceptional divisor is not pseudoeffective. Thus the contraction is small, a possibility excluded by ii.
\end{proof}

The next step in our study is to understand the structure of the lifting given by Lemma \ref{lemma-lifting}:

\begin{lemma}\label{lem:ruling}
In the situation of Theorem \ref{theorem-nefdivisor}, assume that $B \simeq V \simeq \PP^1 \times \PP^1$. 
Let $\holom{g}{V}{V_1 \simeq \PP^1}$ be one of the rulings, and 
let $G \subset X$ be a general fibre of $g \circ \pi$. Then one of the following holds:
\begin{itemize}
\item $G \simeq \PP^1 \times S$ with $S$ a del Pezzo surface of degree one. Denote by
$\holom{f_G}{G}{S}$ the projection onto the second factor.
\item $G$ is the blow-up $\holom{f_G}{G}{G_0}$ along an elliptic curve, where $G_0 \subset \PP(1,1,1,2,3)$
is a smooth sextic. 
\end{itemize}
Moreover there exists a lift 
$\holom{f}{X}{X_1}$ of $g$ such that the restriction of $f$ to $G$ is
\begin{itemize}
\item the projection $f_G$, or
\item the blow-up $f_G$.
\end{itemize} 
In particular the restriction of $f$ to the base locus $B$ coincides with $g$. In the first
case $f$ defines a conic bundle structure, in the second case $f$ is the blow-up along a smooth surface.
\end{lemma}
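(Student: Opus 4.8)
The plan is to realise $G$ as a general fibre of the morphism $g\circ\pi\colon X\to V_1\simeq\PP^1$, identify it with a Fano threefold via Iskovskikh's theorem, and then transport the natural contraction of that threefold back up along Casagrande's lifting. Write $\ell:=g^{-1}(t)\subset V$ for a general fibre of the ruling $g$, so $G=\pi^{-1}(\ell)$. Since $X$ is smooth and $g\circ\pi$ maps onto a curve, $G$ is smooth by generic smoothness; as $\ell$ is a fibre of a $\PP^1$-bundle we have $\sO_V(\ell)|_\ell\simeq\sO_\ell$, hence $\sO_X(G)|_G\simeq(\pi|_G)^*\sO_\ell\simeq\sO_G$ and $K_G\simeq K_X|_G$ by adjunction; so $G$ is a smooth Fano threefold, and $\pi|_G\colon G\to\ell\simeq\PP^1$ is a fibration whose general fibre is a general $\pi$-fibre, i.e.\ a del Pezzo surface of degree one, with $B\cap G=\sigma(\ell)$ a section. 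For each $v\in\ell$ the restriction $H^0(G,-K_G)\to H^0(F_v,-K_{F_v})$ is nonzero, so the base locus of its image is a nonempty subscheme of the pencil $|-K_{F_v}|$; hence $\Bs{|-K_G|}\neq\emptyset$, and Iskovskikh's Theorem \ref{thm:iskovskikh} applies: $G$ is either $\PP^1\times S$ with $S$ a del Pezzo surface of degree one, or the blow-up along an elliptic curve of a smooth sextic $G_0\subset\PP(1,1,1,2,3)$. In the first case $\pi|_G$ is, up to an automorphism of the base, the projection onto the first factor, and I set $f_G:=\mathrm{pr}_S\colon G\to S$; in the second case $\rho(G)=2$, the two elementary contractions being the del Pezzo fibration (which is forced to be $\pi|_G$, since the blow-up morphism has one-dimensional fibres) and the blow-up $G\to G_0$, and I set $f_G:=(G\to G_0)$. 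In both cases $f_G$ is an elementary Mori contraction of $G$ different from $\pi|_G$.

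Next I would take a lift $\holom{f}{X}{X_1}$ of $g$ from Lemma \ref{lemma-lifting}, contracting an extremal ray $\R^+[\gamma]$ with $\pi_*[\gamma]\in R$, $\NE{f}\cap\NE{\pi}=\{0\}$ and $\pi_1\circ f=g\circ\pi$. Since $[\gamma]\notin\NE{\pi}$, the class $\pi_*[\gamma]$ is a positive multiple of $[\ell]$, so every $f$-contracted curve $\gamma'$ has $\pi(\gamma')=\ell'$ a fibre of $g$ and lies in $\pi^{-1}(\ell')$; as $\ell'$ ranges over the fibres of $g$, which cover $V$, the locus swept out by the $f$-contracted curves dominates $V$ under $\pi$, hence meets the general $G=\pi^{-1}(\ell)$. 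Thus $f(G)=\pi_1^{-1}(t)=:G_1$, and $f|_G\colon G\to G_1$ is non-constant, contracting an extremal face of $\NE{G}$ that — as it contains the class of a $\pi|_G$-horizontal curve $\gamma'$ — is not $\NE{\pi|_G}$.

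I would then identify $f|_G$ with $f_G$. In the sextic case this is immediate, since $\rho(G)=2$ leaves only the two elementary contractions. In the case $G\simeq\PP^1\times S$ one writes $[\gamma']=a\,\ell_G+[\delta]$ in $N_1(G)$, where $\ell_G:=[\PP^1\times\{\mathrm{pt}\}]$ is the ruling class, $a\geq 1$ because $\gamma'$ dominates $\ell$, and $[\delta]$ is an effective class of curves in $S$; each of $\ell_G$ and of the components of $\delta$ has strictly positive $(-K_X)$-degree, hence represents a nonzero class of $\NE{X}$, and extremality of $\R^+[\gamma']$ forces $[\delta]$ proportional to $\ell_G$, so $f|_G$ contracts the whole ruling and coincides with $f_G=\mathrm{pr}_S$. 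Since this holds for general $G$, in the first case $f$ has one-dimensional general fibre $\simeq\PP^1$ and is a conic bundle (with $\dim X_1=3$), while in the second case $f$ is birational and contracts a divisor with fibres of dimension at most one over the complement of a curve in $V_1$; arguing as in the proof of Lemma \ref{lem:birational} this bound holds over all of $V_1$, so $f$ is a smooth blow-up along a surface by \cite[Thm.2.3]{And85}.

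Finally I would check $f|_B=g$. In the first case $B\cap G$ is a section of $\pi|_G=\mathrm{pr}_{\PP^1}$; since $h^0(G,-K_G)=h^0(\PP^1,\sO(2))\cdot h^0(S,-K_S)$, the image of $H^0(G,-K_G)$ in each $H^0(F_v,-K_{F_v})$ is the full pencil, whose base point $p$ is independent of $v$, so $B\cap G=\PP^1\times\{p\}$ and $f_G(B\cap G)=\{p\}$. In the second case $B\cap G=\Bs{|-K_G|}$ is a section of $\pi|_G$ lying in the exceptional divisor of $f_G$ — it passes through the base point of each $|-K_{F_v}|$, which lies on the member of $|-K_{F_v}|$ cut out by that divisor — so $f_G$ contracts $B\cap G$ to a point. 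In either case $f$ contracts $B\cap\pi^{-1}(\ell')$ for every fibre $\ell'$ of $g$, and these are precisely the fibres of $g|_B\colon B\simeq V\to V_1$, whence $f|_B=g$. The hard part is the identification of $f|_G$ with $f_G$ in the product case, where $G$ has many elementary contractions and the only handle on $f|_G$ is that it must contract a $\pi|_G$-horizontal extremal curve of $X$: pinning it down needs the Néron–Severi bookkeeping above, crucially using $(-K_X)$-positivity to prevent the kernel of $N_1(G)\to N_1(X)$ from absorbing a component of $\delta$. Closely related are the verification that in the sextic case the section $B\cap G$ really lies in the exceptional divisor of $f_G$, and the upgrade of ``$f$ birational with generically one-dimensional fibres'' to ``$f$ a smooth blow-up along a surface''.
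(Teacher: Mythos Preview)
Your approach differs from the paper's: instead of taking an abstract lift from Lemma \ref{lemma-lifting} and then identifying its restriction to $G$, the paper \emph{constructs} $f$ directly by exhibiting a divisor on $X$ whose restriction to each $G$ supports $f_G$. Concretely, with $\zeta_V$ an ample tautological class for the ruling $g$, the paper considers $-K_X-2\pi^*\zeta_V$; its restriction to $G$ is $\mathrm{pr}_S^*(-K_S)$ in the product case, resp.\ the class of the exceptional divisor $D_G$ in the sextic case. In the first case one argues this class is $(g\circ\pi)$-nef (any extremal ray on which it is negative would have to be contracted to a point inside a special fibre $G_0$, forcing a divisorial contraction with exceptional divisor $G_0$ — impossible since $G_0$ is nef), and the relative basepoint-free theorem produces $f$. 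In the second case the class is \emph{not} $(g\circ\pi)$-nef, so the relative cone theorem gives an extremal ray negative for it; its contraction is divisorial by Lemma \ref{lemma:ruling-preparation} and restricts to $f_G$ on $G$. The advantage is that $f|_G=f_G$ is built in from the start.

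There is a genuine gap in your argument at the step ``the locus swept out by the $f$-contracted curves dominates $V$ under $\pi$, hence meets the general $G$''. You have only shown that each $f$-contracted curve lies in \emph{some} $\pi^{-1}(\ell')$; this says nothing about \emph{which} fibres $\ell'$ actually occur, and certainly does not imply they form a dense subset of $V_1$. If $f$ happened to be birational with exceptional locus contained in a single special fibre $G_{t_0}$, your chain of reasoning would give $f|_G$ an isomorphism for general $G$, and the subsequent identification $f|_G=f_G$ would collapse. The gap can be repaired: by Lemma \ref{lemma:ruling-preparation} the contraction $f$ is never small, and if it is divisorial with exceptional divisor $D$ not dominating $V_1$, then $D\subset G_{t_0}$ forces (via $\dim D=3$ and the integral $\pi$-fibres) $D=\pi^{-1}(\ell_{t_0})=G_{t_0}$, a nef divisor — contradicting $D\cdot\gamma<0$. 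But you must invoke Lemma \ref{lemma:ruling-preparation} and run this argument explicitly; as written, the step is a non sequitur.

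Two smaller points. In the product case your phrase ``$[\delta]$ proportional to $\ell_G$'' is imprecise: extremality in $\NE{X}$ only gives $i_*[\delta]\in\R^+[\gamma]$, and what you actually need (and what follows, since $[\delta]\in\NE{\pi}$ and $\NE{f}\cap\NE{\pi}=\{0\}$) is $[\delta]=0$. And the bound on fibre dimension at the end follows not from the proof of Lemma \ref{lem:birational} but from the same observation the paper uses (``since $f$ is a lift of $g$''): any fibre $f^{-1}(x_1)$ sits in a single $G_t$ and maps to $\ell_t$ under $\pi$, and this map is finite because curves in $f^{-1}(x_1)$ lie in $\NE{f}$ while curves contracted by $\pi|_{f^{-1}(x_1)}$ lie in $\NE{\pi}$, and $\NE{f}\cap\NE{\pi}=\{0\}$.
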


\begin{proof}
The general fibre $G$ is a smooth Fano threefold with a fibration
$\holom{\pi_G:=\pi_{|G}}{G}{\PP^1}$ with general fibre a del Pezzo surface of degree one.
Thus, $\mbox{Bs}(|-K_G|)$ is not empty and we obtain the description of $G$ by Theorem \ref{thm:iskovskikh}. 
Since
$$
\mbox{Bs}(|-K_G|) = \mbox{Bs}(|-K_X|_G|) \subset \BsAX \cap G = B \cap G
$$
and $\mbox{Bs}(|-K_G|)$ is a curve,
we see that equality holds. In particular $f_G$ contracts $B \cap G$ onto a point.
Our goal is to show that the morphism $f_G$ extends to a morphism  $f$. Since
$f_G$ contracts $B \cap G$ it is then clear that $f$ is then a lift of $g$.

Denote by $\zeta_V \rightarrow V$ an ample tautological divisor for the ruling $\holom{g}{V}{\PP^1}$.

{\em 1st case. Assume that $G \simeq \PP^1 \times S$.} 
The restriction of the divisor $-K_X - 2 \pi^* \zeta_V$ to $G$ is nef and a supporting
divisor for the contraction of $f_G$. If we show that $-K_X - 2 \pi^* \zeta_V$ is
$(g \circ \pi)$-nef, we obtain the existence of $f$ by the relative basepoint-free theorem.
Arguing by contradiction we assume that this is not the case.
Since $-K_X$ is $(g \circ \pi)$-ample, the relative cone theorem yields the existence
of a $K_X$-negative extremal ray $\Gamma$ in $\mbox{NE}(X/V_1)$ that is $-K_X - 2 \pi^* \zeta_V$-negative. Since $-K_X - 2 \pi^* \zeta_V$ is nef on the general fibre $G$, 
the contraction of $\Gamma$ is contained in some special fibre $G_0$ of $g \circ \pi$, in particular the contraction is birational. 
By Lemma \ref{lemma:ruling-preparation} the contraction is divisorial and contracts the divisor $G_0$.
Yet $G_0 \simeq \pi^* g^* \sO_{\PP^1}(1)$ is a nef divisor, so this is impossible.

{\em 2nd case. $G$ is a blow-up $\holom{f_G}{G}{G_0}$.} 
Denote by $D_G$ the exceptional divisor of the blow-up $f_G$. From the description of $G$ we know that $\rho(G)=2$ and $D_G = -K_G - 2 F$, where $F$ is a general fibre of $G \rightarrow \PP^1$. Thus the restriction of the divisor $-K_X - 2 \pi^* \zeta_V$ to $G$ coincides with $D_G$. By construction the class is not $(g \circ \pi)$-nef, so
the relative cone theorem yields the existence
of a $K_X$-negative extremal ray $\Gamma$ in $\mbox{NE}(X/V_1)$ that is $(-K_X - 2 \pi^* \zeta_V)$-negative. It is clear that contraction of $\Gamma$ is not of fibre type,
so by Lemma \ref{lemma:ruling-preparation} it is divisorial with exceptional divisor $D$. The restriction of $D$ to the general fibre $G$ is exactly $D_G$. Since $D_G \simeq \PP^1\times X$ with $C$ an elliptic curve, it is clear
that the only way to contract $D_G$ is the blow-up $f_G$.

For the last statement just note that the fibres of $f$ have dimension at most one since $f$ is a lift of $g$. Thus the structure of $f$ is given by Ando's theorem \cite{And85}.
\end{proof}

\begin{lemma}\label{lem:ruling3}
In the situation of Theorem \ref{theorem-nefdivisor}, assume that $B \simeq V \simeq \PP^1 \times \PP^1$ and let $\holom{g_1}{V}{V_1 \simeq \PP^1}$ be one of the rulings. Assume that the lifting $\holom{f}{X}{X_1}$ is a conic bundle.
Then we have
$$
X \simeq X_1 \times \PP^1
$$
and $X_1$ is a smooth Fano threefold with anticanonical base locus. In this case we have
$$
\ma N_{B/X}^* \simeq \sO_{\PP^1 \times \PP^1}^{\oplus 2}, \qquad \mbox{or} \qquad
\ma N_{B/X}^* \simeq \sO_{\PP^1 \times \PP^1} \oplus \sO_{\PP^1 \times \PP^1}(1,0).
$$
\end{lemma}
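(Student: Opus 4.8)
The plan is to recognise $X$ as the fibre product over $V_1\simeq\PP^1$ of the conic bundle $f$ and the del Pezzo fibration $\pi$, and to identify this fibre product with $X_1\times\PP^1$. Write $g_1\colon V\simeq\PP^1\times\PP^1\to V_1\simeq\PP^1$ for the chosen ruling and $p_2\colon V\to\PP^1$ for the other projection, let $\holom{f}{X}{X_1}$ be the lifting given by Lemma~\ref{lem:ruling} (a conic bundle by hypothesis, so $X_1$ is smooth), denote by $\holom{\pi_1}{X_1}{V_1}$ the induced fibration, and set $q:=p_2\circ\pi\colon X\to\PP^1$. I would then study the morphism $h:=(f,q)\colon X\to X_1\times\PP^1$; it satisfies $(\pi_1\times\id_{\PP^1})\circ h=\pi$ and commutes with the two structure morphisms to $V_1$, because $g_1\circ\pi=\pi_1\circ f$. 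The key observation is that for a general fibre $G$ of $g_1\circ\pi$ one has $G\simeq\PP^1\times S$ with $S$ a del Pezzo surface of degree one, the del Pezzo fibration $\pi|_G$ is the first projection, and by the construction of the lifting $f|_G$ is the second projection; hence $q|_G$ is the first projection and $h|_G\colon\PP^1\times S\to S\times\PP^1$ is an isomorphism onto the fibre of $\pi_1\times\id_{\PP^1}$. Thus $h$ is a surjective birational morphism, and in particular $\pi_1$ has general fibre $S$, so $X_1$ is a threefold carrying a del Pezzo fibration of degree one over $\PP^1$.

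To upgrade $h$ to an isomorphism I would argue with Picard numbers. Since $f$ is the contraction of an extremal ray we have $\rho(X)=\rho(X_1)+1=\rho(X_1\times\PP^1)$, and since $h$ is birational the pull-back $h^*\colon N^1(X_1\times\PP^1)\to N^1(X)$ is injective, hence an isomorphism of vector spaces. If $h$ contracted a curve $C$, then $h^*\xi\cdot C=0$ for every class $\xi$, whence $\alpha\cdot C=0$ for all $\alpha\in N^1(X)$, which is absurd since $X$ is projective; therefore $h$ is quasi-finite, hence finite, hence an isomorphism onto the normal variety $X_1\times\PP^1$. This gives $X\simeq X_1\times\PP^1$. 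From $-K_X\simeq\mathrm{pr}_{X_1}^*(-K_{X_1})+\mathrm{pr}_{\PP^1}^*\sO_{\PP^1}(2)$ it follows that $X_1$ is Fano, and from $\mathrm{Bs}(|-K_X|)=\mathrm{Bs}(|-K_{X_1}|)\times\PP^1$ together with $\BsAX=B\simeq\PP^1\times\PP^1$ it follows that $B_1:=\mathrm{Bs}(|-K_{X_1}|)\simeq\PP^1$ and $B=B_1\times\PP^1$; in particular $X_1$ is a smooth Fano threefold with non-empty anticanonical base locus.

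For the conormal bundle I would use that $B=B_1\times\PP^1\subset X_1\times\PP^1=X$ is a product, so $\ma N_{B/X}\simeq\mathrm{pr}_{B_1}^*\ma N_{B_1/X_1}$ is pulled back from the first factor, and it remains to compute $\ma N_{B_1/X_1}$ with $B_1=\mathrm{Bs}(|-K_{X_1}|)\simeq\PP^1$. By Theorem~\ref{thm:iskovskikh} applied to $X_1$ there are two cases. If $X_1\simeq\PP^1\times S_1$ with $S_1$ a del Pezzo surface of degree one, then $B_1=\PP^1\times\{p\}$ with $p=\mathrm{Bs}(|-K_{S_1}|)$, so $\ma N_{B_1/X_1}$ is the pull-back of $\ma N_{\{p\}/S_1}$ and hence trivial; thus $\ma N_{B/X}^*\simeq\sO_{\PP^1\times\PP^1}^{\oplus 2}$. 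If $X_1$ is the blow-up of a smooth del Pezzo threefold of degree one along an elliptic curve, then a short computation — using that $B_1$ is a section of $\pi_1$ with $-K_{X_1}\cdot B_1=1$, the adjunction formula $\det\ma N_{B_1/X_1}\simeq(\omega_{B_1}\otimes\omega_{X_1}^{-1})|_{B_1}$, and pseudoeffectivity of the tautological class of $\PP(\ma N_{B_1/X_1}^*)$ (the three-dimensional analogue of Lemma~\ref{lemma:setup-classA}(i)) — gives $\ma N_{B_1/X_1}\simeq\sO_{\PP^1}\oplus\sO_{\PP^1}(-1)$, so $\ma N_{B/X}^*\simeq\sO_{\PP^1\times\PP^1}\oplus\sO_{\PP^1\times\PP^1}(1,0)$.

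I expect the heart of the argument to be the passage to the product: one must exploit the precise description in Lemma~\ref{lem:ruling} of how $f$ and $\pi$ restrict to a general fibre $G$ in order to see that $h$ is birational, and then the Picard-number count to see that $h$ contracts no curve. The conormal bundle computation in the blow-up case is routine, though it involves a ruled surface over an elliptic curve.
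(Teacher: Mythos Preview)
Your proof is correct and follows essentially the same route as the paper: define the morphism $h=(f,\,p_2\circ\pi)\colon X\to X_1\times\PP^1$, use the description of $G\simeq\PP^1\times S$ from Lemma~\ref{lem:ruling} to see that $h$ is birational, and then conclude it is an isomorphism via the Picard-number equality $\rho(X)=\rho(X_1)+1=\rho(X_1\times\PP^1)$. The paper phrases the last step as ``the exceptional locus is divisorial, hence empty'', whereas you argue that $h^*$ is an isomorphism on $N^1$ so no curve can be contracted; these are equivalent, and your additional derivation of $\ma N_{B/X}^*$ via Iskovskikh's classification of $X_1$ simply spells out what the paper leaves implicit.
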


\begin{remark*} Since we choose to order the product $X_1 \times \PP^1$ (and not $\PP^1 \times X_1$), the case
$\ma N_{B/X}^* \simeq \sO_{\PP^1 \times \PP^1} \oplus \sO_{\PP^1 \times \PP^1}(0,1)$ does not appear in the list,
the corresponding varieties are isomorphic.
\end{remark*}

\begin{proof}
By the proof Lemma \ref{lem:ruling} we know that the general $f$-fibre $d$ is of the form $\PP^1 \times \{p\} \subset G$, where $G$ is a fibre of $g_1 \circ \pi$. In particular $\pi_*(d)$
is reduced and meets a fibre of the second ruling $\holom{g_2}{V}{\PP^1}$ in exactly one point.
In other words the general $f$-fibre and the general $(g_2 \circ \pi)$-fibre meet transversally in one point. Thus the morphism
$$
\holom{\eta:= f \times (g_2 \circ \pi)}{X}{X_1 \times \PP^1}
$$
is birational. Since $X_1$ is the image of a conic bundle, it is smooth. Thus the exceptional locus of $\eta$ is divisorial. Yet $\rho(X)=\rho(X_1)+1=\rho(X_1 \times \PP^1)$, so the exceptional locus is empty and $\eta$ an isomorphism.
\end{proof}

\begin{proposition}\label{prop:quadric-nef}
In the situation of Theorem \ref{theorem-nefdivisor}, assume that $B \simeq V \simeq \PP^1 \times \PP^1$. 
Then $\ma N^*_{B / X}$ is nef.
\end{proposition}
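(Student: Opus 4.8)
The plan is to mirror the divisorial part of the proof of Proposition \ref{prop:P2}, using the hypothesis $V\simeq\PP^1\times\PP^1$ (through Lemma \ref{lemma:ruling-preparation}) to dispense with the small-contraction case. I would argue by contradiction, assuming $\ma N^*_{B/X}$ is not nef. By item iii.\ of Lemma \ref{lemma:setup-classA} this is equivalent to $A$ not being nef, while $A$ is pseudoeffective since $\mu^*A\simeq E+\phi^*\zeta_T$ with $E$ effective and $\zeta_T$ pseudoeffective (item i.). Hence there is an extremal ray $\R^+[\Gamma]$ of $\NE{X}$ (automatically $K_X$-negative, $X$ being Fano) with $A\cdot\Gamma<0$; pseudoeffectivity of $A$ forces its contraction $\holom{f}{X}{X_1}$ to be birational, and by item ii.\ of Lemma \ref{lemma:ruling-preparation} it is not small, so it is divisorial with exceptional prime divisor $D$.

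The first substantial step is to show $B\subset D$. Since $D$ is covered by $A$-negative curves, $\sO_D(A)$ is not pseudoeffective, so by item v.\ of Lemma \ref{lemma:setup-classA} we have $D=\mu(\fibre{\phi}{D_T})$ for a prime divisor $D_T\subset T$ with $\sO_{D_T}(\zeta_T)$ not pseudoeffective; in particular $D_T$ is not nef, because a nef prime divisor would have pseudoeffective restriction of the globally pseudoeffective divisor $\zeta_T$. As $\holom{\mu_T}{T}{V\simeq\PP^1\times\PP^1}$ is a $\PP^1$-bundle and the pullback of every effective curve of $V$ is nef, $D_T$ cannot be a pullback of a divisor from $V$, so $\mu_T(D_T)=V$; translating through the identification $\phi_E\colon E\simeq T$ this gives $B=\mu(E)\subset\mu(\fibre{\phi}{D_T})=D$. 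Moreover, since $\sO_B(A)\simeq\sO_B$ by item ii.\ of Lemma \ref{lemma:setup-classA}, no curve of $B$ lies in $\R^+[\Gamma]$, hence $f|_B$ is finite; thus $f(B)$ is a surface and, $f(D)$ being irreducible and containing $f(B)$, we get that $f$ contracts $D$ onto the surface $f(D)=f(B)$.

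The final step reproduces the genus contradiction from Proposition \ref{prop:P2}. By the structure of divisorial Mori contractions of smooth fourfolds onto surfaces (Andreatta--Wi\'sniewski \cite{AW98}, Takagi \cite{Tak99}) the general fibre $d$ of $f|_D$ is a smooth rational curve with $-K_X\cdot d=1$, and $d$ meets $B$ in a finite nonempty set. Writing $d'\subset X'$ for the strict transform we get $E\cdot d'=\mathrm{length}(B\cap d)\geq1$, so $-K_{X'}\cdot d'=\mu^*(-K_X)\cdot d'-E\cdot d'=-K_X\cdot d-E\cdot d'\leq0$; since $-K_{X'}\simeq\varphi^*H$ is nef, $-K_{X'}\cdot d'=0$ and $E\cdot d'=1$. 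Thus the strict transform $D'\subset X'$ of $D$ is covered by the $-K_{X'}$-trivial, hence $\varphi$-contracted, curves $d'$. By Lemma \ref{lemma:technical} $\varphi$ does not contract a divisor, so $\varphi(D')$ is a surface and a general $d'$ is a general fibre of $\varphi$; by the same lemma $\varphi$ is a flat elliptic fibration with integral fibres over the complement of a finite set, so this fibre is an integral curve of arithmetic genus one, contradicting $d'\simeq d\simeq\PP^1$.

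The only delicate points I anticipate are contained in the second step: the identification $D=\mu(\fibre{\phi}{D_T})$ together with the observation that $D_T$ must dominate $V$ (so that $B\subset D$), and the use of the precise structure of divisorial contractions of fourfolds onto surfaces (needed so that $-K_X\cdot d=1$, which is exactly what makes $-K_{X'}\cdot d'=0$). Everything else is bookkeeping with the already-established Lemmas \ref{lemma:setup-classA}, \ref{lemma:technical} and \ref{lemma:ruling-preparation}; in particular, unlike the $\PP^2$ case, no separate treatment of small contractions or appeal to Lemma \ref{lemma:conormal-nef} is required here.
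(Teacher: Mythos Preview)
Your argument is correct and in fact more direct than the paper's. The paper first invokes Lemma \ref{lem:ruling3} to reduce to the case where both liftings of the rulings are birational, so that $\ma N_{B/X}^*|_{d_i}\simeq\sO_{\PP^1}\oplus\sO_{\PP^1}(1)$ for every line $d_i$; it then runs a Weierstra\ss-model argument to show that the singular locus of $D=\mu(\fibre{\phi}{D_T})$ contains a surface $S\simeq D_T$, and uses the classification of divisorial contractions (allowing $\eta(D)$ to be either a surface or a curve) to force $S\simeq\PP^1\times\PP^1$, whence $D_T\simeq\PP^1\times\PP^1$ contradicts Lemma \ref{lemma:ruling-technical}. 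You instead transplant verbatim the divisorial case of Proposition \ref{prop:P2}: the key observation that $\sO_B(A)\simeq\sO_B$ makes $f|_B$ finite, so once $B\subset D$ the image $f(D)$ is automatically a surface, and the genus-one fibre argument from Lemma \ref{lemma:technical} finishes. Your route eliminates the case distinction on $\dim f(D)$, the singularity analysis of $D$, and any appeal to Lemma \ref{lemma:ruling-technical}; the paper's approach, on the other hand, extracts the additional information $D_T\simeq\PP^1\times\PP^1$, though this is not used elsewhere.
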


\begin{proof}[Proof of Proposition \ref{prop:quadric-nef}]
By Lemma \ref{lem:ruling3} we can restrict to the case where
the two liftings of the rulings on $V$ are birational contractions.
Therefore for every line $d_i \subset B \simeq \PP^1 \times \PP^1$ the restriction
$$
\ma N_{B/X}^* \otimes \sO_{d_i} \simeq \sO_{\PP^1} \oplus \sO_{\PP^1}(1)
$$
is nef. We argue by contradiction and assume that $\ma N_{B/X}^*$ is not nef.

{\em Step 1. Setup.}
We consider the divisor
$$
E + \phi^* \zeta_T \simeq \mu^* A
$$
introduced in Lemma \ref{lemma:setup-classA}, by the lemma it is not nef. By item iii.\ of Lemma \ref{lemma:ruling-preparation} there exists
a prime divisor $D \subset X$ such that the restriction of $A$ to $D$ is not pseudoeffective. 
By item v.\ of Lemma \ref{lemma:setup-classA} there exists 
a prime divisor $D_T \subset T$ such that the restriction of $\zeta_T$ to $D_T$ is not pseudoeffective.
Moreover $D = \mu(\fibre{\phi}{D_T})$.

Note that $D_T$ maps surjectively onto $\PP^1 \times \PP^1$ since otherwise
$D_T$ is the pull-back of a nef divisor in $\PP^1 \times \PP^1$,
contradicting the fact that the restriction of the pseudoeffective divisor $\zeta_T$ to $D_T$ is not pseudoeffective.
By Lemma \ref{lemma:ruling-technical} we have a contradiction if we show that $D_T \simeq \PP^1 \times \PP^1$.

{\em Step 2. We prove that $D_T \simeq \PP^1 \times \PP^1$.}
Using the notation of Proposition \ref{prop:weierstrass} we consider the surface $S := \PP(\sO_{D_T}) \subset \PP(W)$. Then
$$
\sO_S(3 \zeta_W+ \Phi^* (6\zeta_T)) \simeq \sO_{D_T}(6 \zeta_T)
$$
is not pseudoeffective, so $S$ is contained in $X'$.
Note that $S \subset \PP(W_{D_T})$ is a locally complete intersection of codimension
two and
$$
\ma N_{S/\PP(W_{D_T})}^* \simeq \sO_{D_T}(-2 \zeta_T) \oplus \sO_{D_T}(-3 \zeta_T),
$$
so every global section of
$$
\ma N_{S/\PP(W_{D_T})}^* \otimes \sO_S(X') \simeq \sO_{D_T}(4 \zeta_T) \oplus \sO_{D_T}(3 \zeta_T)
$$
vanishes. By \cite[Lemma 1.7.4]{BS95} (applied over
the smooth locus of $D_T$) this shows that $\fibre{\phi}{D_T}$ is singular in every point of the surface $S$.
Since $S$ is disjoint from the exceptional divisor $E := \PP(\sO_{D_T}(-3 \zeta_T))$,
we can identify $S$ to its image in $D \subset X$
and obtain that $S$ is contained in the singular locus of $D$.

The divisor $D$ is not nef, since the restriction of the pseudoeffective divisor $A$ to
$D$ is not pseudoeffective. By the cone theorem there exists
a $K_X$-negative extremal ray $\R^+ [\Gamma]$ such that $D \cdot \Gamma<0$.
By item ii.\ of Lemma \ref{lemma:ruling-preparation} the contraction is not small,
so we obtain an extremal contraction $\holom{\eta}{X}{\bar X}$ with exceptional divisor
$D$. By Lemma \ref{lemma:divisor-contraction} the image of $D$ is not a point, so we have two cases.

{\em 1st case: $\eta(D)$ is a surface.} By
\cite[Thm.]{AW98} the morphism $D \rightarrow \eta(D)$ is a $\PP^1$-bundle
over the complement of finitely many points and the singular locus of $D$ maps onto
a finite set. Thus $S \subset D_{sing}$ is contained in a higher-dimensional fibre
of $D \rightarrow \eta(D)$ and therefore isomorphic to $\PP^2$ or a quadric
\cite[Thm.]{AW98}. By item i.\ of Lemma \ref{lemma:ruling-preparation} this
implies $S \simeq \PP^1 \times \PP^1$.

{\em 2nd case: $\eta(D)$ is a curve.}
By \cite[Main Theorem]{Tak99} the fibration $D \rightarrow \eta(D)$
is a $\PP^2$-bundle or quadric bundle such that the general fibre is irreducible.
An irreducible quadric surface has at most one singular point,
so a two-dimensional component of $D_{\sing}$ does not surject
onto $\eta(D)$. Therefore 
$S \subset D_{sing}$ is contained in a fibre
of $D \rightarrow \eta(D)$ and therefore isomorphic to $\PP^2$ or a quadric.
As in the first case this shows $S \simeq \PP^1 \times \PP^1$.

We conclude by noticing that
$$
D_T \simeq \PP(\sO_{D_T}) = S \simeq \PP^1 \times \PP^1.
$$
\end{proof}

\begin{lemma} \label{lemma:quadric-classifyNB}
In the situation of Theorem \ref{theorem-nefdivisor}, assume that $B \simeq V \simeq \PP^1 \times \PP^1$. 
Then $\ma N_{B/X}^*$ is isomorphic to a vector bundle $\sF$ of the following form:
$$
\sO_{\PP^1 \times \PP^1}^{\oplus 2}, \qquad 
\sO_{\PP^1 \times \PP^1} \oplus \sO_{\PP^1 \times \PP^1}(1,0)
$$
or
$$
\sO_{\PP^1 \times \PP^1} \oplus \sO_{\PP^1 \times \PP^1}(1,1), \qquad \sO_{\PP^1 \times \PP^1}(1,0) \oplus \sO_{\PP^1 \times \PP^1}(0,1)
$$
or
$$
0 \rightarrow \sO_{\PP^1 \times \PP^1}(-1,-1) \rightarrow \sO_{\PP^1 \times \PP^1}^{\oplus 3} \rightarrow \sF \rightarrow 0.
$$
In all the cases the vector bundle $\sF \simeq \ma N_{B/X}^*$ is globally generated and
$-K_{B} - \det \sF$ is ample and globally generated.
\end{lemma}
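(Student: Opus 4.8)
The plan is to bootstrap from Proposition~\ref{prop:quadric-nef} together with the structure of the liftings of the two rulings of $V\simeq\PP^1\times\PP^1$ and the bundle classification in Lemma~\ref{lemma:ruling-technical2}.

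By Proposition~\ref{prop:quadric-nef} the conormal bundle $\ma N_{B/X}^*$ is nef. Let $g_1,g_2\colon V\to\PP^1$ be the two rulings and apply Lemma~\ref{lem:ruling} to each of them: the lifting $f_i\colon X\to X_{1,i}$ is either a conic bundle or the blow-up of a smooth fourfold along a smooth surface. If some $f_i$ is a conic bundle, then Lemma~\ref{lem:ruling3} gives $X\simeq X_{1,i}\times\PP^1$ together with $\ma N_{B/X}^*\simeq\sO_{\PP^1\times\PP^1}^{\oplus2}$ or $\ma N_{B/X}^*\simeq\sO_{\PP^1\times\PP^1}\oplus\sO_{\PP^1\times\PP^1}(1,0)$, which are the first two cases of the list.

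So we may assume that both $f_1$ and $f_2$ are blow-ups along smooth surfaces. In this situation the proof of Proposition~\ref{prop:quadric-nef} shows that for every line $d_i\subset B\simeq\PP^1\times\PP^1$ one has $\ma N_{B/X}^*\otimes\sO_{d_i}\simeq\sO_{\PP^1}\oplus\sO_{\PP^1}(1)$, so the restriction of the nef bundle $\ma N_{B/X}^*$ to every fibre of each of the two projections $\PP^1\times\PP^1\to\PP^1$ is $\sO_{\PP^1}\oplus\sO_{\PP^1}(1)$. Lemma~\ref{lemma:ruling-technical2} then applies and shows that $\ma N_{B/X}^*$ is isomorphic to $\sO_{\PP^1\times\PP^1}\oplus\sO_{\PP^1\times\PP^1}(1,1)$ or $\sO_{\PP^1\times\PP^1}(1,0)\oplus\sO_{\PP^1\times\PP^1}(0,1)$, or fits into an exact sequence $0\to\sO_{\PP^1\times\PP^1}(-1,-1)\to\sO_{\PP^1\times\PP^1}^{\oplus3}\to\ma N_{B/X}^*\to0$; these are the remaining three cases. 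Note that here $\ma N_{B/X}^*$ cannot be one of the first two bundles, since those do not restrict to $\sO_{\PP^1}\oplus\sO_{\PP^1}(1)$ along both ruling directions, so the two parts of the argument are consistent.

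Finally, the positivity statements are checked case by case: in every case $\ma N_{B/X}^*$ is either a direct sum of globally generated line bundles or a quotient of $\sO_{\PP^1\times\PP^1}^{\oplus3}$, hence globally generated; and $\det\ma N_{B/X}^*\in\{\sO_{\PP^1\times\PP^1},\ \sO_{\PP^1\times\PP^1}(1,0),\ \sO_{\PP^1\times\PP^1}(1,1)\}$, so since $-K_B\simeq\sO_{\PP^1\times\PP^1}(2,2)$ the line bundle $-K_B-\det\ma N_{B/X}^*$ equals, according to the case, $\sO_{\PP^1\times\PP^1}(2,2)$, $\sO_{\PP^1\times\PP^1}(1,2)$ or $\sO_{\PP^1\times\PP^1}(1,1)$, all of which are ample and globally generated. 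The only genuinely new input is the reduction to the hypotheses of Lemma~\ref{lemma:ruling-technical2}; everything else is contained in the cited statements, so this reduction — together with bookkeeping of which of the three distinguished bundles occurs in the ``both blow-ups'' case — is the only point that requires care.
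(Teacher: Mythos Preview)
Your proof is correct and follows essentially the same approach as the paper: split according to whether one of the liftings $f_i$ from Lemma~\ref{lem:ruling} is a conic bundle (handled by Lemma~\ref{lem:ruling3}) or both are birational (in which case the restriction $\ma N_{B/X}^*|_{d_i}\simeq\sO_{\PP^1}\oplus\sO_{\PP^1}(1)$ together with nefness from Proposition~\ref{prop:quadric-nef} feeds into Lemma~\ref{lemma:ruling-technical2}). Your additional remarks on consistency between the two cases and the explicit verification of global generation and positivity of $-K_B-\det\sF$ just make explicit what the paper leaves to the reader.
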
 

\begin{proof}
Let $f_i$ be the liftings of the rulings $g_i$ constructed in Lemma \ref{lem:ruling}. If one of the $f_i$ is of fibre type we obtain
the classification by Lemma \ref{lem:ruling3}. If both liftings are birational we know that
 for every line $d_i \subset B \simeq \PP^1 \times \PP^1$ the restriction is
$$
\ma N_{B/X}^* \otimes \sO_{d_i} \simeq \sO_{\PP^1} \oplus \sO_{\PP^1}(1).
$$
Since $\ma N_{B/X}^*$ is nef by Proposition \ref{prop:quadric-nef}, we can conclude with Lemma \ref{lemma:ruling-technical2}.
\end{proof}

\section{Proof of the main theorem}

\begin{corollary}\label{corollary:high-degree}
In the situation of Theorem \ref{theorem-nefdivisor}, assume that $V \simeq B$ is a del Pezzo surface with
$\rho(B) \geq 3$. Then $\ma N_{B/X}^* \simeq \sO_{B}^{\oplus 2}$.
\end{corollary}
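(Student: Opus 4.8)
The plan is to argue by induction on the Picard number $\rho(B)$, reducing via Lemma \ref{lem:birational} to the surfaces $\PP^1\times\PP^1$ and $\mathbb F_1$ that have already been classified. Since $\rho(B)\geq 3$ the del Pezzo surface $B\simeq V$ is not isomorphic to $\PP^1\times\PP^1$, hence it contains a $(-1)$-curve $e\subset V$; let $\holom{g}{V}{V_1}$ be its contraction onto a point. By Lemma \ref{lem:birational} there is a lifting $\holom{f}{X}{X_1}$ such that $X_1$ again satisfies the hypotheses of Theorem \ref{theorem-nefdivisor}, the base locus $B_1:=\mbox{Bs}(|-K_{X_1}|)\simeq V_1$ is a del Pezzo surface with $\rho(B_1)=\rho(B)-1\geq 2$, and — crucially for the induction — $\ma N_{B/X}^*\simeq f_B^*\,\ma N_{B_1/X_1}^*$, where $\holom{f_B}{B}{B_1}$ is the blow-down of $e$.

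Next I would record the numerical identity that drives the argument. By adjunction for the codimension-two submanifolds $B\subset X$ and $B_1\subset X_1$ (compare Remark \ref{remark-restrictionB}) one has $-K_X|_B\simeq -K_B-c_1(\ma N_{B/X}^*)$ and $-K_{X_1}|_{B_1}\simeq -K_{B_1}-c_1(\ma N_{B_1/X_1}^*)$; combining these with the blow-up formula $K_B\simeq f_B^*K_{B_1}+e$ and with $\ma N_{B/X}^*\simeq f_B^*\,\ma N_{B_1/X_1}^*$ yields
$$
-K_X|_B\;\simeq\;f_B^*\bigl(-K_{X_1}|_{B_1}\bigr)-e.
$$
Since $X$ is Fano, the divisor $-K_X|_B$ is ample.

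If $\rho(B_1)\geq 3$, the inductive hypothesis applied to $X_1$ gives $\ma N_{B_1/X_1}^*\simeq\sO_{B_1}^{\oplus 2}$, and therefore $\ma N_{B/X}^*\simeq f_B^*\sO_{B_1}^{\oplus 2}\simeq\sO_B^{\oplus 2}$. If $\rho(B_1)=2$, then $V_1\simeq\PP^1\times\PP^1$ or $V_1\simeq\mathbb F_1$, so $\ma N_{B_1/X_1}^*$ occurs in the explicit list of Lemma \ref{lemma:quadric-classifyNB}, respectively Corollary \ref{corollary:F1-classifyNB}. A short Chern-class computation shows that for every \emph{non-trivial} bundle $\sF$ in those lists the divisor $-K_{X_1}|_{B_1}=-K_{B_1}-c_1(\sF)$ has degree exactly one on some fibre $\ell$ of one of the rulings of $B_1$ (the Hirzebruch ruling when $V_1\simeq\mathbb F_1$, one of the two projections when $V_1\simeq\PP^1\times\PP^1$). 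Choosing such an $\ell$ through the point $p:=f_B(e)$ and letting $\tilde\ell\subset B$ be its strict transform, we get $e\cdot\tilde\ell=1$ and $f_B^*(-K_{X_1}|_{B_1})\cdot\tilde\ell=(-K_{X_1}|_{B_1})\cdot\ell=1$, so
$$
\bigl(-K_X|_B\bigr)\cdot\tilde\ell\;=\;1-1\;=\;0,
$$
contradicting the ampleness of $-K_X|_B$. Hence $\ma N_{B_1/X_1}^*\simeq\sO_{B_1}^{\oplus 2}$, and again $\ma N_{B/X}^*\simeq\sO_B^{\oplus 2}$. The only mildly technical point is the case-by-case check that the anticanonical restriction degenerates on a ruling fibre; this is routine given the classification Lemmas \ref{lemma:quadric-classifyNB} and \ref{corollary:F1-classifyNB}, and I do not expect a genuine obstacle.
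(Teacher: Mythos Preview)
Your proof is correct and takes a genuinely different route from the paper's. The paper argues as follows: run the MMP on $V$ down to $\PP^2$ or $\PP^1\times\PP^1$, pull back the conormal bundle, and use the classification of Lemmas \ref{lemma:P2-classifyNB} and \ref{lemma:quadric-classifyNB} to conclude that $\ma N_{B/X}^*$ is nef; then observe that since Lemma \ref{lem:birational} applies to \emph{every} $(-1)$-curve, the restriction of $\ma N_{B/X}^*$ to each $(-1)$-curve is trivial, hence $\det\ma N_{B/X}^*$ vanishes on all the extremal rays of $\NE{B}$ (which, for $\rho(B)\geq 3$, are exactly the $(-1)$-curves); therefore $\ma N_{B/X}^*$ is numerically flat, and since $B$ is simply connected one concludes by \cite[Thm.~1.18]{DPS94}.

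Your argument replaces the numerical-flatness/DPS step by a direct intersection-theoretic contradiction: you only descend one step to $\rho(B_1)=2$, and then use the ampleness of $-K_X|_B$ together with the explicit lists of Corollary \ref{corollary:F1-classifyNB} and Lemma \ref{lemma:quadric-classifyNB} to force $\ma N_{B_1/X_1}^*$ trivial. This is more elementary in that it avoids the appeal to \cite{DPS94}, at the cost of a small case check on the degree of $-K_{X_1}|_{B_1}$ along a ruling. The paper's approach, on the other hand, extracts a structural reason (numerical flatness) that would generalise more readily. Both are short; your version fits well with the spirit of the proof of Corollary \ref{corollary:F1-classifyNB}, which already uses exactly this kind of degree argument on $-K_X|_B$.
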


\begin{proof}
Applying Lemma \ref{lem:birational} we obtain a sequence 
$$
\xymatrix{
X \ar[r]_{f} \ar[d]_{\pi} \ar@/^1pc/[rrr]^{h} & X_1 \ar[d]^{\pi_1} \ar@{.>}[r] & X_{k-1} \ar[d]^{\pi_{k-1}} \ar[r] & X_k \ar[d]^{\pi_k} \\
V \ar[r]^{g} & V_1 \ar@{.>}[r] & V_{k-1} \ar[r] & V_k
}
$$
with $V_k$ isomorphic to $\PP^2$ or $\PP^1 \times \PP^1$ and
$\ma N_{B/X}^* \simeq h^*(\ma N_{B_k/X_k}^*)$. By item v.\ of the Lemma, the manifold $X_k$ satisfies the conditions of Theorem 
\ref{theorem-nefdivisor}. Thus we can use the classification of Lemma \ref{lemma:P2-classifyNB} and Lemma \ref{lemma:quadric-classifyNB} to obtain that $\ma N_{B/X}^*$ is nef.

Now note that Lemma \ref{lem:birational} does not depend on the choice of the exceptional curves in $V$, so $\ma N^*_{B / X}$ (and therefore $\det \ma N_{B/X}^*$) is trivial on all the $(-1)$-curves curves of $B$. For a del Pezzo surface $B$ with $\rho(B) \geq 3$
the Mori cone is generated by the $(-1)$-curves, so $\det \ma N_{B/X}^*$ is trivial. Thus the nef vector bundle
$\ma N_{B/X}^*$ is numerically flat. Since $B$ is simply connected, we can use \cite[Thm.1.18]{DPS94} to see that $\ma N_{B/X}^*$ is trivial.
\end{proof}

\begin{proof}[Proof of Theorem \ref{theorem-main}]
We already know by Theorem \ref{theorem-nefdivisor} that we have a flat elliptic fibration
$$
\holom{\phi}{X'}{T}
$$
such that the exceptional divisor $E$ is a $\phi$-ample section. By Proposition \ref{prop:weierstrass} 
this shows that $X'$ is a Weierstra\ss\ fibration with distinguished section $E$.

By Corollary \ref{cor:birational}, the surface $B$ is del Pezzo and from the classification 
of $\ma N_{B/X}^*$ in Lemma \ref{lemma:P2-classifyNB}, Corollary \ref{corollary:F1-classifyNB}, Lemma \ref{lemma:quadric-classifyNB}
and Corollary \ref{corollary:high-degree} we see that the list of possible normal bundles
coincides with the list of rank two Fano bundles in \cite[Thm.]{SW90}. In particular $E \simeq \PP(\ma N_{B/X}^*)$ is a Fano threefold.

From our classification, one sees easily that the pairs $(B,\, \ma N_{B/X}^*)$ satisfy the conditions of Proposition \ref{construction:general},
i.e.\ $\ma N_{B/X}^*$ is globally generated and  $-K_B - \det \ma N_{B/X}^*$ is ample.
Thus we can use the construction to recover $X'$ and $X$. If $B$ is not a del Pezzo surface of degree one, we can also
check that $-K_B - \det\ma N_{B/X}^*$ is globally generated, so $(B,\, \ma N_{B/X}^*)$ satisfies the conditions of Proposition \ref{construction:rk2}
and $\BsAX \simeq B$ is a smooth irreducible surface.
\end{proof}

\begin{proof}[Proof of Corollary \ref{corollary-main}]
By \cite[Thm.]{SW90} (which is of course contained in the famous list of \cite{MM81})
there are exactly 22 families of smooth Fano threefolds with a $\PP^1$-bundle structure $\PP(\sF) \rightarrow B$.
If $\rho(B) \geq 3$, we know from Corollary \ref{corollary:high-degree} that the unique possibility 
for $\ma N_{B/X}^*$ is $\sO_B^{\oplus 2}$, so $X \simeq V \times F$ with $F$ a del Pezzo of degree one.
In particular the case where $V$ is a del Pezzo surface of degree one does not appear, since then $\BsAX$ is reducible.

If $\rho(B) \leq 2$, we see from Lemma \ref{lemma:P2-classifyNB}, Corollary \ref{corollary:F1-classifyNB} and Lemma \ref{lemma:quadric-classifyNB} that all the other threefolds in \cite[Thm.]{SW90} appear in our list and
$T \simeq \PP^1 \times \PP^2$ appears twice, as the projectivisation of $\sO_{\PP^2}^{\oplus 2}$ and
$\sO_{\PP^2}(1)^{\oplus 2}$.
In conclusion we obtain $22-1+1=22$ families.
\end{proof}

\section{Numerical invariants and final table}
In the notation of Proposition \ref{construction:general}, let $\ma F\simeq\ma N_{B/X}^*$ under the isomorphism $B\simeq V$. We have $-K_{X'}\simeq \phi^*(-K_T-\zeta_T)$ by \eqref{formulaanticanonical} and $-K_T-\zeta_T$ is a tautological divisor for $$\PP\bigl(\ma F\otimes \sO(-K_V-\det \ma F)\bigr),$$ thus
$$h^0(X,-K_X)=h^0(X', -K_{X'})=h^0\bigl(V, \ma F\otimes \sO(-K_V-\det \ma F)\bigr)=h^0\bigl(V, \ma F^*\otimes \sO(-K_V)\bigr).$$
Moreover, $K_{X'}^4=0$ and \cite[Lemma 3.2]{CR} yield
$$K_X^4=6K_B^2+8K_B\cdot c_1(\ma F)+3c_1(\ma F)^2-c_2(\ma F).$$
To compute $K_{X}^2\cdot c_2(X)$, recall that the Hirzebruch-Riemann-Roch for a smooth Fano fourfold $X$ gives
$$h^0(X, -K_X)=1+\frac{1}{12}\bigl(2K_X^4+K_{X}^2\cdot c_2(X)\bigr).$$

It is not clear to us how to determine the relative Picard number of the elliptic fibration
$\holom{\phi}{X}{T}$ (equivalently, of $\holom{\pi}{X}{V}$), so the Picard number of $X$ is obtained from the description of morphism
$\holom{\psi}{X}{\bar X}$ and $\bar X$ in Section \ref{section-examples}. For the items $\# 1$
and $\# 4$, our description does not allow to determine the Picard number.

In the following table we collect the numerical invariants of the Fano fourfolds from Theorem \ref{theorem-main}, which we list based on the data $(B,\, \ma N_{B/X}^*)$.
We denote by $g\colon \F_1 \to \PP^2$ the blow-up at a point, and by $S_d$ the del Pezzo surface of degree $d$.

\begin{table}[h]
    $$\begin{array}{||c|c|c|c|c|c||}
\hline\hline
\rule{0pt}{2.5ex} \# & (B,\, \ma N_{B/X}^*)  & \rho_{X} & K_{X}^4 &  K_{X}^2\cdot c_2(X) & h^0(X,-K_{X})\\
    \hline\hline

\rule{0pt}{2.5ex} 1 & (\PP^2, \sO\oplus\sO(2)) & \leq 4 & 18 & 108 & 13 \\ \hline

\rule{0pt}{2.5ex} 2 & (\PP^2, \sO\oplus\sO(1)) & 2 & 33 & 114 & 16 \\ \hline

\rule{0pt}{2.5ex} 3 & (\PP^2,\sO^{\oplus 2}) & 10 & 54 & 120 & 20 \\ \hline

\rule{0pt}{2.5ex} 4 & (\PP^2, \sO(1)^{\oplus 2}) & ? & 17 & 98 & 12 \\ \hline

\rule{0pt}{2.5ex} 5 & (\PP^2, T_{\PP^2}(-1)) & 2 & 32 & 104 & 15 \\ \hline

\rule{0pt}{4ex} 6 & \makecell{(\PP^2, \ma F) \\ 0 \to \sO \rightarrow T_{\PP^2}(-1) \oplus \sO(1) \rightarrow \sF \to 0} & 2 & 16 & 88 & 11 \\ \hline

\rule{0pt}{4ex} 7 & \makecell{ (\PP^2, \ma F) \\ 0 \to \sO(-1)^{\oplus 2} \rightarrow \sO^{\oplus 4} \rightarrow \sF \to 0} & 2 & 15 & 78 & 10 \\ \hline

\rule{0pt}{4ex} 8 & \makecell{(\PP^2, \ma F) \\ 0 \rightarrow  \sO(-2) \rightarrow \sO^{\oplus 3} \rightarrow \sF \to 0} & 2 & 14 & 138 & 9 \\ \hline

\rule{0pt}{2.5ex} 9 & (\PP^1 \times \PP^1, \sO \oplus \sO(1,1)) & 3 & 22 & 100 & 13 \\ \hline

\rule{0pt}{2.5ex} 10 & (\PP^1 \times \PP^1, \sO^{\oplus 2}) & 11 & 48 & 108 & 18 \\ \hline

\rule{0pt}{2.5ex} 11 & (\PP^1 \times \PP^1, \sO \oplus \sO(1,0)) & 3 & 32 & 104 & 15 \\ \hline

\rule{0pt}{2.5ex} 12 & (\PP^1 \times \PP^1, \sO(1,0) \oplus \sO(0,1)) & 3 & 21 & 90 & 12 \\ \hline

\rule{0pt}{4ex} 13 & \makecell{(\PP^1 \times \PP^1, \ma F) \\ 0 \rightarrow \sO(-1,-1) \rightarrow \sO^{\oplus 3} \rightarrow \sF \rightarrow 0} & 3 & 20 & 80 & 11 \\ \hline

\rule{0pt}{2.5ex} 14 & (\F_1, g^*(\sO \oplus \sO(1)) & 3 & 27 & 102 & 14 \\ \hline

\rule{0pt}{2.5ex} 15 & (\F_1, \sO^{\oplus 2}) & 11 & 48 & 108 & 18 \\ \hline

\rule{0pt}{2.5ex}  16 & (\F_1, g^* T_{\PP^2}(-1)) & 3 & 26 & 92 & 13 \\ \hline

\rule{0pt}{2.5ex} 17 & (S_7, \sO^{\oplus 2}) & 12 & 42 & 96 & 16 \\ \hline

\rule{0pt}{2.5ex} 18 & (S_6, \sO^{\oplus 2}) & 13 & 36 & 84 & 14 \\ \hline

\rule{0pt}{2.5ex} 19 & (S_5, \sO^{\oplus 2}) & 14 & 30 & 72 & 12 \\ \hline

\rule{0pt}{2.5ex} 20 & (S_4, \sO^{\oplus 2}) & 15 & 24 & 60 & 10 \\ \hline

\rule{0pt}{2.5ex} 21 & (S_3, \sO^{\oplus 2}) & 16 & 18 & 48 & 8 \\ \hline

\rule{0pt}{2.5ex} 22 & (S_2, \sO^{\oplus 2}) & 17 & 12 & 36 & 6 \\
      
\hline\hline
   \end{array}$$

 \bigskip

   \caption{Numerical invariants of the Fano fourfolds from Theorem \ref{theorem-main}}\label{table}
 \end{table}


\newcommand{\etalchar}[1]{$^{#1}$}
\providecommand{\bysame}{\leavevmode\hbox to3em{\hrulefill}\thinspace}
\providecommand{\MR}{\relax\ifhmode\unskip\space\fi MR }
\providecommand{\MRhref}[2]{%
  \href{http://www.ams.org/mathscinet-getitem?mr=#1}{#2}
}
\providecommand{\href}[2]{#2}

\end{document}